\documentclass[11pt, a4paper]{article}

 \usepackage[T1]{fontenc}
% Mit dem Befehl \usepackage[german, ngerman]{babel} weisen wir LaTeX an, das Paket mit Anpassungen für deutschsprachige Dokumente mit der neün deutschen Rechtschreibung zu laden. Dadurch werden Trennungsregeln angepasst und automatisch erzeugte Überschriften ins Deutsche übersetzt (z.B. "Kapitel" statt "Chapter"). Die beiden anderen \usepackage-Anweisungen laden als Zeichensatz den erweiterten ANSI-Zeichensatz, der auch Umlaute richtig darstellen kann sowie den Schriftsatz mit der Bezeichnung T1.

\usepackage[margin=2cm]{geometry}

\usepackage{amsmath}
\usepackage{amsthm}
\usepackage{amssymb}

\usepackage[usenames,dvipsnames]{xcolor} % load xcolor before tikz, else option clash

\usepackage{tikz-cd} 
%for commutative diagrams
\usetikzlibrary{cd}

\usepackage{tikz}
\usetikzlibrary{decorations.markings}
\usetikzlibrary{arrows,shapes,positioning}

%fuer Max' tikz-Vorlage
\usetikzlibrary{matrix}
\usetikzlibrary{graphs}
\usetikzlibrary{backgrounds}

%\usepackage[usenames,dvipsnames,svgnames,table]{xcolor}
%The initialization of additional commands like usenames allows you to use names of the default colors, the same 16 base colors as used in HTML. The dvipsnames allows you access to more colors, another 64, and svgnames allows access to about 150 colors. The initialization of "table" allows colors to be added to tables by placing the color command just before the table.  
%List of colors: http://www.namsu.de/Extra/pakete/Xcolor.html

\usepackage{parskip}
%If you want to use the style of having no indentation with a space between paragraphs, use the parskip package, which does this for you, while making adjustments to the spacing of lists and other structures which use paragraph spacing, so they don't get too far apart.

%\usepackage{tabularx}
%
%% für Package tabularx (Tabellen):
%
%%Verwendet man in der Tabellendefinition die Option p{}, so wird der Text innerhalb der Spalten automatisch im Blocksatz ausgerichtet. Leider blockiert das p{} die Ausrichtungsoptionen l, c und r.
%%Soll innerhalb einer einzelnen Spalte eine andere Textausrichtung als die vordefinierte gesetzt werden, muss folgendes in die Präambel geschrieben werden:
%
%\newcommand{\ltab}{\raggedright\arraybackslash} % Tabellenabschnitt linksbündig
%\newcommand{\ctab}{\centering\arraybackslash} % Tabellenabschnitt zentriert
%\newcommand{\rtab}{\raggedleft\arraybackslash} % Tabellenabschnitt rechtsbündig
%
%%Jetzt kann man innerhalb der gewünschten Spalte folgende Befehle vor den Text schreiben:
%%
%%\ctab 	zentrierte Ausrichtung
%%\rtab 	rechtsbündige Ausrichtung
%%\ltab 	linksbündige Ausricht

\usepackage{bm}
%bold math

\usepackage{hyperref} 
%for hyperlinks within the document. Use the command \texorpdfstring{}{} to avoid warnings when pdf bookmarks are created: The PDF bookmarks are not typeset by TeX: %they simply are strings of characters, so no math or general formatting instructions are allowed.
% Example: \section{The fundamental group of \texorpdfstring{$\Spin(\Pi)$}{Spin(Pi)}}

\usepackage{placeins}
% for using the command \FloatBarrier, to keep tables/figures close to where they are mentioned. 

\usepackage{cases}
% for labelling each case as a separate equation, use \begin{numcases}... \end{numcases}

\usepackage{float}
% to force a picture appearing at the exact location where it is in the source code, using the H specifier for the figure environment.

\usepackage{multirow}

\usepackage[matrix, arrow, curve]{xy}
%From ContractionRevised

\usepackage{array,longtable}   % for pagebreak in table
\newcolumntype{C}{>{$}c<{$}}  % automatic math mode, centered
\setlength\tabcolsep{5pt}     % match value of \arraycolsep 

\newcommand{\SO}{\mathrm{SO}}
\newcommand{\SL}{\mathrm{SL}}
\newcommand{\GL}{\mathrm{GL}}
\newcommand{\Spin}{\mathrm{Spin}}

\newcommand{\Sp}{\mathrm{Sp}}
\newcommand{\RR}{\mathbb{R}}
\newcommand{\R}{\mathbb{R}}
\newcommand{\CC}{\mathbb{C}}
\newcommand{\ZZ}{\mathbb{Z}}
\newcommand{\Z}{\mathbb{Z}}
\newcommand{\NN}{\mathbb{N}}
\newcommand{\PP}{\mathbb{P}}
\newcommand{\CA}{\mathbf{A}}
\newcommand{\Pia}{\Pi^{\mathrm{adm}}}

\newcommand{\X}{\mathcal{X}}

\newcommand\restr[2]{{% we make the whole thing an ordinary symbol
  \left.\kern-\nulldelimiterspace % automatically resize the bar with \right
  #1 % the function
  \vphantom{\big|} % pretend it's a little taller at normal size
  \right|_{#2} % this is the delimiter
  }}

\DeclareMathOperator{\cl}{cl}
\DeclareMathOperator{\Fix}{Fix}

\DeclareMathOperator{\Ad}{Ad}
\DeclareMathOperator{\Aut}{Aut}
\DeclareMathOperator{\ad}{ad}

  % save \epsilon in another command before redefining it.
\renewcommand{\epsilon}{\varepsilon}

\let\oldphi\phi
\renewcommand{\phi}{\varphi}

\newcommand{\iso}{\cong}

\let\hom\relax % Set  the mathoperator \hom equal to to \relax so that LaTeX thinks it's not defined
\newcommand{\hom}{\simeq}

\newcommand{\comp}{\mathsf{C}}

\newcommand\Strut{\rule{0pt}{2\normalbaselineskip}}

\newcommand{\g}{\mathfrak{g}}
\newcommand{\h}{\mathfrak{h}}
\newcommand{\calG}{G}

\setlength{\parindent}{0pt}  % no indentation at beginning of new parapraph
\setlength{\topsep}{1ex plus0.5ex minus0.5ex} % nur für bessere Übersichtlichkeit, später evtl. wieder entfernen 

\title{Fundamental groups of split real Kac--Moody groups and generalized real flag manifolds}
\author{Paula Harring \hskip 1em  Ralf K{\"o}hl\\[2ex] With appendices \\ by Tobias Hartnick and Ralf K{\"o}hl and \\ by Julius Gr\"uning and Ralf K\"ohl}

\begin{document}

%
%\newtheoremstyle{stylename}% name of the style to be used
  %{spaceabove}% measure of space to leave above the theorem. E.g.: 3pt
  %{spacebelow}% measure of space to leave below the theorem. E.g.: 3pt
  %{bodyfont}% name of font to use in the body of the theorem
  %{indent}% measure of space to indent
  %{headfont}% name of head font
  %{headpunctuation}% punctuation between head and body
  %{headspace}% space after theorem head; " " = normal interword space
  %{headspec}% Manually specify head
	
	%(Any arguments that are left blank will assume their default value)

%What are the newtheoremstyle parameters for the default styles?
%The style "plain" should be equivalent to
%
%\newtheoremstyle{plain}
  %{\topsep}   % ABOVESPACE
  %{\topsep}   % BELOWSPACE
  %{\itshape}  % BODYFONT
  %{0pt}       % INDENT (empty value is the same as 0pt)
  %{\bfseries} % HEADFONT
  %{.}         % HEADPUNCT
  %{5pt plus 1pt minus 1pt} % HEADSPACE
  %{}          % CUSTOM-HEAD-SPEC
%
%The style "definition" is the same except for the body font, which is \normalfont; in "remark" the spaces above and below are 0.5\topsep, the head font is \itshape and the body font is \normalfont.

% Quelle: http://tex.stackexchange.com/questions/17551/amsthm-what-are-the-newtheoremstyle-parameters-for-the-default-styles

	% Head spec: \thmname{#1}\thmnumber{ #2}\thmnote{ (#3)}
	% 

\newtheoremstyle{defi}
  {\topsep}   % ABOVESPACE
  {\topsep}   % BELOWSPACE
  {\normalfont}  % BODYFONT
  {0pt}       % INDENT (empty value is the same as 0pt)
  {\bfseries} % HEADFONT
  {.}         % HEADPUNCT
  {5pt plus 1pt minus 1pt} % HEADSPACE
  {\thmname{#1}\thmnumber{ #2}\thmnote{ #3}}          % CUSTOM-HEAD-SPEC

	\newtheoremstyle{theorem}
  {\topsep}
  {\topsep}
  {\itshape}
  {0pt}
  {\bfseries}
  {.}
  {5pt plus 1pt minus 1pt}
  {}
	
	\newtheoremstyle{lemma}
  {\topsep}
  {\topsep}
  {\itshape}
  {0pt}
  {\bfseries}
  {.}
  {5pt plus 1pt minus 1pt}
  {\thmname{#1}\thmnumber{ #2}\thmnote{ #3}}
	
	\newtheoremstyle{namedexample}
  {\topsep}
  {\topsep}
  {\itshape}
  {0pt}
  {\bfseries}
  {.}
  {5pt plus 1pt minus 1pt}
  {\thmname{#1}:\thmnumber{ #2}\thmnote{ #3}}

{\theoremstyle{theorem}
\newtheorem{thm}{Theorem}[section]
\newtheorem*{thm*}{Theorem}
}

{\theoremstyle{lemma}
\newtheorem{lem}[thm]{Lemma}
\newtheorem*{lem*}{Lemma}
\newtheorem{prop}[thm]{Proposition}
\newtheorem{cor}[thm]{Corollary}
}

{\theoremstyle{defi}
\newtheorem{rem}[thm]{Remark}
\newtheorem{nota}[thm]{Notation}
\newtheorem{de+rem}[thm]{Definition and Remark}
\newtheorem{de}[thm]{Definition}
\newtheorem{ex}[thm]{Example}
\newtheorem{observation}[thm]{Observation}
}

{\theoremstyle{namedexample}
\newtheorem*{ex*}{Example}
}

\maketitle

%
%\abstract{We determine the isomorphism types of the fundamental groups of $G(\Pi)$, its maximal compact subgroup $K(\Pi)$, and its spin covers for all algebraically simply connected semisimple split real Kac--Moody groups $G(\Pi)$ with the property that the Bruhat decompositions of the generalized flag varieties are CW decompositions -- in particular, for all two-spherical groups.}

\abstract{We determine the fundamental groups of symmetrizable algebraically simply connected split real Kac--Moody groups endowed with the Kac--Peterson topology. In analogy to the finite-dimensional situation, because of the Iwasawa decomposition $G = KAU^+$ the embedding $K \hookrightarrow G$ is a weak homotopy equivalence, in particular $\pi_1(G) = \pi_1(K)$. It thus suffices to determine $\pi_1(K)$ which we achieve by investigating the fundamental groups of generalized flag varieties. Our results apply in all cases in which the Bruhat decomposition of the generalized flag variety is a CW decomposition -- in particular, we cover the complete symmetrizable situation; furthermore, the results concerning only the structure of $\pi_1(K)$ actually also hold in the non-symmetrizable two-spherical case.}

%
%whose Bruhat decomposition
%
%\abstract{We prove that the spin covers of the maximal compact subgroups of irreducible simply laced split real Kac--Moody groups are topologically simply connected with respect to the Kac--Peterson topology.}\\

%
%\lila{Lila: Geaendert}\\
%\rot{Rot: Muss noch bearbeitet werden.}\\
%\gruen{Gruen: Kann wahrscheinlich weg.}\\
%\blau{Blau: Nicht unbedingt notwendige Zusatzinformationen}\\

\section{Introduction} \label{setting}

The structure of maximal compact subgroups in semisimple Lie groups was investigated by Cartan and, later, Mostow: In \cite{Mos}, Mostow gives a new proof of a Cartan's theorem stating that a connected semisimple Lie group $G$ is a topological product of a maximal compact subgroup $K$ and a Euclidean space, implying in particular that $G$ and $K$ have isomorphic fundamental groups. %\rot{Mention that Cartan and Mostow determined isomorphism types of maximal compact subgroups (Reference?)}.
Subsequent case-by-case analysis provided the isomorphism types of these maximal compact subgroups -- which in the split real situation turn out to be all classical -- and their fundamental groups; tables of the maximal compact subgroups can be found in \cite[p 518]{Helga}, their fundamental groups in \cite[94.33]{Salz}.

Starting in the 1940's, Dynkin diagrams, introduced in \cite{Dynkin}, have been used to describe the structure of simple Lie groups.
In this article, we present a uniform result which makes it possible to determine the fundamental group of any algebraically simply connected split real simple Lie group -- and, more generally, any algebraically simply-connected semisimple split real topological Kac--Moody group -- directly from its Dynkin diagram.

In \cite[Theorem 1]{Ti}, Tits for every generalized Cartan matrix $\CA$ provides a functor $\calG_\CA : \mathfrak{cRings} \to \mathfrak{Groups}$ from commutative rings into groups. Let $\Pi$ be the Dynkin diagram of $\CA$. 
\begin{de}
\label{Kac--Moody group}
 We set $G(\Pi) := [\calG_\CA(\RR),\calG_\CA(\RR)]$ and refer to this group as the \emph{algebraically simply-connected semisimple split real Kac--Moody group of type $\Pi$}.
\end{de}

Kac--Moody groups endowed with the Kac--Peterson topology have been studied extensively by the second author together with Gl\"ockner and Hartnick in \cite{GGH} and with Hartnick and Mars in \cite{HKM}. Our result is applicable to those Kac--Moody groups whose Bruhat decompositions are CW decompositions and for which the embedding $K \hookrightarrow G$ is a weak homotopy equivalence.

\medskip
In order to fix notations, 
let $G=G(\Pi)$ be the algebraically simply-connected split real semisimple Kac--Moody group associated to an irreducible diagram $\Pi = (V,E)$ endowed with the Kac--Peterson topology (for definitions, see Section \ref{Kac--Moody-basics}). % whose vertices are labelled by some finite set $I$.
Let $K=K(\Pi)$ be the so-called maximal compact subgroup of the topological group $G(\Pi)$, i.e., the subgroup fixed by the Cartan--Chevalley involution $\theta$ of $G(\Pi)$. We stress that in the infinite-dimensional non-Lie case this topological group $K$ is {\em not} a compact group, only a $k$-group, in fact a $k_\omega$-group.

Given the Dynkin diagram $\Pi = (V,E)$ with a fixed labelling $\lambda:\{1, \dots, n\} =: I \to  V $, we define a modified diagram $\Pia$ with vertex set $V$ and $\{i^\lambda,j^\lambda\} \in V \times V$ edge if and only if $\epsilon(i,j) = \epsilon(j,i) = -1$, where $\epsilon(i,j)$ denotes the parity of the corresponding Cartan matrix entry. To each connected component $\bar{\Pi}^{\mathrm{adm}}$ of $\Pia$ we then assign a colour as follows: Let $\bar{\Pi}^{\mathrm{adm}}$ be coloured red (denoted by $r$) if it contains a vertex $i^\lambda$ such that there exists a vertex $j^\lambda \in V$ satisfying $\epsilon(i,j) = 1$ and $ \epsilon(j,i) = -1$; let $\bar{\Pi}^{\mathrm{adm}}$ be coloured green ($g$) if it is not red and consists only of an isolated vertex; and blue ($b$) else. 

One can then read off the isomorphism type of $\pi_1(G(\Pi))$ from the coloured diagram $\Pia$ as specified in the following theorem.     

\begin{thm*}
Let $\Pi$ be an irreducible Dynkin diagram such that the Bruhat decomposition of $G(\Pi)$ provides a CW decomposition (i.e., such that the conclusion of Proposition \ref{bruhat decomp is cw decomp} holds) and such that the embeding $K \hookrightarrow G(\Pi)$ is a weak homotopy equivalence (i.e., such that the conclusion of Theorem~\ref{FundamentalGroups2} holds). Let $n(g)$ and $n(b)$ be the number of connected components of $\Pia$ of colour $g$ and $b$, respectively. Then 
\[\pi_1(G(\Pi)) \iso \ZZ^{n(g)} \times C_2^{n(b)}.\] In particular, this statement holds in the symmetrizable case.
\end{thm*}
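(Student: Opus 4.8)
The plan is to carry out the computation in two stages: first reduce to the maximal compact subgroup, then read $\pi_1(K)$ off the cellular data of the flag variety. Granting that $K \hookrightarrow G(\Pi)$ is a weak homotopy equivalence (the conclusion of Theorem~\ref{FundamentalGroups2}), it suffices to compute $\pi_1(K)$. Since $G = KB$ by the Iwasawa decomposition, the flag variety $X = G/B$ is the homogeneous space $K/(K \cap B)$, where $K \cap B$ is the finite $2$-torsion subgroup of the split maximal torus. As $K$ is connected and $K \cap B$ is discrete, the homotopy long exact sequence of the fibration $K \cap B \hookrightarrow K \to X$ collapses to a short exact sequence and identifies $\pi_1(K)$ with the kernel of a boundary homomorphism $\partial \colon \pi_1(X) \to \pi_0(K \cap B)$. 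The task therefore splits into computing $\pi_1(X)$ and then understanding $\partial$.

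To compute $\pi_1(X)$ I would use the CW decomposition supplied by the Bruhat decomposition (the conclusion of Proposition~\ref{bruhat decomp is cw decomp}). There is a single $0$-cell $C_e$; the $1$-cells are the $C_{s_i}$ indexed by the simple reflections, and each closes up to $\overline{C_{s_i}} \cong \mathbb{RP}^1$, so the $1$-skeleton is a wedge of $n$ circles and its fundamental group is free on generators $\gamma_1, \dots, \gamma_n$. The $2$-cells are the $C_w$ with $\ell(w) = 2$, indexed by the length-two elements $s_i s_j$; the closure $\overline{C_{s_i s_j}}$ is the real flag variety of the rank-two subsystem on $\{i,j\}$, and its attaching map furnishes the corresponding relation. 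By cellular approximation and the van Kampen theorem, $\pi_1(X)$ is the free group on $\gamma_1, \dots, \gamma_n$ modulo these rank-two relations, one or two for each pair.

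The core of the proof---and the step I expect to be the main obstacle---is the analysis of the rank-two contribution of each pair $\{i,j\}$ in terms of the Cartan entries $a_{ij}, a_{ji}$, equivalently their parities $\epsilon(i,j), \epsilon(j,i)$. The rank-two flag varieties already have nonabelian fundamental groups (for a type-$A_2$ pair one finds $\pi_1 \iso Q_8$), so the clean structure emerges only after applying $\partial$ and passing to its kernel. Tracking the images of $\gamma_i, \gamma_j$, I would show that on $\pi_1(K)$ a pair with both parities odd identifies $\gamma_i$ with $\gamma_j$ and imposes $\gamma_i^2 = 1$, a pair with both parities even lets the two generators commute freely, and a pair of opposite parity annihilates the generator singled out by the red rule. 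These three regimes are precisely what the edge set and the red/green/blue colouring of $\Pia$ encode, so that globally each connected component of $\Pia$ collapses to a single factor of $\pi_1(K)$: a green component contributes a free $\ZZ$ (the class of its circle $\SO(2)$), a blue component identifies all its generators to one involution and contributes $C_2$, and a red component is annihilated. Taking the direct product over the $n(g)$ green and $n(b)$ blue components gives $\pi_1(K) \iso \ZZ^{n(g)} \times C_2^{n(b)}$, hence the stated value of $\pi_1(G(\Pi))$.

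Finally, for the assertion that the theorem applies in the symmetrizable case I would verify its two hypotheses there. For the CW decomposition (Proposition~\ref{bruhat decomp is cw decomp}), closure-finiteness is immediate from $\overline{C_w} = \bigsqcup_{v \leq w} C_v$ together with the finiteness of each Bruhat interval $\{v : v \leq w\}$, while symmetrizability furnishes, via the invariant bilinear form, the representation-theoretic control needed to confirm that $X$ carries the weak topology with respect to its cells. For the weak homotopy equivalence (Theorem~\ref{FundamentalGroups2}) I would use that the Iwasawa decomposition $G = KAU_+$ is available for symmetrizable split real Kac--Moody groups in the Kac--Peterson topology; since $A \times U_+$ is contractible, the inclusion $K \hookrightarrow G$ is a homotopy equivalence. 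With both hypotheses in force, the displayed isomorphism holds in particular for every symmetrizable $\Pi$.
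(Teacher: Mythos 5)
Your global architecture does coincide with the paper's: reduce to $\pi_1(K)$ via Theorem~\ref{FundamentalGroups2}, realize $G/B \cong K/(K\cap T)$ as the base of a degree-$2^n$ covering of $K$ (cf.\ Lemma~\ref{homeo K/(K cap P_J) to G/P_J}, Corollary~\ref{homeo G/P_J to K/(K cap T) K_J}, Lemma~\ref{degree of K to K/T covering}), present $\pi_1(G/B)$ from the Bruhat CW structure, and sort the components of $\Pia$ by colour. But the step you yourself flag as ``the main obstacle'' --- identifying $\ker \partial$, i.e.\ the index-$2^n$ subgroup $p_*(\pi_1(K)) \leq \pi_1(G/B)$ --- is precisely where the paper's hard work lies, and your proposal only asserts the answer. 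Concretely, three things are missing. First, the relator attached to each $2$-cell must actually be computed: the paper does this with explicit characteristic maps $\chi_{(i,j)}(s,t) = \gamma_i(R(s))\gamma_j(R(t))\cdot B$ together with the identity $e_i k_j e_i = k_j^{\epsilon(i,j)}$ (Lemma~\ref{vertauschen}), yielding $x_i x_j^{\epsilon(i,j)} x_i^{-1} x_j^{-1}$ (Theorem~\ref{fundamental group}); your appeal to ``the attaching map furnishes the corresponding relation'' elides this, and note also that $\cl(C_{s_is_j})$ is a two-dimensional Schubert cell closure, not the full rank-two flag variety. Second, the global product decomposition $\pi_1(G/B) \cong H_{J_1} \times \dots \times H_{J_k}$ over components is not automatic, because mixed-parity relations couple generators across components; the paper proves (Proposition~\ref{fundamental group as product}) that these cross relations are consequences of the red-component relations $x_j^2=1$. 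Third and most seriously, for a blue component $J$ your description of the kernel (``identifies $\gamma_i$ with $\gamma_j$ and imposes $\gamma_i^2=1$'') is not literally correct --- neither $\gamma_i$ nor $\gamma_j$ lies in $\ker\partial$, since $\partial(\gamma_i) = e_i$ and the $e_i$ are independent in $K \cap T \cong C_2^n$; rather $\ker\partial$ restricted to $H_J$ is generated by the common central square $z = \gamma_i^2$, and to know it is $C_2$ rather than trivial one needs the exact order $|H_J| = 2^{|J|+1}$. The presentation alone easily gives the upper bound (all squares are central, equal along edges, of order dividing $2$), but the lower bound $z \neq 1$ requires an argument; the paper obtains it through the entire spin-cover apparatus --- simple connectedness of $\Spin(\Pi_J^{\mathrm{sl}})$ (Proposition~\ref{simplyconnected}, resting on Palais's local triviality, the bundle over $\Spin(\Pi)/\Spin(\Pi_{ij})$, and Corollary~\ref{K/K_J is simply connected.}), whence $\Spin(\Pi_J^{\mathrm{sl}}) \to K(\Pi_J^{\mathrm{sl}})/\tilde{T}$ is a universal covering of degree $2^{|J|+1}$ (Lemma~\ref{connected component subgroups}(c)). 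Your proposal contains no substitute for this step (one could, say, exhibit a Clifford-algebra-type model of $H_J$ in which $z \neq 1$, but something must be said), and the final index count $2^{|J_1|} \cdots 2^{|J_k|} = 2^n$ forcing $p_*(\pi_1(K)) = \tilde{H}_1 \times \dots \times \tilde{H}_k$ is likewise absent.

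Your verification of the ``in particular, symmetrizable'' clause is also flawed on both hypotheses. You invoke a topological Iwasawa decomposition $G = KAU_+$ with contractible $A \times U_+$ to conclude that $K \hookrightarrow G$ is a homotopy equivalence, but the paper states explicitly that it is \emph{unknown} whether the Iwasawa decomposition of $G$ itself is topological; only a continuous bijection is available, which yields no homotopy conclusion. Appendix A instead passes to the semisimple adjoint quotient $\overline{G}$, where the topological Iwasawa decomposition is proved in \cite{FHHK}, establishes contractibility of $\overline{U}^+$ via Kumar's dilation structure \cite{Kum}, and transfers the conclusion back along the central extension $G \to \overline{G}$ using Palais's fibration lemma \cite{Pal} --- obtaining only a \emph{weak} homotopy equivalence. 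Similarly, the CW hypothesis in the symmetrizable case is not a routine consequence of the invariant bilinear form: Appendix B proves it by embedding $G$ into a simply laced Kac--Moody group \cite{margabberkac} and a $k_\omega$ properness argument showing $U_+ \times T \times U_- \to G$ is a homeomorphism (Corollary~\ref{multopen}), which feeds into Corollary~\ref{topstrong} and Proposition~\ref{bruhat decomp is cw decomp}. Since the theorem takes both conclusions as hypotheses, these defects only affect the last sentence of the statement, but as written your justification of it would not stand.
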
  

\begin{ex*}[Isomorphism types of $\pi_1(G(\Pi))$ for the spherical Dynkin diagrams\footnote{Dynkin diagram LaTeX styles kindly provided by Max Horn at \cite{Ho}}]

\def\DynkinNodeSize{3.5mm}
\def\DynkinArrowLength{3mm}
\tikzset{
  % a diagram node
  dnode/.style={
    circle,
    inner sep=0pt,
    minimum size=\DynkinNodeSize,
    fill=white,
    draw},
  %
	%coloured nodes
	  gnode/.style={
    circle,
    inner sep=0pt,
    minimum size=\DynkinNodeSize,
    fill=green,
    draw},
		bnode/.style={
    circle,
    inner sep=0pt,
    minimum size=\DynkinNodeSize,
    fill=blue,
    draw},
		rnode/.style={
    circle,
    inner sep=0pt,
    minimum size=\DynkinNodeSize,
    fill=red,
    draw},
  middlearrow/.style={
    decoration={markings,
      mark=at position 0.6 with
      %{\arrow[black]{angle 90};}
      %{\arrow[black]{angle 60};}
      %{\arrow[black]{stealth};}
      {\draw (0:0mm) -- +(+135:\DynkinArrowLength); \draw (0:0mm) -- +(-135:\DynkinArrowLength);},
    },
    postaction={decorate}
  },
  leftrightarrow/.style={
    decoration={markings,
      mark=at position 0.999 with
      {
      \draw (0:0mm) -- +(+135:\DynkinArrowLength); \draw (0:0mm) -- +(-135:\DynkinArrowLength);
      },
      mark=at position 0.001 with
      {
      \draw (0:0mm) -- +(+45:\DynkinArrowLength); \draw (0:0mm) -- +(-45:\DynkinArrowLength);
      },
    },
    postaction={decorate}
  },
  % single edge
  sedge/.style={
  },
  % directed double edge
  dedge/.style={
    middlearrow,
    double distance=0.5mm,
  },
  % directed triple edge
  tedge/.style={
    middlearrow,
    double distance=1.0mm+\pgflinewidth,
    postaction={draw}, % third line
  },
  % double edge with two arrows, for \tilde{A}_1 residues
  infedge/.style={
    leftrightarrow,
    double distance=0.5mm,
  },
}
\[
\begin{array}{|c|c|c|}
\hline \rule{0pt}{\normalbaselineskip}
\Pi & \Pia \;\mathrm{coloured\;by}\;\gamma & \pi_1(G(\Pi))\\ 
\hline \Strut
\begin{tikzpicture}%[every node/.style={dnode}]
    \draw (-0.5,0) node[anchor=east]  {$A_1$};

    \node[dnode] (1) at (0,0) {};
    %\node[dnode] (2) at (1,0) {};
    %\node[dnode] (3) at (3,0) {};
    %\node[dnode] (4) at (4,0) {};
    %\node[dnode,label=below:$n$] (5) at (5,0) {};

    %\path (1) edge[sedge] (2)
          %(2) edge[sedge,dashed] (3)
          %(3) edge[sedge] (4)
          %(4) edge[sedge] (5)
          ;
\end{tikzpicture}
&  
\begin{tikzpicture}%[every node/.style={dnode}]
    %\draw (-1,0) node[anchor=east]  {$A_n$};
    %\node[bnode,label=center:$b$] (1) at (0,0) {};
    %\node[bnode,label=center:$b$] (2) at (1,0) {};
    %\node[bnode,label=center:$b$] (3) at (3,0) {};
    %\node[bnode,label=center:$b$] (4) at (4,0) {};
    \node[gnode,label=center:$g$] (1) at (0,0) {};
    %\node[bnode,label=center:$b$] (2) at (1,0) {};
    %\node[bnode,label=center:$b$] (3) at (3,0) {};
    %\node[bnode,label=center:$b$] (4) at (4,0) {};
    %\node[dnode,label=below:$n$] (5) at (5,0) {};

    %\path (1) edge[sedge] (2)
          %(2) edge[sedge,dashed] (3)
          %(3) edge[sedge] (4)
          %(4) edge[sedge] (5)
          ;
\end{tikzpicture}&
\pi_1(\SL_{2}(\RR)) \iso  \ZZ   \Strut \\ \hline \Strut
\begin{tikzpicture}%[every node/.style={dnode}]
    \draw (-0.5,0) node[anchor=east]  {$A_n$};

    \node[dnode] (1) at (0,0) {};
    \node[dnode] (2) at (1,0) {};
    \node[dnode] (3) at (3,0) {};
    \node[dnode] (4) at (4,0) {};
    %\node[dnode,label=below:$n$] (5) at (5,0) {};

    \path (1) edge[sedge] (2)
          (2) edge[sedge,dashed] (3)
          (3) edge[sedge] (4)
          %(4) edge[sedge] (5)
          ;
\end{tikzpicture}
&  
\begin{tikzpicture}%[every node/.style={dnode}]
    %\draw (-1,0) node[anchor=east]  {$A_n$};
    %\node[bnode,label=center:$b$] (1) at (0,0) {};
    %\node[bnode,label=center:$b$] (2) at (1,0) {};
    %\node[bnode,label=center:$b$] (3) at (3,0) {};
    %\node[bnode,label=center:$b$] (4) at (4,0) {};
    \node[bnode,label=center:$b$] (1) at (0,0) {};
    \node[bnode,label=center:$b$] (2) at (1,0) {};
    \node[bnode,label=center:$b$] (3) at (3,0) {};
    \node[bnode,label=center:$b$] (4) at (4,0) {};
    %\node[dnode,label=below:$n$] (5) at (5,0) {};

    \path (1) edge[sedge] (2)
          (2) edge[sedge,dashed] (3)
          (3) edge[sedge] (4)
          %(4) edge[sedge] (5)
          ;
\end{tikzpicture}&
\pi_1(\SL_{n+1}(\RR)) \iso C_2 \quad (n\geq 2) \Strut \\ \hline \Strut
\begin{tikzpicture}%[every node/.style={dnode}]
    \draw (-0.5,0) node[anchor=east]  {$B_n$};

    \node[dnode] (1) at (0,0) {};
    \node[dnode] (2) at (1,0) {};
    \node[dnode] (3) at (3,0) {};
    \node[dnode] (4) at (4,0) {};
    %\node[dnode,label=below:$n$] (5) at (5,0) {};

    \path (1) edge[sedge] (2)
          (2) edge[sedge,dashed] (3)
          (3) edge[dedge] (4)
          %(4) edge[dedge] (5)
          ;
\end{tikzpicture}&
\begin{tikzpicture}%[every node/.style={dnode}]
    %\draw (-0.5,0) node[anchor=east]  {$B_n$};

    \node[bnode,label=center:$b$] (1) at (0,0) {};
    \node[bnode,label=center:$b$] (2) at (1,0) {};
    \node[bnode,label=center:$b$] (3) at (3,0) {};
    \node[rnode,label=center:$r$] (4) at (4,0) {};
    %\node[dnode,label=below:$n$] (5) at (5,0) {};

    \path (1) edge[sedge] (2)
          (2) edge[sedge,dashed] (3)
          %(3) edge[sedge] (4)
          %(4) edge[dedge] (5)
          ;
\end{tikzpicture}& \pi_1(\Spin(n,n+1)) \iso \begin{cases} \ZZ &\mathrm{if }\;n\leq 2,\\C_2&\mathrm{if }\; n>2.\end{cases}\Strut \\ \hline \Strut
\begin{tikzpicture}%[every node/.style={dnode}]
    \draw (-0.5,0) node[anchor=east]  {$C_n$};

    \node[dnode] (1) at (0,0) {};
    \node[dnode] (2) at (1,0) {};
    \node[dnode] (3) at (3,0) {};
    \node[dnode] (4) at (4,0) {};
    %\node[dnode,label=below:$n$] (5) at (5,0) {};

    \path (1) edge[sedge] (2)
          (2) edge[sedge,dashed] (3)
          (4) edge[dedge] (3)
          %(5) edge[dedge] (4)
          ;
\end{tikzpicture}
&
\begin{tikzpicture}%[every node/.style={dnode}]
    %\draw (-1,0) node[anchor=east]  {$C_n$};

    \node[rnode,label=center:$r$] (1) at (0,0) {};
    \node[rnode,label=center:$r$] (2) at (1,0) {};
    \node[rnode,label=center:$r$] (3) at (3,0) {};
    \node[gnode,label=center:$g$] (4) at (4,0) {};
    %\node[dnode,label=below:$n$] (5) at (5,0) {};

    \path (1) edge[sedge] (2)
          (2) edge[sedge,dashed] (3)
          %(3) edge[sedge] (4)
          %(5) edge[dedge] (4)
          ;
\end{tikzpicture}& \pi_1(\Sp(2n,\RR)) \iso \ZZ\Strut \\ \hline \Strut % hier muss eine hline und danach Abstand hin
\begin{tikzpicture}%[every node/.style={dnode}]
    \draw (-0.5,0) node[anchor=east]  {$D_n$};

    \node[dnode] (1) at (0,0) {};
    \node[dnode] (2) at (1,0) {};
    \node[dnode] (3) at (3,0) {};
    \node[dnode] (4) at (4,0) {};
		\node[dnode] (5) at (3,1) {};
    %\node[dnode,label=below:$n$] (5) at (5,0) {};

    \path (1) edge[sedge] (2)
          (2) edge[sedge,dashed] (3)
          (3) edge[sedge] (4)
					(3) edge[sedge] (5)
          %(4) edge[dedge] (5)
          ;
\end{tikzpicture}&
\begin{tikzpicture}%[every node/.style={dnode}]
    %\draw (-0.5,0) node[anchor=east]  {$B_n$};

    \node[bnode,label=center:$b$] (1) at (0,0) {};
    \node[bnode,label=center:$b$] (2) at (1,0) {};
    \node[bnode,label=center:$b$] (3) at (3,0) {};
    \node[bnode,label=center:$b$] (4) at (4,0) {};
		\node[bnode,label=center:$b$] (5) at (3,1) {};
    %\node[dnode,label=below:$n$] (5) at (5,0) {};

    \path (1) edge[sedge] (2)
          (2) edge[sedge,dashed] (3)
          (3) edge[sedge] (4)
          (3) edge[sedge] (5)
          ;
\end{tikzpicture}& \pi_1(\Spin(n,n)) \iso C_2 \quad (n\geq 3) \Strut \\ \hline \Strut
\begin{tikzpicture}%[every node/.style={dnode}]
    \draw (-0.5,0) node[anchor=east]  {$F_4$};

    \node[dnode] (1) at (0,0) {};
    \node[dnode] (2) at (1,0) {};
		\node[dnode] (3) at (2,0) {};
    \node[dnode] (4) at (3,0) {};
    %\node[dnode,label=below:$n$] (5) at (5,0) {};

    \path (1) edge[sedge] (2)
		 (3) edge[dedge] (2)
		 (3) edge[sedge] (4)
          ;
\end{tikzpicture}
&  
\begin{tikzpicture}%[every node/.style={dnode}]
    \draw (-0.5,0) node[anchor=east]  {$F_4$};

    \node[rnode,label=center:$r$] (1) at (0,0) {};
    \node[rnode,label=center:$r$] (2) at (1,0) {};
		\node[bnode,label=center:$b$] (3) at (2,0) {};
    \node[bnode,label=center:$b$] (4) at (3,0) {};
    %\node[dnode,label=below:$n$] (5) at (5,0) {};

    \path (1) edge[sedge] (2)
		 %(3) edge[sedge] (2)
		 (3) edge[sedge] (4)
          ;
\end{tikzpicture}&
\pi_1(F_4) \iso C_2 \Strut\\ \hline \Strut

\begin{tikzpicture}%[every node/.style={dnode}]
    \draw (-0.5,0) node[anchor=east]  {$G_2$};

    \node[dnode] (1) at (0,0) {};
    \node[dnode] (2) at (1,0) {};
    %\node[dnode,label=below:$n$] (5) at (5,0) {};

    \path (1) edge[tedge] (2)
          ;
\end{tikzpicture}
&  
\begin{tikzpicture}%[every node/.style={dnode}]
    %\draw (-1,0) node[anchor=east]  {$A_n$};
    %\node[bnode,label=center:$b$] (1) at (0,0) {};
    %\node[bnode,label=center:$b$] (2) at (1,0) {};
    %\node[bnode,label=center:$b$] (3) at (3,0) {};
    %\node[bnode,label=center:$b$] (4) at (4,0) {};
    \node[bnode,label=center:$b$] (1) at (0,0) {};
    \node[bnode,label=center:$b$] (2) at (1,0) {};

    \path (1) edge[sedge] (2)
          ;
\end{tikzpicture}&
\pi_1(G_{2,2}) \iso C_2  \\ \hline \end{array} \]
%\Strut \\ \hline \Strut
\end{ex*}

\begin{ex*}[Isomorphism types of $\pi_1(G(\Pi))$ for selected indefinite Dynkin diagrams\footnote{Dynkin diagram LaTeX styles kindly provided by Max Horn at \cite{Ho}}]

\def\DynkinNodeSize{3.5mm}
\def\DynkinArrowLength{3mm}
\tikzset{
  % a diagram node
  dnode/.style={
    circle,
    inner sep=0pt,
    minimum size=\DynkinNodeSize,
    fill=white,
    draw},
  %
	%coloured nodes
	  gnode/.style={
    circle,
    inner sep=0pt,
    minimum size=\DynkinNodeSize,
    fill=green,
    draw},
		bnode/.style={
    circle,
    inner sep=0pt,
    minimum size=\DynkinNodeSize,
    fill=blue,
    draw},
		rnode/.style={
    circle,
    inner sep=0pt,
    minimum size=\DynkinNodeSize,
    fill=red,
    draw},
  middlearrow/.style={
    decoration={markings,
      mark=at position 0.6 with
      %{\arrow[black]{angle 90};}
      %{\arrow[black]{angle 60};}
      %{\arrow[black]{stealth};}
      {\draw (0:0mm) -- +(+135:\DynkinArrowLength); \draw (0:0mm) -- +(-135:\DynkinArrowLength);},
    },
    postaction={decorate}
  },
  leftrightarrow/.style={
    decoration={markings,
      mark=at position 0.999 with
      {
      \draw (0:0mm) -- +(+135:\DynkinArrowLength); \draw (0:0mm) -- +(-135:\DynkinArrowLength);
      },
      mark=at position 0.001 with
      {
      \draw (0:0mm) -- +(+45:\DynkinArrowLength); \draw (0:0mm) -- +(-45:\DynkinArrowLength);
      },
    },
    postaction={decorate}
  },
  % single edge
  sedge/.style={
  },
  % directed double edge
  dedge/.style={
    middlearrow,
    double distance=0.5mm,
  },
  % directed triple edge
  tedge/.style={
    middlearrow,
    double distance=1.0mm+\pgflinewidth,
    postaction={draw}, % third line
  },
  % double edge with two arrows, for \tilde{A}_1 residues
  infedge/.style={
    leftrightarrow,
    double distance=0.5mm,
  },
}

\[
\begin{array}{|c|c|c|}
\hline \rule{0pt}{\normalbaselineskip}
\Pi & \Pia \;\mathrm{coloured\;by}\;\gamma & \pi_1(G(\Pi))\\ 
\hline \Strut
%\begin{tikzpicture}%[every node/.style={dnode}]
    %\draw (-0.5,0) node[anchor=east]  {$F_4$};
%
    %\node[dnode] (1) at (0,0) {};
    %\node[dnode] (2) at (1,0) {};
		%\node[dnode] (3) at (2,0) {};
    %\node[dnode] (4) at (3,0) {};
    %%\node[dnode,label=below:$n$] (5) at (5,0) {};
%
    %\path (1) edge[sedge] (2)
		 %(3) edge[dedge] (2)
		 %(3) edge[sedge] (4)
          %;
%\end{tikzpicture}
%&  
%\begin{tikzpicture}%[every node/.style={dnode}]
    %\draw (-0.5,0) node[anchor=east]  {$F_4$};
%
    %\node[rnode,label=center:$r$] (1) at (0,0) {};
    %\node[rnode,label=center:$r$] (2) at (1,0) {};
		%\node[bnode,label=center:$b$] (3) at (2,0) {};
    %\node[bnode,label=center:$b$] (4) at (3,0) {};
    %%\node[dnode,label=below:$n$] (5) at (5,0) {};
%
    %\path (1) edge[sedge] (2)
		 %%(3) edge[sedge] (2)
		 %(3) edge[sedge] (4)
          %;
%\end{tikzpicture}&
%\pi_1(F_4) \iso C_2 \Strut\\ \hline \Strut

\begin{tikzpicture}%[every node/.style={dnode}]
    \draw (-0.5,-3.5) node[anchor=east]  {$E_{10}$};
		 \node[dnode] (1) at (0,0) {};
    \node[dnode] (2) at (0,-1) {};
    \node[dnode] (3) at (0,-2) {};
    \node[dnode] (4) at (0,-3) {};
		\node[dnode] (5) at (0,-4) {};
		\node[dnode] (6) at (0,-5) {};
		\node[dnode] (7) at (0,-6) {};
		\node[dnode] (8) at (0,-7) {};
		\node[dnode] (9) at (0,-8) {};
		\node[dnode] (10) at (1,-6) {};

    %\node[dnode,label=below:$n$] (5) at (5,0) {};

    \path (1) edge[sedge] (2)
          (2) edge[sedge] (3)
          (3) edge[sedge] (4)
          (4) edge[sedge] (5)
					(5) edge[sedge] (6)
					(6) edge[sedge] (7)
					(7) edge[sedge] (8)
					(8) edge[sedge] (9)
					(7) edge[sedge] (10)
          ;
\end{tikzpicture}
&
\begin{tikzpicture}%[every node/.style={dnode}]
    %\draw (-0.5,-3.5) node[anchor=east]  {$E_{10}$};

    \node[bnode,label=center:$b$] (1) at (0,0) {};
    \node[bnode,label=center:$b$] (2) at (0,-1) {};
    \node[bnode,label=center:$b$] (3) at (0,-2) {};
    \node[bnode,label=center:$b$] (4) at (0,-3) {};
		\node[bnode,label=center:$b$] (5) at (0,-4) {};
		\node[bnode,label=center:$b$] (6) at (0,-5) {};
		\node[bnode,label=center:$b$] (7) at (0,-6) {};
		\node[bnode,label=center:$b$] (8) at (0,-7) {};
		\node[bnode,label=center:$b$] (9) at (0,-8) {};
		\node[bnode,label=center:$b$] (10) at (1,-6) {};
    %\node[dnode,label=below:$n$] (5) at (5,0) {};

    \path (1) edge[sedge] (2)
          (2) edge[sedge] (3)
          (3) edge[sedge] (4)
          (4) edge[sedge] (5)
					(5) edge[sedge] (6)
					(6) edge[sedge] (7)
					(7) edge[sedge] (8)
					(8) edge[sedge] (9)
					(7) edge[sedge] (10)
          ;
\end{tikzpicture}& \pi_1(E_{10}) \iso C_2 \Strut \\ \hline \Strut

\begin{tikzpicture}%[every node/.style={dnode}]
    \draw (-0.5,-2.5) node[anchor=east]  {$X$};
%\node at (0,4,5){};
    \node[dnode] (1a) at (0,0) {};
    \node[dnode] (1b) at (1,0) {};
    \node[dnode] (1c) at (2,0) {};
    \node[dnode] (2a) at (0,-1) {};
    \node[dnode] (2b) at (1,-1) {};
    \node[dnode] (2c) at (2,-1) {};
	  \node[dnode] (3a) at (0,-2) {};
    \node[dnode] (3b) at (1,-2) {};
    \node[dnode] (3c) at (2,-2) {};
	  \node[dnode] (4a) at (0,-3) {};
    \node[dnode] (4b) at (1,-3) {};
    \node[dnode] (4c) at (2,-3) {};
	  \node[dnode] (5a) at (0,-4) {};
    \node[dnode] (5b) at (1,-4) {};
    \node[dnode] (5c) at (2,-4) {};	
	  \node[dnode] (6a) at (0,-5) {};
    %\node[dnode,label=below:$n$] (5) at (5,0) {};

    \path (1a) edge[tedge] (1b)
          (1b) edge[sedge] (1c)
					(2a) edge[dedge] (1a)
					(2a) edge[dedge] (2b)
					(2c) edge[dedge] (1c)
					(2c) edge[dedge] (2b)
					(2c) edge[dedge] (3c)
					(3a) edge[sedge] (2a)
					(3b) edge[dedge] (3a)
					(3b) edge[dedge] (3c)
					(3b) edge[dedge] (4b)
					(3c) edge[tedge] (4c)
					(4a) edge[dedge] (3a)
					(4a) edge[dedge] (4b)
					(4a) edge[sedge] (5a)
					(5b) edge[dedge] (4b)
					(5b) edge[sedge] (5c)
					(5a) edge[sedge] (6a)
          ;
\end{tikzpicture}
&
\begin{tikzpicture}%[every node/.style={dnode}]
    %\draw (-0.5,0) node[anchor=east]  {$C_n$};
		\node at (0,.5){};
    \node[rnode,label=center:$r$] (1a) at (0,0) {};
    \node[rnode,label=center:$r$] (1b) at (1,0) {};
    \node[rnode,label=center:$r$] (1c) at (2,0) {};
    \node[rnode,label=center:$r$] (2a) at (0,-1) {};
    \node[rnode,label=center:$r$] (2b) at (1,-1) {};
    \node[gnode,label=center:$g$] (2c) at (2,-1) {};
	  \node[rnode,label=center:$r$] (3a) at (0,-2) {};
    \node[gnode,label=center:$g$] (3b) at (1,-2) {};
    \node[rnode,label=center:$r$] (3c) at (2,-2) {};
	  \node[bnode,label=center:$b$] (4a) at (0,-3) {};
    \node[rnode,label=center:$r$] (4b) at (1,-3) {};
    \node[rnode,label=center:$r$] (4c) at (2,-3) {};
	  \node[bnode,label=center:$b$] (5a) at (0,-4) {};
    \node[bnode,label=center:$b$] (5b) at (1,-4) {};
    \node[bnode,label=center:$b$] (5c) at (2,-4) {};	
	  \node[bnode,label=center:$b$] (6a) at (0,-5) {};
    %\node[dnode,label=below:$n$] (5) at (5,0) {};

    \path (1a) edge[sedge] (1b)
          (1b) edge[sedge] (1c)
					%(2a) edge[dedge] (1a)
					%(2a) edge[dedge] (2b)
					%(2c) edge[dedge] (1c)
					%(2c) edge[dedge] (2b)
					%(2c) edge[dedge] (3c)
					(3a) edge[sedge] (2a)
					%(3b) edge[dedge] (3a)
					%(3b) edge[dedge] (3c)
					%(3b) edge[dedge] (4b)
					(3c) edge[sedge] (4c)
					%(4a) edge[dedge] (3a)
					%(4a) edge[dedge] (4b)
					(4a) edge[sedge] (5a)
					%(5b) edge[dedge] (4b)
					(5b) edge[sedge] (5c)
					(5a) edge[sedge] (6a)
          ; 
\end{tikzpicture}& \pi_1(G(X)) \iso \ZZ^2 \times C_2^2 \\ \hline 
\end{array}
\]
%\rot{Die Gruppen in der rechten Spalte sollten nicht am Boden der Tabellenzelle kleben, sondern mittig positioniert werden. Ist mir bisher nicht gelungen.}

%
%\[\scalebox{2}{$\begin{array}{ccc}
	%\dynkin{B}{5}& \rightsquigarrow &\dynkin{B}{5}
%\end{array}$}\]

\end{ex*}

While in the classical finite-dimensional Lie case, one has a topological Iwasawa decomposition $G = K\times A \times U^+$ with $A$ and $U^+$ contractible, implying $\pi_1(K) \iso \pi_1(G)$, it is currently unknown whether the corresponding Iwasawa decomposition in the general Kac--Moody case is also topological. However, using a fibration result by Palais (see Proposition~\ref{Palais}), in the appendix Hartnick and the second author prove  that the isomorphism between the fundamental groups still exists in the general symmetrizable case, therefore reducing the problem to the computation of $\pi_1(K)$.

In \cite[Section~16]{GHKW}, the group ${\Spin(\Pi,\kappa)}$ -- where $\kappa$ denotes a so-called \emph{admissible colouring} of the vertices of $\Pi$ -- is defined as the canonical universal enveloping group of a $\Spin(2)$-amalgam $\mathcal{A}(\Pi, \Spin(2)) = \{\tilde{G}_{ij}, \tilde{\oldphi}_{ij}^i \mid i \neq j \in I\}$ where the isomorphism type of $\tilde{G}_{ij}$ depends on the $(i,j)$- and $(j,i)$-entries of the Cartan matrix of $\Pi$ as well as the values of $\kappa$ on the corresponding vertices.

It is shown in \cite[Section~17]{GHKW} that there exists a {\em finite} central extension $\Spin(\Pi, \kappa) \to K(\Pi)$ which implies that the subspace topology on $K(\Pi)$ inherited from the Kac--Peterson topology on $G(\Pi)$ defines a unique topology on $\Spin(\Pi, \kappa)$ that turns the central extension into a covering map. 
The resulting group topology on $\Spin(\Pi, \kappa)$ is called the {\em Kac--Peterson topology} on $\Spin(\Pi, \kappa)$. 

In the simply-laced case, there is a unique non-trivial admissible colouring $\kappa$ and the corresponding group  $\Spin(\Pi):= \Spin(\Pi,\kappa)$ double-covers $K$ as shown in \cite{GHKW}. We prove here that in the simply-laced case $\Spin(\Pi)$ is simply connected which then implies that $\pi_1(K) \iso C_2$.  

The strategy of proof in the simply-laced case is to study fibre bundles of the form $$\Spin(3) \to \Spin(\Pi) \to \Spin(\Pi)/\Spin(3)$$ arising from embeddings of $\Spin(3)$ along subdiagrams of type $A_2$, which yield exact sequences of the form \[ \{1\} = \pi_1(\Spin(3)) \rightarrow \pi_1(\Spin(\Pi)) \rightarrow \pi_1(\Spin(\Pi)/\Spin(3))\]
and establishes the equivalence of simple-connectedness of $\Spin(\Pi)$ with the simple-connectedness of $\Spin(\Pi)/\Spin(3)$. 

A key to the proof both in the simply-laced and in the general case is the computation of the fundamental groups of \emph{generalized flag varieties} -- that is, spaces of the form $G/P_J$ for a parabolic subgroup $P_J$ of $G$ corresponding to an index subset $J \subseteq I$. It turns out that the aforementioned space $\Spin(\Pi)/\Spin(3)$ is a universal covering space of an appropriately chosen generalized flag variety. In general, we prove the following theorem:

\begin{thm*}
Let $\Pi$ be an irreducible Dynkin diagram such that the Bruhat decomposition of $G(\Pi)$ provides a CW decomposition (i.e., such that the conclusion of Proposition \ref{bruhat decomp is cw decomp} holds), let $I$ be the index set of the Dynkin diagram, let $J \subseteq I$, and let $P_J$ be a parabolic of type $J$. Then a presentation of $\pi_1(G/P_J)$ is given by 

\[ \left\langle x_i; \quad i \in I \mid x_ix_j^{\epsilon(i,j) } = x_jx_i,\quad x_k = 1; \quad  i,j \in I, k \in J \right\rangle.\]
In particular, this statement holds in the $2$-spherical and in the symmetrizable case.
\end{thm*}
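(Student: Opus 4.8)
The plan is to read $\pi_1(G/P_J)$ off the Bruhat CW structure, whose existence is exactly the standing hypothesis (the conclusion of Proposition~\ref{bruhat decomp is cw decomp}). Since the fundamental group of a CW complex depends only on its $2$-skeleton, I would first locate the cells of dimension $\le 2$: the cell $C_w$ has real dimension $\ell(w)$, with $w$ ranging over the minimal-length representatives $W^J$ of $W/W_J$. There is a single $0$-cell ($w=e$), the $1$-cells correspond to the simple reflections $s_i$ with $i\in I\setminus J$, and the $2$-cells to the length-$2$ elements of $W^J$. Hence the $1$-skeleton is a wedge of circles, $\pi_1$ of the $1$-skeleton is free on generators $x_i$ ($i\in I\setminus J$), and by the standard cellular description $\pi_1(G/P_J)$ is this free group modulo the normal closure of the attaching words of the $2$-cells.

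To handle the $J$-dependence I would first settle the full flag variety ($J=\varnothing$) and then reduce the general case to it via the fibre bundle $P_J/B\to G/B\to G/P_J$, whose fibre $P_J/B\hom L_J/B_{L_J}$ is the finite-dimensional flag variety of the Levi factor with diagram $J$. Its homotopy long exact sequence gives $\pi_1(P_J/B)\to\pi_1(G/B)\to\pi_1(G/P_J)\to\{1\}$, so $\pi_1(G/P_J)$ is the quotient of $\pi_1(G/B)$ by the (normal) image of $\pi_1(P_J/B)$. Since the $1$-skeleton of $P_J/B$ consists of the cells $C_{s_k}$ with $k\in J$, the group $\pi_1(P_J/B)$ is generated by the $x_k$, $k\in J$, each mapping to the corresponding $x_k$; the kernel is therefore the normal closure of $\{x_k:k\in J\}$, which yields the relations $x_k=1$. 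The relations $x_ix_j^{\epsilon(i,j)}=x_jx_i$ that involve a vertex of $J$ then trivialize, so the presentation for general $J$ follows from the one for $J=\varnothing$.

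For $J=\varnothing$ the only remaining task is to determine the attaching word of each $2$-cell $C_{s_is_j}$ ($i\neq j$). Because its closure is the Schubert variety $X_{s_is_j}$, whose cells are exactly $C_e,C_{s_i},C_{s_j},C_{s_is_j}$, the attaching loop lies in the subwedge $x_i\vee x_j$, so the relation involves only $x_i$ and $x_j$. I would identify $X_{s_is_j}(\RR)$ through its Bott--Samelson resolution $P_i\times_B P_j/B$, which over $\RR$ is an $\RR\PP^1$-bundle over the base $P_i/B\iso\RR\PP^1$; as a length-$2$ Schubert variety is smooth, the resolution is a homeomorphism. Thus $X_{s_is_j}(\RR)$ is a circle bundle over a circle, i.e.\ a torus or a Klein bottle, carrying the minimal cell structure with one $0$-cell, two $1$-cells and one $2$-cell. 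Its standard one-relator presentation reads $x_ix_jx_i^{-1}=x_j^{\epsilon(i,j)}$, equivalently $x_ix_j^{\epsilon(i,j)}=x_jx_i$, with exponent $+1$ in the orientable (torus) case and $-1$ in the Klein bottle case.

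The main obstacle is precisely this last identification: showing that $X_{s_is_j}(\RR)\to\RR\PP^1$ has orientation-preserving monodromy exactly when the relevant Cartan integer is even, i.e.\ when $\epsilon(i,j)=+1$. This is a rank-$2$, $\SL_2(\RR)$-level computation in which the monodromy of the fibre $\RR\PP^1$ along the base circle is governed by the action of the reflection $s_i$ on the root group attached to $\alpha_j$, whose sign is controlled by $(-1)$ raised to that Cartan integer; I would carry it out explicitly, equivalently by checking the four rank-$2$ building types. Assembling the relations over all ordered pairs then reproduces the statement: a commuting pair contributes the single cell $C_{s_is_j}=C_{s_js_i}$ with the commutation relation, whereas a non-commuting pair contributes the two cells $C_{s_is_j}$ and $C_{s_js_i}$ with the relations indexed by $(i,j)$ and $(j,i)$. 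The asserted two-spherical and symmetrizable special cases are immediate, since there the hypothesis on the Bruhat decomposition is known to hold.
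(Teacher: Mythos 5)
Your cellular skeleton analysis and your final list of relations agree with the paper, but the reduction step on which your whole $J$-dependence rests has a genuine gap: you invoke the homotopy long exact sequence of ``the fibre bundle $P_J/B\to G/B\to G/P_J$'', yet nothing in the hypotheses (nor in the known topology of Kac--Moody groups) makes this map a Serre fibration. The standing assumption is only that the Bruhat decomposition is a CW decomposition. In the Kac--Peterson topology the groups $B$ and $P_J$ are infinite-dimensional (they contain $U^+$), so the Palais-type local cross-section results that the paper uses elsewhere (Lemma~\ref{Palais}, which requires the subgroup to be homeomorphic to a \emph{Lie} group) do not apply to $G\to G/P_J$, and local triviality of this quotient is an open-ended claim -- indeed the paper stresses that even the topological Iwasawa decomposition of $G$ itself is unknown in general, which is why the appendix passes to the adjoint quotient $\overline G$. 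Your description of the fibre as ``the finite-dimensional flag variety of the Levi factor'' is moreover false for non-spherical $J$, where $W_J$ is infinite. The paper avoids the fibration entirely: it proves that $\psi_w:BwB/B\to BwP_J/P_J$ is a homeomorphism for $w\in W^J$ (Lemma~\ref{hom between BwB/B and BwP/P}), shows directly that the Bruhat cells give a CW structure on $G/P_J$ (Proposition~\ref{bruhat decomp is cw decomp}), and then computes $\pi_1(G/P_J)$ on that complex via Lemma~\ref{presentation cw}, with the characteristic maps $\chi_k$ for $k\in J$ becoming constant loops, which is where the relations $x_k=1$ come from. Note that the $2$-cells of $G/P_J$ include mixed cells $C_{\sigma_k\sigma_i}$ with $k\in J$, $i\notin J$, $m_{ki}\geq 3$, which contribute the non-redundant relations $x_i^{\epsilon(k,i)}=x_i$; so your remark that relations meeting $J$ ``trivialize'' is misleading, even though the presentation you write down is formally the right quotient.

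For the case $J=\emptyset$ your route is essentially the paper's argument in geometric disguise -- the parametrization $\chi_{(i,j)}(s,t)=\gamma_i(R(s))\gamma_j(R(t))B$ used in Lemma~\ref{char maps} \emph{is} your Bott--Samelson map -- but two points need repair. First, ``checking the four rank-2 building types'' cannot replace the monodromy computation: in a Kac--Moody group the Cartan integers $\langle\check\alpha_i,\alpha_j\rangle$ are arbitrary non-positive integers, so the rank-2 subgroup is typically non-spherical and the parity of the entry (e.g.\ $-4$ versus $-5$) is not detected by the four spherical diagrams; the computation must be done for arbitrary entries, and it is exactly the conjugation formula $e_ik_je_i=k_j^{\epsilon(i,j)}$ of Lemma~\ref{vertauschen}, which holds at the $\SL_2$-level for all Cartan integers. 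Second, identifying the homeomorphism type of $\cl(C_{\sigma_i\sigma_j}(B))$ as a torus or Klein bottle does not by itself pin down the attaching word in the given basis $x_i,x_j$ (for instance $\left\langle a,b\mid a^2b^2\right\rangle$ also presents the Klein bottle group); you must track the bundle-adapted cell structure -- section $\cl(C_{\sigma_i}(B))$ and fibre $\cl(C_{\sigma_j}(B))$ -- through the Bott--Samelson homeomorphism to conclude the relator is $x_ix_j^{\epsilon(i,j)}x_i^{-1}x_j^{-1}$, which is what the paper's explicit boundary computation with $\phi_1\ast\phi_2\ast\phi_3\ast\phi_4$ accomplishes. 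These repairs are feasible, but the fibration step in your general-$J$ reduction would have to be replaced by the paper's direct quotient-CW computation (or by a substantial new local-triviality argument).
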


We refer to \cite{Wig} for the analog result in the finite-dimensional situation.

In order to determine $\pi_1(K)$ in the general case, we compute subgroups of $\pi_1(K)$ corresponding to the index sets of connected components of $\Pia$ using the above theorem and covering maps of the type $K/K_J \to K/(K \cap T)K_J$ where $T$ is a maximal split torus of $G(\Pi)$ and $K_J$ is the subgroup of fixed points of a Levi factor of $P_J$ with both $T$ and $P_J$ invariant under the Cartan--Chevalley involution. We then show that $\pi_1(K)$ is a direct product of appropriately chosen such subgroups.

In a very similar way, the fundamental group of $\Spin(\Pi,\kappa)$ is determined, establishing the following theorem:

\begin{thm*}
Let $\Pi$ be an irreducible Dynkin diagram such that the Bruhat decomposition of $G(\Pi)$ provides a CW decomposition (i.e., such that the conclusion of Proposition 3.5 holds).
Let $n(g)$ be the number of connected components of $\Pia$ of colour $g$. Let $n(b,\kappa)$ be the number of connected components of $\Pia$ on which $\kappa$ takes the value 1 and which have colour $b$. Then 
\[\pi_1(\Spin(\Pi,\kappa)) \iso \ZZ^{n(g)} \times C_2^{n(b,\kappa)}.\] In particular, this statement holds in the $2$-spherical and in the symmetrizable case.
\end{thm*}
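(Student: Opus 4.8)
The plan is to repeat the computation of $\pi_1(K)$ carried out above, carrying the admissible colouring $\kappa$ through every step; $\kappa$ is the only new datum, and it enters solely through the isomorphism types of the rank-two constituents $\tilde G_{ij}$ of the defining amalgam. Throughout I would rely on the finite central covering $\Spin(\Pi,\kappa)\to K(\Pi)$ from \cite[Section~17]{GHKW}: it is a covering map for the Kac--Peterson topology, so the two groups share their local structure and differ only by a finite central subgroup $Z$, and I may use the Bruhat/CW hypothesis (Proposition~\ref{bruhat decomp is cw decomp}) together with the presentation of $\pi_1(G/P_J)$ established above on the $K$-side.

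First, I would produce the analogue, for $\Spin(\Pi,\kappa)$, of the covering maps $K/K_J\to K/(K\cap T)K_J$ used for $K$, applied to the index sets $J$ of the connected components of $\Pia$. Combined with the flag-variety presentation proved above, each such covering isolates a subgroup of $\pi_1(\Spin(\Pi,\kappa))$ attached to a single component. The crucial difference from the $K$-case is one of bookkeeping: on the $K$-side every vertex contributes a compact-torus relation of order two coming from $K\cap T\cong C_2^{\,n}$, whereas the cover $\Spin(\Pi,\kappa)$ retains such a relation at a vertex only when $\kappa$ equals $1$ there; where $\kappa$ vanishes, the cover unwinds it.

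Second, I would assemble $\pi_1(\Spin(\Pi,\kappa))$ as the direct product of these per-component subgroups. The direct-product structure is inherited verbatim from the $K$-case: vertices in distinct components of $\Pia$ satisfy $\epsilon(i,j)=\epsilon(j,i)=1$ and hence commute, so the associated factors are independent and mutually commuting, while within a single component the relations $x_ix_j^{\epsilon(i,j)}=x_jx_i$ tie all generators together. It then remains to identify each factor from the colour of its component. A green (isolated) component contributes $\ZZ$ exactly as for $K$, since an isolated vertex carries a circle and no relation kills its generator, a conclusion insensitive to $\kappa$. A red component contributes the trivial group, the defining asymmetric-parity vertex producing mixed relations that, together with the component relations, collapse the factor precisely as in the computation of $\pi_1(K)$. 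A blue component is where $\kappa$ becomes visible: the factor is $C_2$ when $\kappa$ takes the value $1$ on the component and is trivial when $\kappa$ vanishes, according to whether the order-two relation survives the passage to the cover. This produces $\ZZ^{n(g)}\times C_2^{n(b,\kappa)}$.

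The main obstacle will be the blue case, that is, making precise how the value of $\kappa$ on a blue component decides between $C_2$ and the trivial group. Concretely, one must compare the two possible isomorphism types of the rank-two pieces $\tilde G_{ij}$ prescribed by $\kappa$ and show that the generator of the corresponding $C_2\subseteq\pi_1(K)$ lifts to a loop in $\Spin(\Pi,\kappa)$ when $\kappa=1$, but to a non-closed path connecting distinct sheets of the cover, hence dying in $\pi_1$, when $\kappa=0$. This is the same building-theoretic input used to prove simple-connectedness in the simply-laced case above, now applied component by component. Once it is in place the direct-product assembly is formal, and the stated two-spherical and symmetrizable cases follow immediately because Proposition~\ref{bruhat decomp is cw decomp} guarantees the CW hypothesis there.
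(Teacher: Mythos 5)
Your proposal follows the paper's proof in all essentials: extend the tower of coverings used for $K$ upward by the central extension $\rho_{\Pi,\kappa}\colon\Spin(\Pi,\kappa)\to K$, obtain a per-component covering $\Spin(\Pi,\kappa)/U_{\bar J}\to K/K_{\bar J}$, assemble $\pi_1(\Spin(\Pi,\kappa))$ as a direct product of per-component subgroups, and let $\kappa$ decide the blue components. Two corrections are needed, however. First, your justification of the product structure is false as stated: vertices $i^\lambda, j^\lambda$ in distinct components of $\Pia$ need \emph{not} satisfy $\epsilon(i,j)=\epsilon(j,i)=1$; only the pair $(-1,-1)$ is excluded by the definition of $\Pia$, so the asymmetric pairs $(1,-1)$ and $(-1,1)$ do occur (e.g.\ between the two components of $\Pia$ in type $B_n$), and then the relation $x_ix_j^{-1}=x_jx_i$ is not a commutator. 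The product decomposition survives because in that situation $j^\lambda$ lies in a component of colour $r$, where every generator has order two, whence the mixed relation reduces to a commutator modulo the component relations --- this is precisely the content of Proposition~\ref{fundamental group as product}, which you should invoke instead of the incorrect parity claim.

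Second, the step you single out as the main obstacle --- deciding, for a blue component, whether the $C_2$-generator lifts to a loop in $\Spin(\Pi,\kappa)$ --- requires no building-theoretic lifting argument and no comparison of the rank-two amalgam pieces $\tilde{G}_{ij}$. The paper settles it by counting alone: by Remark~\ref{Spin remark} the fibres of $\rho^J_{\Pi,\kappa}$ have constant cardinality $2^{c(\Pi,\kappa)-c(\Pi_{\bar J},\kappa_{\bar J})}$, and the exponent is $0$ or $1$ according to whether $\kappa\equiv 1$ or $\kappa\equiv 2$ on the component in question (note that $\kappa$ takes values in $\{1,2\}$, not $\{0,1\}$ as your wording suggests); Lemma~\ref{covering basics} then makes $\rho^J_{\Pi,\kappa}$ a covering of degree $1$ or $2$. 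Exactly as in the proof of Theorem~\ref{fundamental group of K}, the factor $H'_{J_m}$ is thus a subgroup of known index in $\tilde H_m$, which is $\{1\}$, $\ZZ$ or $C_2$ according to colour, and a subgroup of index $1$ or $2$ in these groups is determined up to isomorphism: blue with $\kappa\equiv 1$ gives $C_2$, blue with $\kappa\equiv 2$ gives $\{1\}$, and green gives $\ZZ$ in either case since $2\ZZ\iso\ZZ$. With these two repairs your outline coincides with the paper's argument.
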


%
%It is not difficult to extend the methods of this article to the arbitrary two-spherical situation (see also Remark~\ref{next}). However, already in the spherical situation in the example $\Spin(C_n) = \mathrm{U}_1(\mathbb{C}) \times \mathrm{SU}_2(\mathbb{C})$ (see \cite[Remark~14.7]{GHKW}) one sees that $\pi_1(\Spin(C_n)) = \mathbb{Z}$. One concludes that for general diagrams $\Pi$ the fundamental group $\pi_1(\Spin(\Pi)/\Spin(3))$ is not necessarily trivial, which requires a more careful analysis than in the simply-laced case. In the general non-two spherical situation one additionally has to deal with the problem that the fundamental subgroups of rank two are not necessarily Lie groups.

\medskip
\textbf{Acknowledgements.} The research leading to this article has been partially funded by DFG via the project KO 4323/11. The authors thank Julius Gr\"uning and two anonymous referees for various helpful remarks on earlier versions of this article.

\section{Split-real Kac--Moody groups}
\label{Split-real Kac--Moody groups}
\label{Kac--Moody-basics}

%\begin{nota}
%In this article we denote isomorphism in the category of abstract groups by $\iso$.
%
%%For two topological spaces $U,V$, we will write $U \hom V$ to denote that $U$ is homeomorphic to $V$, while for two groups $G, H$, we will write $G \iso H$ to denote that $G$ is isomorphic to $H$.
%For $n \in \NN$, $\SO(n):= \SO(n,\RR)$ denotes the special orthogonal group over the real numbers.
%%For one- or two-element index sets $\{i,j\}$ we will use the convention $ij:= \{i,j\}$, e.g., $G_{ij} = G_{\{i,j\}} = G_{ji}$.
%
%%{\color{blue}Notation 2.2 finde ich komisch. Wie wird denn in der Literatur
%%zwischen diesen beiden Konzepten unterschieden? Tradition ist manchmal
%%praktisch, da Leute gleich wissen, wie sie etwas zu verstehen haben
%%(den Satz haettest Du jetzt vielleicht nicht erwartet von mir ...) --
%%ich schlage vor, dass wir versuchen, uns an die traditionelle Notation
%%zu halten.}\\
%%{\color{violet} Ich glaube nicht, dass es eine traditionelle Notation gibt (siehe dazu auch diesen Thread in einem Physikerforum: \\\url{https://www.physicsforums.com/threads/symbols-for-homeomorphic-isomorphic-homotopic.155073/}.\\
%%Meine Notation ist zwar unkonventionell und auch ziemlich krude, aber wenn man sie einmal gesehen hat, weiss man Bescheid und wird die beiden Symbole nicht verwechseln. Das kann dem Leser bei den Symbolen $\cong$ und $\simeq$ viel eher passieren, vor allem, wenn er selbst sie normalerweise anders verwendet. Aber du bist der Erfahrenere, also such du dir einfach eine Notation aus.}
%\end{nota}

In \cite[\S1.3]{Kac2}, Kac associates with every generalized Cartan matrix $\CA = (a_{ij})_{1\leq i,j \leq n} \in \ZZ^{n\times n}$ a quadruple $(\g_\CC(\CA), \h_\CC(\CA), \Psi, \check{\Psi})$ of a complex Lie algebra $\g_\CC(\CA)$, an abelian subalgebra $\h_\CC(\CA)$ and linearly independent finite subsets $\Psi = \{\alpha_1, \dots, \alpha_n\} \subseteq \h_\CC(\CA)^*$ and $\check{\Psi} = \{\check{\alpha}_1, \dots, \check{\alpha}_n\} \subseteq \h_\CC(\CA)$ called \emph{simple roots} and \emph{simple coroots}, respectively, such that $\alpha_j(\check{\alpha}_i) = a_{ij}$. Associated with such a quadruple is a Lie algebra generating set $\{e_1, \dots, e_n,f_1,\dots,f_n\} \cup \h_\CC(\CA)$. The complex Lie algebra $\g_\CC(\CA)$ is called the \emph{complex Kac--Moody algebra} associated with $\CA$, and $\h_\CC(\CA)$ its \emph{standard Cartan subalgebra}.

Since $a_{ij} \in \RR$, one can analogously define a quadruple $(\g_\RR(\CA), \h_\RR(\CA), \Psi, \check{\Psi})$ where $\g_\RR(\CA)$ is a real Lie algebra that embeds naturally into $\g_\CC(\CA)$ as the real form given by the involution induced by complex conjugation. One refers to $\g_\RR(\CA)$ as the \emph{split real Kac--Moody algebra} associated with $\RR$ and to $\h_\RR(\CA)$ as its \emph{standard split Cartan subalgebra}.

Let $Q  \subseteq \h_\RR(\CA)^*$ be the group generated by $\Psi$ and $Q_{\pm}$ the subsemigroups generated by $\pm \Psi$, respectively. For $k \in \{\CC, \RR\}$ and $\alpha \in \h_k(\CA)^*$ define the \emph{root space} 
\[\g_{\alpha}^k := \{X \in \g_k(\CA) \mid \forall H \in \h_k(\CA)^*: [H,X] = \alpha(H)X\}. \] The \emph{set $\Delta$ of $\h_k(\CA)$ roots in $\g_k(\CA)$} is defined as $\Delta:=\{\alpha \in Q \setminus \{0\} \mid \g_\alpha^k \neq \{0\}\}$. One has the \emph{root space decomposition} \[\g_k(\CA) = \h_k(\CA) \oplus \bigoplus_{\alpha \in \Delta} \g_\alpha^k.\]

The set $\Delta$ decomposes as a disjoint union into the subsets $\Delta_{\pm}:= \Delta \cap Q_{\pm}$ called {\em positive}, respectively {\em negative roots}. The restriction of the Lie bracket on $\g_\RR(\CA)$ to $$\mathfrak{u^\pm} := \bigoplus_{\alpha \in \Delta_\pm} \g_\alpha^k$$ turns $\mathfrak{u}^+$ and $\mathfrak{u}^-$ into Lie subalgebras of $\g_\RR(\CA)$. 

For $i = \{1,\dots, n\}$ define the \emph{fundamental root reflection} $\sigma_i \in \GL(\h_\RR(\CA)^*)$ by \[\sigma_i(\lambda):= \lambda -\lambda(\check{\alpha}_i)\alpha_i.\] Then the \emph{Weyl group} of $\g_\RR(\CA)$ is defined as $W:= \langle \sigma_1, \dots, \sigma_n\rangle \leq \GL(\h_\RR(\CA)^*)$ and forms a Coxeter system together with the set of fundamental root reflections. Finally, define the \emph{set of real roots} $\Phi:= W.\Psi \subseteq \Delta$ and $\Phi^{\pm} := \Delta_{\pm} \cap \Phi$, the {\em positive}, respectively {\em negative real roots}.

%\lila{\begin{de}
%\label{Kac--Moody group}
%In \cite[Theorem 1]{Ti}, Tits associates with every generalized Cartan matrix $\CA$ and every commutative ring $k$ a group $\calG_k(\CA)$. Let $\Pi$ be the Dynkin diagram of $\CA$. We set $G(\Pi) := [\calG_\RR(\CA),\calG_\RR(\CA)]$ and refer to this group as the \emph{algebraically simply-connected semisimple split real Kac--Moody group of type $\Pi$}.
%\end{de}}

The construction in \cite{Ti} of $\calG_\CA(\RR)$ (see Definition \ref{Kac--Moody group}) provides a representation of $\calG_\CA(\RR)$ on $\g_\RR(\CA)$ by Lie algebra automorphisms, which is denoted by \[ \Ad: \calG_\CA(\RR) \to \Aut(\g_\RR(\CA)),  \] and referred to as the \emph{adjoint representation} of $\calG_\CA(\RR)$. Since the subgroup $\Ad(G(\Pi))$ of $G(\Pi)$ under this representation preserves the commutator subalgebra $\g_\RR'(\CA)$, one obtains an adjoint representation \[ \Ad: G(\Pi) \to \Aut(\g_\RR'(\CA))  \] for $G(\Pi)$. The kernels of the adjoint representations of $\calG_\CA(\RR)$ and $G(\Pi)$ are given by the respective centres.

An element $X \in \g_\RR(\CA)$ is \emph{$\ad$-locally-finite} if for every element $Y \in \g_\RR(\CA)$ there exists an $\ad(X)$-invariant finite-dimensional subspace $W$ with $Y \in W$. As pointed out in \cite[p. 64]{Mar}, this implies that $\left.\ad(X)\right|_W$ is a (finite) matrix in some basis of $W$, so the exponential $\exp(\ad(X))$ can be defined in the ususal way.
By \cite[(KMG5), p.~545]{Ti} and the uniqueness properties of $G_\CA(\RR)$ established in \cite[Theorem 1]{Ti}, $\exp(\ad(X)) \in \Ad(G_\CA(\RR))$. Let $F_{\g_\RR(\CA)}$ and $F_{\g_\RR'(\CA)}$ be the subsets of $\ad$-locally-finite elements of the respective algebras. 
The maps $\exp: F_{\g_\RR(\CA)} \to \Ad(\calG_\CA(\RR))$ and $\exp: F_{\g_\RR'(\CA)} \to \Ad(G(\Pi))$ given by $X\mapsto \exp(\ad(X))$ can be lifted to exponential functions $\exp: F_{\g_\RR(\CA)} \to \calG_\CA(\RR)$ and $\exp: F_{\g_\RR'(\CA)} \to G(\Pi)$.

For $X \in \h_\RR(\CA)\subseteq F_{\g_\RR(\CA)}$, one has 
\begin{eqnarray}
 \mathrm{ad}(X)(e_i) =  [X,e_i] = \alpha_i(X) e_i,& \quad\quad &   \mathrm{Ad}(\exp(X))(e_i) = e^{\alpha_i(X)}\cdot e_i,\label{AdRootSpaces} \\
 \mathrm{ad}(X)(f_i) =  [X,f_i] = -\alpha_i(X) f_i,& \quad\quad &   \mathrm{Ad}(\exp(X))(e_i) = e^{-\alpha_i(X)}\cdot f_i, 
\end{eqnarray}
cf.\ \cite[Section~6.1.6]{Kum}, \cite[(KMG5), p.~545]{Ti}.
%\rot{$e_i$ und $f_i$ muessen vorher noch eingefuehrt werden!}

The same constructions apply also to $\CC$ instead of $\RR$. Since $\h_\CC(\CA) \subseteq F_{\g_\CC(\CA)}$, one can define $T_\CC:= \exp(\h_\CC(\CA)
)$. Note that  $\exp(\h_\RR(\CA)) =: A_\RR \subsetneq T_{\RR} := T_\CC \cap \calG_\CA(\RR)$. There is a unique Lie group topology on $T_\RR$ in which $T_\RR \iso (\RR^\times)^n$ and $A_\RR = T_\RR^\circ \iso (\RR_{>0})^n$. The centre of $\calG_\CA(\RR)$ is contained in $T_\RR$.  

The intersection $T:= G(\Pi) \cap T_\CC$ is called the \emph{standard split maximal torus} of $G(\Pi)$; again, $A_\RR\cap T$ is of finite index in $T$ and $T$ contains the centre of $G(\Pi)$.

The Lie algebra $\mathfrak g_\RR({\mathbf A})$ admits a unique involution $\theta$ which maps $e_j$ to $f_j$ for all $j=1, \dots, r$ and acts as $-1$ on $\mathfrak h_\RR({\mathbf A})$. There exists a unique involutive automorphism $\theta: G_\CA(\RR) \to G_\CA(\RR)$ such that $\theta(\exp(X)) = \exp(\theta(X))$ for all $X \in F_{\mathfrak g_\RR({\mathbf A})}$, and this involutive automorphism is called the \emph{Cartan--Chevalley involution} of $G_\CA(\RR)$. We denote by $K_\CA(\RR) := G_\CA(\RR)^\theta \subset G_\CA(\RR)$ the fixed point subgroup of this involution and define $K(\Pi) := K_\CA(\RR) \cap G(\Pi)$.

Let $\alpha \in \Phi$ be a real root. Then $\g_\alpha^\RR$ is one-dimensional and consists of ad-locally-finite elements. One can therefore define the \emph{root group} $U_\alpha:= \exp(\g_\alpha^\RR) \subseteq \calG_\CA(\RR)$. Each root group $U_\alpha$ carries a unique Lie group topology such that $U_\alpha \iso \RR$ as topological groups. Root groups corresponding to positive real roots are called {\em positive root groups}, root groups corresponding to negative real roots are called {\em negative root groups}.

Define the \emph{positive}, respectively \emph{negative maximal unipotent subgroup} $U^\pm$ of $\calG_\CA(\RR)$ as the group generated respectively by the positive and negative root groups. One has $U^\pm \subseteq G(\Pi)$. The groups $U^\pm$ are normalized by $T_\RR$ and intersect $T_\RR$ trivially. In particular, they intersect the centres of $\calG_\CA(\RR)$ and $G(\Pi)$ trivially and hence embed into both $\Ad(\calG_\CA(\RR)$ and $\Ad(G(\Pi))$.

If $\alpha \in \Phi^+$, then $-\alpha \in \Phi^-$ and the group $G_\alpha:= \langle U_\alpha, U_{-\alpha} \rangle \leq G(\Pi)$ is isomorphic to $\SL_2(\RR)$.  The groups $G_\alpha$ with $\alpha \in \Phi^+$ are called the \emph{rank-$1$ subgroups} and the groups $G_1:=G_{\alpha_1}, \dots, G_n:=G_{\alpha_n}$ are called the \emph{fundamental rank-$1$ subgroups} of $G(\Pi)$.

One can show that the pair $((U_\alpha)_{\alpha \in \Phi}, T)$ defines an RGD system for $G(\Pi)$. For details concerning RGD systems, we refer the reader to \cite[Chapter~8]{AB}.

Recall that the generalized Cartan matrix $\CA$ is called {\em $2$-spherical}, if $a_{ij}a_{ji} \leq 3$ for all $i \neq j \in I$; in other words, if the orders of the products $\sigma_i \sigma_j$ are always finite. The generalized Cartan matrix $\CA$ is {\em symmetrizable}, if it is the product of a symmetric and a diagonal matrix. These notions are also applied to any and all objects that are derived from $\CA$ such as the (extended) Weyl group, the Kac--Moody group, their buildings, etc.

\begin{de+rem}
\label{definition kp}
The \emph{Kac--Peterson topology} on $\calG_\CA(\RR)$ equals the finest group topology on $\calG_\CA(\RR)$ such that the natural embeddings $(U_\alpha \hookrightarrow \calG_\CA(\RR))_{\alpha \in \Phi}$ and $T_\RR \hookrightarrow \calG_\CA(\RR)$ are continuous when $T_\RR$ and the root groups $U_\alpha$ are endowed with their Lie group topologies.

The Kac--Peterson topology is $k_\omega$ by \cite[Proposition 7.10]{HKM} and, in particular, Hausdorff. Moreover, for every $\alpha \in \Phi^+$, it induces the unique connected Lie group topology on $G_\alpha$ and on $T_\RR$ by \cite[Corollary 7.16]{HKM}
\end{de+rem}
For more details on the Kac--Peterson topology, see \cite[Chapter~7]{HKM}.

%From now on, we will always consider $G(\Pi)$ as a topological group with respect to the Kac--Peterson subspace topology.

\begin{nota}
Throughout this paper, let $G:= G(\Pi):=[\calG_\CA(\RR),\calG_\CA(\RR)]$ be the algebraically simply connected semisimple split real Kac--Moody group associated to an irreducible generalized Dynkin diagram $\Pi = (V,E)$ with (bijective) labelling $\lambda:\{1, \dots, n\} =: I \to  V $. Let $K := K(\Pi)$ be the maximal compact subgroup of $G$, i.e., the subgroup fixed by the Cartan--Chevalley involution $\theta$. 

Denote by $B:= B_+$ the positive Borel subgroup of the twin $BN$-pair of $G$, by $T$ the standard split maximal torus and by $W$ the Weyl group of $G$ with generating set $S = \{\sigma_i\}_{i \in I}$. For each $\sigma_i \in S$, take $s_i \in G$ to be a fixed representative of order $4$ for $\sigma_i$. The group $\tilde{W} := \langle s_i \mid i \in I \rangle \leq G$ is called the {\em extended Weyl group}. By \cite[Corollary 1.7]{DMGH}, one has an Iwasawa decomposition $G = KB$.  
%Hier kann stattdessen der Anhang zitiert werden, allerdings mit Erklaerung, dass $KAU = KB$ ist.

The groups $G$ and $K$ are always endowed with the subspace topologies induced by the Kac--Peterson topology on $G_\CA(\RR)$ and $G/B$ with the quotient topology.

Unless specified more explicitly, the symbol $J$ will always denote an arbitrary subset of the index set $I$, the symbol $\Pi_J$ the subdiagram of $\Pi$ corresponding to $J$, the symbol $G_J$ the subgroup $G(\Pi_J)$ of $G$, and the symbols $K_J$ and $B_J$ the intersections $G_J \cap K$ and $G_J \cap B$, respectively. This is consistent with the notation for the fundamental rank one subgroups: One has $G(\Pi_i) = G_i = G_{\alpha_i}$.

%In particular, for $i \neq j \in I$, the groups $G_i$ and $G_{ij}$ are, respectively, fundamental rank one and rank two subgroups of $G$. One has $G_i \iso \SL(2,\RR)$.
\end{nota}

\begin{rem}
\label{homeos alpha beta}
Due to the structure theory of RGD systems (cf.\ \cite[Chapter~8]{AB}, most notably the fact that restricting an RGD system to a subdiagram again yields an RGD system), for each fundamental rank one subgroup $G_{i}$  there exists an (abstract) isomorphism $\gamma_{i}: \SL(2,\RR) \to G_{i}$ with the following properties: Let $B_{\SL(2,\RR)}$ be the group of upper triangular matrices in $\SL(2,\mathbb{R})$ and let $U_{\pm \beta}$ denote the canonical root subgroups of $\SL(2,\RR)$. Then
\begin{itemize}
%\item $\gamma_{ij}(G_i) =  \epsilon_{(i,j)}(\SL(2,\RR))$, $\beta_{kl}(G_k) =  \iota_{(k,l)}(\SL(2,\RR))$.
%\item $\gamma_{ij}(s_i) =  \epsilon_{(i,j)} ({s})$, $\beta_{kl}(s_k) =  \iota_{(k,l)} ({s})$ 
\item $\gamma_{i}(U_{\pm \beta}) = U_{\pm \alpha_{i}}$.                    
\item $\gamma_{i}(B_{\SL(2,\RR)}) = B_{i}$.
\item For each $x \in \SL(2,\RR)$, $\gamma_{i}((x^t)^{-1}) = (\gamma_{i}(x))^{\theta}$,  and hence
\item $\gamma_{i}(\SO(2,\RR)) =  K_i$.
\end{itemize}

By \cite[Corollary~7.16]{HKM}, the restriction of the Kac--Peterson topology to any spherical subgroup $H$ of $G$ coincides with its Lie topology. That is, the groups $G_{i}$ inherit their Lie group topology from the topological Kac--Moody group $G$.
%Since the groups $G_{i}$ inherit their Lie group topology from $G_\RR(\CA)$, 
By the classical theory of Lie groups this yields the existence of a diffeomorphism $\gamma_i$ with the desired properties; in particular, $\gamma_{i}$ is an open map.
%By the open mapping theorem for locally compact groups (e.g.\ \cite[Theorem~6.19]{Str}) these continuous isomorphisms are homeomorphisms.
\end{rem}

\begin{de}
\label{basics}
%Let $(G, (U_\alpha)_{\alpha \in \Phi}, T)$ be an RGD system with Tits system $(G,B_+,N,S)$ and Weyl group $W$, let $B:= B_+$, 
Using the Bruhat decomposition $G = \bigsqcup_{w \in W} BwB$ (\cite[Theorem~6.56, Remark (1)]{AB}), let
\begin{eqnarray*}
\delta: G/B \times G/B & \to & W \\ \delta(gB,hB)  =  w & \iff & g^{-1}h \in BwB
\end{eqnarray*}
be the Weyl distance function on $G/B$, and let $l_S$ be the length function that associates to each element the (unique) length of a corresponding reduced expression in $S$. Let $\leq$ be the strong Bruhat order on $W$: Recall that for $w_1, w_2 \in W$ one has $w_1 \leq w_2$ if there exist reduced expressions $s_{i_1}\dots s_{i_{l_S(w_1)}}$ of $w_1$ and $s_{j_1}\dots s_{j_{l_S(w_2)}}$ such that the former is a (not necessarily consecutive) substring of the latter.

 For $w \in W$ and a chamber $gB \in G/B$ define $$C_w(gB)  := \{hB \in G/B\mid \delta(gB, hB) = w\},$$  $$C_{\leq w}(gB) := \bigcup_{v \leq w}C_v(gB)$$ and $$C_{<w}(gB) :=C_{\leq w}(gB) \setminus C_w(gB).$$  In particular, one has $C_w(B)= BwB/B$ and $C_{\leq \sigma}(B) = B \langle s \rangle B/B$ for $\sigma \in S$ with representative $s \in \tilde{W}\subseteq G$ in the extended Weyl group $\tilde{W}$. A set $C_{\leq \sigma}(gB)$ is called a \emph{$\sigma$-panel}.

 Moreover, for a subset $\{\sigma_i\}_{i \in J} \subseteq S$ with representatives $\{s_i\}_{i \in J} \subseteq \tilde{W}$ define $P_J$ to be the standard parabolic subgroup corresponding to the index set $J$, that is, $P_J := B\langle \{s_i\}_{i \in J} \rangle B$.   
\end{de}

Throughout this paper, $C_w(gB)$  and $C_{\leq w}(gB)$ will always be endowed with the subspace topologies induced by $G/B$.

\begin{lem}
\label{P_i = G_iB}
Let $\sigma_i \neq \sigma_j \in S$.
% and let $G_i := \langle U_{\alpha_{s_i}}, U_{-\alpha_{s_i}}\rangle$ be the rank one subgroup corresponding to $s_i$. 
Then the following hold:
\begin{enumerate}
%\item $Bs_iB = U_{\alpha_{s_i}}s_iB$ \rot{wirklich benoetigt?}
\item $P_i = G_i B$ $= K_i B$. In particular, $C_{\leq \sigma_i}(B) = K_iB/B$.
\item $Bs_i s_jB = Bs_iB Bs_jB$. In particular, $C_{\leq \sigma_i \sigma_j}(B) = K_iBK_jB/B$ %= U_{\alpha_{s_i}} U_{\sigma_i(\alpha_{s_j})} s_i s_j B$. \rot{(ist $\sigma_i(\alpha_{s_j})$ der korrekte Ausdruck?}
\end{enumerate}
\begin{proof} Assertions (a) and (b) follow from \cite[Remark~8.51]{AB} and \cite[Remark (2) after Theorem~6.56]{AB}, respectively, and the Iwasawa decomposition $G_i = K_iB_i$.    
\end{proof}
\end{lem}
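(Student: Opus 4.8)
I would reduce both assertions to standard structural facts about the twin $BN$-pair / RGD system of $G$ together with the rank-one Iwasawa decomposition $G_i = K_iB_i$. Throughout I use that, by definition of $P_i$ (the case $J=\{i\}$), one has $P_i = B\langle s_i\rangle B = B \sqcup Bs_iB$ and that this is the subgroup of $G$ generated by $B$ and $s_i$. Since $W_{\{i\}} = \langle\sigma_i\rangle = \{1,\sigma_i\}$ and since $i\neq j$ forces $l_S(\sigma_i\sigma_j) = l_S(\sigma_i)+l_S(\sigma_j) = 2$, the relevant Bruhat-lower sets are $\{w\leq\sigma_i\} = \{1,\sigma_i\}$ and $\{w\leq\sigma_i\sigma_j\} = \{1,\sigma_i,\sigma_j,\sigma_i\sigma_j\}$.

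For part (a) I would first establish $G_iB = P_i$. The inclusion $G_iB\subseteq P_i$ is immediate once $G_i\subseteq P_i$ is checked: indeed $U_{\alpha_i}\subseteq U^+\subseteq B\subseteq P_i$, while $s_i\in P_i$ gives $U_{-\alpha_i} = s_iU_{\alpha_i}s_i^{-1}\subseteq P_i$ (using $\sigma_i(\alpha_i)=-\alpha_i$), so $G_i = \langle U_{\alpha_i},U_{-\alpha_i}\rangle\subseteq P_i$. For the reverse inclusion I would show that $G_iB$ is in fact a \emph{subgroup}; it then contains $\langle B,s_i\rangle = P_i$, since $B\subseteq G_iB$ and $s_i\in G_i$. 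To see that $G_iB$ is a group it suffices to verify $G_iB = BG_i$, which follows from the Levi-type decomposition of $P_i$: writing $U^+ = U_{\alpha_i}\,U^+_{(i)}$ with $U^+_{(i)} := \langle U_\beta : \beta\in\Phi^+\setminus\{\alpha_i\}\rangle$ the unipotent radical of $P_i$, one has that $T$ normalizes $G_i$ and that $G_i$ (lying in the Levi factor $\langle T,G_i\rangle$) normalizes $U^+_{(i)}$, whence $BG_i = T\,U_{\alpha_i}\,U^+_{(i)}\,G_i = G_i\,T\,U^+_{(i)}\subseteq G_iB$, and symmetrically. Having established $P_i = G_iB$, the rank-one Iwasawa decomposition $G_i = K_iB_i$ together with $B_iB = B$ gives $G_iB = K_iB_iB = K_iB$, so $P_i = K_iB$. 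The ``in particular'' is then immediate from the definition: $C_{\leq\sigma_i}(B) = B\langle s_i\rangle B/B = P_i/B = K_iB/B$.

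For part (b), the identity $Bs_is_jB = Bs_iB\cdot Bs_jB$ is precisely the $BN$-pair multiplication rule for double cosets, valid because $l_S(\sigma_i\sigma_j) = l_S(\sigma_i)+l_S(\sigma_j)$. For the ``in particular'' I would compute the set product directly using part (a): since $K_iB = P_i = B\cup Bs_iB$ and $K_jB = P_j = B\cup Bs_jB$,
\[
K_iBK_jB = P_iP_j = (B\cup Bs_iB)(B\cup Bs_jB) = B\cup Bs_iB\cup Bs_jB\cup Bs_iBs_jB,
\]
and the first part of (b) rewrites $Bs_iBs_jB = Bs_is_jB$. Thus $K_iBK_jB$ is exactly the union of the Bruhat cells indexed by $\{1,\sigma_i,\sigma_j,\sigma_i\sigma_j\} = \{w\leq\sigma_i\sigma_j\}$, so passing to the quotient gives $K_iBK_jB/B = C_{\leq\sigma_i\sigma_j}(B)$.

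\textbf{Main obstacle.} The only genuinely structural input is the identity $P_i = G_iB$, i.e.\ that $G_iB$ is closed under multiplication; everything else is bookkeeping with the Bruhat decomposition. The subtlety there is to justify the normalization claims ($T$ normalizes $G_i$, and $G_i$ normalizes the unipotent radical $U^+_{(i)}$) purely from the RGD axioms rather than from finite-dimensional Lie theory, which is where I expect to lean on the cited structure theory (the Levi decomposition of a rank-one parabolic and the commutator relations for prenilpotent pairs of roots). One should also confirm that $\alpha_i$ is the unique positive real root proportional to $\alpha_i$, so that $U^+_{(i)}$ really is the full unipotent radical and $U^+ = U_{\alpha_i}U^+_{(i)}$ is a genuine factorization.
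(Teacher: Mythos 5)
Your proposal is correct and follows essentially the same route as the paper: the paper's proof is a one-line citation of exactly the two structural facts you use, namely the rank-one parabolic identity $P_i = G_iB$ (\cite[Remark~8.51]{AB}, which you instead re-derive via the Levi-type decomposition of $P_i$) and the $BN$-pair double-coset multiplication rule $Bs_iB\,Bs_jB = Bs_is_jB$ for $l_S(\sigma_i\sigma_j)=2$ (\cite[Remark (2) after Theorem~6.56]{AB}), combined with the same rank-one Iwasawa decomposition $G_i = K_iB_i$. Your additional bookkeeping (the subword computation $\{w \leq \sigma_i\sigma_j\} = \{1,\sigma_i,\sigma_j,\sigma_i\sigma_j\}$ and the expansion of $P_iP_j$) is accurate, and your flagged reliance on the cited structure theory for the normalization claims is precisely where the paper itself defers to \cite{AB}.
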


%\section{Conventions, Basics, and Notation}
%%\section{Basics and Notation}
%\input{conventions_basics_notation.tex}

%\section{Four homeomorphisms and a covering map}
%\input{Gij_Bij.tex}

\section{The fundamental group of the generalized flag variety \texorpdfstring{$G/P_J$}{G/PJ}}

For a moment, let $\Pi$ be an irreducible simply-laced diagram distinct from $A_1$, and let $G = G(\Pi)$ and $K = K(\Pi)$ be as in the preceding section. Moreover, let $\mathrm{Spin}(\Pi)$ be the double cover of $K(\Pi)$ constructed in \cite[Lemma 16.18]{GHKW} (see Definition~\ref{Spin remark} below). By construction, any $A_2$-subdiagram of $\Pi$ yields an embedding $\mathrm{Spin}(3) \hookrightarrow \mathrm{Spin}(\Pi)$ and, since $\mathrm{Spin}(3)$ inherits the Lie topology from the Kac--Peterson topology on $\mathrm{Spin}(\Pi)$ by \cite[Corollary~7.16]{HKM}, one obtains a locally trivial fibre bundle $$\Spin(3) \to \Spin(\Pi) \to \Spin(\Pi)/\Spin(3)$$ by \cite{Pal} (see Proposition~\ref{Palais}). It will turn out in Section~\ref{4} below that $\Spin(\Pi)/\Spin(3)$ is a universal covering space of the generalized flag variety $G/P_J$ where $J \subset I$ equals the set consisting of the two types involved in the chosen $A_2$-subdiagram. The fundamental group of $\Spin(\Pi)$ then follows from the homotopy exact sequence \[ \{1\} = \pi_1(\Spin(3)) \rightarrow \pi_1(\Spin(\Pi)) \rightarrow \pi_1(\Spin(\Pi)/\Spin(3)) = 1.\] This motivates our interest in the fundamental group and covering theory of {\em generalized flag varieties} $G/P_J$.

\medskip \noindent
Throughout this section, let $J \subseteq I$, let $W_J$ be the subgroup of $W$ generated by $\{\sigma_i\}_{i \in J}$, and let $W^J \subseteq W$ be a set of representatives of the cosets in $W/W_J$ that have minimal length in the coset they define.

\begin{lem}[(Bruhat decomposition)]
\label{bruhat}
One has $G/P_J = \bigsqcup_{w\in W^J}BwP_J/P_J$.
\end{lem}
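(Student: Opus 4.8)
The plan is to derive the parabolic decomposition from the ordinary Bruhat decomposition of $G$, which is available because $((U_\alpha)_{\alpha\in\Phi}, T)$ is an RGD system and hence yields a $BN$-pair with Weyl group $W$ (cf.\ \cite[Chapter~8]{AB}). Thus I would start from $G = \bigsqcup_{v\in W} BvB$ together with the defining equality $P_J = B\langle\{s_i\}_{i\in J}\rangle B = \bigsqcup_{u\in W_J} BuB$, and the whole task is to collapse the index set $W$ onto $W^J$ when passing to the quotient by $P_J$. Here I use the convention, already implicit in Definition \ref{basics}, that $BwB$ is indexed by $w\in W$ with a representative in the extended Weyl group understood; this is well defined since $B\cap N = T$.

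First I would recall the purely combinatorial fact from Coxeter theory that $W^J$ is a genuine transversal of $W/W_J$: every $v\in W$ admits a unique factorization $v = wu$ with $w\in W^J$ and $u\in W_J$, and this factorization is length-additive, $l_S(v) = l_S(w) + l_S(u)$ ($w$ being the minimal-length representative of the coset $vW_J$). This is exactly what permits the bijective reindexing of the Bruhat cells below.

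The key group-theoretic step, and the main obstacle, is the $BN$-pair multiplication rule $BvB\cdot BuB = BvuB$ whenever $l_S(vu) = l_S(v) + l_S(u)$. This is proved by induction on $l_S(u)$, the base case being the single-generator rule $BwB\cdot B\sigma_i B = Bw\sigma_i B$ valid when $l_S(w\sigma_i) > l_S(w)$ (and $BwB\cdot B\sigma_i B = Bw\sigma_i B \cup BwB$ otherwise), one of the standard consequences of the $BN$-pair axioms \cite[Chapter~6]{AB}. Applying this with $w\in W^J$ fixed and $u$ ranging over $W_J$, the length-additivity from the previous paragraph gives
\[
BwP_J = BwB\cdot P_J = \bigcup_{u\in W_J} BwB\cdot BuB = \bigsqcup_{u\in W_J} Bwu B,
\]
where the union is disjoint because the elements $wu$ $(u\in W_J)$ are pairwise distinct and distinct Bruhat cells are disjoint.

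Finally I would regroup the Bruhat decomposition of $G$ according to the factorization $v = wu$:
\[
G = \bigsqcup_{v\in W} BvB = \bigsqcup_{w\in W^J}\ \bigsqcup_{u\in W_J} Bwu B = \bigsqcup_{w\in W^J} BwP_J.
\]
Since each $BwP_J$ is right-$P_J$-invariant, it is a union of cosets $gP_J$, and as the sets $BwP_J$ are pairwise disjoint, their images $BwP_J/P_J$ in $G/P_J$ are pairwise disjoint and cover $G/P_J$. This yields $G/P_J = \bigsqcup_{w\in W^J} BwP_J/P_J$, as claimed. The only points requiring genuine care are the $BN$-pair multiplication lemma and the uniqueness of the factorization $v = wu$ feeding the disjointness; both are standard, so in the write-up I would state them precisely and cite \cite[Chapters~6 and~8]{AB} rather than reprove them.
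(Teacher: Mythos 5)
Your proposal is correct and follows essentially the same route as the paper: both start from the Bruhat decomposition $G = \bigsqcup_{v\in W} BvB$ (citing \cite[Chapter~6]{AB}), rewrite $G = \bigcup_{w} BwP_J$, and collapse the index set onto $W^J$ via the correspondence between $(B,P_J)$-double cosets and cosets in $W/W_J$. The only difference is one of granularity: where the paper simply invokes the fact that double cosets partition $G$ and that $Bw_1P_J = Bw_2P_J$ iff $w_1W_J = w_2W_J$, you derive the disjointness explicitly from the length-additive factorization $v = wu$ and the product rule $BvB\cdot BuB = BvuB$ --- machinery the paper itself deploys in the same way in the proof of Lemma~\ref{hom between BwB/B and BwP/P}.
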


\begin{proof}
This follows immediately from \cite[Theorem~6.56, Remark (1)]{AB}.
%By \cite[Theorem~6.56, Remark (1)]{AB}, the group $G$ has a Bruhat decomposition $$G = \bigsqcup_{w \in W} BwB = \bigcup_{w \in W} BwBW_JB =  \bigcup_{w \in W} BwP_J.$$
%Since double cosets partition $G$, one has $w_1 W_J = w_2 W_J$ if and only if $Bw_1 P_J = B w_2 P_J$ for $w_1, w_2 \in W$. This yields the desired disjoint decomposition of $G/P_J$.   
\end{proof}

\begin{lem}
\label{quotient basics}
Let $G$ be a topological group and $H_1 \leq H_2$ subgroups of $G$ and endow $G/H_i$ with the quotient topology. Then the following hold:
\begin{enumerate}
\item The projection map $\pi: G \to G/H_1$ is continuous and open.
\item The canonical map $\psi: G/H_1 \to G/H_2$ is continuous and open.
\end{enumerate}
\end{lem}

\begin{proof}
This is a standard exercise for topological groups.
%(a): Let $U \subseteq G$ open. Since $\pi$ is a quotient map, it suffices to show that $\pi^{-1}(\pi(U))$ is open. But this is true since $\pi^{-1}(\pi(U)) = UH_1 = \bigcup_{h \in H_1}Uh$ is a union of translates of open sets.
%
%
%(b): This follows directly from (a) and the commutative diagram
%(b): Consider the commutative diagram
%\[\begin{tikzcd}
%G \arrow[rd, "\phi"] \arrow[d, "\pi"]  \\
%G/H_1 \arrow[r, "\psi"] & G/H_2 
%\end{tikzcd}.\qedhere\] 
%Let $UH_2 \subseteq G/H_2$ be open. Then $\pi^{-1}(\psi^{-1}(UH_2)) = \phi^{-1}(UH_2) $ is open in G, hence by (a),   $\psi^{-1}(UH_2) = \pi(\phi^{-1}(UH_2))$ is open in $G/H_1$. 
%
%Let $VH_1 \subseteq G/H_1$ be open. Then $\pi^{-1}(VH_1) = VH_1$ is open in $G$. Since $$\phi^{-1}(\psi(VH_1)) = \phi^{-1} (VH_2) = VH_2= VH_1 H_2 = \bigcup_{h \in H_2} VH_1 h,$$ the preimage  $\phi^{-1}(\psi(VH_1)) $ is open and therefore $\psi(VH_1)$ is open in $G/H_2$. 
\end{proof}

\begin{de+rem}
\label{definition psi_w}
For $w \in W$, define the following restrictions of the canonical map $\psi: G/B \to G/P_J$:\begin{itemize}
	\item $\psi_w: BwB/B \to BwP_J/P_J$,
	\item $\psi_{\bar{w}}: \bigcup_{x \leq w} BxB/B \to \bigcup_{x \leq w} BxP_J/P_J$.
\end{itemize}
Since $\psi$ is continuous, the same holds for the two restrictions. The space $\bigcup_{x \leq w} BxB/B$ is compact by \cite[Corollary~3.10]{HKM} and so $\psi_{\bar{w}}$ is a quotient map.
\end{de+rem}

\begin{lem}
\label{hom between BwB/B and BwP/P}
Let $G$ be $2$-spherical or symmetrizable and let $w \in W^J$. Then the canonical map $\psi_w$ is a homeomorphism.
\end{lem}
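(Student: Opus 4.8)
The plan is to show that $\psi_w$ is a continuous bijection and then to upgrade it to a homeomorphism by realizing it as the restriction of the quotient map $\psi_{\bar w}$ to a saturated open subset. Continuity is immediate, since $\psi_w$ is a restriction of the continuous map $\psi$ (Lemma \ref{quotient basics}). Thus the two substantive points are (i) that $\psi_w$ is bijective, and (ii) that $BwB/B$ is exactly the $\psi_{\bar w}$-preimage of $BwP_J/P_J$ and is open in the Schubert variety $\bigcup_{x \le w}BxB/B$.

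For bijectivity I would argue at the level of the twin building. The fibre of $\psi$ over a point $gP_J$ is the $J$-residue $gP_J/B$, and since $w \in W^J$ is the minimal-length representative of its coset $wW_J$, the chamber $gB$ realizing $\delta(B,gB)=w$ is precisely the gate (projection) of the base chamber $B$ onto that residue. By the gate property, every chamber $hB$ of the residue satisfies $\delta(B,hB) = w\,\delta(\mathrm{gate},hB)$ with lengths adding; hence the residue contains exactly one chamber at Weyl distance $w$ from $B$, namely its gate. Translating back, every $P_J$-coset in $BwP_J/P_J$ is hit by exactly one $B$-coset of $BwB/B$, which yields both surjectivity and injectivity of $\psi_w$. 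Equivalently, this reflects the refined Bruhat decomposition $BwP_J = \bigsqcup_{v \in W_J}BwvB$ with $\ell(wv)=\ell(w)+\ell(v)$ for $w \in W^J$, of whose pieces only $v=\mathrm{id}$ contributes to $BwB/B$.

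For the topological upgrade I would first note that $BwB/B$ is open in the compact Schubert variety $X_w := \bigcup_{x \le w}BxB/B$; this is where the hypothesis enters, since the required closure relation (that $\bigcup_{x<w}BxB/B$ is closed, so that the top cell is open) is exactly the content of the CW structure guaranteed by Proposition \ref{bruhat decomp is cw decomp} in the two-spherical and symmetrizable cases. Next, $BwB/B$ is $\psi_{\bar w}$-saturated: a cell $BxB/B$ with $x \le w$ satisfies $\psi(BxB/B)=BxP_J/P_J \subseteq BwP_J/P_J$ only if $xW_J=wW_J$ (Lemma \ref{bruhat}), and since $w$ is the unique Bruhat-minimum of $wW_J$, the inequality $x \le w$ forces $x=w$. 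Hence $\psi_{\bar w}^{-1}(BwP_J/P_J)=BwB/B$. Since $\psi_{\bar w}$ is a quotient map (Definition and Remark \ref{definition psi_w}), its restriction to the saturated open set $BwB/B$ is again a quotient map onto its image $BwP_J/P_J$. A bijective quotient map is open, hence a homeomorphism, and we are done.

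The main obstacle is the interface between the purely combinatorial/building-theoretic bijectivity, which holds in full generality, and the point-set topology in the Kac--Peterson topology. Concretely, the step that genuinely uses the two-spherical or symmetrizable assumption is the openness of $BwB/B$ inside $X_w$ (equivalently, the validity of the Bruhat closure relations); moreover one must ensure that $X_w$ is compact and that $\psi_{\bar w}$ is a genuine quotient map before the saturated-restriction argument applies, both of which are supplied by Definition and Remark \ref{definition psi_w} and Proposition \ref{bruhat decomp is cw decomp}.
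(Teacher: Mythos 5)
Your proof is correct and follows essentially the same route as the paper: you establish that $\psi_{\bar w}$ is a quotient map (Definition and Remark \ref{definition psi_w}), that $BwB/B = \psi_{\bar w}^{-1}(BwP_J/P_J)$ via the refined decomposition $BwP_J = BwW_JB$ with length additivity (your gate-property argument is just the building-theoretic reformulation of this, which you yourself note), and that $BwB/B$ is open in the compact Schubert variety, so that $\psi_w$ is a bijective quotient map and hence a homeomorphism. One small caution: for the openness of the cell you should cite \cite[Proposition~5.9]{HKM} together with Corollary~\ref{topstrong} (or only the \emph{first} assertion of Proposition~\ref{bruhat decomp is cw decomp}, about $G/B$, which is independent of this lemma) rather than the CW structure on $G/P_J$, since that part of Proposition~\ref{bruhat decomp is cw decomp} is itself proved using the present lemma, so your citation as phrased risks apparent circularity.
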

%\rot{Renamed $\psi$ to $\psi_w$, and the canonical map $G/B \to G/P_J$ to $\psi$, needs to be changed throughout the document}.
\begin{proof}

By Remark \ref{definition psi_w}, $\psi_{\bar{w}}$ is a quotient map. 
One has $\psi_{\bar{w}}^{-1}(BwP_J/P_J) = BwB/B$: Let $x \leq w$ such that $BxP_J/P_J = BwP_J/P_J$. Then $x\in BwP_J = BwW_JB$ where the equality holds since by definition of $W^J$ one has $l(ww')= l(w)+l(w')$ for all $w' \in W_J$ which implies $Bww'B = BwBw'B$. The Bruhat decomposition of $G$ yields $x \in wW_J$ and hence, $l(x) \geq l(w)$. This implies $x = w$.

Now, since $BwB/B$ is open in its closure $\bigcup_{x \leq w} BxB/B$ in $G/B$ (see \cite[Proposition~5.9]{HKM} plus Corollary~\ref{topstrong}), the preceding observations yield that $\psi_w$ is an injective quotient map and therefore a homeomorphism.
\end{proof}

\begin{lem}
\label{panels are spheres}
Let $\sigma_i \in S$. Then each panel $C_{\leq \sigma_i}(B)$ is homeomorphic to the $1$-sphere $\mathbb{S}^1$.
\end{lem}

\begin{proof}
The panel $C_{\leq \sigma_i}(B)$ is a subbuilding of $G/B$ corresponding to the RGD system $\{G_i, U_{\alpha_{i}}, U_{-\alpha_{i}}, T \cap G_i \}$. By Remark~\ref{homeos alpha beta} one has $G_i \iso \SL(2,\RR)$, $T \cap G_i \iso T_{\SL(2,\RR)}$ and $U_{\pm\alpha_{i}} \iso U_{\pm\alpha}$ where $T_{\SL(2,\RR)}$ denotes the subgroup of diagonal matrices and $U_{\pm\alpha}$ denote the canonical root subgroups of $\SL(2,\RR)$. This implies that $C_{\leq \sigma_i}(B)$ is homeomorphic to the building $\SL(2,\RR)/B_{\SL(2,\RR)} \hom \PP_1(\RR) \hom \mathbb{S}^1$.
\end{proof}

\begin{de}
Following \cite[Chapter 8]{Rot}, a \emph{CW complex} is an ordered triple $(X, E, \chi)$, where $X$ is a Hausdorff
space, $E$ is a family of cells in $X$, and $\chi = \{\chi_e\mid e\in E\}$ is a family of maps, such
that
\begin{enumerate}
	\item $X = \bigsqcup_{e\in E}{E}$.
\item For $k \in \NN$, let $X^{(k)}\subseteq X$ be the union of all cells of dimension $\leq k$. Then for each $(k+1)$-cell $e \in E$, the map $\chi_e:(D^{k+1}, S^{k}) \to (e\cup X^{(k)}, X^{(k)})$, is a \emph{relative
homeomorphism}, i.e., it is a continuous map and its restriction $D^{k+1} \setminus S^{k} \to e$ is a homeomorphism.
\item If $e \in E$, then its closure $\cl{e}$ is contained in a finite union of cells in $E$.
\item $X$ has the weak topology determined by $\{\cl{e} \mid e \in E\}$, i.e., a subset $A$ of $X$ is closed if and only if $A \cap \cl{e}$ is closed in $\cl{e}$ for each $e \in E$.
\end{enumerate}

For $k \in \NN$, let $\Lambda_k$ be an index set for the $k$-dimensional cells, so that $X^{(k)} \setminus X^{(k-1)} = \bigsqcup_{\lambda \in \Lambda_k} e_\lambda$ and set $\chi_\lambda:= \chi_{e_\lambda}$. This map is called the \emph{characteristic map} of $e_\lambda$.
\end{de}

\begin{prop}
\label{bruhat decomp is cw decomp}
Let $G$ be $2$-spherical or symmetrizable. Then for each $w \in W$, the set $C_w(B) = BwB/B$ is a cell of dimension $l(w)$ that is open in its compact closure $C_{\leq w}(B)$ in $G/B$. For each subset $J \subseteq I$, the Bruhat decomposition $G/P_J = \bigsqcup_{w\in W^J}BwP_J/P_J$ is a CW decomposition. 
\end{prop}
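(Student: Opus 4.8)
The plan is to settle the case $J=\emptyset$ first, establishing that the Bruhat decomposition endows $G/B$ with a CW structure, and then to transport this structure along the canonical projection $\psi\colon G/B\to G/P_J$ using the homeomorphisms $\psi_w$ of Lemma~\ref{hom between BwB/B and BwP/P}. For $G/B$ the four defining axioms must be verified. Disjointness $G/B=\bigsqcup_{w\in W}C_w(B)$ is the Bruhat decomposition (Lemma~\ref{bruhat} with $J=\emptyset$). That each $C_w(B)=BwB/B$ is a cell with $C_w(B)\cong\RR^{l(w)}$ follows from the RGD root-group product description $BwB/B\cong U_{\beta_1}\times\cdots\times U_{\beta_k}$, where $k=l(w)$ and each $U_{\beta_j}\cong\RR$; identifying the subspace topology with the product topology is where the Kac--Peterson structure and the two-spherical-or-symmetrizable hypothesis enter. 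Compactness of the closure $C_{\leq w}(B)$ and openness of $C_w(B)$ inside it are already available from \cite[Corollary~3.10]{HKM} and \cite[Proposition~5.9]{HKM} together with Corollary~\ref{topstrong}.

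The heart of the argument is the construction of the characteristic maps, which I would carry out by induction on $l(w)$. For $l(w)\le 1$ the cells are a point and, by Lemma~\ref{panels are spheres}, the panel $C_{\leq\sigma_i}(B)\cong S^1$, whose standard $1$-cell structure gives $\chi_{s_i}\colon D^1\to C_{\leq\sigma_i}(B)$ collapsing $S^0$ onto the $0$-cell $\{B\}$. For the inductive step, write a reduced expression $w=s_{i_1}w'$ with $l(w')=l(w)-1$ and use the multiplication map $G_{i_1}/B_{i_1}\times C_{\leq w'}(B)\to C_{\leq w}(B)$, $(gB_{i_1},hB)\mapsto ghB$, whose image is controlled by the panel product identities of Lemma~\ref{P_i = G_iB} and the $BN$-pair relation $P_{i_1}BvB\subseteq BvB\cup Bs_{i_1}vB$. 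Precomposing with the product of the panel characteristic map and $\chi_{w'}$ and with a fixed homeomorphism $D^{l(w)}\cong D^1\times D^{l(w)-1}$ produces a candidate $\chi_w$. One then checks that on the interior it restricts to the homeomorphism $C_{\sigma_{i_1}}(B)\times C_{w'}(B)\xrightarrow{\sim}C_w(B)$ coming from the unique factorization $BwB=Bs_{i_1}B\cdot Bw'B$ when lengths add, and that the boundary lands in $C_{<w}(B)$: on $\partial D^1\times D^{l(w)-1}$ the panel map hits its $0$-cell, yielding $1\cdot C_{\leq w'}(B)\subseteq C_{<w}(B)$, while on $D^1\times\partial D^{l(w)-1}$ the factor $C_{<w'}(B)$ is multiplied into cells $C_{\leq u}$ with $l(u)\le l(w)-1$.

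The remaining two axioms are comparatively soft. Closure-finiteness is immediate, since $\overline{C_w(B)}=C_{\leq w}(B)=\bigcup_{v\le w}C_v(B)$ is a finite union of cells because every Bruhat interval $\{v\in W\mid v\le w\}$ is finite. For the weak topology I would invoke that the Kac--Peterson topology makes $G$, and hence $G/B$, a $k_\omega$-space (\cite[Proposition~7.10]{HKM} together with Lemma~\ref{quotient basics}), with the compacta $C_{\leq w}(B)$ forming a $k_\omega$-exhaustion as $W$ is countable; consequently a subset of $G/B$ is closed iff it meets each $C_{\leq w}(B)$ in a closed set, which is precisely the CW weak topology. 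To descend to $G/P_J$, recall that $\psi$ is a continuous open surjection by Lemma~\ref{quotient basics}, hence a quotient map, and is closed on each compact $C_{\leq w}(B)$. For $w\in W^J$ the map $\psi_w$ is a homeomorphism by Lemma~\ref{hom between BwB/B and BwP/P}, so $\psi\circ\chi_w$ serves as a characteristic map for $BwP_J/P_J$; its boundary lands in $\psi(C_{<w}(B))$, which meets only cells $BuP_J/P_J$ with $u\in W^J$ and $u<w$, hence of strictly smaller dimension, because $w\in W^J$ is the minimal-length coset representative. Closure-finiteness and the weak topology then transfer through $\psi$ using $\overline{BwP_J/P_J}=\psi(C_{\leq w}(B))=\bigcup_{u\in W^J,\,u\le w}BuP_J/P_J$.

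I expect the genuine obstacle to be the characteristic-map step, and within it the purely topological claim that the algebraic factorization $BwB/B\cong U_{\beta_1}\times\cdots\times U_{\beta_{l(w)}}$ and the panel multiplication maps are homeomorphisms, and not merely continuous bijections, in the subspace topology inherited from the Kac--Peterson topology. This is exactly the point at which the assumption that $G$ be two-spherical or symmetrizable is needed: it is what guarantees, via the earlier lemmas and the cited \cite{HKM} results, that these maps and the relevant closures behave as in the classical finite-dimensional case of \cite{AB}.
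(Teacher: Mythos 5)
Your proposal is correct in substance, but it routes the first half differently from the paper. For the CW structure on $G/B$ itself, the paper does not construct characteristic maps at all: it simply quotes \cite[Corollary~3.10 and Proposition~5.9]{HKM} together with Corollary~\ref{topstrong} (which extends the strong topological twin building property from the two-spherical to the symmetrizable case, via Appendix B) to obtain outright that the Bruhat decomposition of $G/B$ is a CW decomposition, pointing to \cite[p.~170, 171]{Kra} for the underlying argument. Your inductive construction of $\chi_w$ from a reduced expression $w = s_{i_1}w'$, with the panel map and left translation, together with the $k_\omega$-argument for the weak topology, is essentially a re-derivation of that cited result (it is Kramer's loop-group argument transplanted); it buys self-containedness at the cost of length, and it has one soft spot: the claim that the compacta $C_{\leq w}(B)$ form a $k_\omega$-exhaustion of $G/B$ requires knowing that every compact subset lies in some $C_{\leq w}(B)$ (equivalently, one needs a cofinal chain in the strong Bruhat order \emph{and} the identification of the quotient topology with the colimit topology over these compacta), which is precisely the nontrivial content of the \cite{HKM} results you also cite -- so at full rigor you are either re-using them or owe an extra argument. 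For the second half, the descent to $G/P_J$, your argument coincides with the paper's: characteristic maps $\psi \circ \chi_w$ for $w \in W^J$ via the homeomorphisms of Lemma~\ref{hom between BwB/B and BwP/P}, closure-finiteness from $\cl\left(BwP_J/P_J\right) = \bigcup_{u \leq w,\, u \in W^J} BuP_J/P_J$, and the weak topology transported through $\psi$. There the paper is slightly more careful than your sketch: it verifies both directions of the weak-topology condition, using that each restriction $\psi_{\bar{w}}$ is a quotient map (compact source, Hausdorff target) to pull closedness back to $G/B$, that $\psi$ is open to push closedness forward from the saturated set $\psi^{-1}(A)$, and the identity $e_w = e_{\tilde{w}}$ for minimal-length coset representatives to pass from $w \in W^J$ to all $w \in W$ -- all ingredients you name, but do not fully assemble; none of this affects correctness, only completeness of the write-up.
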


\begin{proof}
The first statement is immediate by \cite[Corollary~3.10 and Proposition~5.9]{HKM} plus Corollary~\ref{topstrong}, see also \cite[p. 170, 171]{Kra}. Furthermore, \cite[Proposition~5.9]{HKM} combined with Corollary~\ref{topstrong} states that the Bruhat decomposition of $G/B$ is a CW decomposition. By Lemma \ref{hom between BwB/B and BwP/P}, $G/P_J$ is composed of cells that are homeomorphic to cells in $G/B$, so composing the characteristic maps of the latter cells with the canonical map $\psi: G/B \to G/P_J$ yields characteristic maps for the cells in $G/P_J$. 

For the closure-finiteness, let $BwP_J/P_J$ be a cell in $G/P_J$. Since $\psi$ is continuous and restricts to a homeomorphism $BwB/B \to BwP_J/P_J$, it maps  
$\cl{BwB/B}$ surjectively onto $\cl{BwP_J/P_J}$. Now, $\cl BwB/B = \bigcup_{x \leq w} BxB/B$, which implies that 
$$\cl{BwP_J/P_J} = \bigcup_{x \leq w} BxP_J/P_J = \bigcup_{\substack{x\leq w \\ x \in W^J}} BxP_J/P_J,$$ where the last equality holds since $W_J \subseteq P_J$. This proves that $\cl{BwP_J/P_J}$ is contained in a finite union of cells.

It remains to show that $G/P_J$ has the weak topology determined by the cell closures.

For $w\in W$ and a minimal-length representative  $\tilde{w}\in W^J$ of $wW_J$, one has $BwP_J/P_J = B\tilde{w}P_J/P_J$. Let $e_w:= BwP_J/P_J = B\tilde{w}P_J/P_J$ and $e_w':= BwB/B$. Let $\bar{e}_w = \cl{e_w} = \bigcup_{x \leq \tilde{w}} BxP_J/P_J$ and $\bar{e}_w':= \cl{e'_w} =  \bigcup_{x \leq w} BxB/B$.

%and let $\bar{e}':= \cl{e'} =  \bigcup_{x \leq w} BxB/B$ and $\bar{e}:= \cl{e} =  \bigcup_{x \leq w} B x P_J/P_J$. 

 %For $e = BwP_J/P_J \in E$, let $e' := BwB/B$ and let $\bar{e}':= \cl{e'} =  \bigcup_{\tilde{w} \leq w} B\tilde{w}B/B$ and $\bar{e}:= \cl{e} =  \bigcup_{\tilde{w} \leq w} B\tilde{w}P_J/P_J$. Define the restriction  
%$\psi_{\bar{e}}: \bar{e}'\to \bar{e}$ of $\psi$. Then 
%%$\psi_e$ is a homeomorphism by Lemma \ref{hom between BwB/B and BwP/P} and 
%$\psi_{\bar{e}}$ is a quotient map since $e'$ is compact by \cite[Corollary~3.10]{HKM}.

Let $A$ be a closed subset of $G/P_J$ and let $e_w$, $w \in W^J$, be an arbitrary cell. Then $\psi^{-1}(A)$ is closed in $G/B$ since $\psi$ is continuous, so $\psi^{-1}(A) \cap \bar{e}_w'$ is closed in $\bar{e}_w'$ since $G/B$ is a CW complex. Now, \[ \psi^{-1}(A) \cap \bar{e}_w' = \psi^{-1}(A) \cap \psi^{-1}(\bar{e}_w) = \psi^{-1}(A \cap \bar{e}_w) = \psi_{\bar{w}}^{-1}(A \cap \bar{e}_w).\] Since $\psi_{\bar{w}}$ is a quotient map by Remark \ref{definition psi_w}, this implies that $A \cap \bar{e}_w$ is closed in $\bar{e}_w$.

Now, let $A$ be a subset of $G/P_J$ such that $A \cap \bar{e}_w$ is closed in $\bar{e}_w$ for all $w \in W^J$. Since for each $w \in W$ one has $e_w = e_{\tilde{w}}$ for any minimal-length representative $\tilde{w} \in W^J$ of $wW_J$, in fact $A \cap \bar{e}_w$ is closed in $\bar{e}_w$ for all $w \in W$. Therefore $\psi_{\bar{w}}^{-1}(A  \cap \bar{e}_w)$ is closed in $\bar{e}_w'$ for all $w \in W$. Since $\psi_{\bar{w}}^{-1}(A  \cap \bar{e}_w) = \psi^{-1}(A) \cap  \bar{e}_w'$, the fact that $G/B$ is a CW complex implies that $\psi^{-1}(A)$ is closed in $G/B$. Since $\psi$ is open by Lemma \ref{quotient basics}, it follows that $A$ is closed in $G/P_J$. This proves that $G/P_J$ is a CW complex.
 %\rot{To show that the Bruhat decomposition is a CW decomposition, one has to verify that} 
%By Lemma \ref{hom between BwB/B and BwP/P}, it suffices to prove that for $w \in W^J$ the relative boundary of the closure $\cl BwP_J/P_J$ of a cell $BwP_J/P_J$ is contained in a finite union of cells of smaller dimension. The canonical map  $G/B \to G/P_J$ maps  
%the relative boundary $\cl BwB/B$ surjectively onto $\partial (\cl BwP_J/P_J)$. Now, $\partial (\cl BwB/B) = \bigcup_{\tilde{w} < w} B\tilde{w}B/B$, which is mapped to $$\bigcup_{\tilde{w} < w} B\tilde{w}P_J/P_J = \bigcup_{\substack{\tilde{w} < w \\ \tilde{w} \in W^J}} B\tilde{w}P_J/P_J,$$ since $W_J \subseteq P_J$. This proves the claim.
\end{proof}

The preceding result combined with the following lemma (which is a consequence of \cite[Ch. 7, Thm 2.1]{Mas}) will allow us to efficiently compute the fundamental group of a generalized flag variety in Theorem~\ref{fundamental group} below.

\begin{lem}
\label{presentation cw}
Let $X$ be a $CW$ complex with only one $0$-cell $x_0$. For each $\lambda \in \Lambda_2$, let $f_\lambda:[0,1] \to \mathbb{S}^1$ be a loop whose homotopy class generates $\pi_1(\mathbb{S}^1)$ and whose image $\gamma_\lambda:= \chi_\lambda \circ f_\lambda$ under $\chi_\lambda$ is a loop in $X^{(1)}$ starting at $x_0$. Then \[ \left \langle [\chi_\mu], \quad \mu \in \Lambda_1 \mid [\gamma_\lambda], \quad \lambda \in \Lambda_2 \right \rangle\] is a presentation of $\pi_1(X,x_0)$, where the brackets denote the respective homotopy classes in $X^{(1)}$.
\end{lem}

%\begin{rem}
%The results from \cite{HKM} cited in the above proof and in the proof of Lemma \ref{hom between BwB/B and BwP/P} rely on the fact that the twin building associated to a two-spherical simply connected split real Kac--Moody group is a strong topological twin building. This, in turn, is a consequence of the openness of the map $m: U_+ \times T \times U_{-} \to B_+B_{-}:(u_+,t,u_{-}) \mapsto u_+tu_{-}$, see \cite[Proposition~7.31]{HKM}. This openness result has been announced without proof in \cite[Theorem~4]{Kac1983} without restriction to the two-spherical case. As \cite{HKM} point out in a remark to Proposition~7.31, this more general statement would allow one to remove the requirement that $G$ be two-spherical from Proposition~7.31. Consequently, two-sphericity could then be removed from Proposition \ref{bruhat decomp is cw decomp} and all further results here; in particular Theorem \ref{fundamental group of K} and Theorem \ref{fundamental group of Spin}
%\end{rem}

Next, we study the characteristic maps of the CW decomposition of a generalized flag variety explicitly.

\begin{nota}
Define $R: [0,1] \to \SO(2,\RR), s \mapsto \begin{pmatrix} \cos(s\pi) & -\sin(s\pi) \\ \sin(s\pi) & \cos(s\pi) \end{pmatrix}.$	
%Let $D:= R([0,1]) \subseteq \SO(2)$.
\end{nota}

\begin{lem}
\label{the map R}
$R$ induces a continuous, surjective map $\tilde{R}: [0,1] \to \SL(2,\RR)/B_{\SL(2,\RR)}$ which maps the interior $(0, 1)$ homeomorphically onto its image and maps the boundary $\{0, 1\}$ surjectively onto its image.
\end{lem}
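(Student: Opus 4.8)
The plan is to transport everything to $\PP^1(\RR)$, where $\tilde{R}$ becomes completely explicit. Write $\bar{B} := B_{\SL(2,\RR)}$. By Lemma~\ref{panels are spheres} the quotient $\SL(2,\RR)/\bar{B}$ is homeomorphic to $\PP^1(\RR)$; concretely I would use the standard homeomorphism $\phi\colon \SL(2,\RR)/\bar{B} \xrightarrow{\sim} \PP^1(\RR)$, $g\bar{B} \mapsto [ge_1]$, sending a coset to the line $[ge_1]$ spanned by the first column of $g$ (here $e_1 = (1,0)^{T}$, and $\bar{B}$ is precisely the stabilizer of the line $[e_1]$). The map $\tilde{R}$ is by construction the composite of $R$ with the inclusion $\SO(2,\RR)\hookrightarrow \SL(2,\RR)$ and the quotient projection, all of which are continuous, so $\tilde{R}$ is continuous; and under $\phi$ it is given by the explicit formula $\phi(\tilde{R}(s)) = [\cos(s\pi) : \sin(s\pi)]$, the line through the first column of $R(s)$.

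From this formula both the boundary and the interior behaviour can be read off directly. For the boundary, $R(0) = I$ and $R(1) = -I$ are both diagonal and hence lie in $\bar{B}$, so $\tilde{R}(0) = \tilde{R}(1) = \bar{B}$ is the base point, corresponding to $[1:0]\in\PP^1(\RR)$; thus $\tilde{R}$ collapses $\{0,1\}$ to a single point and is (trivially) surjective onto that image. For the interior, $\sin(s\pi)\neq 0$ for all $s\in(0,1)$, so $\phi(\tilde{R}(s))$ lies in the affine chart $\{[x:y] : y\neq 0\}\cong\RR$ with coordinate $x/y$, in which $\tilde{R}$ is given by $s\mapsto \cot(s\pi)$. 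Since $s\mapsto\cot(s\pi)$ is a continuous, strictly decreasing bijection of $(0,1)$ onto $\RR$, it is a homeomorphism; composing with the chart homeomorphism and with $\phi^{-1}$ then shows that $\restr{\tilde{R}}{(0,1)}$ is a homeomorphism onto its image $\phi^{-1}(\{y\neq 0\}) = \big(\SL(2,\RR)/\bar{B}\big)\setminus\{\bar{B}\}$, i.e.\ onto the open big cell.

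Finally, surjectivity of $\tilde{R}$ drops out by combining the two pieces: $\tilde{R}([0,1]) = \restr{\tilde{R}}{(0,1)}\big((0,1)\big)\cup \tilde{R}(\{0,1\}) = \big((\SL(2,\RR)/\bar{B})\setminus\{\bar{B}\}\big)\cup\{\bar{B}\} = \SL(2,\RR)/\bar{B}$. The only step needing more than a formal argument is the interior homeomorphism claim: because $(0,1)$ is not compact, one cannot appeal to the fact that a continuous bijection from a compact space onto a Hausdorff space is a homeomorphism, so one genuinely needs the explicit inverse furnished by the strict monotonicity of $\cot(s\pi)$. Every other step is a routine unwinding of the identification $\SL(2,\RR)/\bar{B}\cong\PP^1(\RR)$.
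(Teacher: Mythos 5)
Your proof is correct and follows essentially the same route as the paper's: both identify $\SL(2,\RR)/B_{\SL(2,\RR)}$ with $\PP^1(\RR)$ via the orbit map $gB \mapsto [ge_1]$ (with $B_{\SL(2,\RR)}$ the stabilizer of $[e_1]$) and observe that $R$ parametrizes the upper half circle, with the boundary points collapsing to $[1:0]$ and the interior mapping bijectively onto the complement. The only difference is that you make explicit, via the affine-chart coordinate $\cot(s\pi)$, the interior homeomorphism claim that the paper dismisses as ``clear'' --- a worthwhile elaboration, since as you note the compactness argument is unavailable on $(0,1)$.
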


\begin{proof}
Let $\{x_0\}:= \left\langle \begin{pmatrix} 1 & 0 \end{pmatrix}^\intercal \right\rangle \in \PP^1$ where $\PP^1$ denotes the real projective line, modelled as the subset of one-dimensional subspaces of $\RR^2$. Since each one-dimensional subspace in $\PP^1 \setminus \{x_0\}$ contains exactly one element in the upper half circle $R([0,1]) \cdot \begin{pmatrix} 1 & 0 \end{pmatrix}^\intercal$  while $x_0$ contains the two boundary points corresponding to $R(0)$ and $R(1)$, one has a surjection from $[0,1]$ onto $\PP^1$ given by $t \mapsto \left\langle R(t) \cdot  \begin{pmatrix} 1 & 0 \end{pmatrix}^\intercal \right\rangle$ which maps $(0,1)$ bijectively onto $\PP^1 \setminus \{x_0\}$. Since $\SL(2,\RR)$ acts transitively on the real projective line $\PP^1$ with $B_{\SL(2,\RR)}$ being the stabilizer of $x_0:= \left\langle \begin{pmatrix} 1 & 0 \end{pmatrix}^\intercal \right\rangle$, one has a bijective correspondence $gB \mapsto g x_0$ between $\SL(2,\RR)/B_{\SL(2,\RR)}$ and $\PP^1$. This yields the desired surjectivity and bijectivity properties of $\tilde{R}$. Continuity is clear, as well as the fact that the restriction to the interior is a homeomorphism.
\end{proof}

\begin{de}
Let $D^1 = [0,1]$ be the $1$-dimensional unit disc and note that $D^2 \hom D^1 \times D^1$. For $i,j \in I$ let $\gamma_i, \gamma_j$ be as in Remark \ref{homeos alpha beta}.  Let $p:G \to G/B$ be the canonical projection. Define $\chi_i: D^1 \to G/B$ and $\chi_{(i,j)}: D^1 \times D^1 \to G/B$ by \begin{itemize}
	\item $\chi_i(s) := p(\gamma_i(R(s))) = \gamma_i(R(s))\cdot B$,
	\item $\chi_{(i,j)}(s,t):= p(\gamma_i(R(s))\gamma_j(R(t))) = \gamma_i(R(s))\gamma_j(R(t))\cdot B$.
\end{itemize}

\end{de}
The following lemma was inspired by \cite[Ch. 10, second Proposition of 6.8]{Pro}, see also \cite[\S2.6, p.~198]{Kac}.

\begin{lem}
\label{char maps}
Let $G$ be $2$-spherical or symmetrizable. Then the maps defined above are characteristic maps for the following cells:
\begin{enumerate}
\item $\chi_i$ for $C_{{\sigma}_i}(B) = B {s}_i B /B $,
%\item $\zeta_i$ for $C_{{\sigma}_i}(B_{\SL(2,\RR)\times \SL(2,\RR)}) = B_{\SL(2,\RR)\times \SL(2,\RR)} {s}_i B_{\SL(2,\RR)\times \SL(2,\RR)} /B_{\SL(2,\RR)\times \SL(2,\RR)}$,
\item $\chi_{(i,j)}$ for  $C_{{\sigma}_i {\sigma}_j}(B) = B {s}_i {s}_j B /B$.
%\item $\zeta_{(i,j)}$ for $C_{{\sigma}_i {\sigma}_j}(B_{\SL(2,\RR)\times \SL(2,\RR)}) = B_{\SL(4)} {s}_i {s}_j B_{\SL(4)} /B_{\SL(4)}$.
\end{enumerate}
\begin{proof}
(a): One has to show that $\chi_i([0,1]) \subseteq C_{\leq \sigma_i}(B)$ and that $\chi_i$ is a continuous map which maps $(0,1)$ homeomorphically to $C_{\sigma_i}(B)$. The first assertion is clear, since by Lemma \ref{P_i = G_iB} one has $C_{\leq \sigma_i} = G_iB/B$.

By Lemma \ref{P_i = G_iB}, one has $C_{\sigma_i}(B) = \{kB \mid k \in K_i \setminus (K_i \cap B)\}$. Let $k \in K_i \setminus (K_i \cap B)$. Then $\gamma_i^{-1}(p^{-1} (kB)) = \gamma_i^{-1}(k)\cdot B_{\SL(2,\RR)} \in \SL(2,\RR)/B_{\SL(2,\RR)} \setminus B_{\SL(2,\RR)}$. By Lemma \ref{the map R}, there exists a unique $s \in (0,1)$ satisfying $R(s)B_{\SL(2,\RR)} = \gamma_i^{-1}(k) B_{\SL(2,\RR)}$. Hence, $s$ is the unique preimage of $kB$ under $\chi_i$. This yields the desired bijectivity property. The continuity properties are clear.

(b): Since by Lemma \ref{P_i = G_iB} (b) one has $C_{\leq \sigma_i \sigma_j}(B) = K_iBK_jB/B$, it is clear that $\chi_{(i,j)}([0,1]\times [0,1]) \subseteq C_{\leq {\sigma}_i {\sigma}_j}(B)$. For the injectivity of the restriction, let $(s,t), (\tilde{s}, \tilde{t}) \in (0,1)^2$ such that $\chi_{(i,j)}(s,t) = \chi_{(i,j)}(\tilde{s}, \tilde{t})$. Then \begin{align*}
\gamma_i(R(s)) \gamma_j(R(t))B & =  \gamma_i(R(\tilde{s})) \gamma_j(R(\tilde{t}))B\\
\iff (\gamma_i(R(\tilde{s})))^{-1} \gamma_i(R(s)) \gamma_j(R(t))B & =  \gamma_j(R(\tilde{t}))B \in C_{\sigma_j}(B). \end{align*}
This implies $R(\tilde{s})^{-1} R(s) \in B_{\SO(2, \RR)}$, since otherwise the left expression is in $C_{\sigma_i\sigma_j}(B)$, contradicting $C_{\sigma_i \sigma_j}(B) \cap C_{\sigma_j}(B) = \emptyset$. Since $s, \tilde{s} \in (0,1)$, one obtains $\tilde{s} = s$. It follows that $\chi_j(t) = \chi_j(\tilde{t})$, hence $t = \tilde{t}$ by (a). 

For the surjectivity, note that by Lemma \ref{P_i = G_iB} (b), one has $C_{\sigma_i\sigma_j}(B) = Bs_is_jB/B = Bs_iBBs_jB/B$. Let $x_i x_jB$ be an arbitrary element of $C_{\sigma_i\sigma_j}(B)$ with $x_i = b_1 s_i b_2 \in B s_i B$ and $ x_j \in B s_j B$. By (a), there exists an $s \in (0,1)$ with $\gamma_i(R(s))B = b_1 s_iB \in C_{\sigma_i}(B)$. Hence, there exists a $b \in B$ with $(\gamma_i(R(s))b = b_1 s_i b_2 = x_i$. Again by (a), there exists a $t \in (0,1)$ with $\gamma_j(R(t))B = bx_jB  \in C_{\sigma_j}(B)$. This yields \begin{align*}
\chi_{i,j}(s,t) & = \gamma_i(R(s)) \cdot  \gamma_j(R(t)) B\\
& = x_i b^{-1} \cdot bx_j B\\
=x_i x_j B.
\end{align*}
This proves that $\chi_{i,j}$ maps $(0,1) \times (0,1)$ bijectively to $C_{\sigma_i\sigma_j}(B)$ The continuity properties are clear.
\end{proof}

\end{lem}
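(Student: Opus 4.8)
The plan is to factor both $\chi_i$ and $\chi_{(i,j)}$ through the rank-one identifications $\gamma_i$ together with the compact circle parametrisation $\tilde R$ of Lemma~\ref{the map R}, reducing the one-dimensional statement (a) directly to Lemma~\ref{the map R} and the two-dimensional statement (b) to a combination of the building-theoretic panel combinatorics of Lemma~\ref{P_i = G_iB} with a single application of invariance of domain, the latter being legitimate because Proposition~\ref{bruhat decomp is cw decomp} already guarantees that the Schubert cells are genuine open cells.

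For (a): By Lemma~\ref{P_i = G_iB} one has $C_{\leq\sigma_i}(B) = G_iB/B$, and since $\gamma_i\colon\SL(2,\RR)\to G_i$ carries $B_{\SL(2,\RR)}$ onto $B_i = G_i\cap B$ it descends to a continuous bijection $\bar\gamma_i\colon\SL(2,\RR)/B_{\SL(2,\RR)}\to C_{\leq\sigma_i}(B)$. As the source is compact (it is homeomorphic to $S^1$ by Lemma~\ref{panels are spheres}) and the target is Hausdorff, $\bar\gamma_i$ is a homeomorphism carrying the base point $x_0 = B_{\SL(2,\RR)}$ to the unique $0$-cell $eB$ and its complement onto $C_{\sigma_i}(B)$. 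Since $\chi_i = \bar\gamma_i\circ\tilde R$ by construction, the conclusions of Lemma~\ref{the map R} transport verbatim: $\chi_i$ is continuous, sends $\{0,1\}$ into $X^{(0)}$, and restricts to a homeomorphism $(0,1)\to C_{\sigma_i}(B)$. Hence $\chi_i$ is a characteristic map.

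For (b): Continuity of $\chi_{(i,j)}$ is clear, and $\chi_{(i,j)}([0,1]^2)\subseteq C_{\leq\sigma_i\sigma_j}(B)$ follows from the identity $C_{\leq\sigma_i\sigma_j}(B) = K_iBK_jB/B$ of Lemma~\ref{P_i = G_iB}(b). To see that the boundary lands in the $1$-skeleton, I would note that $R(0) = I$ and $R(1) = -I$ both lie in $B_{\SL(2,\RR)}$, so $s\in\{0,1\}$ forces $\gamma_i(R(s))\in B$ and hence $\chi_{(i,j)}(s,t)\in P_j/B = C_{\leq\sigma_j}(B)$, while symmetrically $t\in\{0,1\}$ gives $\chi_{(i,j)}(s,t)\in C_{\leq\sigma_i}(B)$; both are contained in $X^{(1)}$. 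For injectivity on $(0,1)^2$ I would rewrite $\chi_{(i,j)}(s,t)=\chi_{(i,j)}(\tilde s,\tilde t)$ as $\gamma_i(R(\tilde s)^{-1}R(s))\gamma_j(R(t))B = \gamma_j(R(\tilde t))B\in C_{\sigma_j}(B)$; were $R(\tilde s)^{-1}R(s)\notin B_{\SL(2,\RR)}$, then by Lemma~\ref{P_i = G_iB}(b) the left-hand side would lie in $C_{\sigma_i\sigma_j}(B)$, contradicting $C_{\sigma_i\sigma_j}(B)\cap C_{\sigma_j}(B)=\emptyset$, so $\tilde R(s)=\tilde R(\tilde s)$ and Lemma~\ref{the map R} gives $s=\tilde s$, whence (a) gives $t=\tilde t$. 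For surjectivity I would take $x_ix_jB\in C_{\sigma_i\sigma_j}(B)=Bs_iB\,Bs_jB/B$, use (a) to find $s$ with $\gamma_i(R(s))B=x_iB$, write $x_i=\gamma_i(R(s))b^{-1}$ with $b\in B$, observe $b^{-1}x_j\in Bs_jB$ and apply (a) again to find $t$ with $\gamma_j(R(t))B=b^{-1}x_jB$, so that $\chi_{(i,j)}(s,t)=x_ix_jB$.

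The main obstacle is the final upgrade of this continuous bijection $(0,1)^2\to C_{\sigma_i\sigma_j}(B)$ to a homeomorphism, since continuity of the inverse is not formal. Here I would invoke invariance of domain: by Proposition~\ref{bruhat decomp is cw decomp} the cell $C_{\sigma_i\sigma_j}(B)$ is a genuine open $2$-cell, hence homeomorphic to an open subset of $\RR^2$, and a continuous injection from the open set $(0,1)^2\subseteq\RR^2$ into $\RR^2$ is automatically open, so the bijection is a homeomorphism onto $C_{\sigma_i\sigma_j}(B)$. (No such issue arises in (a), where the homeomorphism is inherited directly from Lemma~\ref{the map R}.) Combined with the containment and boundary verifications, this shows $\chi_{(i,j)}$ is a characteristic map and completes the proof.
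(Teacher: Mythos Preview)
Your proof is correct and follows essentially the same route as the paper: the injectivity and surjectivity arguments in (b) are identical, and (a) is the same reduction to Lemma~\ref{the map R} via the rank-one identification. The only real difference is that the paper dismisses the openness of the restriction $(0,1)^2\to C_{\sigma_i\sigma_j}(B)$ with ``the continuity properties are clear,'' whereas you justify it explicitly via invariance of domain, using Proposition~\ref{bruhat decomp is cw decomp} to know the target is a genuine $2$-cell; this is legitimate since that proposition is established independently of the present lemma, so there is no circularity. (An alternative, avoiding invariance of domain: $\chi_{(i,j)}\colon[0,1]^2\to C_{\leq\sigma_i\sigma_j}(B)$ is a closed map from a compact space to a Hausdorff space, hence a quotient map onto its image; since your boundary computation shows $(0,1)^2$ is exactly the saturated preimage of $C_{\sigma_i\sigma_j}(B)$, the restriction is an injective quotient map and therefore a homeomorphism.)
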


\begin{nota}
\label{epsilon}
For $i,j \in I$, let $\epsilon(i,j):= (-1)^{\alpha_j(\check{\alpha}_i)},$ where  $\alpha_j(\check{\alpha}_i)=a_{ij}$ is the $(i,j)$-entry of the Cartan matrix $\CA$ of $\Pi$. 
\end{nota}

\begin{lem}[{\cite[Remark 15.4(1)]{GHKW}}]
\label{vertauschen}
Let $e_i:= \gamma_i(-I) \in G_i$ with $\gamma_i$ as in Remark \ref{homeos alpha beta} and $k_j \in K_j$. Then $e_i k_j e_i = k_j^{\epsilon(i,j)}$.

\end{lem}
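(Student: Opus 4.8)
The statement asserts a conjugation relation $e_i k_j e_i = k_j^{\epsilon(i,j)}$ where $e_i = \gamma_i(-I)$ is the image in $G_i \cong \SL(2,\RR)$ of the central element $-I$, and $k_j \in K_j \cong \SO(2,\RR)$. The plan is to reduce everything to a computation inside the adjoint representation $\Ad: G(\Pi) \to \Aut(\g_\RR'(\CA))$, so that the relation becomes a relation among automorphisms of the root spaces, and then to invoke the fact that the centre of $G(\Pi)$ lies in $T$ and that $U^\pm$ embed into the adjoint quotient to upgrade the adjoint-level identity to an identity in $G(\Pi)$ itself.

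First I would make the element $e_i$ explicit. Under $\gamma_i$, the element $-I \in \SL(2,\RR)$ lies in $T_{\SL(2,\RR)} \cap \SO(2,\RR)$, and in fact $e_i = \exp(\check\alpha_i \cdot \pi \sqrt{-1})$-type element realised inside the split torus; concretely $e_i \in K_i \cap T$ normalises each root group and acts on $\g_\alpha^\RR$ by the scalar $\Ad(e_i)|_{\g_\alpha} = (-1)^{\langle \check\alpha_i,\alpha\rangle}$, using the formulas \eqref{AdRootSpaces} that describe the action of $\exp$ of a torus element on $e_j, f_j$. For $\alpha = \alpha_j$ this scalar is precisely $(-1)^{\langle\check\alpha_i,\alpha_j\rangle} = \epsilon(i,j)$ by Notation~\ref{epsilon}. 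Since $e_i^2 = 1$ (as $(-I)^2 = I$ in $\SL_2$), conjugation by $e_i$ is an involution.

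Next I would compute the action of conjugation by $e_i$ on the whole subgroup $G_j = \langle U_{\alpha_j}, U_{-\alpha_j}\rangle$. On the root group $U_{\alpha_j} = \exp(\g_{\alpha_j}^\RR)$, conjugation by $e_i$ acts, via $\Ad$, by scaling the parameter by $\epsilon(i,j)$, i.e.\ $e_i \exp(t\, X_{\alpha_j}) e_i = \exp(\epsilon(i,j)\, t\, X_{\alpha_j})$, and similarly on $U_{-\alpha_j}$ the scalar is $(-1)^{\langle\check\alpha_i,-\alpha_j\rangle} = \epsilon(i,j)$ as well. Thus conjugation by $e_i$ induces on $G_j \cong \SL(2,\RR)$ precisely the automorphism sending the standard upper/lower root-parameter $t \mapsto \epsilon(i,j)\, t$; when $\epsilon(i,j) = +1$ this is the identity and when $\epsilon(i,j) = -1$ it is the automorphism $x \mapsto d\, x\, d^{-1}$ for the diagonal element $d = \mathrm{diag}(\sqrt{-1},-\sqrt{-1})$, whose restriction to $\SO(2,\RR)$ is the inversion $k_j \mapsto k_j^{-1}$. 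Identifying $K_j$ with $\SO(2,\RR)$ via $\gamma_j$ (Remark~\ref{homeos alpha beta}), this says exactly $e_i k_j e_i = k_j^{\epsilon(i,j)}$ at the level of $\Ad(G(\Pi))$.

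Finally I would lift the identity from $\Ad(G(\Pi))$ to $G(\Pi)$. The element $w := e_i k_j e_i \cdot k_j^{-\epsilon(i,j)}$ lies in the kernel of $\Ad$, hence in the centre $Z(G(\Pi)) \subseteq T$. On the other hand both $e_i$ and $k_j$ lie in the spherical subgroup $\langle G_i, G_j\rangle$, so $w$ lies in $G_j \cap T$ (after noting $e_i$ normalises $G_j$), and $G_j \cap Z$ is trivial since $G_j \cong \SL(2,\RR)$ has trivial centre-intersection with the ambient centre in the algebraically simply connected setting. Hence $w = 1$, giving the relation in $G(\Pi)$. \emph{The main obstacle} I expect is precisely this last lifting step: one must argue carefully that the discrepancy element is central and then that it is trivial, rather than merely a nontrivial element of the torus; this is where the algebraic simple-connectedness and the explicit structure of $\SO(2)$ versus the diagonal $\SL_2$-torus must be used, and it is the content for which Lemma~\ref{vertauschen} is cited from \cite[Remark 15.4(1)]{GHKW}.
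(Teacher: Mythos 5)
Your core computation is correct and is essentially the standard one (the paper itself offers no proof here, importing the statement verbatim from \cite[Remark 15.4(1)]{GHKW}, so your argument stands or falls on its own): indeed $e_i = \gamma_i(-I) = \check{\alpha}_i(-1) \in K_i \cap T$, its adjoint action on $\g_{\pm\alpha_j}$ is multiplication by $(-1)^{\pm\langle\check{\alpha}_i,\alpha_j\rangle} = \epsilon(i,j)$, and the parameter-flip automorphism of $\SL(2,\RR)$ (conjugation by ${\rm diag}(\sqrt{-1},-\sqrt{-1})$) restricts to inversion on $\SO(2,\RR)$. The genuine gap is your final lifting step. You claim the discrepancy $w := e_i k_j e_i \cdot k_j^{-\epsilon(i,j)} \in \ker\Ad = Z(G(\Pi))$ must vanish because ``$G_j \cap Z$ is trivial \dots in the algebraically simply connected setting.'' This is false, and the appeal to simple connectedness points the wrong way: algebraic simple connectedness makes the centre \emph{larger}, not smaller. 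Concretely, for the affine diagram $A_1^{(1)}$ with Cartan matrix $\left(\begin{smallmatrix} 2 & -2 \\ -2 & 2 \end{smallmatrix}\right)$ every pairing $\langle\check{\alpha}_j,\alpha\rangle$ with a root is even, so $\Ad(e_j) = \mathrm{id}$ and $e_j$ is a nontrivial element of $G_j \cap Z(G(\Pi))$ --- a symmetrizable case squarely within the paper's scope. As written, your argument only shows $w \in G_j \cap Z(G(\Pi)) \subseteq Z(G_j) = \{1, e_j\}$, i.e.\ the relation holds up to a possible central factor $e_j$.

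The gap is localized and fixable in either of two ways. First, one can avoid $\Ad$ entirely: since $((U_\alpha)_{\alpha\in\Phi},T)$ is an RGD system, $e_i \in T$ normalizes each root group, and since root groups intersect the centre trivially (as the paper records), $\Ad$ is injective on $U_{\pm\alpha_j}$; hence your adjoint computation already yields the \emph{exact} relations $e_i\, x_{\pm\alpha_j}(s)\, e_i = x_{\pm\alpha_j}(\epsilon(i,j)\,s)$ in $G$ itself (equivalently, use the torus--root-group relations of Tits' presentation, cf.\ \cite[(4), p.~549]{Ti} as quoted in the appendix). Then conjugation by $e_i$ agrees with the parameter-flip automorphism on the generators $U_{\pm\alpha_j}$ of $G_j$, hence on all of $G_j$, in particular at $k_j$, with no central ambiguity for \emph{any} element of $G_j$. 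Alternatively, patch your last step directly: $w$ lies in the discrete set $\{1, e_j\}$ and depends continuously on $k_j$ in the connected group $K_j \cong \SO(2,\RR)$, with $w = 1$ at $k_j = 1$ (since $e_i^2 = 1$); hence $w \equiv 1$. Either repair completes your proof.
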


\begin{thm}
\label{fundamental group}
If the Bruhat decomposition satisfies the conclusion of Proposition \ref{bruhat decomp is cw decomp}, then a presentation of $\pi_1(G/P_J)$ is given by 

\[ \left\langle x_i; \quad i \in I \mid x_ix_j^{\epsilon(i,j) } = x_jx_i,\quad x_k = 1; \quad  i,j \in I, k \in J \right\rangle.\]
In particular, this statement holds in the $2$-spherical and the symmetrizable case.
\begin{proof}
By Lemma \ref{hom between BwB/B and BwP/P} and Proposition \ref{bruhat decomp is cw decomp}, the Bruhat decomposition $G/P_J = \bigsqcup_{w\in W^J}BwP_J/P_J$ is a CW decomposition where each cell $BwP_J/P_J$ has dimension $l(w)$. The characteristic maps of the 1-cells $Bs_iP_J/P_J$ and 2-cells $Bs_is_jP_J/P_J$ are given by the compositions $\tilde{\chi}_i := \psi_{s_i} \circ \chi_i$, respectively $\tilde{\chi}_{(i,j)}:= \psi_{s_is_j} \circ \chi_{(i,j)}$  ($\psi_{s_i}$ and $\psi_{s_is_j}$ denoting the canonical homeomorphisms from Lemma \ref{hom between BwB/B and BwP/P}). 

Lemma \ref{presentation cw} gives a presentation of $\pi_1(G/P_J)$. The generating elements are given by the homotopy classes $x_i:=[\tilde{\chi}_i]$ of the characteristic maps of the 1-cells -- namely, the cells $B s_i P_J/P_J$ where $i \in I \setminus J$. For the homotopy classes $x_k$ with $k \in J$, note that $\gamma_k(R(t)) \in G_k \subseteq P_J$, and so $\tilde{\chi}_k(t) = \gamma_k(r(t)) \cdot P_J = P_J$ which implies $x_k = [\tilde{\chi}_k] = 1_{\pi_1(G/P_J)}$. This yields the desired generating set as well as the trivial relation $x_k = 1$ for $i \in J$. %Hence, after renaming the elements of $I \setminus J$, one obtains $\{x_i \mid i \in I\setminus J\}$ as generating set. 

To obtain the set of relators, for $k = 1, \dots, 4$ let $\phi_k: [0,1] \to [0,1] \times [0,1]$ where \begin{align*}
\phi_1(t) &= (t,0),\\
\phi_2(t) &= (1,t),\\
\phi_3(t) &= (1-t,1),\\
\phi_4(t) &= (0,1-t).
\end{align*}
Then the concatenation $\phi:= \phi_1 * \phi_2 * \phi_3 * \phi_4$ is a loop in the relative boundary $\partial([0,1] \times [0,1]) \simeq \mathbb{S}^1$ which generates its fundamental group. Moreover, for each characteristic map $\tilde{\chi}_{(i,j)}$ of a 2-cell, one has $\tilde{\chi}_{(i,j)}(\phi(0)) = \tilde{\chi}_{(i,j)}((0,0)) = \psi_{s_is_j}({\chi}_{(i,j)}(0,0)) = \psi_{s_is_j}(B) = P_J$ where $P_J$ is the unique 0-cell of the CW complex. Therefore, Lemma \ref{presentation cw} implies that the set of relators is given by $\{[\tilde{\chi}_{(i,j)} \circ \phi] \mid \sigma_i\sigma_j \in W^J, l(\sigma_i\sigma_j) = 2\}$. Now, 
\begin{align*}
[\tilde{\chi}_{(i,j)} \circ \phi]  & = [\tilde{\chi}_{(i,j)} \circ \phi_1] \cdot [\tilde{\chi}_{(i,j)} \circ \phi_2] \cdot [\tilde{\chi}_{(i,j)} \circ \phi_3]\cdot [\tilde{\chi}_{(i,j)} \circ \phi_4], \end{align*}
where 

$\tilde{\chi}_{(i,j)} (s,t) = \alpha_i(R(s))\alpha_j(R(t))\cdot P_J$ with $R(0) = I_{\SO(2,\RR)}, R(1) = -I_{\SO(2,\RR)} \in B_{\SO(2,\RR)}$ which implies
\begin{align*}
[\tilde{\chi}_{(i,j)} \circ \phi_1] &=  x_i, \\
[\tilde{\chi}_{(i,j)} \circ \phi_3] &=  x_i^{-1},\\          
[\tilde{\chi}_{(i,j)} \circ \phi_4] &=  x_j^{-1}.                               
\end{align*}

Moreover, \begin{align*}
(\tilde{\chi}_{(i,j)} \circ \phi_2)(t) &= \alpha_i(-I)\alpha_j(R(t))\cdot P_J\\
& = \alpha_i(-I)\alpha_j(R(t)) \alpha_i(-I) \cdot P_J, \quad \text{ since }\alpha_i(-I)\in P_J\\
& = \alpha_j(R(t))^{\epsilon(i,j)}\cdot P_J \quad \text{ by Lemma \ref{vertauschen}}.
\end{align*}  
Since $R(t)^{-1} = R(1-t)$, this yields $[\tilde{\chi}_{(i,j)} \circ \phi_2] = x_j^{\epsilon(i,j)}$. One therefore obtains $[\tilde{\chi}_{(i,j)} \circ \phi] = x_i \cdot  x_j^{\epsilon(i,j)} \cdot x_i^{-1} \cdot x_j^{-1}$. This proves the assertion.

\end{proof}

\end{thm}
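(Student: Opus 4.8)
The plan is to realize $\pi_1(G/P_J)$ via the CW structure supplied by the Bruhat decomposition, which Proposition~\ref{bruhat decomp is cw decomp} guarantees has a single $0$-cell (the base chamber $P_J$), one $1$-cell for each $i \in I$ (namely $Bs_iP_J/P_J$), and one $2$-cell for each length-two coset representative $\sigma_i\sigma_j \in W^J$. Since a CW complex's fundamental group depends only on its $2$-skeleton, I would first invoke Lemma~\ref{presentation cw} to read off a presentation directly: the generators are the homotopy classes $x_i := [\tilde\chi_i]$ of the characteristic maps of the $1$-cells, and the relators are the boundary words $[\tilde\chi_{(i,j)} \circ \phi]$ of the $2$-cells, where $\phi$ traverses $\partial([0,1]^2)$. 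The characteristic maps here are the compositions $\tilde\chi_i = \psi_{s_i}\circ\chi_i$ and $\tilde\chi_{(i,j)} = \psi_{s_is_j}\circ\chi_{(i,j)}$, using the homeomorphisms $\psi_w$ of Lemma~\ref{hom between BwB/B and BwP/P} together with the characteristic maps of Lemma~\ref{char maps}.

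The two easy families of relations come almost for free. First, for $k \in J$ the map $\gamma_k(R(t))$ lands in $G_k \subseteq P_J$, so $\tilde\chi_k$ is the constant loop at $P_J$ and hence $x_k = 1$; this gives the relations $x_k = 1$ for $k \in J$. Second, I would compute each boundary word $[\tilde\chi_{(i,j)} \circ \phi]$ edge by edge along the four sides $\phi_1,\dots,\phi_4$ of the square. The two ``horizontal'' segments contribute $x_i$ and $x_i^{-1}$ (since $R(0)=I$ and $R(1)=-I$ both lie in $B_{\SO(2,\R)}$, the $\gamma_j(R(t))$-factor stays in $B$ and the edge just traces the $i$-th $1$-cell forward then backward), and the last segment contributes $x_j^{-1}$.

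The genuinely load-bearing step is the evaluation of the remaining side $\phi_2(t) = (1,t)$, where one passes the element $\gamma_i(-I)$ through the factor $\gamma_j(R(t))$. The key input is Lemma~\ref{vertauschen}: inserting a second copy of $\gamma_i(-I)$ (legitimate since $\gamma_i(-I) \in G_i \subseteq P_J$ when $i$ is being wrapped, or more precisely since it is absorbed into the coset) converts $\gamma_i(-I)\,\gamma_j(R(t))\,\gamma_i(-I)$ into $\gamma_j(R(t))^{\epsilon(i,j)}$, so that $[\tilde\chi_{(i,j)}\circ\phi_2] = x_j^{\epsilon(i,j)}$. Concatenating the four contributions yields the single relator $x_i\,x_j^{\epsilon(i,j)}\,x_i^{-1}\,x_j^{-1}$, equivalently $x_ix_j^{\epsilon(i,j)} = x_jx_i$. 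I expect the main obstacle to be precisely the bookkeeping of this commutation: one must check that the sign $\epsilon(i,j)$ emerges correctly and that the orientation conventions on the square match the inverses appearing on the $\phi_3,\phi_4$ edges, so that the relator is exactly the commutator-type word claimed rather than its inverse or a conjugate. Assembling these over all admissible pairs $(i,j)$ and adjoining the relations $x_k=1$ then produces exactly the asserted presentation; the final sentence about the two-spherical and symmetrizable cases follows since those are precisely the hypotheses under which Proposition~\ref{bruhat decomp is cw decomp} applies.
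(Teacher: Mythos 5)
Your proposal reproduces the paper's own proof essentially step for step: the CW structure from Proposition~\ref{bruhat decomp is cw decomp} fed into Lemma~\ref{presentation cw}, the characteristic maps $\tilde\chi_i = \psi_{s_i}\circ\chi_i$ and $\tilde\chi_{(i,j)} = \psi_{s_is_j}\circ\chi_{(i,j)}$, the trivial relations $x_k = 1$ for $k \in J$ via constancy of $\tilde\chi_k$, and the edge-by-edge boundary computation in which Lemma~\ref{vertauschen} handles the $\phi_2$-side and produces the relator $x_i\,x_j^{\epsilon(i,j)}\,x_i^{-1}\,x_j^{-1}$. The only slip is your claim of ``one $1$-cell for each $i \in I$'' --- the $1$-cells are indexed by $i \in I\setminus J$, since for $k \in J$ the set $Bs_kP_J/P_J$ collapses into the $0$-cell --- but your second paragraph treats exactly this correctly, so the argument coincides with the paper's.
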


\begin{lem}
\label{cardinality fundamental group}
Let $\Pi$ be irreducible simply-laced distinct from $A_1$ and $\emptyset \neq J \subset I = \{ 1, ..., n\}$. Then $\pi_1(G/P_J) \iso {C_2}^{n-|J|}$.
\begin{proof}

For each generator $x_h$ in the presentation of Theorem~\ref{fundamental group}, one has $x_h^2 = 1$: Recall that $\lambda$ denotes the labelling map $I \to V$ of the vertex set of $\Pi$. Since $\Pi$ is connected, one has a minimal path $(i_1, \dots, i_m = h)^\lambda$ in $\Pi$ such that $i_1 \in J$. If $m = 1$, one has $x_h = 1$ by the presentation above. Let $x_{i_1}, \dots, x_{i_{m-1}}$ have order $\leq 2$. Since $\Pi$ is simply-laced, $\epsilon(m-1,h) = -1 = \epsilon (h,m-1)$ which implies $x_h  x_{i_{m-1}}^{-1} x_h^{-1} x_{i_{m-1}}^{-1} = 1$ and $x_{i_{m-1}} x_h^{-1} x_{i_{m-1}}^{-1} x_h^{-1} = 1$. Multiplying these expressions yields $x_h^2 = 1$.

Since each generator has order $\leq 2$, the relations show that the group is abelian. One concludes that $\pi_1(G/P_J) \cong {C_2}^{n-|J|}$.
%Therefore, the elements of $\pi_1(G/P_J)$ are in one-to-one correspondence with the subsets of the generator set $\{x_i \mid i = 1, \dots, n-|J|\}$. This proves the claim.
\end{proof}
\end{lem}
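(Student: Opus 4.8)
The plan is to argue entirely inside the finitely presented group
\[ \Gamma := \left\langle x_i;\ i \in I \mid x_i x_j^{\epsilon(i,j)} = x_j x_i,\ x_k = 1;\ i,j \in I,\ k \in J \right\rangle \]
furnished by Theorem \ref{fundamental group}, and to show $\Gamma \iso C_2^{\,n - |J|}$. First I would record what simple-lacedness buys: the off-diagonal Cartan entries lie in $\{0,-1\}$, so $\epsilon(i,j) = -1$ precisely when $i^\lambda$ and $j^\lambda$ are joined by an edge of $\Pi$, and $\epsilon(i,j) = +1$ otherwise. The two relations attached to an edge $\{i,j\}$ then read $x_i x_j^{-1} = x_j x_i$ and $x_j x_i^{-1} = x_i x_j$.

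The core step is the claim that every generator $x_h$ has order dividing $2$, proved by induction on the graph distance from $h$ to $J$. Connectedness of $\Pi$ together with $J \neq \emptyset$ guarantees a minimal path $(i_1, \dots, i_m = h)^\lambda$ with $i_1 \in J$; the base case $m=1$ is the relation $x_{i_1} = 1$. For the inductive step I would use that $i_{m-1}$ and $h$ are adjacent, so $\epsilon(i_{m-1},h) = \epsilon(h,i_{m-1}) = -1$: writing $a := x_{i_{m-1}}$ (which satisfies $a^2 = 1$ by hypothesis) and $b := x_h$, the edge relation $x_h x_{i_{m-1}}^{-1} = x_{i_{m-1}} x_h$ becomes $ba = ab$ after substituting $a^{-1} = a$, so $a$ and $b$ commute; feeding this into the other edge relation $a b^{-1} = b a = a b$ yields $b^{-1} = b$, i.e.\ $x_h^2 = 1$.

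Once all generators satisfy $x_h^2 = 1$, every relation $x_i x_j^{\epsilon(i,j)} = x_j x_i$ degenerates to $x_i x_j = x_j x_i$ (the exponent $\epsilon(i,j) = -1$ is absorbed by $x_j^{-1} = x_j$), so $\Gamma$ is abelian and generated by involutions; hence $\Gamma$ is an elementary abelian $2$-group generated by $\{x_i : i \in I \setminus J\}$, giving the upper bound $|\Gamma| \leq 2^{\,n - |J|}$. For the matching lower bound --- the only genuinely non-formal point --- I would exhibit a surjection onto $C_2^{\,n-|J|}$: send $x_i \mapsto e_i$ for $i \in I \setminus J$ and $x_k \mapsto 0$ for $k \in J$ into the $\mathbb{F}_2$-vector space $V := \mathbb{F}_2^{\,I \setminus J}$, and check that every relator dies. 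This is where simple-lacedness enters a second time: under the map $x_j^{\epsilon(i,j)} \mapsto \epsilon(i,j)\,e_j = e_j$ because $-1 \equiv 1 \pmod 2$, so each commutator relator maps to $e_i + e_j - e_i - e_j = 0$ while each relator $x_k$ maps to $0$. The induced surjection $\Gamma \twoheadrightarrow V$ forces $|\Gamma| \geq 2^{\,n-|J|}$, and together with the upper bound this pins down $\Gamma \iso C_2^{\,n-|J|}$.

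The main obstacle I anticipate is purely the bookkeeping in the induction: one must ensure that the relation invoked at each step is the one attached to the single edge $\{i_{m-1}, h\}$ of the path and that the previously treated vertex already carries the involution property --- it is \emph{minimality} of the path, rather than an arbitrary path, that keeps the induction well-founded. The counting half is comparatively soft once the surjection to $V$ is in hand.
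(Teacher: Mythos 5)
Your proof is correct and takes essentially the same route as the paper's: an induction along a minimal path from $J$ showing every generator is an involution (your substitution of $a^{-1}=a$ into the two edge relations is the same computation as the paper's ``multiplying the relators'', and both crucially use the inductive hypothesis $a^2=1$ --- without it the two relations alone are satisfied, e.g., in $Q_8$, so the base case at $J$ really matters), after which all relations degenerate to commutations. The one place you go beyond the paper is a welcome completion rather than a different approach: where the paper simply ``concludes'' $C_2^{\,n-|J|}$ once the group is seen to be elementary abelian on the generators $x_i$, $i\in I\setminus J$, your explicit surjection onto $\mathbb{F}_2^{\,I\setminus J}$ (the abelianization computation) supplies the lower bound that rules out any further collapsing among these generators.
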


\section{The fundamental groups of \texorpdfstring{$G(\Pi)$}{K(Pi)} and \texorpdfstring{$\Spin(\Pi,\kappa)$}{Spin(Pi,kappa)}}
\label{spin_simply_con} \label{4}

The Iwasawa decomposition $G = KAU_+$ implies that $K$ acts transitively on the generalized flag varieties $G/P_J$. In this section, we describe the generalized flag varieties and suitable covering spaces as coset spaces of $K$ and its various spin covers defined in \cite{GHKW}. This will then allow us to compute the fundamental group of $K$ and its various spin covers via locally trivial fibre bundles and homotopy exact sequences.

\begin{lem}
\label{homeo K/(K cap P_J) to G/P_J}  \label{homeo G/P_J to K/(K cap T) K_J}
The canonical map $\psi: K/(K \cap P_J) \to G/P_J$ is a homeomorphism. In particular, there exists a homeomorphism $G/P_J \to K/(K \cap T) K_J$. %In particular, $\pi_1(K/(K \cap T) K_J) \iso {C_2}^{n-|J|}$.
\end{lem}

\begin{proof}
Bijectivity follows from the product formula for subgroups since $G = KP_J$. 
By Lemma \ref{quotient basics}, the map $\tilde{\psi}: G/(K \cap P_J) \to G/P_J$ is continuous, so the same holds for its bijective restriction $\psi:   K/(K \cap P_J) \to G/P_J$.

%Continuity:  Consider the commutative diagram 
%\begin{tikzcd}
%G \arrow[rd, "\pi"] \arrow[d, "\phi"]  \\
%G/P_J \cap K \arrow[r, "\tilde{\psi}"] & G/P_J 
%\end{tikzcd}. Proceed as above.\\

In order to show that $\psi$ is closed, let $P:= P_J$ and let $\tilde{P} := P_J \cap K$. Consider the commutative diagram \[\begin{tikzcd}
K/\tilde{P} \arrow[rd, "\psi"] \arrow[d, "\iota"]  \\
G/\tilde{P}  \arrow[r, "\phi"] & G/P 
\end{tikzcd}\] where $\iota$ denotes the canonical embedding and $\phi$ denotes the canonical map from $G/\tilde{P}$ to $G/P$. Since $K$ is closed in $G$ by \cite[Section~3F]{FHHK}, the map $\iota$ is closed. By Lemma~\ref{quotient basics}, $\phi$ is open.  

Let $X\tilde{P} \subseteq K/\tilde{P}$ be a closed subset of $K/\tilde{P}$ and 
%Then the complement $\comp_{K/\tilde{P}}(X\tilde{P})$ is open. Since $\psi$ is bijective, we have $\psi(\comp_{K/\tilde{P}}(X\tilde{P})) = \comp_{G/P}(\psi(X\tilde{P})) =  \comp_{G/P}(XP)$.  
suppose that $\psi(X\tilde{P}) = X P$ is not closed in $G/P$. Then the complement $\comp_{G/P}(XP)$ is not open in $G/P$, hence the complement $\comp_{G/\tilde{P}}(\phi^{-1} (XP)) = \phi^{-1} (\comp_{G/P}(XP))$ is not open in $G/\tilde{P}$. Therefore,  $\phi^{-1} (XP)$ is not closed in $G/\tilde{P}$. This yields that $X\tilde{P} = \psi^{-1}(XP) = \iota^{-1}(\phi^{-1}(XP))$ is not closed in $K/\tilde{P}$, a contradiction.
%{\color{red}Ist das Argument $X\tilde{P} = \psi^{-1}(XP) = \iota^{-1}(\phi^{-1}(XP))$ zulaessig? $X\tilde{P} = \psi^{-1}(XP)$ sollte gelten, da $\psi$ bijektiv. Folgt die zweite Gleichheit aus dem kommutativen Diagramm, obwohl $\iota$ nicht surjektiv ist? Falls das so nicht funktioniert, alternativer Beweis ab $(*)$:
%
%Therefore,  $\phi^{-1} (XP) = XP\tilde{P}$ is not closed in $G/\tilde{P}$. This implies that $\iota^{-1}(XP\tilde{P}) = (XP \cap K)  \tilde{P}$ is not closed in $K/\tilde{P}$. Since $X \subseteq K$, we have $XP \cap K = X(P\cap K) = X\tilde{P}$. Hence, $X\tilde{P} = (XP \cap K)  \tilde{P}$ is not closed in $K/\tilde{P}$, a contradiction.
%}  

For the second claim, since $P_J = G_JB$ and $\theta(P_J) \cap P_J = G_J T$, one has $P_J \cap K = K_J (K \cap T)$. Furthermore, $G_J$ is normal in $G_J T$ which implies $K_J(K \cap T) = (K \cap T) K_J$. The claim follows.
\end{proof}

%\begin{rem}
%Since $P_J = G_JB$ and $\omega(P_J) \cap P_J = G_J T$, we have $P_J \cap K = K_J (T \cap K)$. Furthermore, $G_J$ is normal in $G_J T$ which implies K_J(T \cap 
%
%\end{rem}
%
%\begin{cor}
%\label{homeo G/P_J to K/(K cap T) K_J}
%There exists a homeomorphism $G/P_J \to K/(K \cap T) K_J$. %In particular, $\pi_1(K/(K \cap T) K_J) \iso {C_2}^{n-|J|}$.
%\end{cor}
%
%\begin{proof}
%  Since $P_J = G_JB$ and $\theta(P_J) \cap P_J = G_J T$, one has $P_J \cap K = K_J (K \cap T)$. Furthermore, $G_J$ is normal in $G_J T$ which implies $K_J(K \cap T) = (K \cap T) K_J$. The claim now follows from Lemma~\ref{homeo K/(K cap P_J) to G/P_J}. %The second claim is immediate by Corollary~\ref{cardinality fundamental group}.
%\end{proof}

The key advantage of the description of a generalized flag variety as a $K$-coset space lies in the fact that $K \cap T$ is a finite group. It is therefore straightforward to write down covering spaces of generalized flag varieties via the following well-known basic observation from covering theory.

\begin{lem}
\label{covering basics}
Let $\phi: X \to Y$ be a continuous, open, surjective map between Hausdorff topological spaces. If all fibers are finite and of constant cardinality, then $\phi$ is a covering map.
\end{lem}
%\begin{proof}
%Let $y \in Y$ and let $\phi^{-1}(y) = \{x_1, \dots, x_k\} \subseteq X$. Since $X$ is Hausdorff, for $i = 1, \dots, k$ there exist neighborhoods $U_i$ of $x_i$ with $\bigcap_{i = 1}^k U_i = \emptyset$. Let $V:= \bigcap_{i = 1}^k \phi(U_i)$. Then $V$ is open since $\phi$ is open and $V \neq \emptyset$ since $y \in V$. The preimage $\phi^{-1}(V)$ is a disjoint union of open sets $\tilde{U}_i:= \phi^{-1}(V) \cap U_i$ and each $\tilde{U}_i$ is mapped bijectively to $V$: Surjectivity is clear; for the injectivity let $y' \in V$. Then each $\tilde{U}_i$ contains a preimage of $y'$. Since all fibers have constant cardinality $k$, it follows that $|\tilde{U}_i \cap \phi^{-1}(y')| = 1$. This proves the assertion. 
%\end{proof}

This readily applies in our setting:

\begin{lem}
\label{degree of K to K/T covering}
The canonical map $\psi: K/K_J \to K/(K \cap T)K_J$ is a covering map of degree $2^{n-|J|}$.
\end{lem}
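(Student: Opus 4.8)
The plan is to verify the hypotheses of Lemma~\ref{covering basics} for $\psi$, so that the work reduces to a group-theoretic computation of the common fibre cardinality. Continuity, openness and surjectivity are immediate: applying Lemma~\ref{quotient basics}(b) to the inclusion of subgroups $K_J \leq (K\cap T)K_J$ of $K$ shows that the canonical map $\psi\colon K/K_J \to K/(K\cap T)K_J$ is continuous and open, and surjectivity is clear since $\psi$ is the projection of a homogeneous space onto a coarser quotient. Because $K$ acts transitively on both spaces, every fibre is a translate of the fibre over the base point; hence the fibres are automatically of constant cardinality, equal to the index $[(K\cap T)K_J : K_J]$, which by the second isomorphism theorem is $\lvert (K\cap T)/\bigl((K\cap T)\cap K_J\bigr)\rvert$. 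It therefore remains to identify the two finite groups $K\cap T$ and $(K\cap T)\cap K_J$.

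To compute $K\cap T$, recall that the Cartan--Chevalley involution $\theta$ acts as $-1$ on $\h_\RR(\CA)$ and hence inverts every element of $T$. Consequently $K\cap T = T^\theta = \{t\in T \mid t^2 = 1\}$ is exactly the $2$-torsion subgroup of $T$. Since $G$ is algebraically simply connected, the simple coroots $\check\alpha_1,\dots,\check\alpha_n$ form a basis of the cocharacter lattice and yield an isomorphism $T \cong (\RR^\times)^n$ via $(t_1,\dots,t_n)\mapsto \prod_i \check\alpha_i(t_i)$. Passing to $2$-torsion gives $K\cap T \cong C_2^n$, generated by the involutions $e_i = \check\alpha_i(-1) = \gamma_i(-I)$ appearing in Lemma~\ref{vertauschen}.

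For the intersection with $K_J$, restricting the RGD system to the subdiagram $\Pi_J$ realizes $G_J$ as the algebraically simply connected Kac--Moody group of type $\Pi_J$, with split torus $T_J = T\cap G_J$; here $\{\check\alpha_i \mid i\in J\}$ spans a direct summand of the cocharacter lattice, so $T_J \cong (\RR^\times)^{\lvert J\rvert}$ sits inside $T$ as a coordinate subtorus. Thus $(K\cap T)\cap K_J = K\cap T\cap G_J = K\cap T_J = T_J^\theta \cong C_2^{\lvert J\rvert}$ is the corresponding coordinate copy of $C_2^{\lvert J\rvert}$ inside $C_2^n$, generated by $\{e_i \mid i\in J\}$. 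The quotient $C_2^n/C_2^{\lvert J\rvert} \cong C_2^{\,n-\lvert J\rvert}$ then has order $2^{\,n-\lvert J\rvert}$, which is the common fibre cardinality, and Lemma~\ref{covering basics} yields that $\psi$ is a covering map of the asserted degree.

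The main obstacle is not topological but structural: the entire argument hinges on identifying $K\cap T$ with the full $2$-torsion $C_2^n$ and $(K\cap T)\cap K_J$ with precisely the coordinate $C_2^{\lvert J\rvert}$. This is where algebraic simple connectedness genuinely enters, through the fact that the simple coroots form a lattice basis, together with the identity $T\cap G_J = T_J$; once these are in place, the remaining steps are the formal consequences of Lemma~\ref{quotient basics} and Lemma~\ref{covering basics} indicated above.
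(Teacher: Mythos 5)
Your proposal is correct and takes essentially the same route as the paper: both verify the hypotheses of Lemma~\ref{covering basics} via Lemma~\ref{quotient basics} (continuity, openness, surjectivity) and then compute the constant fibre cardinality as $|K\cap T|\,/\,|(K\cap T)\cap K_J| = 2^{n}/2^{|J|} = 2^{n-|J|}$, with algebraic simple connectedness entering through the splitting $T \iso T_J \times T_{I\setminus J}$. The only differences are cosmetic: the paper cites \cite{FHHK} for $|K\cap T| = 2^n$ where you derive it from $\theta$ acting by inversion on $T$, and it counts the cosets $ktK_J$, $t \in T\cap K_{I\setminus J}$, directly at each point where you invoke $K$-equivariance of $\psi$ together with the second isomorphism theorem.
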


\begin{proof}
By Lemma \ref{quotient basics}, $\psi$ is continuous, open and surjective. 

%The fibers of elements $k(K \cap T)K_J \in K/(K \cap T)K_J$ are of the form $\{kk' K_J \mid k' \in K \cap T\}$. Since by \cite[3C and Lem. 3.16]{FHHK}, the group $(K \cap T)$ has order $2^n$ and is therefore discrete, each $k' \in   K \cap T$ has an open neighborhood $U_{k'} \subseteq K$ such that $U_{k'} \cap K \cap T = k'$. Since $kU_{k'}$ is open and the projection $K \to K/K_J$ is open by Lemma \ref{quotient basics}, the set $kU_{k'} K_J \subseteq K/K_J$ is an open neighborhood of $kK_J$ which is mapped homeomorphically onto its image by $\psi$. 

%The fiber of an arbitrary element $k(K \cap T)K_J \in K/(K \cap T)K_J$ is of the form $\{kk' K_J \mid k' \in K \cap T\}$. Since by \cite[3C and Lem. 3.16]{FHHK}, the group $(K \cap T)$ has order $2^n$, its projection $(K\cap T) K_J \subseteq K/K_J$ is also finite and therefore discrete. Hence, for each $k'K_J \in  (K\cap T) K_J$ there exists an open subset $U_{k'}K_J \subseteq K/K_J$ such that $U_{k'}K_J \cap  (K\cap T) K_J  = k' K_J$. 
By \cite[Lemma~3.20 and the discussion after Prop 3.8]{FHHK}, the group $\tilde{T}:=(K \cap T)$ has order $2^n$. 
%\rot{(exakte Referenz: \cite{FHHK}, Lemma 3.20 ($T \cap K = M$) and the discussion after Prop 3.8 ($|M|=2^n$). }
%Let $\tilde{T}:= \{1= t_1 \dots t_{2^n}\} := T \cap K$ and let $k\tilde{T}K_J \in K/ \tilde{T}K_J$. We have $\psi^{-1} (k\tilde{T}K_J) = \{k t K_J\mid t \in \tilde{T}\}$. 
%For each $t \in \tilde{T}\setminus K_J$ and $k \in K$, one has $ktK_J \neq k K_J$. Since $K/K_J$ is Hausdorff and $\tilde{T}$ is finite, there exists an open neighborhood $UK_J$ of $K_J \in K/K_J$ such that for each $t \in \tilde{T}\setminus K_J$ one has $UK_J \cap UtK_J = \emptyset$. 
%
%Now, let $k\tilde{T}K_J \in K/ \tilde{T}K_J$. Since $\psi$ is open, $\psi(kUK_J) = kU \tilde{T} K_J$ is an open neighbourhood of $k \tilde{T}K_J$ in  $K/ \tilde{T}K_J$. One has $\psi^{-1}(kU \tilde{T} K_J) = \{kUt K_J\mid t \in \tilde{T}\} = \dot{\bigcup}_{t \in \tilde{T} \setminus K_J} kUt K_J$. Moreover, each open set $kUt K_J$ is mapped homeomorphically onto its image $kU \tilde{T} K_J$, since $ku_1 \tilde{T} K_J = ku_2 \tilde{T} K_J$ implies $u_1 = u_2 t K_J$ for some $t \in \tilde{T}$, and therefore $u_1 \in U \cap Ut$ which yields $t \in K_J$. 
%
%This proves that $\psi$ is a covering map.
Note that one has $T_J \cap T_{I\setminus J} = \{1\}$, since the Kac--Moody group $G$ being algebraically simply connected implies $T \iso T_J \times T_{I \setminus J}$.
%$1 \neq g \in G_J \cap G_{I\setminus J}$ implies $B \neq gB \in \Res_J(B) \cap \Res_{I \setminus J}(B) = \Res_{J \cap (I \setminus J)}(B) = \Res_\emptyset(B) = \{B\}$, a contradiction. Here, $\Res_{J}(B)$ denotes the $J$-residue of $B$ in $G/B$, that is, $\Res_{J}(B) = \{ gB \in G/B \mid g \in P_{J}\}$, where $G_{J} \subseteq G_JB = P_J$.  For the intersection of residues see, e.g., \cite[Ex. 5,32]{AB}.
Now, for $k \in K$ one has $\psi^{-1} (k\tilde{T}K_J) = \{ktK_J \mid t \in \tilde{T}\}$, and since $T_J \cap T_{I\setminus J} = \{1\}$, one has $k t_i K_J \neq  k t_j K_J$ for $t_i \neq t_j \in T \cap K_{I\setminus J}$. This yields $|\psi^{-1} (k\tilde{T}K_J)| = |\{ktK_J \mid t \in \tilde{T}\}| = |\{ktK_J \mid t \in T \cap K_{I\setminus J}\}| = | T \cap K_{I\setminus J} | = | T_{I\setminus J} \cap K_{I\setminus J} |= 2^{n-|J|}$. Lemma \ref{covering basics} now shows that $\psi$ is a covering map.\end{proof}

\label{general case}

\begin{de}[{\cite[Definition~16.2]{GHKW}}]
Let  $\Pia$ be the graph on the vertex set $V$ with edge set \[\{ \{i,j\} \in V \times V \mid i \neq j \in I, \epsilon(i,j) = \epsilon(j,i) = -1  \},\]
where $\epsilon(i,j)$ denotes the parity of the corresponding Cartan matrix entry, as defined in Notation \ref{epsilon}.

An \emph{admissible colouring} of $\Pi$ is a map $\kappa: V \to \{1,2\}$ such that
\begin{enumerate}
	\item $\kappa(i^\lambda) = 1$ whenever there exists $j \in I \setminus\{i\}$ with $\epsilon(i,j) = 1$ and $ \epsilon(j,i) = -1$.
	\item the restriction of $\kappa$ to any connected component of the graph $\Pia$ is a constant map.
\end{enumerate}
 Define $c(\Pi,\kappa)$ to be the number of connected components of $\Pia$ on which $\kappa$ takes the value 2.
For a subgraph $\Pia_J$ of $\Pia$ that is a union of connected components of $\Pia$ let $\kappa_J$ be the corresponding restriction of $\kappa$.\end{de}

\begin{de}
\label{colouring}
 Let be the colouring $\gamma: V \to \{r,g,b\}$ of $\Pia$ that to each connected component $\bar{\Pi}^{\mathrm{adm}}$ of $\Pia$ assigns a colour as follows: Let $\bar{\Pi}^{\mathrm{adm}}$ be coloured red (denoted by $r$) if it contains a vertex $i^\lambda$ such that there exists a vertex $j^\lambda \in V$ satisfying $\epsilon(i,j) = 1$ and $ \epsilon(j,i) = -1$; let $\bar{\Pi}^{\mathrm{adm}}$ be coloured green ($g$) if it is not red and consists only of an isolated vertex; and blue ($b$) else. 
%
%
%
%
%\begin{enumerate}
%	\item $\gamma(i^\lambda) = r$ whenever there exists $j \in I \setminus\{i\}$ with $\epsilon(i,j) = 1$ and $ \epsilon(j,i) = -1$.
%	\item $\gamma(i^\lambda) =g$ whenever for each $j \in I \setminus\{i\}$, one has $(\epsilon(i,j), \epsilon(j,i)) \in \{(1,1), (-1,1)\}$.
%	\item $\gamma(i^\lambda) = b$ whenever the connected component of $i^\lambda$ in $\Pia$ contains more than one vertex and case (a) applies to none of the vertices in this component.
	%there exists $j \in I \setminus \{i\}$ with $\epsilon(i,j) = -1 = \epsilon(j,i)$, there exists no $j \in I \setminus\{i\}$ with $\epsilon(i,j) = 1$ and $ \epsilon(j,i) = -1$ (i.e. case (a) does not apply), and the connected component of $i^\lambda$ in $\Pia$ does not contain a vertex $k^\lambda$ with $\gamma(k^\lambda) = r$.
%	\item The restriction of $\gamma$ to any connected component of the graph $\Pia$ is a constant map.
%\end{enumerate}
%For a vertex $i^\lambda$ of $\Pia$ with $\gamma(i^\lambda) = x \in \{r,g,b\}$, we will use the phrasing "$i^\lambda$ has colour $x$".
\end{de}
We refer to the introduction for a discussion of various examples.
%\begin{ex}
%\label{colouring example}
%\input{dynkin.tex}
%\end{ex}

\begin{de+rem}
\label{Spin remark}
Recall from the introduction that in \cite[Definition~16.16]{GHKW}, the \emph{spin group $\Spin(\Pi,\kappa)$ with respect to $\Pi$ and $\kappa$} is defined as the universal enveloping group of a particular $\Spin(2)$-amalgam $\{\tilde{G}_{ij},  \tilde{\oldphi}_{ij}^i\mid i \neq j \in I \}$ where the isomorphism type of $\tilde{G}_{ij}$ depends on the $(i,j)$- and $(j,i)$-entries of the Cartan matrix of $\Pi$ as well as the values of $\kappa$ on the corresponding vertices. The group $K(\Pi)$ can be regarded as (being uniquely isomorphic to) the universal enveloping group of an $\SO(2,\RR)$-amalgam $\{G_{ij},  \oldphi_{ij}^i\mid i \neq j \in I \}$ where each $\tilde{G}_{ij}$ covers $G_{ij}$ via an epimorphism $\alpha_{ij}$.
% the degree of the cover being $2$ if $i^\lambda$ and $j^\lambda$ are contained in a common connected component of $\Pia$ with $\kappa(i^\lambda) = 2$ and 1 else. 
By \cite[Lemma 16.18]{GHKW} there exists a canonical central extension $\rho_{\Pi,\kappa}: \Spin(\Pi, \kappa) \to K(\Pi)$ that makes the following diagram commute for all $i \neq j \in I$:  
\[\begin{tikzcd}
\tilde{G}_{ij} \arrow[r, "\tilde{\tau}_{ij}"] \arrow[d, " \alpha_{ij}"] & \Spin(\Pi,\kappa)  \arrow[d, "\rho_{\Pi,\kappa}"] \\
G_{ij} \arrow[r, "{\tau}_{ij}"] & K(\Pi) 
\end{tikzcd}\] 
Here, $\tilde{\tau}_{ij}$ and ${\tau}_{ij}$ denote the respective canonical maps into the universal enveloping groups. 

By \cite[Proposition~3.9]{GHKW}, one has \[\ker(\rho_{\Pi,\kappa}) = \langle \tilde{\tau}_{ij}(\ker(\alpha_{ij})) \mid i \neq j \in I \rangle_{\Spin(\Pi,\kappa)}.\]

 Each connected component of $\Pia$ that admits a vertex $i^\lambda$ with $\kappa(i^\lambda) = 2$ contributes a factor $2$ to the order of $\ker(\rho_{\Pi,\kappa})$ so that $\Spin(\Pi,\kappa)$ is a $2^{c(\Pi,\kappa)}$-fold central extension of $K(\Pi)$. 

In particular, this implies that the subspace topology on $K(\Pi)$ defines a unique topology on $\Spin(\Pi)$  that turns the extension into a covering map. The resulting group topology on $\Spin(\Pi, \kappa)$ is called the {\em Kac--Peterson topology} on $\Spin(\Pi, \kappa)$. 

In the case of an irreducible simply-laced diagram $\Pi$, the only admissible colourings are the (trivial) constant colouring $V \to \{1\}$ (that every diagram admits) and the constant colouring $\kappa: V \to \{2\}$; we define the \emph{spin group $\Spin(\Pi)$ with respect to $\Pi$} as $\Spin(\Pi) := \Spin(\Pi, \kappa)$.

\end{de+rem}

Before turning to the general case, we will first consider the simply-laced case and formulate and prove the corresponding simplified versions of the main theorems.

%\rot{Spaetestens an dieser Stelle muesste $\Spin(\Pi)$ fuer $\Pi$ simply laced definiert werden: \cite[Definition 11.5]{GHKW}}

\begin{lem}
\label{homeo Spin(Pi)/Spin(Pi_{ij}) and K/K_{ij}}
Let $\Pi$ be irreducible simply-laced distinct from $A_1$ and let $\{i,j\} \subseteq I$ be the index set of an $A_2$-subdiagram of $\Pi$. Then the spaces $\Spin(\Pi)/\Spin(\Pi_{ij})$ and $K/K_{ij}$ are homeomorphic.

\begin{proof}
From \cite{GHKW} (exact references below) it follows that the kernel of the covering map $\Spin(\Pi) \to K$ coincides with the kernel of the covering map $\Spin(\Pi_{ij}) \to K_{ij}$ and is equal to the group $Z:= \{\pm 1_{\Spin(\Pi)}\}$ (for the definition of $-1_{\Spin(\Pi)}$, see below). 
%In \cite{GHKW} it is established that for any irreducible simply-laced Dynkin diagram $\Pi$ the covering map $\Spin(\Pi) \to K(\Pi)$ has kernel $Z:= \{\pm 1_{\Spin(\Pi)}\}$. 
This is a consequence of the following facts regarding an irreducible simply-laced diagram $\Pi$ (all referring to \cite{GHKW}):
\begin{itemize}
\item There is an epimorphism $\Spin(2) \to \SO(2,\RR)$ with kernel $\{\pm 1_{\Spin(2)}\}$ (see [Theorem~6.8]).
\item In $\Spin(\Pi)$, all elements $\tilde{\tau}_{ij}(\tilde{\oldphi}_{ij}^i(-1_{\Spin(2)}))$ coincide
%, i.e., the $-1$-elements of the $\Spin(2)$-subgroups of $\Spin (\Pi)$ corresponding to the amalgam $\mathcal{A}(\Pi, \Spin(2))$ 
(see [Lemma~11.7]).
\item Let $-1_{\Spin(\Pi)}:= \tilde{\tau}_{ij}(\tilde{\oldphi}_{ij}^i(-1_{\Spin(2)}))$ for an arbitrary pair $i \neq j \in I$.
% $\Spin(2)$-subgroup $\lambda_{ij}(\tilde{\phi}_{ij}^i(\Spin(2)))$. 
Then $1_{\Spin(\Pi)} \neq -1_{\Spin(\Pi)}$ (see [Corollary~11.16]).
\item $\Spin(\Pi)$ is a 2-fold central extension of $K(\Pi)$ (see [Theorem~11.17]).
\end{itemize}
Hence, the 2-fold covering map $\tilde{\phi}: \Spin (\Pi) \to K(\Pi)$ induces a continuous bijective map $\phi: \Spin(\Pi) / \Spin(\Pi_{ij}) \to  (\Spin(\Pi) / Z) / (\Spin(\Pi_{ij}) / Z) \to K / K_{ij}$. One has a commutative diagram \[\begin{tikzcd}
\Spin(\Pi) \arrow[r, "\tilde{\phi}"] \arrow[d, "\pi_1"] & K \arrow[d, "\pi_2"] \\
\Spin(\Pi)/\Spin(\Pi_{ij}) \arrow[r, "\phi"] & K/K_{ij} 
\end{tikzcd}.\]
Since $\tilde{\phi}$ is open as a covering map and $\pi_2$ is open by Lemma \ref{quotient basics}, it follows that $\phi$ is a homeomorphism.  
\end{proof}
\end{lem}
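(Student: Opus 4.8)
The plan is to realize the desired homeomorphism as the map induced on quotients by the two-fold covering $\tilde{\phi}\colon\Spin(\Pi)\to K$, and then to promote the resulting continuous bijection to a homeomorphism by an openness argument. First I would assemble the structural facts from \cite{GHKW}: that $\Spin(\Pi)\to K$ is a central two-fold extension, that its kernel is $Z:=\{\pm 1_{\Spin(\Pi)}\}$ with $-1_{\Spin(\Pi)}=\tilde{\tau}_{ij}(\tilde{\oldphi}^i_{ij}(-1_{\Spin(2)}))$ independent of the chosen pair, and that this element is genuinely nontrivial. The role of the hypothesis that $\{i,j\}$ spans an $A_2$-subdiagram is to make the rank-two piece $\Spin(\Pi_{ij})$ a double cover of $K_{ij}\cong\SO(3,\RR)$, namely $\Spin(3)$; in particular the kernel of the restricted covering $\Spin(\Pi_{ij})\to K_{ij}$ is again $Z$, so that $Z\subseteq\Spin(\Pi_{ij})$.

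Granting $Z\subseteq\Spin(\Pi_{ij})$, the covering $\tilde{\phi}$ descends to a well-defined map $\phi\colon\Spin(\Pi)/\Spin(\Pi_{ij})\to K/K_{ij}$, which one may view as factoring through the intermediate quotient $(\Spin(\Pi)/Z)/(\Spin(\Pi_{ij})/Z)$, the two inner quotients being $K$ and $K_{ij}$ respectively. Continuity of $\phi$ is inherited from continuity of $\tilde{\phi}$ together with the universal property of the quotient topology. For bijectivity I would argue that surjectivity comes from surjectivity of $\tilde{\phi}$, while injectivity follows because two cosets $g\Spin(\Pi_{ij})$ and $g'\Spin(\Pi_{ij})$ have equal image in $K/K_{ij}$ exactly when $g^{-1}g'\in\Spin(\Pi_{ij})\cdot Z=\Spin(\Pi_{ij})$, the last equality being precisely the containment $Z\subseteq\Spin(\Pi_{ij})$.

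The final step upgrades $\phi$ to a homeomorphism. I would record the commutative square whose vertical maps are the canonical projections $\pi_1\colon\Spin(\Pi)\to\Spin(\Pi)/\Spin(\Pi_{ij})$ and $\pi_2\colon K\to K/K_{ij}$, whose top map is $\tilde{\phi}$, and whose bottom map is $\phi$. Since $\tilde{\phi}$ is open (a covering map is open) and $\pi_2$ is open by Lemma \ref{quotient basics}, the composite $\phi\circ\pi_1=\pi_2\circ\tilde{\phi}$ is open; because $\pi_1$ is a surjective quotient map, this forces $\phi$ itself to be open. A continuous open bijection is a homeomorphism, which completes the argument.

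The hard part will be the kernel identification in the first paragraph — verifying that the deck group $Z$ of the large covering is contained in $\Spin(\Pi_{ij})$, equivalently that the large and small coverings share the same kernel. Everything else is a formal manipulation of quotient topologies. This containment is exactly where the $A_2$-hypothesis and the amalgam bookkeeping of \cite{GHKW} enter: one must know that the distinguished central element $-1_{\Spin(\Pi)}$ is already realized inside the rank-two subgroup indexed by $\{i,j\}$ (which is why the pair is required to span an edge rather than a disconnected $A_1\times A_1$) and that it remains nontrivial there.
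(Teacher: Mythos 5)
Your proposal is correct and takes essentially the same route as the paper: you assemble the identical structural facts from \cite{GHKW} (the two-fold central extension, the distinguished element $-1_{\Spin(\Pi)}$ lying in the rank-two subgroup $\Spin(\Pi_{ij})$ and being nontrivial, hence the equality of the two kernels $Z$), descend $\tilde{\phi}$ to the induced continuous bijection $\phi$, and upgrade it to a homeomorphism via the same commutative square using openness of $\tilde{\phi}$ (as a covering map) and of $\pi_2$ (Lemma \ref{quotient basics}). Your explicit verifications --- injectivity via $\tilde{\phi}^{-1}(K_{ij}) = \Spin(\Pi_{ij}) \cdot Z = \Spin(\Pi_{ij})$, and openness of $\phi$ from openness of $\phi \circ \pi_1 = \pi_2 \circ \tilde{\phi}$ together with surjectivity of the quotient map $\pi_1$ --- merely spell out steps the paper leaves implicit.
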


\begin{lem}
\label{K/K_J is simply connected.}
Let $\Pi$ be irreducible simply-laced distinct from $A_1$ and $\emptyset \neq J \subset I = \{ 1, ..., n\}$. Then $K/K_J$ is simply connected.
\end{lem}

\begin{proof}
$K/K_J$ is connected since $K$ is generated by connected groups isomorphic to $\SO(2,\RR)$. Hence by Lemma \ref {degree of K to K/T covering} it is a non-trivial cover of $K/(K \cap T)K_J$ of degree $2^{n-|J|}$. The claim now follows from Corollary~\ref{cardinality fundamental group} and Corollary~\ref{homeo G/P_J to K/(K cap T) K_J}.
\end{proof}

%Recall from the introduction that $\Spin(\Pi)$ is the universal enveloping group of the $\Spin(2)$-amalgam $\mathcal{A}(\Pi, \Spin(2)) =  \{\hat{H}_{ij}, \hat{\oldphi}_{ij}^i \mid i \neq j \in I\}$. For each pair $i \neq j$ in $I$ there exists a canonical homomorphism $\hat{\tau}_{ij}: \hat{H}_{ij} \to \Spin(\Pi)$ where $\hat{H}_{ij}$ is equal to either $\Spin(3)$ or $\Spin(2) \times \Spin(2)$, depending on whether $(i,j)^\lambda$ is an edge of $\Pi$ or not. Let $\Spin(\Pi_{ij}):= \tau_{ij}(\hat{H}_{ij})$. 

%By \cite[4.2.4]{Hus}, for a closed subgroup $H$ of a topological group $G$, the projection $p: G \to G/B$ is a principal $G$-bundle. By \cite[4.1, Cor.]{Pal}, this bundle is locally trivial if $H$ is a closed Lie group. Since locally trivial bundles admit local cross sections, \cite[7.4, Cor.]{Ste} implies that in this case $p: G \to G/B$ is fibre bundle with fibre $H$. This proves the following Lemma.
%
%
%\begin{lem}
%Let $J$ be as above. We have a fibre bundle 
%\[\begin{tikzcd}
%\Spin(\Pi_J) \arrow[r] & \Spin(\Pi) \arrow[r] & \Spin(\Pi)/\Spin(\Pi_J). 
%\end{tikzcd}\]
%\end{lem}

The following proposition provides our main result in the simply laced case.

\begin{prop} \label{simplyconnected}
Let $\Pi$ be irreducible simply-laced distinct from $A_1$. Then $\Spin(\Pi)$ is simply connected with respect to the Kac--Peterson topology. In particular, $\pi_1(G) \iso C_2$.

\begin{proof}
By \cite[4.2.4]{Hus}, for a closed subgroup $H$ of a topological group $G$, the projection $p: G \to G/H$ is a principal $H$-bundle. By Lemma \ref{Palais}, %\cite[Corollary in Section~4.1]{Pal},
this bundle is locally trivial if $H$ is a (closed) Lie group (note that, by \cite[Theorem~5.11]{HR}, every locally compact subgroup of a topological group is closed). Since locally trivial bundles admit local cross sections, \cite[Corollary in Section~7.4]{Ste1} implies that, if $H$ is a closed Lie group, then $p: G \to G/H$ is a fibre bundle with fibre $H$. This yields a {locally trivial} fibre bundle
\[\begin{tikzcd}
\Spin(\Pi_{ij}) \arrow[r] & \Spin(\Pi) \arrow[r] & \Spin(\Pi)/\Spin(\Pi_{ij}). 
\end{tikzcd}\]
By \cite[Chapter~4]{Hatcher}, this yields the homotopy long exact sequence
\begin{eqnarray}
\pi_4(\Spin(\Pi)/\Spin(\Pi_{ij})) & \to & \pi_3(\Spin(\Pi_{ij})) \to \pi_3(\Spin(\Pi)) \to
\pi_3(\Spin(\Pi)/\Spin(\Pi_{ij})) \notag \\ & \to & \pi_2(\Spin(\Pi_{ij})) \to \pi_2(\Spin(\Pi)) \to
\pi_2(\Spin(\Pi)/\Spin(\Pi_{ij})) \notag \\ & \to & \pi_1(\Spin(\Pi_{ij})) \rightarrow \pi_1(\Spin(\Pi)) \rightarrow \pi_1(\Spin(\Pi)/\Spin(\Pi_{ij})) \notag \\  \label{homotopylong}
\end{eqnarray}
from which one extracts the exact sequence
\[ \{1\} = \pi_1(\Spin(\Pi_{ij})) \rightarrow \pi_1(\Spin(\Pi)) \rightarrow \pi_1(\Spin(\Pi)/\Spin(\Pi_{ij})). \]
By Lemmas~~\ref{homeo Spin(Pi)/Spin(Pi_{ij}) and K/K_{ij}} and \ref{K/K_J is simply connected.} one has $ \pi_1(\Spin(\Pi)/\Spin(\Pi_{ij})) \iso \pi_1(K/K_{ij}) = \{1\}$ and so by exactness $\pi_1(\Spin(\Pi)) = \{ 1 \}$.

The second assertion follows from the fact that $\pi_1(G) \iso \pi_1(K)$ by Corollary \ref{FundamentalGroups2} and the fact that $\Spin(\Pi)$ is a 2-fold central extension of $K$ by \cite[Theorem~11.17]{GHKW}.
\end{proof}
\end{prop}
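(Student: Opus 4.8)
The plan is to realize $\Spin(\Pi)$ as the total space of a fibre bundle whose fibre is the simply connected group attached to an $A_2$-subdiagram and whose base is a simply connected homogeneous space, and then to extract $\pi_1(\Spin(\Pi)) = \{1\}$ from the homotopy long exact sequence of the fibration. Since $\Pi$ is irreducible, simply laced, and has at least two vertices, it is connected and contains an edge, so I may fix an index set $\{i,j\} \subseteq I$ spanning a subdiagram of type $A_2$. The associated subgroup is $\Spin(\Pi_{ij}) \iso \Spin(3) \iso \SU(2)$, which is simply connected, so $\pi_1(\Spin(\Pi_{ij})) = \{1\}$.

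First I would establish that the projection $p \colon \Spin(\Pi) \to \Spin(\Pi)/\Spin(\Pi_{ij})$ is a locally trivial fibre bundle. The subgroup $\Spin(\Pi_{ij}) \iso \Spin(3)$ is compact, hence locally compact, hence closed by \cite[Theorem~5.11]{HR}, and it is a Lie group in its subspace topology. By \cite[4.2.4]{Hus} the map $p$ is a principal $\Spin(\Pi_{ij})$-bundle, and by the Palais-type fibration result (Lemma~\ref{Palais}) this bundle is locally trivial because the fibre is a closed Lie group; local cross sections then exist, and \cite[Corollary in Section~7.4]{Ste1} upgrades $p$ to a fibre bundle with fibre $\Spin(\Pi_{ij})$. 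The homotopy long exact sequence of this fibration \cite[Chapter~4]{Hatcher} then contains the segment
\[
\pi_1(\Spin(\Pi_{ij})) \to \pi_1(\Spin(\Pi)) \to \pi_1(\Spin(\Pi)/\Spin(\Pi_{ij})).
\]

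Next I would show that both outer terms vanish. The left-hand term is $\pi_1(\Spin(3)) = \{1\}$. For the right-hand term, Lemma~\ref{homeo Spin(Pi)/Spin(Pi_{ij}) and K/K_{ij}} gives a homeomorphism $\Spin(\Pi)/\Spin(\Pi_{ij}) \iso K/K_{ij}$, and $K/K_{ij}$ is simply connected by Corollary~\ref{K/K_J is simply connected.}; hence $\pi_1(\Spin(\Pi)/\Spin(\Pi_{ij})) = \{1\}$. Exactness then forces $\pi_1(\Spin(\Pi)) = \{1\}$. For the final assertion, $\Spin(\Pi)$ is a $2$-fold central extension of $K$ by \cite[Theorem~11.17]{GHKW}; being simply connected it is the universal cover of $K$, whence $\pi_1(K) \iso C_2$, and $\pi_1(G) \iso \pi_1(K) \iso C_2$ via the weak homotopy equivalence $K \hookrightarrow G$ (Corollary~\ref{FundamentalGroups2}).

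The main obstacle is the first step: passing from the purely algebraic central extension $\Spin(\Pi) \to K$ to an honest \emph{locally trivial} fibre bundle in the Kac--Peterson topology. The delicate point is that $\Spin(\Pi)$ is an infinite-dimensional topological group, so the classical differential-geometric argument for local triviality of $G \to G/H$ does not apply verbatim; one must invoke a theorem valid for closed Lie subgroups of general topological groups and verify that $\Spin(\Pi_{ij})$ really is such a subgroup, i.e. that it is closed (via local compactness) and that its intrinsic Lie topology agrees with the subspace topology. Once local cross sections are secured, the remainder is a formal diagram chase through the long exact sequence together with the two simple-connectedness inputs.
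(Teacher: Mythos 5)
Your proposal is correct and takes essentially the same approach as the paper's proof: the identical chain of citations (closedness of the fibre via \cite[Theorem~5.11]{HR}, the principal bundle of \cite[4.2.4]{Hus}, local triviality via Lemma~\ref{Palais}, and \cite[Corollary in Section~7.4]{Ste1}) producing the locally trivial bundle $\Spin(\Pi_{ij}) \to \Spin(\Pi) \to \Spin(\Pi)/\Spin(\Pi_{ij})$, the same segment of the homotopy long exact sequence with the same two vanishing inputs from Lemma~\ref{homeo Spin(Pi)/Spin(Pi_{ij}) and K/K_{ij}} and Corollary~\ref{K/K_J is simply connected.}, and the same concluding use of the $2$-fold central extension \cite[Theorem~11.17]{GHKW} together with Corollary~\ref{FundamentalGroups2}. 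Your only addition is to make explicit the choice of an $A_2$-subdiagram and the implicit rank assumption $|I|\geq 2$ (needed anyway, since for $A_1$ one has $\pi_1(G)\iso\ZZ$), which the paper leaves tacit.
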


We will now return to the case of a general irreducible Dynkin diagram $\Pi$.

\begin{nota}
\label{HJ}
For a subset $J \subseteq I$ let
\[ H_J:= \left\langle x_i; \quad i \in J \mid x_ix_j^{\epsilon(i,j) } = x_jx_i,; \quad  i,j \in J\right\rangle.\]
\end{nota}

\begin{lem}
\label{connected component subgroups}
Let $J \subseteq I$ be the index set of a connected component $\Pia_J$ of $\Pia$. Then the following hold:
\begin{enumerate}
	\item If $\Pia_J$ has colour $r$, then $H_J \iso C_2^{|J|}$.
	\item If $\Pia_J$ has colour $g$, then $|J| = 1$ and $H_J \iso \ZZ$.
	\item If $\Pia_J$ has colour $b$, then $|H_J| = 2^{|J|+1}$.
\end{enumerate}
\end{lem}

\begin{proof}
(a): If $\Pia_J$ has colour $r$, then there exist $i \in J, j \in I \setminus\{i\}$ with $\epsilon(i,j) =1$ and $\epsilon(j,i) = -1$. This implies $x_ix_j = x_j x_i$ and $x_j x_i^{-1} = x_i x_j$ which yields $x_i^2 = 1$. Now, if $\{i^\lambda, k^\lambda\}$ is an edge in $\Pia$, then $x_i x_k^{-1}x_i^{-1} x_k^{-1} = 1 = x_k x_i^{-1} x_k^{-1} x_i^{-1}$. Multiplying these expressions shows that $x_i^2 = 1$ implies $x_k^2 = 1$. Since $\Pia_J$ is connected, this yields $x_k^2 = 1$ for each $k \in J$. Commutativity then follows from the relations of $H_J$. 

(b): By definition, nodes of colour $g$ are isolated in $\Pia$.

(c): Let $\Pi_J^{\mathrm{sl}}$ be the simply laced Dynkin diagram with vertex set $J^\lambda$ and edge set $\{ \{i,j\} \in J \times J \mid \{i,j\} \text{ edge in }\Pia\}$.
Let $\tilde{T}:= K(\Pi_J^{\mathrm{sl}}) \cap T(\Pi_J^{\mathrm{sl}})$ where $T(\Pi_J^{\mathrm{sl}})$ denotes the standard maximal torus of $G(\Pi_J^{\mathrm{sl}})$. Then by Lemma \ref{degree of K to K/T covering} and Proposition \ref{simplyconnected}, $\Spin(\Pi_J^{\mathrm{sl}}) \to  K(\Pi_J^{\mathrm{sl}})  \to K(\Pi_J^{\mathrm{sl}})/\tilde{T}$ is a universal covering map where $K(\Pi_J^{\mathrm{sl}})  \to K(\Pi_J^{\mathrm{sl}})/\tilde{T}$ has degree $2^{|J|}$ and  $\Spin(\Pi_J^{\mathrm{sl}}) \to  K(\Pi_J^{\mathrm{sl}})$ has degree $2$ according to  \cite[Theorem~11.17]{GHKW}. Since  $\pi_1(K(\Pi_J^{\mathrm{sl}})/\tilde{T}) \iso H_J$ by Theorem \ref{fundamental group} and Lemma~\ref{homeo K/(K cap P_J) to G/P_J}, this implies $|H_J| = 2^{|J|+1}$.
%Then by the results for the simply laced case,
%
%\rot{Give concrete references:
%
%Then by Theorem 3.6
  %$\Spin() \to K() \to K(...)/\tilde{T}$
%is a universal covering map.
%According to Proposition 2.13 one has
  %$\pi_1(K(...)/ \tilde T) \cong H_J$.}
%
%
 %$\Spin(\Pi_J^{\mathrm{sl}})$ is the universal cover of $K(\Pi_J^{\mathrm{sl}})/\tilde{T}$ where $\pi_1(K(\Pi_J^{\mathrm{sl}})/\tilde{T}) \iso H_J$; 
%furthermore, $K(\Pi_J^{\mathrm{sl}}) \to  K(\Pi_J^{\mathrm{sl}})/\tilde{T}$ is a covering map of degree $2^{|J|}$ 
%and by \cite[Theorem~11.17]{GHKW}, $\Spin(\Pi_J^{\mathrm{sl}})$ is a double cover of $K(\Pi_J^{\mathrm{sl}})$. This implies $|H_J| = |\pi_1(K(\Pi_J^{\mathrm{sl}})/\tilde{T}| = 2^{|J|+1}$.
\end{proof}

\begin{prop}
\label{fundamental group as product}
Let $J_1 \sqcup \dots \sqcup J_k = I$ be the index sets of the connected components of $\Pia$. If the Bruhat decomposition satisfies the conclusion of Proposition \ref{bruhat decomp is cw decomp}, then \[\pi_1(G/B) \iso H_{J_1} \times \dots \times H_{J_k}. \] 

\begin{proof}
By Theorem \ref{fundamental group}, $\pi_1(G/B) \iso H_I$ where \[H_I = \left\langle x_i; \quad i \in I \mid x_ix_j^{\epsilon(i,j) } = x_jx_i,; \quad  i,j \in I\right\rangle\] as defined in \ref{HJ}. For $J \subseteq I$, let 
\begin{equation}
\label{R_J}
R_J:= \{x_ix_j^{\epsilon(i,j)}x_i^{-1} x_j^{-1} \mid i,j \in J\},
\end{equation} the set of relators of $H_J$. Let \[R^c:= \bigcup_{\substack{i^\lambda, j^\lambda \text{in different}\\\text{conn. components}}}\{x_ix_j x_i^{-1} x_j^{-1}\},\] the set of commutators of pairs of generators from different connected components of $\Pia$. Then \[H_{J_1} \times \dots \times H_{J_k} \iso \left \langle x_i; \quad i \in I \mid \bigcup_{l=1}^k R_{J_l} \cup R^c \right\rangle =: H.\] Let $\pi_{H_I}$ and $\pi_{H}$ be the canonical homomorphisms from the free group $\langle x_i; \quad i \in I\rangle$ to $H_I$ and $H$, respectively. It suffices to show that $\bigcup_{l=1}^k R_{J_l} \cup R^c \subseteq \ker \pi_{H_I}$ and $R_{I} \subseteq \ker \pi_H$.
It is clear that a relator $x_ix_j^{\epsilon(i,j)}x_i^{-1} x_j^{-1} \in R_I$ with $i^\lambda$ and $j^\lambda$ in a common connected component is contained in $\bigcup_{l=1}^k R_{J_l} \subseteq \ker \pi_H$, so let $x_ix_j^{\epsilon(i,j)}x_i^{-1} x_j^{-1} \in R_I$ with $i^\lambda$ and $j^\lambda$ in different connected components of $\Pia$. Then one has $(\epsilon(i,j), \epsilon(j,i)) \in \{ (1,1), (1,-1), (-1,1)\}$. If $\epsilon(i,j) = 1$, then $x_ix_j^{\epsilon(i,j)}x_i^{-1} x_j^{-1} \in R^c \subseteq \ker \pi_H$, so let $\epsilon(i,j) = -1$ and $\epsilon (j,i) = 1$. Then $j^\lambda$ is contained in a connected component $\Pia_{J_m}$ of colour $r$, and by Lemma \ref{connected component subgroups}, $\langle x_l; \quad l \in J_m \mid R_{J_m} \rangle = H_{J_m} \iso C_2^{|J_m|}$.

This implies that $x_j$ has order $2$ in ${H_{J_m}}$, hence $x_j^2 \in \langle \langle R_{J_m} \rangle \rangle_{\langle x_i;  i \in I\rangle}$, the normal closure of $R_{J_m}$ in the free group.

 Since $\langle \langle R_{J_m} \rangle \rangle_{\langle x_i;  i \in I\rangle} \subseteq \ker \pi_H$, one obtains $x_j^2 \in \ker \pi_H$. Since $x_j x_i x_j^{-1} x_i^{-1} \in R^c \subseteq \ker \pi_H$ and $\epsilon(i,j) = -1$, one therefore has \[\pi_H(x_ix_j^{\epsilon(i,j)}x_i^{-1} x_j^{-1}) = \pi_H (x_j x_i x_j^{-1} x_i^{-1} \cdot x_ix_j^{\epsilon(i,j)}x_i^{-1} x_j^{-1} ) = 1_H.\] 
%using the fact that $x_j^{-1}x_j^{\epsilon(i,j)} = x_j^{-2} \in \ker \pi_H.$ 

Conversely, it is clear that $\bigcup_{l=1}^k R_{J_l}  \subseteq R_{I} \subseteq \ker \pi_{H_I}$, so let $x_i x_j x_i^{-1} x_j^{-1} \in R^c$ with $i^\lambda$ and $j^\lambda$ in different connected components. As above, we can assume that $\epsilon(i,j) = -1$ and $\epsilon(j,i) = 1$. Since $x_j x_i^{\epsilon(j,i)} x_j^{-1} x_i^{-1} \in \ker \pi_{H_I}$, this implies  \[ \pi_{H_I}(x_i x_j x_i^{-1} x_j^{-1}) = \pi_{H_I}(x_j x_i^{\epsilon(j,i)} x_j^{-1} x_i^{-1} \cdot x_i x_j x_i^{-1} x_j^{-1}) = 1_{H_I}.\] This proves the assertion.
\end{proof}
\end{prop}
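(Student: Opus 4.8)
The plan is to identify $\pi_1(G/B)$ with the abstractly presented group $H_I$ and then show that this presentation is Tietze-equivalent to the canonical presentation of the direct product $H := H_{J_1} \times \dots \times H_{J_k}$. Applying Theorem~\ref{fundamental group} with $J = \emptyset$ gives $\pi_1(G/B) \iso H_I$, where $H_I$ is presented on the generators $\{x_i \mid i \in I\}$ by the relator set $R_I = \{x_i x_j^{\epsilon(i,j)} x_i^{-1} x_j^{-1} \mid i,j \in I\}$. Since the index sets $J_1, \dots, J_k$ partition $I$, the direct product $H$ is presented on the same generating set by $\bigcup_{l} R_{J_l} \cup R^c$, where $R_{J_l}$ is the relator set of $H_{J_l}$ and $R^c$ consists of the commutators $x_i x_j x_i^{-1} x_j^{-1}$ for $i^\lambda, j^\lambda$ lying in distinct components. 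Both groups being quotients of the free group $F := \langle x_i \mid i \in I\rangle$ by the normal closure of their relator sets, it suffices to prove that these two normal closures coincide, that is, $R_I \subseteq \ker\pi_H$ and $\bigcup_l R_{J_l} \cup R^c \subseteq \ker\pi_{H_I}$, where $\pi_{H_I}$ and $\pi_H$ denote the canonical projections from $F$.

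For the inclusion $\bigcup_l R_{J_l} \cup R^c \subseteq \ker\pi_{H_I}$, each $R_{J_l}$ is already a subset of $R_I$, and for a commutator in $R^c$ the two indices $i^\lambda, j^\lambda$ lie in different components of $\Pia$, so $(\epsilon(i,j), \epsilon(j,i)) \neq (-1,-1)$; whichever of the two parities equals $+1$ yields, via the corresponding defining relation of $H_I$, that $x_i$ and $x_j$ commute. Hence every generator of $H$'s relator set maps to $1$ in $H_I$.

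The reverse inclusion $R_I \subseteq \ker\pi_H$ is where the real work lies. A relator $x_i x_j^{\epsilon(i,j)} x_i^{-1} x_j^{-1}$ with $i^\lambda, j^\lambda$ in the same component lies in the corresponding $R_{J_l}$, and if $\epsilon(i,j) = +1$ it is literally a commutator in $R^c$; both map to $1$ in $H$. The main obstacle is the remaining case of indices in distinct components with $\epsilon(i,j) = -1$, forcing $\epsilon(j,i) = +1$. Here I would first use the commutativity supplied by $R^c$ to rewrite the relator as $x_j^{-2}$ in $H$, reducing the problem to proving $x_j^2 = 1$ in $H$. The point is that the configuration $\epsilon(j,i) = 1$, $\epsilon(i,j) = -1$ is exactly the condition of Definition~\ref{colouring}(a) making $j^\lambda$ red, so its component $\Pia_{J_m}$ has colour $r$; Lemma~\ref{connected component subgroups}(a) then gives $H_{J_m} \iso C_2^{|J_m|}$, whence $x_j^2 = 1$ already holds in the factor $H_{J_m}$ and therefore in $H$. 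This closes the hard case.

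Once both inclusions are established the two normal closures agree, giving $\pi_1(G/B) \iso H_I \iso H = H_{J_1} \times \dots \times H_{J_k}$. I expect the only genuinely delicate step to be the last one: the relation $x_j^2 = 1$ needed to kill the asymmetric cross-component relator is not visible among the relators of $H_{J_m}$ when these are read in isolation, and it is precisely the red colouring of $j^\lambda$'s component --- encoded through Lemma~\ref{connected component subgroups}(a) --- that supplies it.
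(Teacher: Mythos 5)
Your proposal is correct and takes essentially the same route as the paper: after identifying $\pi_1(G/B)$ with $H_I$ via Theorem~\ref{fundamental group}, both arguments compare the normal closures of $R_I$ and $\bigcup_l R_{J_l}\cup R^c$ in the free group, and both dispose of the only delicate case ($\epsilon(i,j)=-1$, $\epsilon(j,i)=1$ across components) by noting that $j^\lambda$ then lies in a component of colour $r$, so Lemma~\ref{connected component subgroups}(a) forces $x_j^2=1$ in $H_{J_m}$ and hence in $H$. Your rewriting of the asymmetric relator as $x_j^{-2}$ using the commutators in $R^c$ before invoking the order-two relation is only a cosmetic reorganization of the paper's computation.
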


%\begin{lem}[{\cite[Theorem~1.1]{Calcut}}]
%\label{surjectivity of quotient-induced homo}
%Let $f:(X,x_0)\to(Y,y_0)$ be a quotient map of topological spaces, where $X$ is locally path-connected and $Y$ is semilocally simply-connected. If each fiber $f^{-1}(y)$ is connected, then the induced homomorphism \mbox{$f_{*}:\pi_1(X,x_0)\to \pi_1(Y,y_0)$} is surjective.
%\end{lem}
%
%\begin{cor}
%Let $J \subseteq I$ and $\phi: K \to K/K_J$ the canonical projection. Then the induced homomorphism $\phi_*:\pi_1(K) \to \pi_1(K/K_J)$ is surjective.
%
%\begin{proof}
%Covering spaces of CW complexes carry an induced CW structure -- the characteristic maps lift to characteristic maps of the covering space; see e.g. \cite[Theorem~8.10]{Bredon}. 
%Lemma \ref{homeo K/(K cap P_J) to G/P_J} and Lemma \ref{degree of K to K/T covering} therefore imply that $K$ and $K/K_J$ are CW complexes. By \cite[Proposition~A.4]{Hatcher}, this implies that they are locally contractible, and hence locally path-connected as well as semilocally simply connected. Since the fibers of $\phi$ are translates of $K_J$ and therefore connected, Lemma \ref{surjectivity of quotient-induced homo} shows that $\phi_*$ is surjective.
%\end{proof}
%\end{cor}

\begin{thm}
\label{fundamental group of K}
Let $\Pi$ be an irreducible Dynkin diagram such that $G(\Pi)$ satisfies the conclusions of Proposition \ref{bruhat decomp is cw decomp} and of Theorem \ref{FundamentalGroups2}. Let $n(g)$ and $n(b)$ be the number of connected components of $\Pia$ of colour $g$ and $b$, respectively. Then 
\[\pi_1(G(\Pi)) \iso \ZZ^{n(g)} \times C_2^{n(b)}.\] In particular, this statement holds in the symmetrizable case.
%\rot{Fuer $G(\Pi)$ statt $K(\Pi)$ formulieren und auf Anhang verweisen.}
\end{thm}

\begin{proof}
By Theorem \ref{FundamentalGroups2}, $\pi_1(G) \iso \pi_1(K)$, so it suffices to prove that $\pi_1(K)$ is of the given isomorphism type; note that Theorem \ref{FundamentalGroups2} has only been established in the symmetrizable case.
Let $J \subseteq I$. The diagram \[\begin{tikzcd}
K \arrow[r, "\phi"] \arrow[d, "p"] & K/K_J \arrow[d, "q"] \\
K/(K \cap T)\arrow[r, "\psi"] & K/(K \cap T)K_J \end{tikzcd},\] with all maps being the respective canonical maps, commutes. Since the maps are continuous by Lemma \ref{quotient basics}, one obtains a commutative diagram of induced homomorphisms \[\begin{tikzcd}
\pi_1(K) \arrow[r, "\phi_*"] \arrow[d, "p_*"] & \pi_1(K/K_J) \arrow[d, "q_*"] \\
\pi_1(K/(K \cap T))\arrow[r, "\psi_*"]& \pi_1(K/(K \cap T)K_J) \end{tikzcd},\] 
where $p_*$ and $q_*$ are injective, because $p$ and $q$ are covering maps (see Lemma~\ref{degree of K to K/T covering}). By Theorem \ref{fundamental group} and Lemma \ref{homeo K/(K cap P_J) to G/P_J}, $\pi_1(K/(K \cap T))$ and  $\pi_1(K/(K \cap T)K_J)$ can be identified with $H_I = \langle x_i; \quad i \in I \mid R_I\rangle$ and $\langle x_i; \quad i \in I \mid R_I \cup \{x_j \mid j \in J\}\rangle$, respectively ($R_I$ as in (\ref{R_J}) in the above proof), where $\psi_*$ corresponds to the canonical homomorphism between these groups as the proof of Theorem \ref{fundamental group} shows.

For the index set $J_m$ of a connected component of $\Pia$, let $\bar{J}_m:= I \setminus J_m$. Then by Proposition \ref{fundamental group as product},  \[\left\langle x_i; \quad i \in I \mid R_I \cup \{x_j \mid j \in \bar{J}_m\}\right \rangle \iso \left(\prod_{i=1}^k{H_{J_i}}\middle/\prod_{\substack{i=1\\i \neq m}}^k{H_{J_i}}\right) \iso H_{J_m}.\] Summing up, one obtains a commutative diagram \[\begin{tikzcd}
\pi_1(K) \arrow[r, "\phi_*"] \arrow[d, "p_*"] & \pi_1(K/K_{\bar{J}_m}) \arrow[d, "q_*"] \\
\prod_{i=1}^k{H_{J_i}} \arrow[r, "\pi_m"]& H_{J_m} \end{tikzcd},\] having replaced $p_*$ and $q_*$ from above with the corresponding monomorphisms.

By Lemma \ref{degree of K to K/T covering}, the covering $K/K_{\bar{J}_m} \to K/K_{\bar{J}_m}(K \cap T)$ has degree $2^{n-|\bar{J}_m|} = 2^{|J_m|}.$ This implies that $\tilde{H}_m:= q_*(\pi_1(K/K_{\bar{J}_m}))$ is a subgroup of $H_{J_m}$ of index $2^{|J_m|}$. The isomorphism type of $\tilde{H}_m$ is uniquely determined by this index and Lemma \ref{connected component subgroups}: One has \[\tilde{H}_m \iso \begin{cases} \{1\}, & \text{if }\Pia_{J_m}\text{ has colour }r,\\
 2\ZZ \iso \ZZ, & \text{if }\Pia_{J_m}\text{ has colour }g,\\
C_2 & \text{if }\Pia_{J_m}\text{ has colour }b. \end{cases}\] 
Again by Lemma \ref{degree of K to K/T covering}, the covering $K \to K/(K \cap T)$ has degree $2^n$, so $p_*(\pi_1(K))$ is a subgroup of index $2^n$ of $\prod_{i=1}^k{H_{J_i}}$. The commutative diagram above implies that $\pi_1(K) \iso p_*(\pi_1(K)) \subseteq \pi_m^{-1}(\tilde{H}_m)$. Since this holds for the index set of every connected component of $\Pia$, one has $p_*(\pi_1(K)) \subseteq \tilde{H}_1 \times \dots \times \tilde{H}_m$. But the latter is a subgroup of index $2^{|J_1|}\cdot \dots \cdot 2^{|J_m|} = 2^n$ of $\prod_{i=1}^k{H_{J_i}}$, so equality holds. This proves the assertion.
\end{proof}

%\rot{Spaetestens an dieser Stelle muessten $\Spin(\Pi, \kappa)$ und $ \rho_{\Pi, \kappa}$ definiert werden: \cite[Definition 16.16, Lemma 16.18]{GHKW}.}

%\begin{ex}
%For the Dynkin diagrams from example \ref{colouring example} one obtains
%\begin{itemize}
	%\item 
%\end{itemize}
%\rot{Give $\pi_1(G)$ for the groups from example \ref{colouring example}}
%\end{ex}

%\begin{rem}
%\label{Spin remark}
%Recall from the introduction that $\Spin(\Pi, \kappa)$ is the universal enveloping group of the $\Spin(2)$-amalgam $\mathcal{A}(\Pi, \Spin(2)) =  \{\tilde{H}_{ij}, \hat{\oldphi}_{ij}^i \mid i \neq j \in I\}$, that $K(\Pi)$ can be regarded as the universal enveloping group of an $\SO(2)$-amalgam $\{G_{ij},  \oldphi_{ij}^i\mid i \neq j \in I \}$ where each $\tilde{G}_{ij}$ covers $G_{ij}$ via an epimorphism $\alpha_{ij}$, and that there exists a canonical central extension $\rho_{\Pi,\kappa}: \Spin(\Pi, \kappa) \to K(\Pi)$ that makes the following diagram commute for all $i \neq j \in I$:  
%\[\begin{tikzcd}
%\tilde{G}_{ij} \arrow[r, "\tilde{\tau}_{ij}"] \arrow[d, " \alpha_{ij}"] & \Spin(\Pi,\kappa)  \arrow[d, "\rho_{\Pi,\kappa}"] \\
%G_{ij} \arrow[r, "{\tau}_{ij}"] & K(\Pi) 
%\end{tikzcd}\] 
%Here, $\tilde{\tau}_{ij}$ and ${\tau}_{ij}$ denote the respective canonical maps into the universal enveloping groups. 
%\end{rem}

\begin{thm}
\label{fundamental group of Spin}
Let $\Pi$ be an irreducible Dynkin diagram such that $G(\Pi)$ satisfies the conclusion of Proposition \ref{bruhat decomp is cw decomp}. Let $n(g)$ be the number of connected components of $\Pia$ of colour $g$. Let $n(b,\kappa)$ be the number of connected components of $\Pia$ on which $\kappa$ takes the value 1 and which have colour $b$. Then 
\[\pi_1(\Spin(\Pi,\kappa)) \iso \ZZ^{n(g)} \times C_2^{n(b,\kappa)}.\] In particular, this statement holds in the $2$-spherical and the symmetrizable case.
\end{thm}

\begin{proof}
By \cite[Theorem~17.1]{GHKW}, the map $ \rho_{\Pi,\kappa}:\Spin(\Pi, \kappa) \to K$ is a $2^{c(\Pi, \kappa)}$-fold central extension. Let $J$ be the index set of a connected component of $\Pia$ and let $\bar{J}:= I \setminus \bar{J}$. Let $U_{\bar{J}} := \langle \tilde{G}_{ij} \mid i \neq j \in J \rangle_{\Spin(\Pi,\kappa)}$.

 Since $\rho_{\Pi,\kappa}(U_{\bar{J}}) \subseteq K_{\bar{J}}$, one has a continuous induced map $\rho_{\Pi,\kappa}^J: \Spin(\Pi,\kappa) / \Spin(\Pi_{\bar{J}},\kappa_{\bar{J}}) \to K/K_{\bar{J}}$ making the following diagram commute, where $\tilde{\phi}$ and $\phi$ denote the respective canonical maps:
\[\begin{tikzcd}
\Spin(\Pi, \kappa) \arrow[r, "\tilde{\phi}"] \arrow[d, " \rho_{\Pi,\kappa}"] & \Spin(\Pi,\kappa) /U_{\bar{J}} \arrow[d, "\rho_{\Pi,\kappa}^J"] \\
K \arrow[r, "\phi"] & K/K_{\bar{J}} 
\end{tikzcd}\]

Each fiber of $\rho_{\Pi,\kappa}^J$ has cardinality \begin{align*}
|\{xU_{\bar{J}} \mid x \in \ker \rho_{\Pi,\kappa}\}| &= |\ker(\rho_{\Pi,\kappa}) /(U_{\bar{J}} \cap \ker(\rho_{\Pi,\kappa}))|\\
%& = |\ker \rho_{\Pi,\kappa} / \ker \rho_{\Pi_{\bar{J}},\kappa_{\bar{J}}}| \\
& = 2^{c(\Pi, \kappa) - c(\Pi_{\bar{J}},\kappa_{\bar{J}}) }\text{ by Remark \ref{Spin remark}}
\end{align*} 
Since $\rho_{\Pi,\kappa}$ is open as a covering map and $\phi$ is open by Lemma \ref{quotient basics}, it follows from Lemma \ref{covering basics} that $\rho_{\Pi,\kappa}^J$ is a covering map.

From here the proof is analogous to the proof of Theorem \ref{fundamental group of K}, after extending the commutative diagram at the beginning of the latter proof:
\[\begin{tikzcd}
\Spin(\Pi, \kappa) \arrow[r, "\tilde{\phi}"] \arrow[d, " \rho_{\Pi,\kappa}"] & \Spin(\Pi,\kappa) / U_{\bar{J}} \arrow[d, "\rho_{\Pi,\kappa}^J"] \\
K \arrow[r, "\phi"] \arrow[d, "p"] & K/K_J \arrow[d, "q"] \\
K/(K \cap T)\arrow[r, "\psi"] & K/(K \cap T)K_J \end{tikzcd}.\]
One obtains that $\pi_1(\Spin(\Pi, \kappa)) \iso \prod_{i = 1}^k H'_{J_i}$ where  each $H'_{J_m}$ is a subgroup of index $2^{c(\Pi, \kappa) - c(\Pi_{\bar{J}_m},\kappa_{\bar{J}_m})}$ of \[\tilde{H}_m \iso \begin{cases} \{1\}, & \text{if }\Pia_{J_m}\text{ has colour }r,\\
 2\ZZ \iso \ZZ, & \text{if }\Pia_{J_m}\text{ has colour }g,\\
C_2 & \text{if }\Pia_{J_m}\text{ has colour }b. \end{cases}\] Since $\Pia_{\bar{J}_m}$ is the union of all connected components except $\Pia_{J_m}$, one has $c(\Pi, \kappa) - c(\Pi_{\bar{J}_m},\kappa_{\bar{J}_m}) \in \{0,1\}$, depending on whether $\kappa$ is constant $1$ or $2$ on  $\Pia_{J_m}$. This implies \[{H}'_m \iso \begin{cases} \{1\}, & \text{if }\Pia_{J_m}\text{ has colour }r,\\
 \ZZ, & \text{if }\Pia_{J_m}\text{ has colour }g,\\
C_2 & \text{if }\Pia_{J_m}\text{ has colour }b\text{ and }\kappa\equiv 1\text{ on }\Pia_{J_m},\\
\{1\}, & \text{if }\Pia_{J_m}\text{ has colour }b\text{ and }\kappa\equiv 2\text{ on }\Pia_{J_m}.
 \end{cases}\]
This proves the assertion. 
\end{proof}

Now all theorems from the introduction have been proved.

\appendix

\section{Maximal unipotent subgroups of Kac--Moody groups and applications to Kac--Moody symmetric spaces (by Tobias Hartnick and Ralf K\"ohl)}

Throughout this appendix we fix a symmetrizable generalized Cartan matrix ${\bf A}$ with underlying diagram $\Pi$. We consider the corresponding algebraically simply-connected semisimple split real Kac--Moody group $G:=G(\Pi) = [G_\CA(\RR),G_\CA(\RR)]$ as given by Definition \ref{Kac--Moody group}. As in Section \ref{Split-real Kac--Moody groups} we also denote by $K_\CA(\RR) \leq G_\CA(\RR)$ the fixed point subgroup of the Cartan--Chevalley involution $\theta$ and set $K := K(\Pi) = K_\CA(\RR) \cap G$. We equip all of these groups with the restrictions of the Kac--Peterson topology.

The goal of this appendix is to relate the topology of $G$ to the topology of $K$. Our main result (see Theorem \ref{FundamentalGroups2} below) asserts that the inclusion $K \hookrightarrow G$ is a weak homotopy equivalence. This implies in particular that $\pi_1(G) \cong \pi_1(K)$ and thus allows the computation of $\pi_1(G)$ by the methods presented in the main part of the article.

In the spherical case the subgroup $K < G$ is even a deformation retract and hence the inclusion $K \hookrightarrow G$ is a homotopy equivalence, as a consequence of the topological Iwasawa decomposition of $G$. This decomposition also implies that the associated Riemannian symmetric space $G/K$ is contractible.

While real Kac--Moody groups also possess an Iwasawa decomposition, it is currently unknown whether this decomposition is topological. To establish our main result we thus have to work with a certain central quotient $\overline{G}$ of $G$, for which the topological Iwasawa decomposition was established in \cite{FHHK}. We will show that the image $\overline{K}$ of $K$ in $\overline{G}$ is a strong deformation retract and that the reduced Kac--Moody symmetric space $\overline{G}/\overline{K}$ is contractible. Since the finite-dimensional central extension $G \to \overline{G}$ is a Serre fibration by a classical result of Palais \cite{Pal}, this will allow us to deduce the desired result about $G$ and $K$.

\subsection{The topological Iwasawa decomposition}
Let us denote by $\Ad: \calG_\CA(\RR) \to \Aut(\g_\RR(\CA))$ and $\Ad: G(\Pi) \to \Aut(\g_\RR'(\CA))$ the adjoint representations of $ \calG_\CA(\RR)$ and $G = G(\Pi)$ respectively. We recall from \cite{FHHK} that the quotient map $G \to {\rm Ad}(G)$ factors as
\begin{equation}\label{AdjointSemisimpleQuotient}
G \xrightarrow{p_1} \overline{G} \xrightarrow{p_2} {\rm Ad}(G),
\end{equation}
where $\overline{G}$ is uniquely determined by the fact that $\overline{T} := p_1(T) \cong (\RR^\times)^{{\rm rk}(\CA)}$ is a torus and $p_2$ has finite kernel. The group $\overline{G}$ is referred to as the \emph{semisimple adjoint quotient} of $G$, and we equip it with the quotient topology with respect to the Kac--Peterson topology on $G$. We will denote by $U^{\pm}$ the positive, respectively negative maximal unipotent subgroup of $G(\Pi)$ as introduced in Section \ref{Split-real Kac--Moody groups}. Also recall from Section \ref{Split-real Kac--Moody groups} that $A_\RR:= \exp(\h_\RR(\CA)) \leq G_\CA(\RR)$ and set $A:= A_\RR\cap G$. 
\begin{lem}[(Iwasawa decomposition)]
Multiplication induces continuous bijections
\[
K_\CA(\RR) \times A_\RR \times U^+ \to G_\CA(\RR) \quad \text{and} \quad K \times A \times U^+ \to G.
\]
\end{lem}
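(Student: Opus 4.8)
The plan is to prove both bijections by a single uniform argument; I write $(\hat G,\hat K,\hat T,\hat A)$ for either $(\calG_\RR(\CA),K_\RR(\CA),T_\RR,A_\RR)$ or $(G,K,T,A)$, and I recall from Section~\ref{Split-real Kac--Moody groups} that $\hat A=\exp(\h_\RR(\CA))\subseteq\hat T$ is the torsion-free identity component of the split torus $\hat T$. Continuity of the multiplication map $\hat K\times\hat A\times U^+\to\hat G$ is immediate: by Definition and Remark~\ref{definition kp} the Kac--Peterson topology is a group topology, so multiplication is continuous, and the three factors carry the subspace topology. Thus the entire content is bijectivity; note that I do \emph{not} claim a homeomorphism, which is precisely the open problem mentioned in the introduction.

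For surjectivity I would start from the Iwasawa decomposition $\hat G=\hat K\hat B$ (for $G$ this is \cite[Corollary~1.7]{DMGH}, quoted in Section~\ref{Split-real Kac--Moody groups}; for $\calG_\RR(\CA)$ it follows in the same way from the RGD datum once the full torus $T_\RR$ is adjoined), together with the factorization $\hat B=\hat B_+=\hat T\,U^+$. It then suffices to split the torus as $\hat T=(\hat K\cap\hat T)\,\hat A$. This holds because $\theta$ acts as $-1$ on $\h_\RR(\CA)$ and therefore inverts $\hat T$, so $\hat K\cap\hat T=\hat T^{\theta}=\{t\in\hat T\mid t^2=1\}$ is the (finite) $2$-torsion subgroup, while $\hat A=\hat T^{\circ}\cong(\RR_{>0})^{n}$ is the torsion-free identity component; hence $\hat T=(\hat K\cap\hat T)\times\hat A$. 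Writing $g=k\,t\,u$ with $t=t_0a$, $t_0\in\hat K\cap\hat T$, $a\in\hat A$, $u\in U^+$, and absorbing the torsion part into $\hat K$, gives $g=(kt_0)\,a\,u\in\hat K\hat A U^+$.

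For injectivity I would first reduce to the identity $\hat K\cap\hat A U^+=\{1\}$. Since $\hat A$ normalizes $U^+$ and $\hat A\cap U^+\subseteq\hat T\cap U^+=\{1\}$ (using that $U^+$ meets $\hat T$ trivially, Section~\ref{Split-real Kac--Moody groups}), the set $\hat A U^+$ is a group in which every element factors uniquely as $au$. Thus $k_1a_1u_1=k_2a_2u_2$ forces $k_2^{-1}k_1\in\hat K\cap\hat A U^+$, and once this gives $k_1=k_2$ the unique factorization in $\hat A U^+$ yields $a_1=a_2$ and $u_1=u_2$. To prove $\hat K\cap\hat A U^+=\{1\}$, take $g=au\in\hat K$ with $a\in\hat A$, $u\in U^+$. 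Applying $\theta$ and using $\theta(a)=a^{-1}$ together with $\theta(U^+)=U^-$ (as $\theta$ swaps $e_i\leftrightarrow f_i$), the equation $g=\theta(g)$ becomes $au=a^{-1}\theta(u)$, i.e.\ $a^2u=\theta(u)\in U^-$. Since $a^2u\in\hat A U^+\subseteq\hat B_+$ and $\hat B_+\cap U^-\subseteq(\hat B_+\cap\hat B_-)\cap U^-=\hat T\cap U^-=\{1\}$, we get $a^2u=1$; hence $u=1$ and $a^2=1$, and as $\hat A$ is torsion-free, $a=1$.

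The only structural inputs are the Iwasawa decomposition $\hat G=\hat K\hat B$ and the twin-$BN$-pair facts $\hat B_+\cap\hat B_-=\hat T$ and $U^{\pm}\cap\hat T=\{1\}$; everything else is a short computation with the involution $\theta$, identical for both groups. Accordingly, I expect the only point needing care to be the verification of the Iwasawa decomposition and the torus splitting for the \emph{full} group $\calG_\RR(\CA)$ (rather than for the derived group $G$, where $\hat G=\hat K\hat B$ is quoted directly), since there the decomposition must be assembled from the RGD structure together with the enlarged torus $T_\RR$; the $\theta$-based uniqueness argument then transfers verbatim.
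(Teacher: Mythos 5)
Your argument is correct, and it is worth noting that the paper itself gives \emph{no} proof of this lemma: it is stated as known, with the Iwasawa decomposition $G=KB$ quoted from \cite[Corollary~1.7]{DMGH} and the refined topological version for the quotient $\overline{G}$ deferred to \cite[Theorem~3.23]{FHHK}. What you have written is essentially the standard derivation that underlies those references: surjectivity from $\hat G=\hat K\hat B$, $\hat B=\hat T U^+$ and the splitting $\hat T=(\hat K\cap\hat T)\hat A$; injectivity reduced to $\hat K\cap \hat A U^+=\{1\}$ and settled by the $\theta$-trick $a^2u=\theta(u)\in \hat B_+\cap U^-=\{1\}$, where $\hat B_+\cap U^-\subseteq \hat B_+\cap\hat B_-=\hat T$ is the Birkhoff-type fact for the RGD system. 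All the structural inputs you invoke ($U^\pm\cap T_\RR=\{1\}$, $T_\RR$ normalizes $U^\pm$, $\theta(U^+)=U^-$, torsion-freeness of $\hat A$) are available in Section~\ref{Split-real Kac--Moody groups}, so the proof is complete and self-contained where the paper is not.

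One small point deserves more care than you give it: the assertion that $\theta$ \emph{inverts} all of $\hat T$ (so that the $2$-torsion lies in $\hat K$) does not follow from the $\exp$-equivariance $\theta(\exp X)=\exp(\theta X)$ alone, because the torsion part of $T_\RR$ is not in the image of $\exp$. For the derived group $G$ this is easily repaired: the $2$-torsion of $T$ is generated by the elements $\check\alpha_i(-1)=\gamma_i(-I)$, which lie in the fundamental rank-one subgroups, and Remark~\ref{homeos alpha beta} gives $\gamma_i\bigl((x^t)^{-1}\bigr)=\gamma_i(x)^\theta$, so $\check\alpha_i(-1)$ is $\theta$-fixed since $-I\in\SO(2,\RR)$. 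For the full group $\calG_\RR(\CA)$, whose torus may contain $2$-torsion outside the subgroup generated by the $\check\alpha_i(-1)$, one has to appeal to the construction of $\theta$ (it is induced by $-\mathrm{id}$ on the cocharacter data, hence acts on $T_\RR\cong(\RR^\times)^{n}$ by inversion) rather than to uniqueness-from-$\exp$; with that remark inserted, your uniform treatment of both cases goes through verbatim.
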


\begin{proof}
This follows from \cite[Proposition~5.1(a)]{Kac1985}.
\end{proof}

A more refined statement has been established in \cite{FHHK} for the semisimple adjoint quotient $\overline{G}$ of $G$. To state this result, denote by 
\[G \xrightarrow{p_1} \overline{G} \xrightarrow{p_2} {\rm Ad}(G)\]
the canonical quotient maps from \eqref{AdjointSemisimpleQuotient} and set $\overline{K} := p_1(K)$, $\overline{T} := p_1(T) \cong (\R^\times)^{{\rm rk}(\mathbf A)}$, $\overline{A} := p_1(A) = \overline{T}^o$ and $\overline{U^+} := p_1(U^+)$. Equip these groups with their respective quotient topologies and note that $p_1$ restricts to a bijection between $U^+$ and $\overline{U^+}$.
\begin{thm}[{Topological Iwasawa decomposition, \cite[Theorem~3.23]{FHHK}}]\label{TopIwasawa}
Multiplication induces homeomorphisms
\[
\overline{K} \times \overline{A} \times \overline{U^+} \to \overline{G} \quad \text{and} \quad \overline{U^+} \times \overline{A} \times\overline{K}  \to \overline{G}.
\]
\end{thm}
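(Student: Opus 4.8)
The statement has two parts; I would first establish the homeomorphism $\overline{K}\times\overline{A}\times\overline{U^+}\to\overline{G}$ and then deduce the ``opposite'' factorization formally. The preceding Iwasawa Lemma already supplies a continuous bijection $K\times A\times U^+\to G$; composing with the quotient map $p_1$, and using that $p_1$ restricts to a bijection $U^+\to\overline{U^+}$, to surjections $K\to\overline K$ and $A\to\overline A$, and makes $\overline A\cong(\RR_{>0})^{{\rm rk}(\CA)}$ a genuine Lie group (because $\overline T$ is a torus), one obtains a continuous bijection $\mu\colon\overline K\times\overline A\times\overline{U^+}\to\overline G$. Thus everything reduces to continuity of $\mu^{-1}$, i.e.\ to continuity of the three Iwasawa projections. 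Once the first homeomorphism is known, the second follows at once: inversion $g\mapsto g^{-1}$ is a self-homeomorphism of $\overline G$ carrying a product $k\,a\,u$ to $u^{-1}a^{-1}k^{-1}\in\overline{U^+}\,\overline A\,\overline K$, so it conjugates $\mu$ into the multiplication map $\overline{U^+}\times\overline A\times\overline K\to\overline G$.

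Both source and target are $k_\omega$ spaces: $\overline G$ is a Hausdorff quotient of the $k_\omega$ group $G$ (cf.\ \cite[Proposition~7.10]{HKM}), and a finite product of $k_\omega$ groups is $k_\omega$. I would therefore invoke the following criterion: a continuous bijection $f\colon X\to Y$ between $k_\omega$ spaces with exhaustions $(X_n)$, $(Y_n)$ is a homeomorphism provided the exhaustions are \emph{compatible}, that is $f(X_n)\subseteq Y_n$ (automatic, since continuous images of compacta are compact) and, crucially, each $f^{-1}(Y_n)$ is relatively compact in $X$. As the forward inclusion is free, the whole problem collapses to a \emph{boundedness} (properness) statement: the Iwasawa coordinates of any compact subset of $\overline G$ must again lie in compact subsets of $\overline K$, $\overline A$ and $\overline{U^+}$.

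The $\overline A$-coordinate is controlled representation-theoretically. For a dominant integral weight $\lambda$ whose integrable highest-weight module $V(\lambda)$ (see \cite[\S1.3]{Kac2}, \cite[Section~6.1.6]{Kum}) factors through $\overline G$, fix a positive-definite $\theta$-contravariant form on $V(\lambda)$, with respect to which $\overline K$ acts by isometries. Since $U^+$ is generated by the positive root groups and $e_iv_\lambda=0$, the highest-weight vector is fixed by $\overline{U^+}$, while $\overline A$ acts on it through the positive character $a\mapsto a^\lambda$; hence for $g=kau$ one has the clean identity $\|g\,v_\lambda\|=a^\lambda\,\|v_\lambda\|$. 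As $g\mapsto g\,v_\lambda$ is continuous into $V(\lambda)$, this exhibits $g\mapsto a^\lambda$ as a continuous function, and letting $\lambda$ range over a $\ZZ$-basis of the character lattice of $\overline T$ recovers $a(g)$ continuously; the same identity bounds $a^\lambda$ away from $0$ and $\infty$ on compacta, so the $\overline A$-projection is proper.

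Having peeled off $a(g)$, it remains to separate $\overline K$ and $\overline{U^+}$ in $\tilde g:=g\,a(g)^{-1}$. Here the orbit maps do the bookkeeping: $\tilde g\,v_\lambda=\kappa(g)\,v_\lambda$ is continuous and pins down $\kappa(g)$ modulo the $\overline K$-stabilizer of the highest-weight line, and as $\lambda$ varies over the fundamental weights these stabilizers intersect in the finite group $\overline K\cap\overline T$. Combining this with the properness of $a$, one shows that the $\overline K$- and $\overline{U^+}$-parts of a compact subset of $\overline G$ are relatively compact, which is exactly the compatibility needed to apply the $k_\omega$ criterion and conclude that $\mu$ is a homeomorphism. \textbf{The main obstacle} is precisely this last boundedness statement for the infinite-dimensional factor $\overline{U^+}$: unlike the finite-dimensional setting there is no compact $\overline K$ to absorb the estimates, and the direct-limit topology on $\overline{U^+}$ must be matched against the Kac--Peterson $k_\omega$-exhaustion of $\overline G$ in root-group coordinates. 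Making this quantitative --- controlling the unipotent factor of a convergent sequence --- is the genuinely hard analytic input and constitutes the technical heart of \cite{FHHK}.
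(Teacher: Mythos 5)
First, a structural point: the paper itself contains \emph{no} proof of Theorem~\ref{TopIwasawa} --- the statement is imported verbatim from \cite[Theorem~3.23]{FHHK} (compare also \cite[Theorem~4(a)]{Kac1983}), so there is no internal argument to compare yours against; the relevant benchmark is the Kac--Peterson/FHHK argument that the citation stands in for. Measured against that, your architecture is the right one. Deducing the factorization $\overline{U^+}\times\overline{A}\times\overline{K}\to\overline{G}$ from the other via inversion is fine; reformulating openness of $\mu$ as properness of the Iwasawa projections via the $k_\omega$ property is exactly in the spirit of Proposition~\ref{komega} (note only that one needs the preimages $f^{-1}(Y_n)$ compact, which your ``relatively compact'' delivers since these preimages are closed in a Hausdorff space); and controlling the $\overline{A}$-coordinate through the identity $\|g\,v_\lambda\|=a^\lambda\|v_\lambda\|$ on integrable highest-weight modules with a $\theta$-contravariant positive form is precisely the classical Kac--Peterson mechanism (it does presuppose continuity of the $\overline{G}$-action on $V(\lambda)$ in the Kac--Peterson topology, which deserves a citation rather than silence). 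One smaller point you gloss over: injectivity of $\mu$ on $\overline{G}$ is not formal, since $p_1$ is injective on neither $K$ nor $A$; it holds because $\ker p_1\subseteq T$ is \emph{central}, so a coincidence $k_1a_1u_1=k_2a_2u_2z$ with $z\in\ker p_1$ can be absorbed using $T=(K\cap T)A$, together with the triviality of $\overline{K}\cap\overline{A}\,\overline{U^+}$. Say this; ``one obtains a continuous bijection'' is not yet an argument.

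The genuine gap is exactly where you plant your flag, and planting a flag is not the same as closing the gap: the boundedness of the $\overline{K}$- and $\overline{U^+}$-coordinates of a compact subset of $\overline{G}$. Your highest-weight bookkeeping shows that $g\mapsto k(g)v_\lambda\in V(\lambda)$ is continuous and determines $k(g)$ up to the finite group $\overline{K}\cap\overline{T}$, but injectivity-mod-finite of the map $k\mapsto (kv_\lambda)_\lambda$ does \emph{not} give properness: $\overline{K}$ is neither compact nor locally compact, its Kac--Peterson topology is a colimit topology, and a continuous injection from such a group into a product of representation spaces can fail badly to be a homeomorphism onto its image --- to conclude, one must actually match a $k_\omega$-exhaustion of $\overline{K}$, and of $\overline{U^+}$ in the root-group coordinates of Proposition~\ref{directlimit}, against an exhaustion of $\overline{G}$, and this quantitative matching is the substance of the FHHK proof. (Note also that once continuity of the $K$- and $A$-projections is in hand, continuity of $u(g)=a(g)^{-1}k(g)^{-1}g$ is automatic, so all difficulty concentrates in the $K$-projection; your framing slightly misplaces the weight onto $\overline{U^+}$.) Your closing paragraph concedes that this ``technical heart'' is outsourced to \cite{FHHK}. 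As written, then, the proposal is a correct and well-organized \emph{reduction} of the theorem to the key properness lemma, followed by a citation of the very reference from which the paper quotes the theorem --- a reasonable thing to do, but not a self-contained proof.
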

Since $\overline{A}$ is contractible, in order to show that $\overline{K}$ is a deformation retract of $\overline{G}$ it will suffice to show that $\overline{U^+}$ is contracible. We thus need to understand the topology induced by the Kac--Peterson topology on the standard unipotent subgroups.

\subsection{The Kac--Peterson topology on \texorpdfstring{$U^{\pm}$}{Upm}}
We now turn to the study of the restriction of the Kac--Peterson topology to the standard maximal unipotent subgroups $U^-$ and $U^+$. Recall from Section \ref{Split-real Kac--Moody groups} that the Weyl group $W$ is a Coxeter group, so elements of $W$ can be represented by reduced words in the generators $s_{1}, \dots, s_{r}$. Given such a reduced word $w =( s_{i_1}, \dots, s_{i_r})$ in $W$ with corresponding simple roots $\alpha_{i_1}, \dots \alpha_{i_r}$ we define positive roots $\beta_1, \dots, \beta_r$ by
\begin{equation}\label{betas}
\beta_1 := \alpha_{i_1}, \quad \beta_2 := s_{i_1}(\alpha_{i_2}), \quad \dots, \quad \beta_r := s_{i_1}s_{i_2} \cdots s_{i_{r-1}}(\alpha_{i_r}).
\end{equation}
We then set $U_w := U_{\beta_1}\cdots U_{\beta r} \subset U^+$ and define a map
\[
\mu_w: U_{\beta_1} \times \dots \times U_{\beta_r} \to U_w, \quad (x_1, \dots, x_r) \mapsto x_1 \cdots x_r.
\]
It is established in \cite[Section 5.5, Lemma]{CapraceRemy} that the map $\mu_w$ is a bijection for every reduced word $w$, and that its image $U_w$ depends only on the Weyl group element represented by $w$, but not on the chosen reduced expression. Since $\calG_\CA(\RR)$ is a topological group, the bijection $\mu_w$ is continuous. In fact, one can show that $\mu_w$ is a homeomorphism. A proof of this fact was sketched in \cite[Lemma 7.25]{HKM}; since openness of the maps $\mu_w$ is crucial for everything that follows, we fill in the details of this sketch here:
\begin{lem} For every reduced word $w$ the map $\mu_w$ is a homeomorphism onto its image.
\end{lem}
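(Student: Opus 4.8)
The map $\mu_w$ is already known to be a continuous bijection onto $U_w$, so the only remaining task is to produce a continuous inverse, equivalently to show that $\mu_w$ is an open (or closed) map onto $U_w$. The plan is to regard $\mu_w$ as a map into the ambient group, $\mu_w \colon U_{\beta_1}\times\dots\times U_{\beta_n}\to\calG_\RR(\CA)$, and to prove that it is \emph{proper}: that $\mu_w^{-1}(L)$ is compact for every compact $L\subseteq\calG_\RR(\CA)$. Since each factor satisfies $U_{\beta_i}\iso\RR$, the domain is homeomorphic to $\RR^n$, and properness amounts to the assertion that $\mu_w^{-1}(L)$ is bounded. The reduction from properness to the lemma is then purely formal and uses only that $\calG_\RR(\CA)$, being $k_\omega$ by \cite[Proposition~7.10]{HKM}, is a Hausdorff $k$-space: if $\mu_w$ is proper then for closed $C\subseteq\RR^n$ and compact $L$ the set $C\cap\mu_w^{-1}(L)$ is a closed subset of the compactum $\mu_w^{-1}(L)$, hence compact, so $\mu_w(C)\cap L=\mu_w\bigl(C\cap\mu_w^{-1}(L)\bigr)$ is compact and therefore closed in $L$; as $\calG_\RR(\CA)$ is a $k$-space this forces $\mu_w(C)$ to be closed. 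A continuous, closed, injective map is a closed embedding, so $\mu_w$ is a homeomorphism onto $U_w$ with the subspace topology, as claimed.

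The main obstacle is thus establishing properness of $\mu_w$, which is exactly the statement that a bounded family $\mu_w(t_1,\dots,t_n)$ cannot come from parameters escaping to infinity. I would prove this by recovering the coordinates $t_1,\dots,t_n$ as continuous functions of $\mu_w(t_1,\dots,t_n)$. Fixing a faithful integrable highest-weight representation of $\calG_\RR(\CA)$ (or using the adjoint action on $\g_\RR'(\CA)$), each factor acts on the relevant weight spaces as $\exp\bigl(t_i\,\ad(e_{\beta_i})\bigr)$, so suitable matrix coefficients $\phi\bigl(\mu_w(t_1,\dots,t_n)\,v\bigr)$ are polynomial in the $t_i$. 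Because the roots $\beta_1,\dots,\beta_n$ arising from a reduced word are pairwise distinct positive real roots occurring in the order dictated by \eqref{betas}, the unique-factorization statement of \cite[Section~5.5]{CapraceRemy} yields a triangular dependence from which each $t_i$ can be expressed polynomially in finitely many such matrix coefficients. Continuity of the representation then turns boundedness of $\mu_w(t_1,\dots,t_n)$ in the Kac--Peterson topology into boundedness of these matrix coefficients, and hence of the parameters, giving properness.

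The step I expect to require the most care is making this coordinate recovery rigorous, i.e.\ choosing the representation together with vectors $v_k$ and functionals $\phi_k$ so that $(t_1,\dots,t_n)\mapsto\bigl(\phi_k(\mu_w(t_1,\dots,t_n)\,v_k)\bigr)_k$ is a \emph{proper} polynomial map $\RR^n\to\RR^N$; the ordering of the $\beta_i$ and the commutator relations among the corresponding root groups are what must be exploited to guarantee that the leading behaviour in each variable is genuinely detected. An alternative route that avoids explicit matrix coefficients is to invoke the $k_\omega$-structure of $\calG_\RR(\CA)$ directly: the compact exhaustion of $\calG_\RR(\CA)$ produced in \cite{HKM} (following \cite{GGH}) is assembled from products of root groups with bounded parameters, and one verifies that the compacta $\mu_w\bigl(\prod_i[-m,m]\bigr)$ are cofinal among the compact subsets of $\calG_\RR(\CA)$ meeting $U_w$; each restriction $\mu_w|_{\prod_i[-m,m]}$ is a continuous bijection from a compact space onto a Hausdorff space and hence a homeomorphism, so cofinality again yields properness and therefore the desired homeomorphism property.
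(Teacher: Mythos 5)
Your overall strategy is sound but genuinely different from the paper's. The paper constructs the continuous inverse of $\mu_w$ directly, by induction on the length of $w$: using linear independence of the simple roots it picks $X \in \h_\RR(\CA)$ with $\alpha_{i_1}(X)=0$ and $\alpha_j(X)<0$ for all $j \neq i_1$, so that conjugation by $\exp(tX)$ fixes $U_{\beta_1}$ pointwise while contracting the factors from $U_{\beta_2},\dots,U_{\beta_m}$ to the identity uniformly on compacta as $t \to \infty$ (reducedness of $w$ guarantees $\beta_k(X)<0$ for $k\geq 2$); this makes the first-coordinate projection $x_1\cdots x_m \mapsto x_1$ continuous, and the remaining factors are handled by conjugating, via axiom (RGD2), onto the image of $\mu_{w'}$ for the shortened word and invoking the induction hypothesis. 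That argument is elementary and self-contained, and the same contraction trick is reused later in the proof of Theorem~\ref{kumar}. Your route instead reduces everything to properness of $\mu_w$ into the ambient group and then applies the fact that proper maps into Hausdorff $k$-spaces are closed --- which is legitimate, and is in fact exactly Proposition~\ref{komega} of Appendix~B (applicable since the Kac--Peterson topology is $k_\omega$ by \cite[Proposition~7.10]{HKM}); a closed continuous injection is an embedding, so the formal reduction is correct. Your main route to properness, recovering each $t_i$ polynomially from matrix coefficients, can indeed be completed: $U_w$ is a finite-dimensional unipotent group, and by the Kac--Peterson theory of regular functions \cite{Kac1983} (see also \cite[Chapter~7]{Kum}) its coordinate functions on $\mathbb{A}^{\ell(w)}$ are polynomials in restrictions of matrix coefficients of integrable modules, so boundedness of the image forces boundedness of the parameters. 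What each approach buys: yours gives properness as a by-product (closedness of $U_w$ in suitable compacta, in the spirit of Appendix~B), but at the cost of importing regular-function machinery; the paper's is lighter and produces an explicit contraction with uniform convergence that is needed again later.

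Three points in your sketch require repair before it is a proof. First, for $U^+$ a highest-weight vector is annihilated by all positive root vectors, so the matrix coefficients you propose vanish identically on that vector; you must work with an integrable \emph{lowest}-weight module or with the adjoint action on $\g_\RR'(\CA)$, as your parenthetical hints but does not carry out. Second, continuity of matrix coefficients in the Kac--Peterson topology is not automatic from its definition as a finest group topology; it has to be verified, e.g.\ via the $k_\omega$-decomposition of \cite{HKM}, on each compact piece, where it holds because such pieces are continuous images of products of bounded subsets of root groups and tori on which the action is visibly continuous. Third, your alternative route is circular as stated: the assertion that the compacta $\mu_w\bigl(\prod_i[-m,m]\bigr)$ are cofinal --- correctly phrased, that $L \cap U_w \subseteq \mu_w\bigl(\prod_i[-m,m]\bigr)$ for every compact $L$ and suitable $m$ --- \emph{is} precisely the properness statement you are trying to prove, so it cannot serve as an independent verification; an argument of the first (matrix-coefficient) kind, or of the paper's contraction kind, is unavoidable there.
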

\begin{proof} We argue by induction on the length $m$ of $w$ and observe that the case $m=1$ holds by definition. Since the linear functionals $\alpha_1, \dots, \alpha_r$ are linearly independent, there exists an element $X \in \mathfrak{h}_\RR(\CA)$ (see Section~\ref{Split-real Kac--Moody groups}) such that $\alpha_{i_1}(X) = 0$ and $\alpha_{j}(X) < 0$ for all $j \in \{1, \dots, \widehat{i_1}, \dots, r\}$. It follows that $\beta_1(X) = \alpha_{i_1}(X) = 0$ and $\beta_k(X) < 0$ for all $k =2, \dots, m$. Indeed, since the word $w$ is reduced, none of the positive real roots $\beta_2, \dots, \beta_m$ equals $\alpha_{i_1}$, and since $n\alpha_{i_1}$ is not a root for any $n \geq 2$ (cf.\ \cite[Proposition~5.1]{Kac2}), each of them contains at least one other positive simple root as a summand. Now for $j\in \{1, \dots, m\}$ and $Y \in \mathfrak g_{\beta_j}$ we have ${\rm ad}(X)(Y) = \beta_j(X)(Y)$, and thus
\[
\lim_{t \to \infty}{\rm Ad}(\exp(tX))(Y) = \left\{\begin{array}{ll}Y, & j = 1,\\ 0, & j>1. \end{array}\right.
\]
We conclude that if $x_j \in U_{\beta_j}$, then
\[
\lim_{t \to \infty} \exp(tX) (x_1 \dots x_m) \exp(-tX) = x_1,
\]
where the convergence is uniform on compacta. This shows that the map
\[
\pi_1: U_w \to U_{\beta_1}, \quad x_1 \cdots x_m \mapsto x_1
\]
is continuous, and hence the map
\begin{equation}\label{InductionStepUw}
U_w \to U_{\beta_1} \times U_{\beta_2} \cdots U_{\beta_m}, \quad x_1 \cdots x_m \mapsto (x_1, x_2\cdots x_m)
\end{equation}
is continuous. Now let $w' = (r_{i_2}, \dots, r_{i_m})$ and let $\beta_2' = s_{i_1}(\beta_2)$, \dots, $\beta_m' := s_{i_1}(\beta_m)$. Now by Axiom (RGD2) of an RGD system (see \cite[Chapter~8]{AB}) there exists an element $g \in G_\CA(\RR)$ such that $gU_{\beta_j} g^{-1} = U_{\beta_j'}$ for all $j=2, \cdots, m$, and by induction hypothesis we have a homeomorphism
\[
\mu_{w'}: U_{\beta_2'} \times \dots \times U_{\beta_m'} \to U_{w'}, \quad (x_{2}, \dots, x_m) \mapsto x_{2} \cdots x_m.
\]
Conjugating the inverse of this homeomorphism by $g^{-1}$ we obtain a homeomorphism
\[
U_{\beta_2} \cdots U_{\beta_m} \to U_{\beta_2} \times \dots \times U_{\beta_m}.
\]
Composing this homeomorphism with the map \eqref{InductionStepUw} now provides the desired continuous inverse to $\mu_w$.
\end{proof}
To describe the topology on $U^+$ we recall that there exist several distinct but related partial orders on $W$ which in different places in the literature are referred to as the \emph{Bruhat order} on $W$. In the sequel we will consider the following version; here $\ell$ denotes the length function with respect to the generating set $\{s_{1}, \dots, s_r\}$.
\begin{de} The \emph{weak right Bruhat order} on $W$ is the partial order $\leq_w$ defined as
\[
w_1 \leq_w w_2 \quad :\Longleftrightarrow \quad \ell(w_2) = \ell(w_1) + \ell(w_1^{-1}w_2).\quad (w_1, w_2 \in W)
\]
\end{de} 
According to \cite[p.~44]{CapraceRemy} we have $w_1 \leq_w w_2$ if and only if there exists a reduced word $(r_{i_1}, \dots, r_{i_{\ell(w_2)}})$ for $w_2$ such that $w_1 = r_{i_1} \cdots r_{i_{\ell(w_1)}}$. 

Recall that for the strong Bruhat order $\leq$ one has $w_1 \leq w_2$ if there exists a reduced word $(r_{i_1}, \dots, r_{i_m})$ for $w_2$ and a reduced word $(r_{j_1}, \dots, r_{j_l})$ for $w_1$ such that $(r_{j_1}, \dots, r_{j_l})$ is a substring of $(r_{i_1}, \dots, r_{i_m})$ (not necessarily consecutive). By definition,
\[
w_1 \leq_w w_2 \quad \Longrightarrow \quad w_1 \leq w_2,
\]
but the converse is not true. An important difference between the weak right Bruhat order and the strong Bruhat order is that $(W, \leq)$ contains a cofinal chain, i.e., a totally ordered subset $T\subset W$ such that for every $w \in W$ there exists $t \in T$ such that $w \leq t$, whereas for the weak right Bruhat order, such a cofinal chain does not exist. In fact, given $w_1, w_2 \in W$ there will in general not exist an element $w_3 \in W$ with $w_1 \leq_w w_3$ and $w_2 \leq_w w_3$.

Note that if $w_1 \leq_w w_2$, then we can choose a reduced word $(r_{i_1}, \dots, r_{i_{\ell(w_2)}})$ for $w_2$ such that $w_1 = r_{i_1} \cdots r_{i_{\ell(w_1)}}$. Thus if we define $\beta_1, \dots, \beta_{\ell(w_2)}$ as above then we have a commuting diagram
\[\begin{xy}\xymatrix{
U_{\beta_1} \times \dots \times U_{\beta_{\ell(w_1)}} \ar[rrr] \ar[d]&&&U_{\beta_1} \times \dots \times U_{\beta_{\ell(w_2)}}\ar[d]\\
U_{w_1} \ar[rrr] &&& U_{w_2},
}\end{xy}\]
where the horizontal maps are inclusions, and the vertical maps are homeomorphisms. In particular, we have a continuous inclusion $\iota_{w_1}^{w_2}: U_{w_1} \hookrightarrow U_{w_2}$, hence we may form the colimit
\[
\lim_{\to}((U_w)_{w \in W}, (\iota_{w_1}^{w_2})_{w_1 \leq_w w_2})
\]
in the category of topological spaces. We emphasize that in view of the previous remark the system $((U_w)_{w \in W}, (\iota_{w_1}^{w_2})_{w_1 \leq_w w_2})$ is \emph{not} directed, hence this colimit is not a direct limit.
\begin{prop} \label{directlimit}
The $k_\omega$-space $U^+$ is given by the colimit 
\[U^+ = \lim_{\to}((U_w)_{w \in W}, (\iota_{w_1}^{w_2})_{w_1 \leq_w w_2})\] 
both in the category of topological spaces and in the category of $k_\omega$-topological spaces.
\end{prop}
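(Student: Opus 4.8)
The plan is to construct the canonical comparison map from the colimit onto $U^+$ and to prove it is a homeomorphism by comparing the colimit topology with the Kac--Peterson topology in both directions, exploiting that $U^+$ is $k_\omega$.

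First I would fix the set-level picture. By \cite[Section~5.5]{CapraceRemy} one has $U^+ = \bigcup_{w \in W} U_w$, and for $w_1 \leq_w w_2$ the map $\iota_{w_1}^{w_2}$ is the plain inclusion $U_{w_1} \hookrightarrow U_{w_2}$ inside $U^+$. Hence the family $(U_w \hookrightarrow U^+)_{w \in W}$ is compatible with the transition maps and induces a continuous map $\Theta$ from the topological colimit onto $U^+$; a routine check shows $\Theta$ is injective, the distinct representatives of a given element being linked through common lower stages in the weak order. It therefore remains to show that $\Theta$ is closed.

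Write $\tau_{KP}$ for the Kac--Peterson subspace topology on $U^+$ and $\tau_c$ for the colimit topology. The inclusion $\tau_{KP} \subseteq \tau_c$ is immediate: by the preceding Lemma each $\mu_w$ is a homeomorphism onto its image, so every inclusion $U_w \hookrightarrow U^+$ is a topological embedding, whence $\tau_{KP}$ is among the topologies making all these inclusions continuous; being finest with this property, $\tau_c$ is finer than $\tau_{KP}$, so $\Theta$ is continuous. What must still be shown is the converse, namely that a set $A \subseteq U^+$ meeting every $U_w$ in a closed subset is already $\tau_{KP}$-closed. For this I would use that $U^+$ is $k_\omega$ by \cite[Proposition~7.10]{HKM} and fix an admissible compact exhaustion $(K_n)_{n \in \NN}$, so that $A$ is $\tau_{KP}$-closed precisely when $A \cap K_n$ is closed in $K_n$ for all $n$. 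The decisive point is the structural claim that each compactum $K_n$ is contained in a finite union $U_{w_1} \cup \dots \cup U_{w_m}$ of stages. Granting this, $A \cap K_n = \bigcup_{i=1}^m (A \cap U_{w_i}) \cap K_n$ is a finite union of subsets closed in $K_n$, using that each $A \cap U_{w_i}$ is closed in $U_{w_i}$ and that $K_n$ carries the subspace topology; hence $A \cap K_n$ is closed and $A$ is $\tau_{KP}$-closed, giving $\tau_c \subseteq \tau_{KP}$ and so $\tau_c = \tau_{KP}$.

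I expect this containment of compacta to be the main obstacle. The system $(U_w)$ is not directed, the $U_w$ are not open in $U^+$, and one cannot simply use height-bounded boxes, since the inversion sets $\Phi^+ \cap w\Phi^-$ governing the $U_w$ are biconvex whereas sets of roots of bounded height are not. The route I would follow is to read the containment off the explicit construction of the $k_\omega$-exhaustion of $U^+$ in \cite{HKM}, arranging the defining compacta to be boxes inside finitely many $U_w$, and to use the openness of the limit map $\pi_1$ appearing in the proof of the preceding Lemma to absorb an arbitrary compactum into finitely many stages. Finally, the two categorical statements coincide: the colimit in the category of $k_\omega$-spaces is the $k_\omega$-reflection of the topological colimit, so once the latter is identified with $U^+$, which is $k_\omega$, and each stage $U_w \iso \RR^{\ell(w)}$ is itself $k_\omega$, the reflection is the identity and both colimits equal $U^+$.
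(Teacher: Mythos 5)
Your overall skeleton (compare the colimit topology $\tau_c$ with the Kac--Peterson topology $\tau_{KP}$ in both directions, using that $U^+$ is $k_\omega$) is the natural one; note that the paper itself does not argue directly at all, but simply quotes \cite[Theorem~5.3]{CapraceRemy} for the set-level statement and \cite[Proposition~7.27]{HKM} for the topological one. Measured as a stand-alone proof, however, your attempt has a genuine gap exactly where you yourself locate it: the claim that every compact subset of $U^+$ (equivalently, every member of a suitable $k_\omega$-exhaustion) is contained in a finite union $U_{w_1}\cup\dots\cup U_{w_m}$ is the entire mathematical content of the proposition beyond formalities, and you do not prove it --- you only name two hoped-for mechanisms, both problematic as stated. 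Reading the containment ``off the explicit construction of the $k_\omega$-exhaustion in \cite{HKM}'' is close to circular: the natural exhaustions of $U^+$ are inherited from exhaustions of $G$ built from products of balls in the fundamental root groups taken in arbitrary order, and such products have no a priori reason to lie inside finitely many sets $U_w$, since the $U_w$ are governed by the biconvex inversion sets $\Phi^+\cap w(\Phi^-)$. Likewise, the map $\pi_1$ from the preceding lemma is defined and shown continuous only on a single $U_w$, via conjugation limits adapted to the first letter of a fixed reduced word; to ``absorb an arbitrary compactum of $U^+$ into finitely many stages'' you would need globally defined continuous projections on all of $U^+$, and their continuity on $U^+$ is essentially equivalent to the colimit description you are trying to establish. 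So the argument begs the question at its decisive step, and the honest repair is the one the paper takes, namely invoking \cite[Proposition~7.27]{HKM}, where the compatibility of a $k_\omega$-decomposition of $U^+$ with the stages $U_w$ is worked out.

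Two smaller points. Injectivity of your comparison map $\Theta$ is not a ``routine check'': since the indexing poset is not directed, you need that two representatives in $U_{w_1}$ and $U_{w_2}$ of one element of $U^+$ are zigzag-connected in the diagram, which in practice means $U_{w_1}\cap U_{w_2}=U_{w_1\wedge w_2}$ for the meet in the weak right Bruhat order; this is precisely the content of \cite[Theorem~5.3]{CapraceRemy} and should be cited rather than asserted. Your closing categorical remark is correct but can be simplified: no $k_\omega$-reflection is needed, because a colimit computed in the category of topological spaces whose vertex happens to be a $k_\omega$-space is automatically a colimit in the full subcategory of $k_\omega$-spaces.
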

\begin{proof} The corresponding statement in the category of sets is established in \cite[Theorem~5.3]{CapraceRemy}. For the topological statement see \cite[Proposition~7.27]{HKM}.
\end{proof}
In view of the applications to Kac--Moody symmetric spaces that we have in mind we recall that $U^{\pm}$ are subgroups of the commutator subgroup $G$ of $G_\RR(\CA)$, in particular we can consider their images $\overline{U}^\pm := p_1(U^{\pm})$ under the map $p_1: G \to \overline{G}$ from \eqref{AdjointSemisimpleQuotient}. In this context we will need the following fact:
\begin{prop}\label{UvsUbarTopology} The map $p_1$ induces homeomorphisms $U^{\pm} \to \overline{U}^\pm$.
\end{prop}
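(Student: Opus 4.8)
The plan is to prove that the continuous bijection $p_1|_{U^\pm}\colon U^\pm \to \overline{U}^\pm$ has a continuous inverse; I will treat only $U^+$, the case of $U^-$ being identical. That $p_1|_{U^+}$ is a continuous bijection onto $\overline{U}^+$ is immediate: it is a restriction of the continuous map $p_1$, it is surjective by the definition $\overline{U}^+ = p_1(U^+)$, and it is injective because $\ker p_1 =: Z$ is central, hence contained in $T$ (recall that $\overline{T} = p_1(T)$ is a torus), while $U^+ \cap T = \{1\}$. It therefore remains to show that $p_1|_{U^+}$ is open, equivalently that $(p_1|_{U^+})^{-1}$ is continuous.

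First I would reduce to finite-dimensional pieces. For a reduced word $w$ with associated roots $\beta_1, \dots, \beta_m$ as in \eqref{betas}, set $\overline{U}_w := p_1(U_w)$, and claim that $p_1$ restricts to a homeomorphism $U_w \to \overline{U}_w$. Indeed, by \eqref{AdjointSemisimpleQuotient} one has $p_2 \circ p_1 = \Ad$, and $\Ad$ is injective on $U^+$ since $U^+$ embeds into $\Ad(G)$. Running the argument of the preceding Lemma inside $\Ad(G)$—the contraction dynamics of $\Ad(\exp(tX))$ used there factor through $\Ad(G)$—shows that $\Ad|_{U_w}\colon U_w \to \Ad(U_w)$ is a homeomorphism onto its image. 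Since $\Ad|_{U_w} = p_2|_{\overline{U}_w} \circ (p_1|_{U_w})$ with $p_2$ continuous, the inverse $(p_1|_{U_w})^{-1} = (\Ad|_{U_w})^{-1}\circ p_2|_{\overline{U}_w}$ is continuous, proving the claim.

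Next I would transport the direct-limit description. By Proposition \ref{directlimit} the group $U^+$ is the $k_\omega$-colimit $\varinjlim U_w$, and the key step is to establish the analogous presentation $\overline{U}^+ = \varinjlim \overline{U}_w$ for the subspace topology inherited from $\overline{G}$. Here I would first observe that $\overline{G}$ is again a $k_\omega$-group, being the quotient of the $k_\omega$-group $G$ by the closed central subgroup $Z$, and that quotients of $k_\omega$-groups by closed normal subgroups are $k_\omega$. Moreover $\overline{U}^+$ is closed in $\overline{G}$: by the topological Iwasawa decomposition (Theorem \ref{TopIwasawa}) multiplication is a homeomorphism $\overline{U}^+ \times \overline{A} \times \overline{K} \to \overline{G}$, so $\overline{U}^+$ is the homeomorphic image of the closed set $\overline{U}^+ \times \{1\} \times \{1\}$ and hence closed. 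A closed subspace of a $k_\omega$-space is $k_\omega$, so $\overline{U}^+$ is $k_\omega$. Using the first step, each inclusion $\overline{U}_{w_1} \hookrightarrow \overline{U}_{w_2}$ for $w_1 \leq_w w_2$ is conjugate, via the homeomorphisms $p_1|_{U_{w_i}}$, to the coordinate inclusion $U_{w_1} \hookrightarrow U_{w_2}$ and is therefore a closed embedding, exactly as in the commuting diagram for the $U_w$ in the text. Combined with the Caprace–Rémy colimit at the level of sets, this yields $\overline{U}^+ = \varinjlim \overline{U}_w$ by the $k_\omega$-argument of \cite[Proposition~7.27]{HKM}, now carried out in $\overline{G}$.

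Finally I would combine the two ingredients. Continuity of $(p_1|_{U^+})^{-1}$ may be tested on the colimit $\overline{U}^+ = \varinjlim \overline{U}_w$, so it suffices that each restriction $(p_1|_{U^+})^{-1}|_{\overline{U}_w} = (p_1|_{U_w})^{-1}\colon \overline{U}_w \to U_w \hookrightarrow U^+$ be continuous, which is precisely the content of the first step; hence $p_1|_{U^+}$ is a homeomorphism. I expect the main obstacle to be the third step—matching the subspace (equivalently quotient) topology on $\overline{U}^+$ with the direct-limit topology $\varinjlim \overline{U}_w$. The finite-dimensional homeomorphisms are routine, but the passage to the colimit genuinely relies on $\overline{G}$ being $k_\omega$ and on the closedness of $\overline{U}^+$ furnished by the topological Iwasawa decomposition, since the Iwasawa decomposition of $G$ itself is not known to be topological.
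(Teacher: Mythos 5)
Your reduction rests entirely on Step 1, and that is where the argument breaks. The claim that one can ``run the argument of the preceding Lemma inside $\Ad(G)$'' is not justified: that Lemma's induction on the length of $w$ starts from the base case $m=1$, which in $G$ holds \emph{by definition} of the Kac--Peterson topology (it induces the standard topology on each root group $U_\beta$). In $\Ad(G)$, or in $\overline{G}$, equipped with the quotient topology, there is no such definitional base case: you would first have to show that the quotient map restricted to a single root group, $U_\beta \to \Ad(U_\beta)$, is open onto its image, i.e.\ that the saturation $ZU_\beta$ is homeomorphic to $Z \times U_\beta$, where $Z$ denotes the kernel. That is a statement of exactly the same nature as Proposition \ref{UvsUbarTopology} itself --- restrictions of open quotient homomorphisms to non-saturated subgroups need not be embeddings (think of a one-parameter subgroup winding densely on a torus; and here $\ker p_1 \cong (\RR^\times)^{\mathrm{cork}(\CA)}$ is noncompact, so nothing cheap rules this out). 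The same problem reappears inside the induction step: continuity of the projection $\pi_1$ in that Lemma is deduced from convergence which is uniform on compacta of $U_w$, whose topology is known; downstairs, the subspace topology on $\Ad(U_w)$ is precisely the unknown, so you can neither identify its compact subsets nor invoke ``uniform convergence on compacta implies continuity'' (this needs $\Ad(U_w)$ to be a $k$-space, which you cannot assert in advance --- your closedness argument via Theorem \ref{TopIwasawa} applies to $\overline{U}^+$ as a whole, not to the individual pieces $\overline{U}_w$). So Step 1 is circular as written, and Steps 2 and 3, which are conjugated through the homeomorphisms of Step 1, inherit the gap.

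The paper closes exactly this hole with a single citation that your proof never uses: by \cite[Proposition~7.27]{HKM}, multiplication induces a homeomorphism $T \times U^+ \to TU^+$, and $\ker p_1 \subseteq T$. Injectivity and continuity of $p_1|_{U^+}$ follow as in your first paragraph, and openness is then immediate: for $V \subseteq U^+$ open, the saturated set $(\ker p_1)V$ corresponds to $(\ker p_1) \times V$ under the product homeomorphism, hence is open in $p_1^{-1}(\overline{U}^+) = (\ker p_1)U^+$, and since $p_1$ is an open map, its restriction to this saturated subset carries $(\ker p_1)V$ onto an open subset of $\overline{U}^+$. This is precisely the ``$ZU \cong Z \times U$'' input you were missing, established once and for all at the level of $U^+$; with it, the entire colimit scaffolding of your Steps 2 and 3 (Proposition \ref{directlimit}, the $k_\omega$ bookkeeping, the Iwasawa-based closedness of $\overline{U}^+$) becomes unnecessary. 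Your peripheral observations are sound --- the bijectivity argument, $\overline{G}$ being $k_\omega$ as a Hausdorff open continuous image of $G$, and the closedness of $\overline{U}^+$ via Theorem \ref{TopIwasawa} --- but they cannot substitute for that missing input.
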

\begin{proof} By \cite[Proposition~7.27]{HKM} the map $T \times U^+ \to TU^+$ is a homeomorphism and the kernel of $p_1$ is contained in $T$. The latter implies that $p_1$ restricts to a continuous bijection $U^+ \to \overline{U^+}$, and the former implies that this bijection is open.
\end{proof}
%If ${\bf A}$ happens to be of $2$-spherical type we can provide an alternative description of the Kac--Peterson topology on $U^+$ as follows.
%*** maybe add here also the alternative descriptions of the Kac--Peterson topology? ***
\subsection{Dilation structures on \texorpdfstring{$U^{\pm}$}{Upm}}

\begin{de} Let $U$ be a topological group. By a \emph{dilation structure} on $U$ we mean a family of maps $(\Phi_t: U \to U)_{t \in \RR}$ with the following properties:
\begin{enumerate}
%[(DS1)]
\item Each $\Phi_t$ is a continuous automorphisms of the topological group $U$.
\item $(\Phi_t)_{t \in \RR}$ is a $1$-parameter group, i.e. $\Phi_0 = {\rm Id}$ and $\Phi_{s+t} = \Phi_s \circ \Phi_t$ for all $s, t\in \RR$.
\item If we define $\Phi_{-\infty}: U \to U$ by $\Phi_{-\infty}(u) := e$, then the map
\[
[-\infty, \infty) \times U \to U, \quad (t, u) \mapsto \Phi_t(u)
\]
is continuous.
\end{enumerate}
\end{de}
\begin{rem}
\label{dilation implies contractible}
Note that if a topological group $U$ admits a dilation structure, then it is in particular contractible. Indeed, if we define $\Psi_t := \Phi_{\frac{t}{t-1}}$, then
\[
\Psi: [0, 1] \times U \to U, \quad (t,u) \mapsto \Psi_t(u)
\]
is continuous with $\Psi_0 = \Phi_0= {\rm Id}$ and $\Psi_1 = \Phi_{-\infty}$, hence a contraction to the identity.
\end{rem}
Dilation structures on finite-dimensional simply-connected nilpotent Lie groups play a major role in conducting analysis on such groups, see e.g.\ \cite{Goodman}. Not every finite-dimensional simply-connected nilpotent Lie group admits a dilation structure, but if $U$ is the unipotent radical of a minimal parabolic subgroup of a semisimple Lie group, then such a dilation structure always exists. The methods of \cite{Kum} allow one to extend this result to the Kac--Moody setting.

Following \cite[\S 3.12]{Kac2}, we define the \emph{fundamental chamber} of $ \mathfrak{h}_\RR(\mathbf{A})$ as $$C := \{ h \in \mathfrak{h}_\RR(\mathbf{A}) \mid \forall 1 \leq i \leq n: \alpha_i(h) \geq 0 \} \subset \mathfrak{h}_\RR(\mathbf{A}).$$ Since the family $(\alpha_i)_{1 \leq i \leq n}$ is linearly independent, there exists $$X_0 \in C \text{ such that } \alpha_i(X_0) = 1 \text{ for all } 1 \leq i \leq n.$$ Indeed, by the linear independence of $(\alpha_i)_{1 \leq i \leq n}$ the solution space for the system of $n-1$ linear equations $\forall 2 \leq i \leq n : \alpha_1(x) - \alpha_i(x) = 0$ has strictly larger dimension than the solution space for the system of $n$ linear equations $\forall 1 \leq i \leq n : \alpha_i(x) = 0$. 

We now define a $1$-parameter subgroup of $A_\RR$ by $a_t := \exp(tX_0)$ and denote by
\[
\phi_t := {\rm Ad}(a_t) \in {\rm Aut}(\mathfrak u^+)
\]
the associated automorphism of the Lie algebra $\mathfrak u^+ = \bigoplus_{\alpha \in \Delta_+} \g_\alpha^k$. Similarly we denote by
\[
\Phi_t := c_{a_t}|_{U^+} \in {\rm Aut}(U^+)
\]
the restriction of the conjugation-action of $a_t$ on $G_\CA(\RR)$ to $U^+$. Note that if $X \in \mathfrak u^+$ is ad-locally finite  then
\[
\Phi_t(\exp(X)) = \exp(\phi_t(X)).
\]
From \eqref{AdRootSpaces} and the defining property of $X_0$ one deduces that for every positive root $\alpha$ with height $|\alpha|$
\[
\forall Y \in \mathfrak g_\alpha: \; \phi_t(Y) = e^{t |\alpha|} Y.
\]
It follows that for all positive roots $\alpha$ one has
\begin{eqnarray}
\Phi_t(x_\alpha(s)) = \exp(tX_0) \cdot x_\alpha(s) \cdot \exp(-tX_0) = x_\alpha(e^{t |\alpha|}s),\label{contractionformula}
\end{eqnarray}
(see \cite[(4), p.~549]{Ti}), where $\{ x_\alpha(s) \mid s \in \mathbb{R}\} \cong (\mathbb{R},+)$ is the root subgroup of $G_\CA(\RR)$ corresponding to the root space $\mathfrak{g}_\alpha$.
As a consequence, if one endows each of the root subgroups $\{ x_\alpha(s) \mid s \in \mathbb{R}\}$ with the natural topology of $\mathbb{R}$, then $\Phi_t$ contracts each of them.
We are now in a position to reproduce the following result and proof by Kumar:

\begin{thm}[{\cite[Proposition~7.4.17]{Kum}}] \label{kumar}
The family $(\Phi_t)_{t\in \R}$ defines a dilation structure on $U^+$. 
\end{thm}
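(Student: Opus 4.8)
The plan is to verify the three axioms of a dilation structure directly for the conjugation maps $\Phi_t = c_{a_t}|_{U^+}$, with essentially all of the work concentrated in the continuity axiom (3).

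First I would dispose of axioms (1) and (2), which are formal. Each $\Phi_t$ is the restriction to $U^+$ of the inner automorphism of $G_\RR(\CA)$ given by conjugation with $a_t = \exp(tX_0) \in A_\RR$. Since $A_\RR \leq T_\RR$ normalizes $U^+$, this restriction is a well-defined group automorphism of $U^+$, and it is continuous because $G_\RR(\CA)$ is a topological group and $U^+$ carries the subspace topology. For the one-parameter group property, $X_0$ commutes with itself, so $a_{s+t} = a_s a_t$; as conjugation is a homomorphism into $\Aut(G_\RR(\CA))$ and $A_\RR$ is abelian, one obtains $\Phi_0 = \mathrm{Id}$ and $\Phi_{s+t} = \Phi_s \circ \Phi_t$.

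The hard part will be axiom (3): joint continuity of $(t,u) \mapsto \Phi_t(u)$ on $[-\infty,\infty) \times U^+$, including the limiting value $\Phi_{-\infty}(u) = e$. My strategy is to reduce to the finite-dimensional pieces $U_w$ and then invoke the colimit description of $U^+$. Conjugation by the torus element $a_t$ preserves each root group $U_{\beta_i}$, hence preserves every $U_w = U_{\beta_1}\cdots U_{\beta_{\ell(w)}}$, and under the homeomorphism $\mu_w$ the restriction $\Phi_t|_{U_w}$ becomes the diagonal linear scaling of $U_{\beta_1}\times\cdots\times U_{\beta_{\ell(w)}} \cong \RR^{\ell(w)}$ whose $i$-th factor is $e^{t|\beta_i|}$ (this follows from $\Phi_t(\exp X) = \exp(\phi_t(X))$ with $\phi_t = \Ad(a_t)$ together with \eqref{AdRootSpaces}; cf.\ \eqref{contractionformula}). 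Because each height $|\beta_i| \geq 1 > 0$, the map $[-\infty,\infty) \times U_w \to U_w$, $(t,u) \mapsto \Phi_t(u)$, extended by the constant value $e$ at $t = -\infty$, is manifestly jointly continuous, the scaling factors tending to $0$ uniformly on compacta as $t \to -\infty$. Moreover these restrictions are compatible with the inclusions $\iota_{w_1}^{w_2}\colon U_{w_1} \hookrightarrow U_{w_2}$, since all of them are restrictions of the single map $\Phi_t$ defined on all of $U^+$.

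It then remains to upgrade continuity on each $[-\infty,\infty)\times U_w$ to continuity on $[-\infty,\infty)\times U^+$, and this is where I expect the only genuine subtlety to lie. The system $((U_w), (\iota_{w_1}^{w_2}))$ is \emph{not} directed, so one cannot argue as for a direct limit; instead I would use the honest colimit description of Proposition \ref{directlimit} together with the fact that $[-\infty,\infty)$ is locally compact Hausdorff. Since locally compact Hausdorff spaces are exponentiable in $\mathrm{Top}$, the functor $[-\infty,\infty)\times(-)$ is a left adjoint and hence preserves colimits, so that $[-\infty,\infty)\times U^+ = \varinjlim_w \bigl([-\infty,\infty)\times U_w\bigr)$. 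Applying the universal property of this colimit to the compatible family of continuous restrictions constructed above then yields continuity of $(t,u)\mapsto \Phi_t(u)$ on all of $[-\infty,\infty)\times U^+$, completing the verification of axiom (3) and hence the proof.
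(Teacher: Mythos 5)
Your proof is correct and follows essentially the same route as the paper's: reduce to the finite-dimensional pieces $U_w$ via the homeomorphisms $\mu_w$, observe that $\Phi_t$ acts there as coordinatewise scaling (your sign convention, with factors $e^{t|\beta_i|}$ tending to $0$ as $t \to -\infty$, is the one consistent with the requirement $\Phi_{-\infty} \equiv e$; the paper's displayed formula \eqref{contractionformula} carries the opposite sign), and then pass to $U^+$ via Proposition \ref{directlimit}. Your only additions are the explicit justification of why $[-\infty,\infty) \times (-)$ commutes with the non-directed colimit -- exponentiability of the locally compact Hausdorff space $[-\infty,\infty)$ -- a step the paper's proof leaves implicit in the phrase ``combining this with Proposition \ref{directlimit}'', and the verification of axioms (1) and (2), which the paper omits as formal.
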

\begin{proof}
Let $w$ be a reduced word and write $w = s_{i_1} \cdots s_{i_r} \in W$ with corresponding simple roots $\alpha_{i_1}, \dots \alpha_{i_r}$. Recall that multiplication induces a homeomorphism
\[
U_{\beta_1} \times \dots \times U_{\beta_r} \to U_w,
\]
where the roots $\beta_1, \dots, \beta_r$ are given by 
\[
\beta_1 := \alpha_{i_1}, \quad \beta_2 := s_{i_1}(\alpha_{i_2}), \quad \dots, \quad \beta_r := s_{i_1}s_{i_2} \cdots s_{i_{r-1}}(\alpha_{i_r}).
\]
Given an element $x_{\beta_1}(y_1)x_{\beta_2}(y_2)\cdots x_{\beta_r}(y_r) \in U_w$ by \eqref{contractionformula} one has $$\Phi_t(x_{\beta_1}(y_1)x_{\beta_2}(y_2)\cdots x_{\beta_r}(y_r)) = x_{\beta_1}(e^{t |\beta_1|}y_1)x_{\beta_2}(e^{t |\beta_2|}y_2)\cdots x_{\beta_r}(e^{t |\beta_r|}y_r).$$
Setting $\Phi_{-\infty}(u):= e$ for all $u\in U^+$, we deduce that the map 
\[
\Phi|_{U_w}: [-\infty, \infty) \times U_w \to U_w, \quad (t,u) \mapsto \Phi_t(u)
\]
is continuous and that $\Phi_0 = {\rm Id}_{U_w}$. Combining this with Proposition~\ref{directlimit} one deduces that the map
\[
\Phi:  [-\infty, \infty) \times U^+ \to U^+, \quad (t,u) \mapsto \Phi_t(u)
\]
is continuous, hence a dilation structure.
\end{proof}
Recall that $U^+$ is isomorphic to $U^-$ under the Cartan--Chevalley involution of $G_\CA(\RR)$, which maps $a_t$ to $a_{-t}$. Thus if we define $\Phi^-_t := c_{a_{-t}}|_{U^-}$ then we obtain:
\begin{cor}
The family $(\Phi^-_t)_{t \in \R}$ defines a dilation structure on $U^-$.\qed
\end{cor}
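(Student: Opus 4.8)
The plan is to deduce this directly from Theorem~\ref{kumar} by transporting the dilation structure on $U^+$ across the Cartan--Chevalley involution $\theta$, exactly as the sentence preceding the statement suggests. The starting point is the observation that $\theta$ restricts to a topological group isomorphism $\vartheta := \theta|_{U^+} \colon U^+ \to U^-$. Since $\theta$ interchanges the simple root vectors $e_j$ and $f_j$, it carries each positive root group onto the corresponding negative one, so $\theta(U^+) = U^-$; and since $\theta$ is a continuous involutive automorphism of $G_\RR(\CA)$ for the Kac--Peterson topology, its restriction $\vartheta$ is a homeomorphism onto $U^-$ with inverse $\theta|_{U^-}$. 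The continuity of $\theta$ is the one point I would spell out: $\theta$ permutes the defining root groups $U_\alpha$ (homeomorphically) and preserves $T_\RR$, so the finest group topology making these inclusions continuous is $\theta$-invariant, and being an involution $\theta$ is then a homeomorphism.

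Next I would record the compatibility of $\theta$ with the dilating one-parameter group. Because $a_t = \exp(tX_0)$ with $X_0 \in \mathfrak{h}_\RR(\CA)$ and $\theta$ acts as $-1$ on $\mathfrak{h}_\RR(\CA)$, one has $\theta(a_t) = \exp(-tX_0) = a_{-t}$. Using that $\theta$ is a group homomorphism with $\theta^2 = \mathrm{id}$, a direct computation then gives the conjugation identity
\[
\Phi^-_t = \vartheta \circ \Phi_t \circ \vartheta^{-1} \qquad (t \in \RR),
\]
since for $u \in U^-$ one finds $\vartheta(\Phi_t(\vartheta^{-1}(u))) = \theta\bigl(a_t\,\theta(u)\,a_t^{-1}\bigr) = a_{-t}\,u\,a_{-t}^{-1} = \Phi^-_t(u)$.

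It then remains to verify that the three defining properties of a dilation structure are preserved under conjugation by the topological isomorphism $\vartheta$. Each $\Phi^-_t$ is a continuous group automorphism of $U^-$ as the $\vartheta$-conjugate of the continuous automorphism $\Phi_t$; the one-parameter law follows formally from $\Phi^-_0 = \vartheta\,\Phi_0\,\vartheta^{-1} = \mathrm{id}_{U^-}$ and $\Phi^-_{s+t} = \vartheta\,\Phi_s\Phi_t\,\vartheta^{-1} = \Phi^-_s \circ \Phi^-_t$; and, setting $\Phi^-_{-\infty}(u) := e$, the map $[-\infty,\infty) \times U^- \to U^-$, $(t,u) \mapsto \Phi^-_t(u)$, is the composite of $(t,u) \mapsto (t, \vartheta^{-1}(u))$, the dilation map $\Phi$ of Theorem~\ref{kumar}, and $\vartheta$, hence is continuous, with the value at $t = -\infty$ consistently equal to $\vartheta(\Phi_{-\infty}(\vartheta^{-1}(u))) = \vartheta(e) = e$.

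I expect no genuine obstacle in this argument: it is a pure transport of structure along $\theta$, and the only step deserving a moment's attention is the continuity of $\theta$ (equivalently of $\vartheta$ and its inverse) in the Kac--Peterson topology, which I would justify as indicated above from the universal property defining that topology.
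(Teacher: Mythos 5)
Your proof is correct and takes essentially the same route as the paper, which dispenses with the corollary by simply noting that the Cartan--Chevalley involution $\theta$ carries $U^+$ onto $U^-$ and maps $a_t$ to $a_{-t}$ --- precisely the transport-of-structure argument you carry out via the identity $\Phi^-_t = \vartheta \circ \Phi_t \circ \vartheta^{-1}$. Your explicit verification that $\theta$ is a homeomorphism in the Kac--Peterson topology (via its permutation of the root groups, its preservation of $T_\RR$, and the universal property of the final group topology) correctly fills in the one detail the paper leaves tacit.
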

Combining this with Remark \ref{dilation implies contractible} and Proposition \ref{UvsUbarTopology} we can record:
\begin{cor}\label{UContractible} The topological groups $U^+$ and $U^-$ are contractible. Consequently, the groups $\overline{U}^+$ and $\overline{U}^-$ are contractible.\qed
\end{cor}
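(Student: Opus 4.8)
The plan is to read off both assertions directly from the dilation structures established in Theorem~\ref{kumar} and its corollary, combined with the topological invariance of contractibility. No new construction is needed; the content of the corollary is that everything already proved assembles formally.

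First I would invoke Theorem~\ref{kumar} together with the subsequent corollary to record that $U^+$ carries the dilation structure $(\Phi_t)_{t \in \RR}$ and $U^-$ carries the dilation structure $(\Phi^-_t)_{t \in \RR}$. By Remark~\ref{dilation implies contractible}, any topological group admitting a dilation structure is contractible: reparametrizing via $\Psi_t := \Phi_{t/(t-1)}$ produces a continuous homotopy $\Psi \colon [0,1] \times U^\pm \to U^\pm$ with $\Psi_0 = \mathrm{Id}$ and $\Psi_1$ the constant map at the identity. This immediately yields contractibility of $U^+$ and of $U^-$, proving the first claim.

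For the second assertion I would appeal to Proposition~\ref{UvsUbarTopology}, which asserts that $p_1$ restricts to homeomorphisms $U^{\pm} \to \overline{U}^\pm$. Since contractibility is a homeomorphism invariant, the contractibility of $U^\pm$ transfers verbatim to $\overline{U}^\pm$, establishing the ``consequently'' part.

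I do not expect any genuine obstacle here: the substantive work has all been discharged upstream --- the openness of the maps $\mu_w$, the (non-directed) colimit description of $U^+$ from Proposition~\ref{directlimit}, and the verification via \eqref{contractionformula} that the conjugation one-parameter group $\Phi_t$ simultaneously contracts every root subgroup and hence patches to a continuous global dilation. The only point worth stating explicitly is that the homeomorphisms of Proposition~\ref{UvsUbarTopology} carry the contraction of $U^\pm$ over to $\overline{U}^\pm$, so that one need not re-examine whether the dilation structure itself descends through $p_1$.
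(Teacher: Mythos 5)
Your proposal matches the paper's argument exactly: the paper derives Corollary~\ref{UContractible} by combining the dilation structures from Theorem~\ref{kumar} and its corollary with Remark~\ref{dilation implies contractible}, and then transfers contractibility to $\overline{U}^{\pm}$ via the homeomorphisms of Proposition~\ref{UvsUbarTopology}. Your closing observation --- that one transports the contraction through the homeomorphism rather than checking that the dilation structure descends along $p_1$ --- is precisely the intended reading and requires no further elaboration.
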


%\subsection{The Cartan--Chevalley involution and the topological Iwasawa decomposition}

\subsection{Homotopy groups of real-split semisimple Kac--Moody groups}
\begin{cor}\label{FundamentalGroups1} The subgroup $\overline{K}< \overline{G}$ is a deformation retract. In particular the inclusion $i_K: K_\CA(\RR) \hookrightarrow G_\CA(\RR)$ is a homotopy equivalence and thus induces isomorphisms $(i_K)_*: \pi_n(\overline{K}) \to \pi_n(\overline{G})$ for all $n \geq 0$.
\begin{proof}
We have established in Corollary \ref{UContractible} that $\overline{U^+}$ is contractible, and $\overline{A}$ is contractible since it is homeomorphic to $\R^{{\rm rk}(\mathbf A)}$. The assertion now follows from Theorem \ref{TopIwasawa}.
\end{proof}
\end{cor}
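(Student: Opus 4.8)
The plan is to exhibit $\overline{K}$ as a \emph{strong} deformation retract of $\overline{G}$ by contracting the two non-compact Iwasawa factors to the identity while keeping the compact factor fixed. First I would invoke the topological Iwasawa decomposition (Theorem~\ref{TopIwasawa}), which provides a homeomorphism $m\colon \overline{K} \times \overline{A} \times \overline{U^+} \to \overline{G}$. Writing $m^{-1}(g) = (k(g), a(g), u(g))$, the three coordinate projections $g \mapsto k(g)$, $g \mapsto a(g)$, $g \mapsto u(g)$ are then continuous, and $\overline{K}$ is precisely the image $m(\overline{K} \times \{e\} \times \{e\})$.

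Next I would produce compatible continuous contractions of the remaining two factors, each fixing the identity throughout. Since $\overline{A} = \overline{T}^{\circ}$ is homeomorphic to $\R^{{\rm rk}(\CA)}$, a straight-line homotopy $H^A\colon [0,1] \times \overline{A} \to \overline{A}$ scaling each element toward $e$ is continuous with $H^A_0 = \mathrm{Id}$, $H^A_1 \equiv e$, and $H^A_s(e) = e$ for all $s$. For the unipotent factor I would use the dilation structure on $U^+$ from Theorem~\ref{kumar}: by Remark~\ref{dilation implies contractible} the reparametrised family $\Psi_s := \Phi_{s/(s-1)}$ gives a continuous contraction $\Psi\colon [0,1] \times U^+ \to U^+$ with $\Psi_0 = \mathrm{Id}$, $\Psi_1 \equiv e$ and $\Psi_s(e) = e$; transporting it along the homeomorphism $p_1\colon U^+ \to \overline{U^+}$ of Proposition~\ref{UvsUbarTopology} yields an analogous contraction of $\overline{U^+}$ fixing $e$.

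I would then glue these through the Iwasawa coordinates by setting
\[
H\colon [0,1] \times \overline{G} \to \overline{G}, \qquad H_s(g) := m\bigl(k(g),\, H^A_s(a(g)),\, \Psi_s(u(g))\bigr).
\]
Continuity of $H$ follows from continuity of $m$, $m^{-1}$, $H^A$ and $\Psi$; one checks that $H_0 = \mathrm{Id}_{\overline{G}}$, that $H_1(g) = m(k(g), e, e) = k(g) \in \overline{K}$, and that each $H_s$ fixes $\overline{K}$ pointwise, because every $k \in \overline{K}$ has Iwasawa coordinates $(k, e, e)$ and both contractions fix $e$. Hence $H$ is a strong deformation retraction onto $\overline{K}$, so the inclusion is a homotopy equivalence and induces isomorphisms $\pi_n(\overline{K}) \to \pi_n(\overline{G})$ for all $n \geq 0$.

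The main obstacle is organisational rather than conceptual: all the hard analytic input has been front-loaded into Theorem~\ref{TopIwasawa} (that the Iwasawa multiplication is a \emph{homeomorphism}, not merely a continuous bijection) and into Theorem~\ref{kumar} (the existence of a \emph{continuous} dilation contracting $U^+$, transferred to $\overline{U^+}$ via Corollary~\ref{UContractible} and Proposition~\ref{UvsUbarTopology}). The delicate point to get right is that bare contractibility of $\overline{A}$ and $\overline{U^+}$ would a priori yield only a homotopy equivalence; to obtain a \emph{strong} deformation retract fixing $\overline{K}$ one needs contractions of each factor that fix the identity for all $s$, which is exactly what the dilation structure and the linear scaling of $\overline{A}$ supply, and one must verify that these assemble into a globally continuous homotopy on $\overline{G}$ through the Iwasawa homeomorphism.
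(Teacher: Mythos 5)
Your proposal is correct and takes essentially the same approach as the paper: the paper's proof consists precisely of citing the contractibility of $\overline{A}$ and $\overline{U^+}$ (the latter via the dilation structure, Corollary \ref{UContractible}) together with the topological Iwasawa homeomorphism (Theorem \ref{TopIwasawa}), and your explicit homotopy $H_s(g) = m\bigl(k(g), H^A_s(a(g)), \Psi_s(u(g))\bigr)$ is exactly the strong deformation retraction that this citation leaves implicit. You merely fill in the routine verification --- that both factor contractions fix the identity, so that $\overline{K}$ is fixed pointwise --- which the paper omits.
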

Since it is currently unknown whether the Iwasawa decomposition of $G$ is also a topological decomposition, the strategy of the above proof can not be applied to $G$. However, using the following result of Palais \cite[Section 4.1, Corollary]{Pal}, one can still obtain an isomorphism between the fundamental groups of $G$ and $K$.

\begin{prop}[(Palais)]\label{Palais} Let $G$ be a topological group and let $H< G$ be a subgroup which is homeomorphic to a Lie group. Then the fibration $H \hookrightarrow G \to G/H$ is locally trivial, in particular a Hurewicz fibration, hence there is a long exact sequence of homotopy groups
\[
\dots \to \pi_2(H) \to \pi_2(G) \to \pi_2(G/H) \to \pi_1(H) \to \pi_1(G) \to \pi_1(G/H) \to \pi_0(H) \to \pi_0(G). 
\]
\end{prop}
Recall that the kernel of the quotient map $G \to \overline{G}$ is homeomorphic to $(\R^\times)^{{\rm cork}(\mathbf A)}$. In particular it has $2^{{\rm cork}(\mathbf A)}$ connected components, whereas its higher homotopy groups vanish. Applying Proposition~\ref{Palais} to the diagram of fibrations
\[\begin{xy}\xymatrix{
(\R^\times)^{{\rm cork}(\mathbf A)}\ar[r] & G \ar[r] & \overline{G} \\
(\Z/2\Z)^{{\rm cork}(\mathbf A)}\ar[u]\ar[r] & K \ar[r]\ar[u] & \overline{K} \ar[u]
}\end{xy}\]
we thus obtain:
\begin{cor} There is a commutative diagram with exact rows
\[\begin{xy}\xymatrix{
0 \ar[r] & \pi_1(G) \ar[r] & \pi_1(\overline{G}) \ar[r] & (\Z/2\Z)^{{\rm cork}(\bf A)} \ar[r] &0\\
0 \ar[r] & \pi_1(K) \ar[r] \ar[u]& \pi_1(\overline{K}) \ar[r] \ar[u]& (\Z/2\Z)^{{\rm cork}(\bf A)} \ar[u]^\cong\ar[r] &0
}\end{xy}\]
Moreover, for $n \geq 2$ there are isomorphisms $\pi_n(G_\CA(\RR)) \cong \pi_n(G)$ and $\pi_n(K_\CA(\RR)) \cong \pi_n(K)$.
\end{cor}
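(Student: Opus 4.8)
The plan is to apply Palais' Lemma~\ref{Palais} to both rows of the displayed diagram of fibrations and then read off the resulting homotopy long exact sequences, exploiting that each fibre is homotopically trivial in positive degrees. First I would record the homotopy type of the two fibres: the group $F := (\R^\times)^{\mathrm{cork}(\mathbf A)}$ is a disjoint union of contractible pieces, so $\pi_n(F) = 0$ for all $n \geq 1$ while $\pi_0(F) \cong (\Z/2\Z)^{\mathrm{cork}(\mathbf A)}$, and the discrete group $F' := (\Z/2\Z)^{\mathrm{cork}(\mathbf A)}$ likewise has $\pi_n(F') = 0$ for $n \geq 1$ and $\pi_0(F') = F'$. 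Both $F$ and $F'$ are Lie groups, so Lemma~\ref{Palais} makes the extensions $F \hookrightarrow G \to \overline{G}$ and $F' \hookrightarrow K \to \overline{K}$ locally trivial Hurewicz fibrations. Truncating their long exact sequences at $\pi_1$, the vanishing of $\pi_1(F)$ and $\pi_1(F')$ makes $\pi_1(G) \to \pi_1(\overline{G})$ and $\pi_1(K) \to \pi_1(\overline{K})$ injective. For surjectivity onto the right-hand terms I would use that $G$ and $K$ are connected --- both are generated by the connected rank-one subgroups $G_i \cong \SL(2,\R)$, respectively $K_i \cong \SO(2,\R)$ --- so that $\pi_0(G) = \pi_0(K) = \ast$, whence exactness at $\pi_0(F)$ and $\pi_0(F')$ forces the boundary maps $\pi_1(\overline{G}) \to \pi_0(F)$ and $\pi_1(\overline{K}) \to \pi_0(F')$ to be onto. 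Since these boundary maps are group homomorphisms (the fibrations arise from short exact sequences of topological groups), the two rows are genuine short exact sequences of groups.

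Commutativity of the ladder is then naturality of the homotopy long exact sequence with respect to the morphism of fibrations given by the three vertical inclusions. The rightmost vertical arrow is the map $\pi_0(F') \to \pi_0(F)$ induced by $F' \hookrightarrow F$; as $+1$ and $-1$ lie in the two distinct components of each factor $\R^\times$, it is a bijection, which gives the isomorphism $(\Z/2\Z)^{\mathrm{cork}(\mathbf A)} \xrightarrow{\cong} (\Z/2\Z)^{\mathrm{cork}(\mathbf A)}$ marked in the diagram.

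For the final assertion the strategy is to identify $G$ and $K$ with the identity components of $G_\RR(\mathbf A)$ and $K_\RR(\mathbf A)$. Since $G_\RR(\mathbf A)$ is generated by $T_\RR$ together with the root groups, all of which lie in the normal subgroup $G$, one has $G_\RR(\mathbf A) = T_\RR \cdot G$ and hence $G_\RR(\mathbf A)/G \cong T_\RR/T$. By \eqref{AdjointSemisimpleQuotient} the torus $T$ is an extension of $\overline{T} \cong (\R^\times)^{\mathrm{rk}(\mathbf A)}$ by $\ker(p_1) \cong (\R^\times)^{\mathrm{cork}(\mathbf A)}$, so $\dim T = \mathrm{rk}(\mathbf A) + \mathrm{cork}(\mathbf A) = n = \dim T_\RR$ and $T_\RR/T$ is finite discrete. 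As $G$ is connected this gives $G = G_\RR(\mathbf A)^\circ$; then $K_\RR(\mathbf A)^\circ \subseteq G_\RR(\mathbf A)^\circ = G$ forces $K_\RR(\mathbf A)^\circ \subseteq K_\RR(\mathbf A) \cap G = K$, and connectedness of $K$ yields $K = K_\RR(\mathbf A)^\circ$. Because $\pi_n(X,x_0)$ depends only on the path component of $x_0$ for every $n \geq 1$, this immediately yields $\pi_n(G_\RR(\mathbf A)) \cong \pi_n(G)$ and $\pi_n(K_\RR(\mathbf A)) \cong \pi_n(K)$ for all $n \geq 1$, in particular for $n \geq 2$.

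Once Palais' fibration theorem is granted, I expect the remaining difficulties to be structural rather than homotopy-theoretic. The two points requiring care are: checking that the rows are exact sequences of \emph{groups} and that the rightmost maps are surjective, which is precisely where the connectedness of $G$ and $K$ is used; and, for the last assertion, the dimension count showing that $T_\RR/T$ is finite, so that $G$ and $K$ genuinely exhaust the identity components of $G_\RR(\mathbf A)$ and $K_\RR(\mathbf A)$.
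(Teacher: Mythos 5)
Your treatment of the ladder is exactly the paper's (implicit) argument: the paper does no more than apply Lemma~\ref{Palais} to the two displayed fibrations and read off the homotopy long exact sequences, and the details you supply --- that the connecting maps $\pi_1(\overline{G}) \to \pi_0((\R^\times)^{\mathrm{cork}(\CA)})$ and $\pi_1(\overline{K}) \to \pi_0((\Z/2\Z)^{\mathrm{cork}(\CA)})$ are group homomorphisms because the fibrations come from extensions of topological groups, that surjectivity follows from connectedness of $G$ and $K$ via exactness at $\pi_0$ of the fibre, that commutativity is naturality of the long exact sequence, and that $\pi_0((\Z/2\Z)^{\mathrm{cork}(\CA)}) \to \pi_0((\R^\times)^{\mathrm{cork}(\CA)})$ is a bijection since $\pm 1$ lie in distinct components of each $\R^\times$ --- are all correct and are precisely what the paper's ``we thus obtain'' suppresses.

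Where you genuinely diverge is the ``moreover''. Judged by the restriction to $n \geq 2$ and by how the corollary is consumed in Theorem~\ref{FundamentalGroups2} (where it is combined with Corollary~\ref{FundamentalGroups1}, i.e.\ with $\pi_n(\overline{K}) \cong \pi_n(\overline{G})$), the isomorphisms the paper actually extracts from the displayed fibrations are $\pi_n(G) \cong \pi_n(\overline{G})$ and $\pi_n(K) \cong \pi_n(\overline{K})$: for $n \geq 2$ the homotopy-discrete fibres contribute nothing to the long exact sequence, while at $n=1$ the $(\Z/2\Z)^{\mathrm{cork}(\CA)}$ obstruction appears, which explains the bound $n\ge 2$; the printed ``$\pi_n(G_\RR(\CA)) \cong \pi_n(G)$'' is best read as a slip for the $\overline{G}$-version. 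You instead prove the literal statement by identifying $G = G_\RR(\CA)^\circ$ and $K = K_\RR(\CA)^\circ$; this is sound in substance (and even yields the isomorphism for $n=1$), but two remarks are in order. First, a small fillable gap: from ``$T_\RR/T$ finite discrete'' you jump to $G = G_\RR(\CA)^\circ$, which needs $G$ to be \emph{open} in $G_\RR(\CA)$ --- finite index alone does not suffice in a topological group. This can be repaired via your own dimension count plus the universal property of the Kac--Peterson topology (Definition and Remark~\ref{definition kp}): $T^\circ$ is an $n$-dimensional connected subgroup of $T_\RR \cong (\R^\times)^n$, hence equals $A_\RR = T_\RR^\circ$, so $T$ is open in $T_\RR$; the abstract quotient homomorphism $G_\RR(\CA) \to G_\RR(\CA)/G \cong T_\RR/T$ is then continuous on $T_\RR$ and trivial on every root group, hence continuous by finality of the topology, so $G = $ kernel is open. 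Second, since your route bypasses the quotient groups, you should still record $\pi_n(G) \cong \pi_n(\overline{G})$ and $\pi_n(K) \cong \pi_n(\overline{K})$ for $n \geq 2$ explicitly --- they drop out of the long exact sequences you already set up, and they, not the $G_\RR(\CA)$-statement, are what the deduction of Theorem~\ref{FundamentalGroups2} from Corollary~\ref{FundamentalGroups1} actually uses.
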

Combining this with Corollary \ref{FundamentalGroups1} we deduce:
\begin{thm}\label{FundamentalGroups2} For every $n \geq 0$ the inclusion $K \hookrightarrow G$ induces isomorphisms 
\[
\pi_n(K) \hookrightarrow \pi_n(G),
\]
hence is a weak homotopy equivalence. In particular, $\pi_1(G) \cong \pi_1(K)$. \hfill \qed
\end{thm}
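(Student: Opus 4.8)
The plan is to deduce the statement for all $n$ by combining the homotopy equivalence $\overline{K} \hookrightarrow \overline{G}$ of Corollary~\ref{FundamentalGroups1} with the two central fibrations $(\R^\times)^{{\rm cork}(\mathbf A)} \to G \to \overline{G}$ and $(\Z/2\Z)^{{\rm cork}(\mathbf A)} \to K \to \overline{K}$ appearing in the preceding corollary. The crucial structural point is that the inclusion $K \hookrightarrow G$ together with the quotient map $p_1$ organises these fibrations into a commuting ladder, so that all comparison maps between homotopy groups are induced by the inclusion and Palais' Lemma~\ref{Palais} supplies a morphism of long exact sequences between the two fibrations.

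First I would settle the extreme degrees. For $n = 0$, both $G$ and $K$ are connected --- $G$ because it is generated by the connected root subgroups $U_\alpha \cong \R$, and $K$ because it is generated by the connected subgroups $K_i \cong \SO(2,\RR)$ --- so $\pi_0(K) \to \pi_0(G)$ is a bijection of one-point sets. For $n = 1$ I would read off the conclusion from the commutative diagram with exact rows in the preceding corollary: its right-hand vertical arrow $(\Z/2\Z)^{{\rm cork}(\mathbf A)} \to (\Z/2\Z)^{{\rm cork}(\mathbf A)}$ is an isomorphism, and its middle arrow $\pi_1(\overline{K}) \to \pi_1(\overline{G})$ is an isomorphism by Corollary~\ref{FundamentalGroups1}; a five-lemma argument applied to the two short exact sequences then forces the left-hand arrow $\pi_1(K) \to \pi_1(G)$ to be an isomorphism as well.

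For $n \geq 2$ the fibres $(\R^\times)^{{\rm cork}(\mathbf A)}$ and $(\Z/2\Z)^{{\rm cork}(\mathbf A)}$ have vanishing homotopy in all positive degrees, so the long exact sequences of the two fibrations yield isomorphisms $\pi_n(K) \cong \pi_n(\overline{K})$ and $\pi_n(G) \cong \pi_n(\overline{G})$, both induced by $p_1$. Composing with the isomorphism $\pi_n(\overline{K}) \cong \pi_n(\overline{G})$ of Corollary~\ref{FundamentalGroups1} and using naturality of the ladder, I obtain that $\pi_n(K) \to \pi_n(G)$ is an isomorphism. Collecting all degrees, the inclusion $K \hookrightarrow G$ induces isomorphisms on all homotopy groups and is therefore a weak homotopy equivalence, and in particular $\pi_1(G) \cong \pi_1(K)$.

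I do not expect the assembly itself to be the difficulty: it is a naturality-plus-five-lemma argument resting on inputs that are already in place, namely the contractibility of $\overline{U}^{\pm}$ obtained from the dilation structure (Theorem~\ref{kumar} and Corollary~\ref{UContractible}) and the topological Iwasawa decomposition of $\overline{G}$ (Theorem~\ref{TopIwasawa}), which together yield the deformation retraction of Corollary~\ref{FundamentalGroups1}. The one point that genuinely needs care is verifying that the isomorphisms produced above are really \emph{induced by} the inclusion $K \hookrightarrow G$ rather than merely abstract; this is exactly what the commutativity of the ladder of fibrations guarantees, through the compatibility of $p_1$ with the two inclusions.
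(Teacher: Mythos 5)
Your proposal is correct and takes essentially the same route as the paper: Corollary~\ref{FundamentalGroups1} (the deformation retraction $\overline{K}\hookrightarrow\overline{G}$ obtained from the topological Iwasawa decomposition and the contractibility of $\overline{U}^{+}$ and $\overline{A}$) combined, via Palais' Lemma~\ref{Palais}, with the commuting ladder of central fibrations $(\R^\times)^{{\rm cork}(\mathbf A)}\to G\to\overline{G}$ and $(\Z/2\Z)^{{\rm cork}(\mathbf A)}\to K\to\overline{K}$ and a comparison of the resulting exact sequences. The paper compresses the final assembly into a one-line deduction from its corollary with the two short exact rows, so your explicit five-lemma chase at $n=1$, the vanishing-fibre argument for $n\geq 2$, and the connectivity check at $n=0$ simply spell out the same argument in detail.
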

%\begin{rem}
 %The construction of spin covers in \cite{GHKW} shows that the group $K'$ is in general not simply-connected, hence by Corollary \ref{FundamentalGroups2} the algebraically simply-connected group $G$ is not simply-connected in the topological sense either. In fact it was established very recently in \cite{Harring/Koehl}, that in the irreducible simply laced situation one always has $\pi_1(K') = \Z/2\Z$, and hence also $\pi_1(G) = \Z/2\Z$ by Corollary \ref{FundamentalGroups2}.
%\end{rem}
\subsection{Kac--Moody symmetric spaces and causal contractions}
We conclude this appendix with an application to the results obtained so far to Kac--Moody symmetric spaces. It was established in \cite{FHHK} that the homogeneous spaces $G_\CA(\RR)/K_\CA(\RR)$ and $G/K$ carry the natural structure of topological reflection spaces, and the same holds for their quotients ${\rm Ad}(G_\CA(\RR))/{\rm Ad}(K_\CA(\RR))$ and  ${\rm Ad}(G)/{\rm Ad}(K)$. The topological reflection space $\mathcal X = G/K$ is called the \emph{unreduced Kac--Moody symmetric space} of type ${\mathbf A}$, and the topological reflection space $\overline{\X} = {\rm Ad}(G)/{\rm Ad}(K) = \overline{G}/\overline{K}$ is called the \emph{reduced Kac--Moody symmetric space} of type ${\mathbf A}$. 
\begin{cor}\label{Contrac1} The reduced symmetric space $\overline{\X}$ is contractible. 
\end{cor}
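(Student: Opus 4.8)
The plan is to use the $\overline{U^+}\,\overline{A}\,\overline{K}$ form of the topological Iwasawa decomposition from Theorem~\ref{TopIwasawa} to identify $\overline{\X} = \overline{G}/\overline{K}$ with a product of contractible spaces. Let $\Psi: \overline{U^+} \times \overline{A} \times \overline{K} \to \overline{G}$ denote the multiplication homeomorphism $(u,a,k)\mapsto uak$ and let $q: \overline{G} \to \overline{G}/\overline{K}$ be the canonical projection onto right cosets. Since $uak\,\overline{K} = ua\,\overline{K}$, the composite $q\circ\Psi$ depends only on $(u,a)$, hence factors through the projection $\mathrm{pr}: \overline{U^+}\times\overline{A}\times\overline{K} \to \overline{U^+}\times\overline{A}$ as $q\circ\Psi = \phi\circ\mathrm{pr}$, where $\phi(u,a) := ua\,\overline{K}$. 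The map $\phi$ is continuous, and it is a bijection: this is exactly the existence and uniqueness of the $\overline{U^+}\,\overline{A}\,\overline{K}$ decomposition, which forces $ua\,\overline{K} = u'a'\,\overline{K}$ to hold if and only if $u=u'$ and $a=a'$.

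First I would verify that $\phi$ is in fact a homeomorphism $\overline{\X}\cong\overline{U^+}\times\overline{A}$. By Lemma~\ref{quotient basics}(a) the projection $q$ is open, and $\Psi$ is a homeomorphism, so $q\circ\Psi$ is an open map. For any open $W\subseteq\overline{U^+}\times\overline{A}$ one has $\mathrm{pr}^{-1}(W) = W\times\overline{K}$, and therefore $\phi(W) = (q\circ\Psi)(W\times\overline{K})$ is open in $\overline{\X}$. Thus $\phi$ is a continuous open bijection, i.e.\ a homeomorphism. It then remains to observe that each factor is contractible: $\overline{U^+}$ is contractible by Corollary~\ref{UContractible} (the contraction arising from the dilation structure of Theorem~\ref{kumar} transported via the homeomorphism of Proposition~\ref{UvsUbarTopology}), while $\overline{A}\cong\R^{{\rm rk}(\mathbf A)}$ is contractible. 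A finite product of contractible spaces is contractible, with an explicit contraction onto the base point $e\overline{K}$ given by the product of the two individual contractions. Hence $\overline{\X}$ is contractible.

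The only genuine technical point is the homeomorphism claim for $\phi$; everything else is formal once Theorem~\ref{TopIwasawa} is available. The one subtlety worth flagging is that one must quotient by $\overline{K}$ on the correct side, so that the $\overline{K}$-factor is the one absorbed when passing to right cosets. This is precisely why the $\overline{U^+}\,\overline{A}\,\overline{K}$ ordering in Theorem~\ref{TopIwasawa}, rather than the $\overline{K}\,\overline{A}\,\overline{U^+}$ ordering, is the relevant one here.
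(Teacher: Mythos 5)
Your proof is correct and follows essentially the same route as the paper: the paper's proof likewise invokes Theorem~\ref{TopIwasawa} to assert that the orbit map $\overline{U^+}\times\overline{A}\to\overline{\X}$, $(u,a)\mapsto ua.o$, is a homeomorphism and then concludes from contractibility of $\overline{U^+}$ (Corollary~\ref{UContractible}) and $\overline{A}$; you merely spell out the homeomorphism step (continuity, bijectivity from uniqueness of the decomposition, openness via Lemma~\ref{quotient basics}(a)) that the paper leaves implicit. The only nitpick is terminological: the cosets $g\overline{K}$ you form are conventionally called left cosets, not right cosets, though your argument is unaffected.
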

\begin{proof} In view of the topological Iwasawa decomposition the orbit map at the basepoint $o = e\overline{K}$
\[
\overline{U^+} \times \overline{A} \to \overline{\X}, \quad (u,a) \mapsto ua.o
\]
is a homeomorphism. Since $\overline{U^+}$ and $\overline{A}$ are contractible, this implies contractability of $\overline{\X}$.
 %We claim that the canonical map $p: \X \to \overline{\X}$  is a Hurewicz fibration. Since its fibres are contracible, this will allow us to deduce the result for $\X$ from the result of $\overline{\X}$.
%
%To prove the claim, we consider the commutative diagram
%\[
%\begin{xy}\xymatrix{U^+ \times A
%}\end{xy}\]
\end{proof}
The proof of Theorem~\ref{kumar} can be used to provide an explicit contraction for $\overline{\X} \simeq \overline{U^+} \times \overline{A}$, using the contraction by conjugation with suitable elements of the torus $T_\RR$ on the group $\overline{U^+}$ and the standard contraction on the finite-dimensional real vector space $A$.
It turns out that this contraction has interesting additional properties. Recall from \cite[Section~7]{FHHK} that the symmetric space $\overline{\X}$ admits future and past boundaries $\Delta^+_{\|}$ and $\Delta^-_{\|}$ that both carry a simplicial structure which turns them in the geometric realizations of the positive and negative halves of the twin building of $G_\CA(\RR)$. Following \cite[Section~7]{FHHK}, a \emph{causal ray} is a geodesic ray of $\overline{\X}$ whose parallelity class equals a point in $\Delta^+_{\|}$ and a \emph{piecewise geodesic causal curve} is the concatenation of a finite set of segments of causal rays that can be parametrized in such a way that the walking direction always points towards the future boundary.
Given $x, y \in \overline{\X}$ we say that $x$ \emph{causally preceeds} $y$ (in symbols $x \preceq y$) if there exists a piecewise geodesic causal curve from $x$ to $y$.

Since both conjugation by elements of $T_\RR$ and the standard contraction of the vector space $A$ preserve geodesic rays and the future and past boundaries (cf.~\cite[Section~7]{FHHK}), the set of piecewise geodesic causal curves of $\overline{\X}$, and hence the causal pre-order $\preceq$, are invariant under the given contraction.

\begin{cor}
  The reduced symmetric space $\overline{\X}$ is causally contractible, i.e., it admits a contraction that preserves $\preceq$.
  %In particular, if $\overline{\X}$ contains a pair of $\preceq$-equivalent points, then it contains a pair of $\preceq$-equivalent points in any non-empty open subset of $\overline{\X}$. 
\end{cor}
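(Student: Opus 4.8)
The plan is to take the explicit contraction sketched in the paragraph preceding the statement and to check that each of its time slices is monotone for $\preceq$. Using the homeomorphism $\overline{U^+} \times \overline{A} \to \overline{\X}$, $(u,a)\mapsto ua.o$ furnished by Corollary~\ref{Contrac1}, I would define a candidate contraction $\Psi\colon[0,1]\times\overline{\X}\to\overline{\X}$ as a product of contractions: on the $\overline{U^+}$-factor I would use the dilation structure of Theorem~\ref{kumar}, reparametrised as in Remark~\ref{dilation implies contractible} so that $\Psi^{U}_t:=\Phi_{t/(t-1)}$ shrinks $\overline{U^+}$ to the identity as $t\to1$ (invoking Proposition~\ref{UvsUbarTopology} to transport $(\Phi_t)$ from $U^+$ to $\overline{U^+}$), and on the $\overline{A}$-factor I would use the standard straight-line contraction of the real vector space $\overline{A}\cong\R^{\mathrm{rk}(\mathbf A)}$ towards the basepoint. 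That $\Psi$ is a genuine contraction — continuous with $\Psi_0=\mathrm{Id}$ and $\Psi_1\equiv o$ — is immediate from the corresponding properties of the two factors and merely re-packages Corollary~\ref{Contrac1}.

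The substantive step is to verify that each $\Psi_t$ preserves $\preceq$. Here I would use that, in the Iwasawa coordinates, the $\overline{U^+}$-part of $\Psi_t$ is implemented by conjugation by the torus element $a_{t/(t-1)}\in T_\RR$ while the $\overline{A}$-part is the standard contraction of $A$. By the facts recalled from \cite[Section~7]{FHHK}, both operations send geodesic rays to geodesic rays and preserve the future and past boundaries $\Delta^{+}_{\|}$ and $\Delta^{-}_{\|}$. Hence $\Psi_t$ carries a causal ray — a geodesic ray whose parallelity class lies in $\Delta^{+}_{\|}$ — to another causal ray, and therefore maps any piecewise geodesic causal curve to a piecewise geodesic causal curve while respecting the admissible parametrisation. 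Consequently, if $x\preceq y$ is witnessed by such a curve from $x$ to $y$, then its image under $\Psi_t$ witnesses $\Psi_t(x)\preceq\Psi_t(y)$; this is exactly the assertion that $\Psi$ is causal.

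The main obstacle I anticipate is organisational rather than conceptual: one must confirm that the product contraction, when read back through the orbit map, really does decompose into the conjugation action on $\overline{U^+}$ and the vector-space contraction on $\overline{A}$, and that this decomposition matches the description of geodesic rays and of the boundaries $\Delta^{\pm}_{\|}$ given in \cite{FHHK}. In particular one should check that moving simultaneously in both the $\overline{U^+}$- and the $\overline{A}$-direction does not disturb the walking-direction condition defining causal curves. Once the ray- and boundary-preservation of these two elementary operations is secured, the monotonicity of $\preceq$, and hence causal contractibility, follows formally.
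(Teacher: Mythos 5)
Your proposal is correct and takes essentially the same route as the paper, whose proof is the paragraph preceding the corollary: the explicit contraction of $\overline{\X}\simeq\overline{U^+}\times\overline{A}$ assembled from torus conjugation on $\overline{U^+}$ (the dilation structure of Theorem~\ref{kumar}, transported via Proposition~\ref{UvsUbarTopology}) and the standard contraction of $\overline{A}$, with causality preserved because both operations carry geodesic rays to geodesic rays and fix the boundaries $\Delta^{+}_{\|}$ and $\Delta^{-}_{\|}$ by \cite[Section~7]{FHHK}, hence map piecewise geodesic causal curves to such curves. Even your closing caveat about verifying the decomposition in Iwasawa coordinates matches the level of detail the paper delegates to the cited reference.
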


\section{The Bruhat decomposition is a CW decomposition (by Julius Gr\"uning and Ralf K\"ohl)}

Let $G$ be a Kac--Moody group endowed with the Kac--Peterson topology and let $T$ be the standard maximal torus and $U^+$, $U^-$ the standard unipotent subgroups. \cite[Theorem~4(a)]{Kac1983} asserts without proof that the multiplication map $$U^+ \times T \times U^- \to U^+TU^-$$ is a homeomorphism with respect to the Kac--Peterson topology. In this note we provide a proof in the symmetrizable case that makes use of this fact in the two-spherical case (\cite[Proposition~7.31]{HKM}), of the embedding of Kac--Moody groups constructed in \cite[Theorem 3.15(2)]{margabberkac}, and of the fact that the Kac--Peterson topology is $k_\omega$. Among the various consequences of this result is that the Bruhat decomposition of a symmetrizable topological Kac--Moody group is a CW decomposition.   

Recall that a {\em $k$-space} (alternatively: {\em compactly generated space}) is a topological space $X$ in which a set $C \subset X$ is closed if and only if its intersection $C \cap K$ with any compact subset $K$ of $X$ is compact. That is, a $k$-space is a topological space $X$ whose topology is coherent with the family of all compact subspaces of $X$. A $k_\omega$-space is a topological space $X$ whose topology is coherent with respect to a countable ascending family of compact subspaces. By (3) of \cite{kwsurvey} any $k_\omega$-space is a $k$-space.

\begin{prop}[({\cite[Corollary]{Palais:1970}})] \label{komega}
A continuous proper map $f : X \to Y$ from a topological space $X$ to a $k$-space $Y$ is closed. In particular, a continuous injection $\iota : X \to Y$ into a $k_\omega$-space $Y=\bigcup_{n \in \mathbb{N}} Y_m$ with compact $Y_m$ such that for each $m \in \mathbb{N}$ the pre-image $\iota^{-1}(Y_m)$ is also compact is a topological embedding, i.e., it is a homeomorphism onto its image. 
\end{prop}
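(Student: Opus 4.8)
The plan is to prove the two assertions in order, deducing the embedding statement from the closedness statement by checking that the injection $\iota$ is proper. Throughout I assume, as is standard for $k_\omega$-spaces, that $Y$ is Hausdorff, so that every compact subset of $Y$ is closed; I also take \emph{proper} in the sense that preimages of compact sets are compact.

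For the first assertion, let $f\colon X \to Y$ be continuous and proper with $Y$ a $k$-space, and let $A \subseteq X$ be closed. Since $Y$ is a $k$-space, it suffices to show that $f(A) \cap C$ is closed in $C$ for every compact $C \subseteq Y$. The key point is the set-theoretic identity
\[
f(A) \cap C = f\bigl(A \cap f^{-1}(C)\bigr),
\]
whose two inclusions are immediate from the definition of $f^{-1}(C)$. Now $f^{-1}(C)$ is compact by properness, so $A \cap f^{-1}(C)$, being closed in $f^{-1}(C)$, is compact; hence its continuous image is compact, thus closed in $Y$ by Hausdorffness, and therefore closed in $C$. This shows $f(A)$ meets every compact set in a closed set, so $f(A)$ is closed.

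For the second assertion I would first observe that a $k_\omega$-space is in particular a $k$-space, and then show that $\iota$ is proper, so that the first part applies. Properness rests on the standard fact that \emph{every} compact subset $C \subseteq Y$ is contained in some $Y_m$. Granting this, $\iota^{-1}(C) \subseteq \iota^{-1}(Y_m)$; since $C$ is closed in $Y$ and $\iota$ is continuous, $\iota^{-1}(C)$ is closed in $X$, hence closed in the subspace $\iota^{-1}(Y_m)$, which is compact by hypothesis, so $\iota^{-1}(C)$ is compact. Thus $\iota$ is proper, and by the first part it is a closed map. A continuous closed injection is a homeomorphism onto its image: for $A \subseteq X$ closed, $\iota(A)$ is closed in $Y$ and hence in $\iota(X)$, so the inverse of $\iota\colon X \to \iota(X)$ is continuous. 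This gives the desired embedding.

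The only genuinely nontrivial step—and thus the main obstacle—is the cited property that a compact subset of a $k_\omega$-space lies in one of the sets $Y_m$. I would argue by contradiction: if $C \not\subseteq Y_m$ for all $m$, choose $x_m \in C \setminus Y_m$ and set $D := \{x_m : m \in \mathbb{N}\}$. Because the $Y_m$ are increasing and $x_m \notin Y_m$, one has $x_m \notin Y_n$ whenever $n \leq m$, so $D \cap Y_n \subseteq \{x_0, \dots, x_{n-1}\}$ is finite and hence closed for every $n$. By the $k_\omega$-property the same reasoning applies to every subset of $D$, so $D$ is an infinite closed discrete subset of the compact set $C$, which is impossible; here $D$ is infinite since a finite $D$ would lie in some $Y_M$, contradicting $x_M \notin Y_M$. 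This contradiction establishes the containment property and completes the plan.
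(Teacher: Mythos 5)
Your proof is correct and follows the same logical route as the paper: the paper's own proof consists of citing Palais's Corollary for the closedness statement and statement (3) of the Franklin--Smith Thomas survey of $k_\omega$-spaces for the fact that a compact subset of a $k_\omega$-space is absorbed by some $Y_m$, and then deduces the embedding statement exactly as you do (properness of $\iota$, closedness, continuity of the inverse on the image). The only differences are that you supply self-contained proofs of both cited ingredients --- your identity $f(A)\cap C = f\bigl(A\cap f^{-1}(C)\bigr)$ is precisely Palais's argument, and your closed-discrete-subset argument is the standard proof of the absorption property --- and that you make the Hausdorff hypothesis on $Y$ explicit, which is implicit in the paper's conventions (the Kac--Peterson topology is Hausdorff) and is genuinely needed both for compact images to be closed and for finite sets to be closed in the absorption argument.
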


\begin{proof}
The first statement is exactly \cite[Corollary]{Palais:1970}. The second statement is an immediate consequence of the first, since a $k_\omega$-space is a $k$-space in which any compact subset $K$ of $Y$ is contained in some $Y_m$ of the ascending family $(Y_m)_{m \in \mathbb{N}}$ of compact subsets (statement (3) of \cite{kwsurvey}).
\end{proof}

\begin{rem}
The authors thank Tobias Hartnick and Stefan Witzel for various lively discussions concerning the correct formulation and application of Proposition~\ref{komega}. Moreover, they thank Stefan Witzel for suggesting to make use of the concept of proper maps.
%
%
%  \begin{enumerate}
%\item The authors thank Tobias Hartnick and Stefan Witzel for various lively discussions concerning the correct formulation and application of Proposition~\ref{komega}. Note in particular that the counterexample $[0, 2\pi) \stackrel{\mathrm{exp}}{\to} \mathbb{S}^1$ shows that the conclusion of the proposition becomes false if one removes the hypothesis that the intersections $Y_m \cap \iota(X)$ be compact with respect to the transported topology.
%\item Stefan Witzel pointed out to us that Proposition~1 is a variation of \cite[Corollary]{Palais:1970}: {\em A continuous proper map $f : X \to Y$ from a topological space $X$ to a $k$-space $Y$ is closed.} Here a {\em proper} map $f : X \to Y$ is a map such that pre-images of compact subsets of $Y$ are compact in $X$; and a Hausdorff space $Y$ is called a {\em $k$-space}, if a subset $A$ of $Y$ is closed if and only if any intersection $A \cap K$ with a compact subset $K$ of $Y$ is closed.
%
%  Now, a $k_\omega$-space $Y = \bigcup Y_m$ is a $k$-space in which any compact subset $K$ of $Y$ is contained in some $Y_m$ (statement (3) of \cite{kwsurvey}). Hence compactness of pre-images $\iota^{-1}(K)$ in $X$ with respect to $\tau_X$ is equivalent to compactness of intersections $K \cap \iota(X)$ with respect to $\tau_{\iota_X}$ is equivalent to compactness of intersections $Y_m \cap \iota(X)$ with respect to $\tau_{\iota_X}$ is equivalent to compactness of pre-images $\iota^{-1}(Y_m)$ in $X$ with respect to $\tau_X$.  
%\end{enumerate}
\end{rem}

A subgroup of a Kac--Moody group is {\em bounded} if it lies in the intersection of two spherical parabolic subgroups of opposite signs. In other words, it is bounded if and only if it stabilises a point the Davis CAT(0) realization of each half of its twin building. The maximal bounded subgroups of a Kac--Moody group have been determined in \cite[Theorem~4.1]{CapraceMuhlherr}.

\begin{prop}  \label{maximalbounded}
Let $G$ be a split real Kac--Moody group. Then the Kac--Peterson topology $\tau_{\mathrm{KP}}$ on $G$ equals the finest group topology $\tau_{\mathrm{MB}}$ on $G$ such that the embeddings of the maximal bounded subgroups, each endowed with its Lie group topology, are continuous. 
\end{prop}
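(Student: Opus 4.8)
The plan is to prove that the two topologies coincide by showing that each one makes the defining inclusions of the other continuous; since both $\tau_{\mathrm{KP}}$ and $\tau_{\mathrm{MB}}$ are defined as \emph{finest} group topologies, this yields $\tau_{\mathrm{KP}} \le \tau_{\mathrm{MB}}$ and $\tau_{\mathrm{MB}} \le \tau_{\mathrm{KP}}$ and hence equality. Recall from Definition and Remark~\ref{definition kp} that $\tau_{\mathrm{KP}}$ is the finest group topology for which every root group inclusion $U_\alpha \hookrightarrow G$ with $\alpha \in \Phi$ and the inclusion $T_\RR \hookrightarrow G$ are continuous, each source carrying its Lie topology; by the statement to be proven, $\tau_{\mathrm{MB}}$ is the finest group topology for which every inclusion $M \hookrightarrow G$ of a maximal bounded subgroup $M$, equipped with its Lie group topology, is continuous. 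The universal property of a finest topology then reads: if $\sigma$ is any group topology making a prescribed family of inclusions into $G$ continuous, then $\sigma$ is coarser than the finest group topology with that property.

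First I would establish $\tau_{\mathrm{MB}} \le \tau_{\mathrm{KP}}$. For this it suffices to check that the inclusions $U_\alpha \hookrightarrow (G,\tau_{\mathrm{MB}})$ and $T_\RR \hookrightarrow (G,\tau_{\mathrm{MB}})$ are continuous, since then $\tau_{\mathrm{MB}}$ makes the maps defining $\tau_{\mathrm{KP}}$ continuous and is therefore coarser than $\tau_{\mathrm{KP}}$. Every real root group $U_\alpha$ is a conjugate of a fundamental root group and hence lies in a rank-one subgroup isomorphic to $\SL_2(\RR)$ (cf.\ Remark~\ref{homeos alpha beta}); together with $T_\RR$ it is contained in a \emph{standard maximal bounded subgroup}, namely a Levi subgroup $M = \langle T_\RR, G_J \rangle$ attached to a maximal spherical subset $J \subseteq I$ (or a conjugate thereof). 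As $U_\alpha$ and $T_\RR$ are closed Lie subgroups of such an $M$, the inclusions $U_\alpha \hookrightarrow M$ and $T_\RR \hookrightarrow M$ are continuous for the Lie topologies, and composing with the $\tau_{\mathrm{MB}}$-continuous inclusion $M \hookrightarrow (G,\tau_{\mathrm{MB}})$ gives the desired continuity. The only delicate point here is to use the full Levi $\langle T_\RR, G_J\rangle$, which contains all of the rank-$n$ torus $T_\RR$, rather than the bare semisimple part $G_J$.

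For the reverse inequality $\tau_{\mathrm{KP}} \le \tau_{\mathrm{MB}}$ I would show that each inclusion $M \hookrightarrow (G,\tau_{\mathrm{KP}})$, with $M$ carrying its Lie topology, is continuous. The structural input is that every maximal bounded subgroup is \emph{spherical}: a bounded subgroup fixes a point in each half of the twin building and is therefore contained in the stabilizer of a spherical residue, so that up to conjugacy $M = \langle T_\RR, G_J\rangle$ with $J$ spherical is a finite-dimensional reductive Lie group. By \cite[Corollary~7.16]{HKM} the restriction of $\tau_{\mathrm{KP}}$ to any spherical subgroup coincides with its Lie topology; in particular the subspace topology induced by $\tau_{\mathrm{KP}}$ on $M$ equals the Lie topology of $M$, so $M \hookrightarrow (G,\tau_{\mathrm{KP}})$ is a topological embedding and a fortiori continuous. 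If one prefers a direct argument, one may instead note that $M$ is $\sigma$-compact while $(G,\tau_{\mathrm{KP}})$ is $k_\omega$, so that Proposition~\ref{komega} upgrades the continuous injection $M \hookrightarrow (G,\tau_{\mathrm{KP}})$ to an embedding once one verifies that the preimages of a compact exhaustion of $G$ are compact.

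I expect the main obstacle to be the structural identification underlying the reverse inequality: that every maximal bounded subgroup is a finite-dimensional spherical Levi subgroup (so that \cite[Corollary~7.16]{HKM} applies), together with the dual fact that $T_\RR$ and all root groups genuinely lie inside such subgroups. Both rest on the fixed-point theorem for bounded subgroups acting on the twin building and on the description of maximal spherical residues; once these are in place, the comparison of topologies is a formal consequence of the two universal properties.
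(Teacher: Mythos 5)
Your proposal is correct and takes essentially the same route as the paper: both directions follow from the universal property of finest group topologies, with $\tau_{\mathrm{MB}} \le \tau_{\mathrm{KP}}$ obtained because the rank-one pieces sit as closed Lie subgroups inside maximal bounded subgroups (the paper runs this through the fundamental $\SL_2(\RR)$'s and \cite[Proposition~7.21]{HKM}, you through the $U_\alpha$ and $T_\RR$ of Definition and Remark~\ref{definition kp} --- equivalent characterizations), and $\tau_{\mathrm{KP}} \le \tau_{\mathrm{MB}}$ because $\tau_{\mathrm{KP}}$ induces the Lie topology on every maximal bounded subgroup. The only difference is one of packaging: where the paper cites \cite[Lemma~4.3]{marfix} for this last fact, you re-derive it from the identification of maximal bounded subgroups with spherical Levi subgroups (via the fixed-point theorem on the twin building) combined with \cite[Corollary~7.16]{HKM}, which is precisely the content that the cited lemma of Marquis encapsulates.
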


\begin{proof}
By \cite[Lemma~4.3]{marfix}, the Kac--Peterson topology $\tau_{\mathrm{KP}}$ on $G$ induces the Lie group topology on its maximal bounded subgroups.
A fundamental $\SL_2(\mathbb{R})$ is bounded and, in particular, embeds as a closed subgroup into a maximal bounded subgroup. Therefore its subspace topology equals its Lie group topology; by \cite[Proposition~7.21]{HKM} the topology $\tau_{\mathrm{KP}}$ equals the finest group topology on $G$ such that the embeddings of the fundamental $\SL_2(\mathbb{R})$ Lie subgroups is continuous, whence $\tau_{\mathrm{KP}}$ is finer than or equal to the final group topology $\tau_{\mathrm{MB}}$ with respect to the embedded maximal bounded subgroups. Again, since by \cite[Lemma~4.3]{marfix} the Kac--Peterson topology on $G$ induces the Lie group topology on its maximal bounded subgroups, the two described topologies actually coincide.  
\end{proof}

\begin{cor} \label{maximalbounded2}
Let $G$ be a split real Kac--Moody group endowed with the Kac--Peterson topology and let $(G_i)_{i \in I}$ be a finite family of Lie-subgroups of $G$ such that each fundamental $\SL_2(\mathbb{R})$ is contained in at least one of the $G_i$. Then the Kac--Peterson topology on $G$ equals the finest group topology on $G$ such that the embeddings of the $(G_i)_i$, each endowed with its Lie group topology, are continuous. 
\end{cor}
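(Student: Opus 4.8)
The plan is to prove the equality of topologies by the same sandwich argument used for Proposition~\ref{maximalbounded}, now with the family $(G_i)_{i\in I}$ in place of the maximal bounded subgroups. Write $\tau_{\mathrm{KP}}$ for the Kac--Peterson topology and $\tau$ for the finest group topology on $G$ making every embedding $G_i\hookrightarrow G$ continuous, each $G_i$ being endowed with its Lie group topology. It suffices to establish the two inequalities $\tau\leq\tau_{\mathrm{KP}}$ and $\tau_{\mathrm{KP}}\leq\tau$.

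For $\tau\leq\tau_{\mathrm{KP}}$, I would invoke the hypothesis that each fundamental $\SL_2(\mathbb{R})$ lies in some $G_i$. By Remark~\ref{homeos alpha beta} (that is, \cite[Corollary~7.16]{HKM}) the topology $\tau_{\mathrm{KP}}$ restricts to the Lie group topology both on $G_i$ and on the fundamental $\SL_2(\mathbb{R})$; hence the Lie group topology on $\SL_2(\mathbb{R})$ coincides with its subspace topology inside $G_i$, so the inclusion $\SL_2(\mathbb{R})\hookrightarrow G_i$ is continuous with respect to the two Lie topologies. Composing with $G_i\hookrightarrow (G,\tau)$ shows that $\tau$ renders every fundamental $\SL_2(\mathbb{R})$-embedding continuous. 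Since $\tau_{\mathrm{KP}}$ is the finest group topology with this property by \cite[Proposition~7.21]{HKM} (as already recalled in the proof of Proposition~\ref{maximalbounded}), we conclude $\tau\leq\tau_{\mathrm{KP}}$.

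For the reverse inequality $\tau_{\mathrm{KP}}\leq\tau$, the decisive input is that $\tau_{\mathrm{KP}}$ induces the Lie group topology on each finite-dimensional Lie subgroup $G_i$, again by \cite[Corollary~7.16]{HKM}, respectively \cite[Lemma~4.3]{marfix}. In particular every embedding $G_i\hookrightarrow (G,\tau_{\mathrm{KP}})$ is continuous, and since $\tau$ is by definition the finest group topology for which this holds, it follows that $\tau_{\mathrm{KP}}\leq\tau$. Together with the previous paragraph this yields $\tau=\tau_{\mathrm{KP}}$.

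The argument is essentially a formal manipulation of final group topologies, and the single point that genuinely needs checking is that $\tau_{\mathrm{KP}}$ restricts to the Lie group topology on each $G_i$ -- this is precisely where the hypothesis that the $G_i$ are Lie subgroups enters, via \cite[Corollary~7.16]{HKM} and \cite[Lemma~4.3]{marfix}. I expect this to be the only (mild) obstacle; the remainder follows the template of Proposition~\ref{maximalbounded} verbatim.
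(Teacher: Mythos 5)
Your proof is correct and matches the paper's intent exactly: the paper states Corollary~\ref{maximalbounded2} without a separate proof, as an immediate instance of the sandwich argument in Proposition~\ref{maximalbounded}, and your two inequalities reproduce that argument verbatim with the $G_i$ in place of the maximal bounded subgroups (factoring each fundamental $\SL_2(\mathbb{R})$-embedding through some $G_i$ for one direction, and using that $\tau_{\mathrm{KP}}$ induces the Lie topology on each $G_i$ for the other). The only point worth flagging is the one you already identified yourself --- that $\tau_{\mathrm{KP}}$ restricts to the Lie group topology on each $G_i$ --- which is precisely what the hypothesis ``Lie-subgroup of $G$'' is meant to encode, and what \cite[Corollary~7.16]{HKM} and \cite[Lemma~4.3]{marfix} supply for the spherical and bounded subgroups to which the corollary is actually applied later in the appendix.
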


\begin{prop}[({cf.\ \cite[1.5, 1.10]{hkl}, \cite[Theorem 3.15(2)]{margabberkac}})] \label{topemb}
Any symmetrizable topological Kac--Moody group endowed with the Kac--Peterson topology admits a continuous injective group homomorphism into a simply laced topological Kac--Moody group with closed image with respect to the Kac--Peterson topology.
\end{prop}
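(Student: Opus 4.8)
The plan is to obtain the map algebraically and then promote it to a topological statement in two independent stages: continuity via Corollary~\ref{maximalbounded2}, and closedness of the image via Proposition~\ref{komega}. Write $G = G(\Pi)$ for the symmetrizable group, let $\widehat{\Pi}$ be the simply-laced diagram and $\widehat{G} := G(\widehat{\Pi})$ the associated algebraically simply-connected group produced by the unfolding construction, and let $\sigma$ denote the diagram automorphism of $\widehat{G}$ attached to the folding. By \cite[1.5, 1.10]{hkl} together with \cite[Theorem~3.15(2)]{margabberkac} there is an injective group homomorphism $\iota : G \to \widehat{G}$ with image contained in the fixed-point subgroup $\widehat{G}^{\sigma}$, which on root groups carries each fundamental $U_{\pm\alpha_i}$ of $G$ onto the product $\prod_{j} U_{\pm\widehat{\alpha}_{j}}$ of the root groups of $\widehat{G}$ indexed by the $\sigma$-orbit of $i$. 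Injectivity is part of the cited statement, so only the two topological assertions remain.

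For continuity I would apply Corollary~\ref{maximalbounded2} to the finite family $(G_i)_{i\in I}$ of fundamental rank-one subgroups of $G$, each $\cong \SL_2(\RR)$ and trivially containing a fundamental $\SL_2(\RR)$. By the universal property of the resulting final group topology it suffices to check that each restriction $\iota|_{G_i}$ is continuous into $\widehat{G}$. The vertices of a single $\sigma$-orbit are pairwise non-adjacent in $\widehat{\Pi}$, so the subgroup $\langle U_{\pm\widehat{\alpha}_j} : j \in \sigma\text{-orbit of } i\rangle$ of $\widehat{G}$ is a \emph{direct product} of copies of $\SL_2(\RR)$, in particular a spherical subgroup, on which by \cite[Corollary~7.16]{HKM} the Kac--Peterson topology restricts to the Lie topology. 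Since $\iota|_{G_i}$ is then a homomorphism of the Lie group $\SL_2(\RR)$ into a finite product of copies of $\SL_2(\RR)$, it is a morphism of Lie groups and hence continuous; continuity of $\iota$ follows from Corollary~\ref{maximalbounded2}.

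For the closed image I would feed the continuous injection $\iota$ into Proposition~\ref{komega}, using that $\widehat{G} = \bigcup_m Y_m$ is $k_\omega$ by \cite[Proposition~7.10]{HKM}. It suffices to show that $\iota$ is \emph{proper}, i.e.\ that each $\iota^{-1}(Y_m)$ is compact: the first assertion of Proposition~\ref{komega} then forces $\iota$ to be closed, so that $\iota(G)$ is closed in $\widehat{G}$, while its second assertion simultaneously upgrades $\iota$ to a topological embedding (which is what the later application to the homeomorphism $U_+TU_- \cong U_+\times T\times U_-$ will require). To establish properness I would exploit the compatibility of $\iota$ with the Bruhat decompositions: the Weyl group $W$ of $G$ embeds into $\widehat{W}$ compatibly with the folding and $\iota(BwB) \subseteq \widehat{B}\,\iota(w)\,\widehat{B}$ for every $w\in W$. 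A compact $Y_m$ has compact image in $\widehat{G}/\widehat{B}$; since $\widehat{G}$ is simply-laced and hence two-spherical, the Bruhat decomposition of $\widehat{G}/\widehat{B}$ is a CW decomposition in the already-established two-spherical case of \cite{HKM}, so this image meets only finitely many cells and $Y_m \subseteq \bigcup_{\widehat{w}\in F}\widehat{B}\widehat{w}\widehat{B}$ for a finite $F\subseteq\widehat{W}$. Disjointness of double cosets then confines $\iota^{-1}(Y_m)$ to the finitely many cells $BwB$ with $\iota(w)\in F$, and on each such cell the explicit coordinatization by root groups exhibits $\iota$ with bounded fibres, so that $\iota^{-1}(Y_m)$ is a closed bounded subset of a compact set, hence compact.

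I expect the properness in the last step to be the main obstacle: it is precisely the point where an abstract continuous injective homomorphism is not enough and one must rule out \emph{distortion}, i.e.\ the compression of an unbounded subset of $G$ into a compact subset of $\widehat{G}$. The reduction to finitely many Bruhat cells is the device that makes this tractable, after which the fibrewise boundedness reduces to the rank-one situation, where $\iota$ restricts to a diagonal embedding $\SL_2(\RR)\hookrightarrow \SL_2(\RR)^{\times k}$ and is manifestly proper. As a complementary and more elementary route to closedness alone, one may note that $\sigma$ permutes the root groups and fixes $T_\RR$, hence is a \emph{topological} automorphism of $\widehat{G}$, so that $\widehat{G}^{\sigma}$ is closed as the equalizer of $\sigma$ and the identity; if the unfolding is arranged so that $\iota(G)=\widehat{G}^{\sigma}$, this yields the closed image directly, independently of the properness argument, which is then needed only for the topological-embedding refinement.
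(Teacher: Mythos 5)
Your proposal reaches the statement, but only through the route you relegate to the end: your ``complementary and more elementary route'' is, almost verbatim, the paper's actual proof. The paper obtains $\iota$ from \cite[Theorem 3.15(2)]{margabberkac}, checks continuity essentially as you do (the restriction to each fundamental rank-one subgroup is a diagonal morphism into a product of pairwise commuting fundamental $\SL_2(\RR)$-subgroups of $H$, hence continuous, and then universality of the Kac--Peterson topology, \cite[Proposition~7.21]{HKM} --- equivalent to your appeal to Corollary~\ref{maximalbounded2} --- upgrades this to continuity of $\iota$), and then proves closedness of the image by writing $\iota(G)=\bigcap_\sigma \Fix(\phi_\sigma)$, where $\phi_\sigma$, for $\sigma=(\sigma_1,\dots,\sigma_N)$ with $\sigma_i\in\mathrm{Sym}_{n_i}$, is the automorphism $x_{i,j}(r)\mapsto x_{i,\sigma(j)}(r)$: each $\phi_\sigma$ is continuous for the Kac--Peterson topology, so each fixed-point set is closed in the Hausdorff group $H$. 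Two points of difference: the paper uses the whole family of coordinate permutations rather than a single folding automorphism (orbits may have size greater than $2$), and it asserts the equality $\iota(G)=\bigcap_\sigma\Fix(\phi_\sigma)$ unconditionally, whereas you leave it as a hypothesis (``if the unfolding is arranged so that $\iota(G)=\widehat{G}^{\sigma}$''). With that equality granted, your fallback argument is complete and is the intended proof.

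Your primary route --- properness plus Proposition~\ref{komega} --- proves more than the proposition asserts (a topological embedding), and that stronger statement is exactly what the paper postpones to Corollary~\ref{multopen}; moreover, as sketched it has a genuine gap at the step you yourself flag. After confining $\iota^{-1}(Y_m)$ to finitely many Bruhat cells $BwB$, you conclude that it is ``a closed bounded subset of a compact set, hence compact''. But Bruhat cells in $G$ are not compact: $B=TU^+$ with $T$ noncompact and $U^+$ an infinite colimit of root-group products, so nothing at this stage is bounded. What is needed is that a compact subset of $H$ pulls back to a set that is bounded in the $U_w\times T\times U^+$ coordinates, i.e.\ one must exclude distortion of $\iota(G)$ inside $H$; this is precisely the content of the paper's Proposition~\ref{nondistortion} (formulated in the opposites geometry of the twin buildings) together with the carefully chosen $k_\omega$-decompositions by balls in maximal bounded subgroups in the proof of Corollary~\ref{multopen}. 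Your ``explicit coordinatization with bounded fibres'' would have to reproduce that argument rather than replace it. For the proposition as stated, the clean fix is to promote your final paragraph to the main argument and drop the properness machinery, which belongs to the later embedding result.
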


\begin{proof}
  By \cite[Theorem 3.15(2)]{margabberkac} for any symmetrizable Kac--Moody group $G$ there is an injective group homomorphism $\iota : G \to H$ into a simply laced Kac--Moody group $H$ embedding each fundamental rank-$1$ subgroup $G_{\alpha_i} \cong \SL_2(\mathbb{R})$ diagonally into the direct product $$\prod_{j=1}^{n_i} H_{\alpha_{i,j}} \cong \SL_2(\mathbb{R})^{n_i}$$ of a suitable (finite) family of fundamental rank-$1$ subgroups $H_{\alpha_{i,j}}$ of $H$.
  
The restriction of this map to any fundamental rank-$1$ subgroup $G_\alpha$ of $G$ is continuous with respect to the Lie group topology on $G_\alpha$ and the Kac--Peterson topology on $H$. Hence, by universality (see \cite[Proposition~7.21]{HKM}), the map $\iota : G \to H$ is continuous with respect to the Kac--Peterson topology on both $G$ and $H$. 

One has $$\iota(G)=\bigcap_\sigma \Fix(\phi_\sigma),$$
where $\phi_\sigma$ is the automorphism of $H$ given by $$H_{\alpha_{i,j}}\to H_{\alpha_{i,\sigma(j)}}$$ for some $ \sigma=(\sigma_1,\dots,\sigma_N)$, where $\sigma_i \in \mathrm{Sym}(n_i)$ acting by permuting the factors of the direct product $\prod_{j=1}^{n_i} H_{\alpha_{i,j}} \cong \SL_2(\mathbb{R})^{n_i}$. Since the automorphisms $\phi_\sigma$ are continuous with respect to the Kac--Peterson topology on $H$, the group $\iota(G)$ is a closed subgroup of $H$.
\end{proof}

%\begin{observation} \label{maxboundedcut}
%\begin{enumerate}
%\item In the situation of Proposition~\ref{topemb}, the maximal bounded subgroups of $\iota(G)$ are exactly the intersections of maximal bounded subgroups of $H$ with $\iota(G)$. Indeed, the intersection of a maximal bounded subgroup of $H$ with $\iota(G)$ is an almost connected Lie subgroup of $\iota(G)$, since $\iota(G)$ is closed in $H$, and hence is a bounded subgroup of $\iota(G)$ by \cite[Theorem~E]{marfix}. Conversely, any maximal bounded subgroup of $\iota(G)$ is an almost connected Lie subgroup of $H$, whence again is a bounded subgroup of $H$ by \cite[Theorem~E]{marfix}.   
%\item
The embedding $\iota : G \to H$ corresponds to an embedding of the twin building $\Delta_G$ of $G$ into the twin building $\Delta_H$ of $H$ such that $\Delta_G = \bigcap_\sigma \Fix(\phi_\sigma)$ (with the $\phi_\sigma$ now considered as twin building automorphisms) and the additional property that two chambers of $\Delta_G$ are opposite in $\Delta_G$ if and only if they are opposite in $\Delta_H$.

Indeed, this is immediate from an argument along the lines of descent in buildings (cf.\ \cite{MPW}). The automorphisms $\phi_\sigma$ act on the twin apartment defined by the fundamental chambers $c_+$, $c_-$ of $\Delta_H$ and, by definition, the fixed substructure is isometric to a twin apartment of $\Delta_G$. The claim then follows from the fact that $G$ acts transitively on the twin apartments of $\Delta_G$.
%\end{enumerate}

In particular, this embedding $$\Delta_G = (\Delta^+_G,\Delta^-_G, \delta^*_G) \to \Delta_H = (\Delta^+_H,\Delta^-_H, \delta^*_H)$$ of twin buildings induces an embedding of opposite geometries $$\mathrm{Opp}(\Delta_G) = \{ (c,d) \in \Delta^+_G \times \Delta^-_G \mid \delta^*_G(c,d) = 1 \} \to \mathrm{Opp}(\Delta_H) = \{ (c,d) \in \Delta^+_H \times \Delta^-_H \mid \delta^*_H(c,d) = 1 \}.$$

Specialising to the embedding of a fundamental rank-$1$ subgroup $G_{\alpha_i} \cong \SL_2(\mathbb{R})$ of $G$ diagonally into the direct product $$\prod_{j=1}^{n_i} H_{\alpha_{i,j}} \cong \SL_2(\mathbb{R})^{n_i}$$ of a suitable (finite) family of fundamental rank-$1$ subgroups $H_{\alpha_{i,j}}$ of $H$, one obtains an embedding of the real projective line $\mathbb{S}^1$ (the building of type $A_1$) diagonally into a suitable product $\left(\mathbb{S}^1\right)^{n_i}$ of real projective lines (the building of type ${A_1}^{n_i} = \underbrace{A_1 \oplus A_1 \oplus \cdots \oplus A_1}_{n_i}$).

This in turn yields an embedding of the corresponding opposites geometries of pairs of distinct points of $\mathbb{S}^1$ with adjacency relation given by the complete relation (the opposite geometry of type $A_1$ of diameter $1$), respectively of $n_i$-tuples of pairs of distinct points of $\mathbb{S}^1$ with adjacency relation given by equality in all up to at most one component (the opposite geometry of type ${A_1}^{n_i}$ of diameter $n_i$).

Refer to \cite[Section~4.3]{Gramlich} for more details, some examples, and applications of the opposite geometry. The most striking application of the opposite geometry is a proof of \cite[Theorem~13.32]{Tits1974} via its simple connectedness and M\"uhlherr's generalization to Kac--Moody groups\footnote{A manuscript that has never been published and unfortunately seems to be lost. To the second author's dismay he has lost his copy that he once owned.}; see also \cite{AbramenkoMuhlherr}.
%\end{observation}

\smallskip \noindent The following result follows immediately from the preceding discussion:

\begin{prop}\label{nondistortion}
Let $\iota : G \to H$ be the injective group homomorphism from Proposition~\ref{topemb}, let $\Delta_G = (\Delta^+_G,\Delta^-_G, \delta^*_G) \to \Delta_H = (\Delta^+_H,\Delta^-_H, \delta^*_H)$ be the induced embedding of twin buildings, and $\mathrm{Opp}(\Delta_G) = \{ (c,d) \in \Delta^+_G \times \Delta^-_G \mid \delta^*_G(c,d) = 1 \} \to \mathrm{Opp}(\Delta_H) = \{ (c,d) \in \Delta^+_H \times \Delta^-_H \mid \delta^*_H(c,d) = 1 \}$ the resulting embedding of opposite geometries. Given $(c_+,c_-) \in \mathrm{Opp}(\Delta_G)$, for all $n \in \mathbb{N}$ exists $m \in \mathbb{N}$ such that the intersection of $\mathrm{Opp}(\Delta_G)$ with the ball of radius $n$ in $\mathrm{Opp}(\Delta_H)$ around $(c_+,c_-)$ is contained in the ball of radius $m$ in $\mathrm{Opp}(\Delta_H)$ around $(c_+,c_-)$. 
\end{prop}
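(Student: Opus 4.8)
The plan is to reduce the geometric assertion of Proposition~\ref{nondistortion} to a purely Coxeter-theoretic non-distortion inequality for the embedding of Weyl groups induced by $\iota$, and then to read off the desired control of the combinatorial metric on the opposites geometry from the length functions of $W_G$ and $W_H$. First I would record that the building embedding $\Delta_G \hookrightarrow \Delta_H$ is compatible with the chosen Borel subgroups: since $\iota$ sends positive root groups of $G$ diagonally into products of positive root groups of $H$ and the torus of $G$ into that of $H$, one has $\iota(B_G)\subseteq B_H$ and $\iota(B_G^-)\subseteq B_H^-$, so the Bruhat and Birkhoff decompositions give, for chambers $x,y$ of $\Delta_G$, the identity $\delta_H(x,y)=\iota_W(\delta_G(x,y))$, where $\iota_W\colon W_G\hookrightarrow W_H$ is the Weyl-group embedding underlying $\iota$. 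By Observation~\ref{maxboundedcut}, opposition in $\Delta_G$ agrees with opposition in $\Delta_H$, so $\mathrm{Opp}(\Delta_G)=\mathrm{Opp}(\Delta_H)\cap(\Delta_G\times\Delta_G)$ and the base pair $(c_+,c_-)$ is a genuine point of both geometries. Writing a displacement of $(d_+,d_-)\in\mathrm{Opp}(\Delta_G)$ via the two Weyl elements $w_\pm:=\delta_\pm(c_\pm,d_\pm)\in W_G$, membership of $(d_+,d_-)$ in a ball of radius $n$ in $\mathrm{Opp}(\Delta_H)$ amounts (up to a fixed, at most linear, comparison between the metric on the opposites geometry and the gallery lengths in the two halves) to a bound on $\ell_H(\iota_W(w_\pm))$, while membership in a $\Delta_G$-ball amounts to a bound on $\ell_G(w_\pm)$. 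The two halves are interchanged by the Cartan--Chevalley involution $\theta$, so it suffices to treat one of them and prove: a bound on $\ell_H(\iota_W(w))$ forces a bound on $\ell_G(w)$.

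The heart of the argument is the following two-sided length estimate, and its lower bound is the actual non-distortion content. I would recall from the unfolding construction of \cite{margabberkac} that each simple reflection $s_i$ of $W_G$ is sent to the product $w_i:=\prod_{j=1}^{n_i}s_{i,j}$ of $n_i$ pairwise commuting simple reflections of $W_H$ (these $n_i$ vertices being mutually non-adjacent in the simply-laced diagram of $H$), that $\iota_W(W_G)$ is exactly the subgroup of $W_H$ fixed by the diagram automorphisms $\phi_\sigma$ with $\sigma\in\prod_i\mathrm{Sym}_{n_i}$, and that $\{w_i\}$ is a Coxeter generating set for this fixed subgroup. With $N:=\max_i n_i$ one then has
\[
\ell_G(w) \;\le\; \ell_H(\iota_W(w)) \;\le\; N\,\ell_G(w), \qquad w \in W_G.
\]
The lower bound I would obtain by comparing inversion sets: the positive real roots of $\Delta_G$ inject $\phi_\sigma$-equivariantly into those of $\Delta_H$, and a reduced expression $w=s_{i_1}\cdots s_{i_k}$ in $W_G$ unfolds to $\iota_W(w)=w_{i_1}\cdots w_{i_k}$, which is reduced in $W_H$ of length $\sum_j n_{i_j}$ by the standard length-additivity of orbit products for diagram-fixed (folded) Coxeter subgroups. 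In particular $\ell_G(w)=k\le\sum_j n_{i_j}=\ell_H(\iota_W(w))$.

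Given $n$, a pair $(d_+,d_-)\in\mathrm{Opp}(\Delta_G)$ in the $\Delta_H$-ball of radius $n$ then has $\ell_H(\iota_W(w_\pm))$ bounded by a fixed function of $n$, whence $\ell_G(w_\pm)$ is bounded by the lower estimate, so $(d_+,d_-)$ lies in the $\Delta_G$-ball of a radius $m$ depending only on $n$. Since $\iota_W$ preserves the Bruhat order (the subword characterization together with the reduced-word property above), this bound also passes to the closures of Bruhat cells, so the statement holds for the closed balls forming the $k_\omega$-exhaustion relevant for Proposition~\ref{komega}. The main obstacle is precisely the lower length bound of the middle paragraph---that the folding $W_G\hookrightarrow W_H$ does not contract lengths---because a priori the additional relations present in the larger Coxeter group $W_H$ could shorten the image of a long element of $W_G$; ruling this out is exactly the \emph{non-distortion} phenomenon, and it is what converts the continuous injection $\iota$ into a topological embedding via Proposition~\ref{komega}.
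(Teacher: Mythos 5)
Your proposal is correct, but it takes a genuinely different route from the paper. The paper stays entirely inside the opposites geometry: it observes that each ${A_1}^{n_i}$-residue of $\Delta_H$ has finite diameter and that the only $\mathrm{Opp}(\Delta_G)$-points of such a residue come from the diagonally embedded panel $\mathbb{P}_1(\mathbb{R}) \cong \mathbb{S}^1 \hookrightarrow \left(\mathbb{S}^1\right)^{n_i}$, so that every residue-step in $\mathrm{Opp}(\Delta_H)$ between $\mathrm{Opp}(\Delta_G)$-points amounts to exactly one panel-step in $\mathrm{Opp}(\Delta_G)$; the non-distortion is then read off residue by residue, with no Coxeter-length bookkeeping. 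You instead push everything down to the Weyl groups: using $\iota(B_G)\subseteq B_H$ to get $\delta_H = \iota_W \circ \delta_G$, you convert ball membership into bounds on $\ell_H(\iota_W(w_\pm))$ and prove the genuinely quantitative estimate $\ell_G(w) \leq \ell_H(\iota_W(w)) \leq N\,\ell_G(w)$ via the length-additivity of reduced unfoldings for the folding $s_i \mapsto \prod_j s_{i,j}$ (for which you should supply a reference, e.g.\ H\'ee or M\"uhlherr on fixed-point subgroups of Coxeter groups; note also that you only need the orbit involutions to generate a Coxeter subgroup with additive unfolded lengths, not the full equality of $\iota_W(W_G)$ with the fixed subgroup of the $\phi_\sigma$). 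Both arguments ultimately rest on the same structural input from Proposition~\ref{topemb} -- the diagonal embedding of each fundamental $\SL_2(\mathbb{R})$ into a block of $n_i$ pairwise commuting ones -- but your reduction has a concrete advantage: since the displacement $\delta_H(c_\pm,d_\pm)$ is controlled by the chain length irrespective of where the intermediate points of the chain lie, your argument is insensitive to the fact that a chain in $\mathrm{Opp}(\Delta_H)$ joining two $\mathrm{Opp}(\Delta_G)$-points need not pass through $\mathrm{Opp}(\Delta_G)$ at intermediate steps, a point the paper's residue-by-residue tracking glosses over; the price is that your translation between the metric on the opposites geometry and the gallery lengths in the two halves is only asserted ``up to a fixed, at most linear, comparison,'' though this matches the level of precision of the paper's own formulation, and your closing remark on Bruhat order and the $k_\omega$-exhaustion is not needed for the statement.
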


%\begin{proof}
%By \cite[Theorem 3.15(2)]{margabberkac}, each fundamental $G_{\alpha_i} \cong \SL_2(\mathbb{R})$ embeds diagonally into a product $\prod_{j=1}^{n_i} H_{\alpha_{i,j}} \cong \SL_2(\mathbb{R})^{n_i}$ of finitely many pairwise commuting fundamental subgroups of $H$. Since the diameter of a (twin) building and an opposites geometry of type ${A_1}^{n_i}$ is finite, in order to prove the claim of the proposition it suffices to observe that the $\mathrm{Opp}(\Delta_G)$-points of a union of bounded chains of such ${A_1}^{n_k}$-residues of $\mathrm{Opp}(\Delta_H)$ starting in $(c_+,c_-)$ actually lies in a finite $\mathrm{Opp}(\Delta_G)$-ball around $(c_+,c_-)$. However, this is obvious, because the only $\mathrm{Opp}(\Delta_G)$-points of such a ${A_1}^{n_k}$-residue come from the opposites geometry of the twin building of the diagonally embedded panel $\mathbb{P}_1(\mathbb{R}) \cong \mathbb{S}^1 \hookrightarrow \left(\mathbb{S}^1\right)^{n_i}$; in other words, each step by a ${A_1}^{n_k}$-residue in $\mathrm{Opp}(\Delta_H)$ between $\mathrm{Opp}(\Delta_G)$-points equals one step via an $\mathrm{Opp}(\Delta_G)$-panel of type $\alpha_k$.
%\end{proof}

\begin{cor} \label{multopen} Let $G$ be a topological Kac--Moody group endowed with the Kac--Peterson topology. If it is two-spherical or symmetrizable, then the multiplication map $\varphi: U^+\times T \times U^- \to G$ is a homeomorphism onto its image.
\end{cor}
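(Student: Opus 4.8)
The plan is to treat the two-spherical case first, where the statement is exactly \cite[Proposition~7.31]{HKM}, and then to reduce the symmetrizable case to it by means of the embedding $\iota \colon G \to H$ of Proposition~\ref{topemb}. At the outset I would observe that $\varphi$ is continuous, being a restriction of the multiplication of the topological group $G$, and injective, because the big cell $U_+TU_-$ admits a unique factorization by the structure theory of the twin RGD system; hence the only point at issue is that the continuous bijection $\varphi$ onto $U_+TU_-$ is a topological embedding.

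For the symmetrizable case I would take the continuous injective homomorphism $\iota \colon G \to H$ into a simply laced --- hence two-spherical --- Kac--Moody group $H$ provided by Proposition~\ref{topemb}. For $H$ the two-spherical case already yields that $\varphi_H \colon U_+^H \times T^H \times U_-^H \to H$ is a homeomorphism onto the big cell of $H$. Since $\iota$ embeds each fundamental root group diagonally into a product of fundamental root groups of $H$, it maps $U_\pm$ into $U_\pm^H$ and $T$ into $T^H$, so that
\[
\iota \circ \varphi = \varphi_H \circ \psi, \qquad \psi := \iota|_{U_+} \times \iota|_{T} \times \iota|_{U_-}.
\]

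I would then invoke the elementary descent principle that, whenever $g \circ h$ is a topological embedding and $g$ is continuous and injective, $h$ is itself a topological embedding (given a net with $h(x_\alpha) \to h(x)$, apply $g$ to get $(g\circ h)(x_\alpha) \to (g\circ h)(x)$, whence $x_\alpha \to x$). Taking $h = \varphi$ and $g = \iota$, this reduces the problem to showing that $\iota \circ \varphi$ is a topological embedding; and since $\varphi_H$ is a homeomorphism, that is equivalent to $\psi$ being one. As a finite product of topological embeddings is again a topological embedding, it then suffices to prove separately that $\iota|_{U_+}$, $\iota|_{T}$ and $\iota|_{U_-}$ are topological embeddings. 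The torus factor is immediate, since $T \cong (\RR^\times)^n$ and $T^H$ are finite-dimensional Lie groups and $\iota|_T$ is a continuous injective homomorphism, hence a closed embedding.

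For the two unipotent factors I would appeal to Proposition~\ref{komega}: each $U_\pm^H$ is $k_\omega$, so the continuous injection $\iota|_{U_\pm} \colon U_\pm \to U_\pm^H$ is a topological embedding as soon as the preimage of every member of the defining compact exhaustion of $U_\pm^H$ is compact. Under the identification of $U_+$ and $U_-$ with the sets of chambers of the twin building opposite to the base chambers, filtered by gallery distance, this properness is exactly the assertion that the $k_\omega$-exhaustions of $U_\pm$ and of $U_\pm^H$ are cofinal with one another --- equivalently, that a ball of $\mathrm{Opp}(\Delta_H)$ meets $\mathrm{Opp}(\Delta_G)$ in a bounded subset --- which is Proposition~\ref{nondistortion}, the opposition-preserving embedding of Observation~\ref{maxboundedcut} ensuring that the two geometries are compatibly filtered. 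I expect this compactness step to be the main obstacle: continuity, injectivity and the algebraic factorization are formal, and the whole apparatus (the embedding $\iota$, the two-spherical result inside $H$, and the non-distortion of $\mathrm{Opp}(\Delta_G)$ in $\mathrm{Opp}(\Delta_H)$) is deployed precisely in order to verify the properness hypothesis of Proposition~\ref{komega} for $\iota|_{U_+}$ and $\iota|_{U_-}$.
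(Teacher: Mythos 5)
Your overall architecture coincides with the paper's: the two-spherical case is quoted from \cite[Proposition~7.31]{HKM}, the symmetrizable case is reduced to it via the embedding $\iota\colon G \to H$ of Proposition~\ref{topemb}, the commutative square $\iota\circ\varphi = \varphi^H\circ\psi$ with $\psi = \iota|_{U_+}\times\iota|_T\times\iota|_{U_-}$ is the same, and the crucial input is a properness argument through Proposition~\ref{komega} together with Proposition~\ref{nondistortion}. (The paper applies Proposition~\ref{komega} to $\iota\colon G\to H$ globally and then restricts; your descent to the three factors is a legitimate variant of the same reduction.) However, your verification of the properness hypothesis for $\iota|_{U_\pm}$ contains a genuine gap: you assert that this properness \emph{is exactly} the statement that a ball of $\mathrm{Opp}(\Delta_H)$ meets $\mathrm{Opp}(\Delta_G)$ in a bounded set, i.e.\ Proposition~\ref{nondistortion}. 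That conflates combinatorial boundedness with topological compactness. A gallery-distance ball in $U_+$ is a finite union of sets $U_w \cong \RR^{\ell(w)}$, which are \emph{not} compact; so the compact exhaustion of $U_\pm^H$ required by Proposition~\ref{komega} cannot consist of combinatorial balls, and cofinality of the gallery-distance filtrations is strictly weaker than the needed assertion that the preimage of each compact member of the exhaustion is compact. Proposition~\ref{nondistortion} bounds the \emph{number} of panel or residue steps but says nothing about compactness within each rank-one direction.

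What is missing is the second, Lie-theoretic half of the paper's argument: one builds the $k_\omega$-decompositions out of products $X_t = X_1^1\cdots X_t^t$ of \emph{metric} balls $X_t^t$ in the maximal bounded subgroups (this is where Corollary~\ref{maximalbounded2} enters), and one uses that each fundamental $G_{\alpha_i}\cong \SL_2(\RR)$ sits diagonally as a closed subgroup of $\prod_{j=1}^{n_i} H_{\alpha_{i,j}}\cong \SL_2(\RR)^{n_i}$, so that each compact piece $H_j^j$ meets $\iota(G)$ in a compact subset of a fundamental subgroup of $G$ in its Lie topology. Only the combination of this metric properness in each bounded direction with Proposition~\ref{nondistortion} (which controls how many factors of the product $H_1^1\cdots H_t^t$ can be traversed inside $\iota(G)$) yields the containment $H_t\cap\iota(G)\subseteq \iota(G_{t'})$, which is the actual hypothesis of Proposition~\ref{komega}. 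Your sketch names the right tools but omits this metric ingredient, and the equivalence as you state it would fail. A smaller slip: your justification for the torus factor --- that a continuous injective homomorphism of finite-dimensional Lie groups is automatically a closed embedding --- is false in general (consider a dense one-parameter subgroup of a $2$-torus); the conclusion does hold here, but because $\iota|_T\colon(\RR^\times)^n\to(\RR^\times)^m$ is an explicit monomial homomorphism whose exponent matrix is injective, hence proper after passing to logarithms of absolute values, and that needs to be said.
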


\begin{proof}
The two-spherical case is \cite[Proposition~7.31]{HKM}.
In the symmetrizable case note that Proposition~\ref{komega} is applicable since the Kac--Peterson topology is $k_\omega$ by \cite[Proposition~7.10]{HKM}. Consequently, the injection from Proposition~\ref{topemb} yields a topological embedding $\iota : G \to H$, provided one can find $k_\omega$-decompositions $G = \bigcup_n G_n$ and $H = \bigcup_m H_m$ such that each intersection $H_m \cap \iota(G)$ lies in some $\iota(G_n)$. (Indeed, $\iota^{-1}(H_m)$ is closed by continuity of $\iota$, so it is compact once it lies inside some compact set $G_n$, which is equivalent to $H_m \cap \iota(G) \subset \iota(G_n)$.) 

%For $G$ choose the standard $k_\omega$-decomposition: each fundamental subgroup $G_{\alpha_{i}} \cong \mathrm{SL}_2(\mathbb{R})$ is $\sigma$-compact locally compact metrizable and, thus, admits a $k_\omega$-decomposition $G_{\alpha_{i}} = \bigcup_{n} G_{i}^n$ where each $G_i^n$ is the closed ball of radius $n$ with center $1$ in $G_{\alpha_{i}}$. By construction (see \cite[Section~7]{HKM}) $$G_1 := G_1^1, \quad G_2 := G_1^1G_2^2, \quad G_3 := G_1^1G_2^2G_3^3, \quad \cdots, G_t := G_1^1\cdots G_t^t, \quad \cdots $$ provides a $k_\omega$-decomposition of $G$, with lower indices taken modulo the number of simple roots of $G$. 

For $G$ and $H$ choose $k_\omega$-decompositions making use of Corollary~\ref{maximalbounded2} and $k_\omega$-decompositions of the fundamental subgroups $G_{\alpha_i} \cong \SL_2(\mathbb{R})$ of $G$ and the corresponding subgroups $\prod_{j=1}^{n_i} H_{\alpha_{i,j}} \cong \SL_2(\mathbb{R})^{n_i}$ of $H$ into which the $G_{\alpha_i}$ embed diagonally, endowed with their Lie group topology. That is, $$X_1 := X_1^1, \quad X_2 := X_1^1X_2^2, \quad X_3 := X_1^1X_2^2X_3^3, \quad \cdots, X_t := X_1^1\cdots X_t^t, \quad \cdots $$ where each of the $X_t^t$ is the ball of radius $t$ around $1$ of the maximal bounded subgroup $X_t$ endowed with some suitable metric inducing its Lie group topology, with $X \in \{ G, H \}$ and lower index $t$ taken modulo the total number of maximal bounded subgroups.

By construction, each $H_j^j$ intersects $\iota(G)$ in some compact subset of a fundamental subgroup $G_{\alpha_i}$ of $G$ with respect to the Lie group topology. In other words, each $H_j^j \cap \iota(G)$ lies in some $\iota(G_k^k)$.
Forming finite products of such sets and using Proposition~\ref{nondistortion} one concludes that $H_t \cap \iota(G) = (H_1^1\cdots H_t^t) \cap \iota(G)$ lies in some suitable product $G_t = G_1^1\cdots G_t^t$; that is, the injective homomorphism $\iota : G \to H$ indeed is a topological embedding.

Since $\iota$ restricts to maps $\restr{\iota}{U_+^G}:U_+^G\to U_+^H$, $\restr{\iota}{U_-^G}:U_-^G\to U_-^H$, $\restr{\iota}{T^G}:T^G\to T^H$, one can conclude that the diagram
 \[
\begin{tikzcd}
U_+^G\times T^G\times U_-^G \arrow{r}{\varphi^G} \arrow[swap]{d}{\restr{\iota}{U_+^G}\times \restr{\iota}{T^G}\times \restr{\iota}{U_-^G} }& G \arrow{d}{\iota} \\
U_+^H\times T^H\times U_-^H \arrow{r}{\varphi^H} & H
\end{tikzcd}
\]
commutes, which proves that the map $\varphi^G$ is a homeomorphism onto its image, since $\varphi^H$ is a homeomorphism onto its image by \cite[Proposition~7.31]{HKM}.
\end{proof}

\begin{cor} \label{topstrong}
 Let $G$ be a topological Kac--Moody group endowed with the Kac--Peterson topology. If it is two-spherical or symmetrizable, then the associated twin building with the quotient topology is a strong topological twin building.
\end{cor}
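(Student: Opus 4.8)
The plan is to reduce the statement to the product homeomorphism just established, treating the strong topological twin building structure as essentially a reformulation of Corollary~\ref{multopen}. First I would recall the set-up from \cite{HKM}: the two halves $\Delta_\pm$ of the twin building are the chamber sets $G/B_+$ and $G/B_-$, each carrying the quotient topology induced by the Kac--Peterson topology on $G$, and \cite{HKM} verifies in full generality that with these topologies the chamber sets are $k_\omega$-spaces, the panels are closed, and the incidence, projection, retraction and codistance maps of the twin building are continuous. Crucially, none of these verifications depends on two-sphericity; they rest only on general features of the Kac--Peterson topology and therefore remain valid in the symmetrizable case.

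Next I would isolate the single extra requirement distinguishing a \emph{strong} topological twin building from a weak one. In the treatment of \cite{HKM} this amounts to the demand that the big cell carry the product topology: concretely, that the multiplication map $\varphi\colon U_+\times T\times U_-\to U_+TU_-$ be a homeomorphism onto the open double coset, equivalently that the orbit map $U_\mp \to B_\mp B_\pm/B_\pm$ onto the set of chambers opposite the fundamental chamber be a homeomorphism. This is precisely the point at which the two-sphericity hypothesis enters the HKM argument, via \cite[Proposition~7.31]{HKM}, and hence the reason their strong conclusion was stated only in the two-spherical case.

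The key observation closing the argument is that Corollary~\ref{multopen} furnishes exactly this homeomorphism in the symmetrizable case as well. Substituting Corollary~\ref{multopen} for \cite[Proposition~7.31]{HKM} in the HKM proof, every axiom of a strong topological twin building is met, and the claim follows. I expect the only genuine difficulty to be the homeomorphy of $\varphi$, which has already been handled in Corollary~\ref{multopen}; what remains is the routine check that this product decomposition is indeed the sole obstruction and that all remaining axioms are insensitive to the passage from the two-spherical to the symmetrizable setting.
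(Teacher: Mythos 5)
Your proposal is correct and follows exactly the paper's own argument: the paper disposes of the two-spherical case via \cite[Theorem~1]{HKM} and, in the symmetrizable case, simply substitutes Corollary~\ref{multopen} for \cite[Proposition~7.31]{HKM} in the HKM proof, which is precisely your reduction. Your identification of the product decomposition $U_+\times T\times U_-\to U_+TU_-$ as the sole place where two-sphericity entered is the same observation the paper makes (cf.\ the discussion after \cite[Theorem~1]{HKM}), only spelled out in more detail.
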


\begin{proof}
  The two-spherical case is \cite[Theorem~1]{HKM}. In the symmetrizable case it follows by replacing \cite[Proposition~7.31]{HKM} with Corollary~\ref{multopen}; cf.\ the discussion after \cite[Theorem~1]{HKM}.
\end{proof}

\begin{cor}
 Let $G$ be a topological Kac--Moody group endowed with the Kac--Peterson topology. If it is two-spherical or symmetrizable, then the Bruhat decomposition of a symmetrizable Kac--Moody group is a CW decomposition.
\end{cor}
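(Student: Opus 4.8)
The plan is to deduce the statement directly from Corollary~\ref{topstrong} together with the general theory of strong topological twin buildings developed in \cite{HKM}. By this point all the genuinely topological content has been absorbed into the homeomorphism property of the multiplication map $\varphi : U_+ \times T \times U_- \to G$ (Corollary~\ref{multopen}), so the remaining task is essentially formal: to read off the four defining axioms of a CW complex from the strong topological twin building structure, which is exactly what Proposition~\ref{bruhat decomp is cw decomp} was written to do, taking Corollary~\ref{topstrong} as its hypothesis.

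First I would invoke Corollary~\ref{topstrong} to conclude that, in the two-spherical or symmetrizable case, the twin building of $G$ equipped with the quotient topology from $G/B$ is a strong topological twin building. For such buildings, \cite[Corollary~3.10]{HKM} shows that each closure $C_{\leq w}(B) = \bigcup_{x \leq w} BxB/B$ is compact, and \cite[Proposition~5.9]{HKM} shows that $C_w(B) = BwB/B$ is an open cell of dimension $l(w)$ inside this compact closure, with the expected attaching behaviour. This is precisely the input consumed in the proof of Proposition~\ref{bruhat decomp is cw decomp}.

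Next I would assemble the four CW axioms. Disjointness of the decomposition is the Bruhat decomposition (Lemma~\ref{bruhat}); the characteristic maps and the cell structure come from the previous paragraph; closure-finiteness follows from $\overline{C_w(B)} = \bigcup_{x \leq w} C_x(B)$ together with the finiteness of each Bruhat interval $[1,w]$; and the weak-topology condition is exactly where the strong topological twin building property (equivalently, the $k_\omega$-structure of $G$ secured via Corollary~\ref{multopen}) is used. For the flag varieties $G/P_J$ one then transports this structure along the canonical map $G/B \to G/P_J$, using Lemma~\ref{hom between BwB/B and BwP/P} that this map restricts to homeomorphisms on the cells indexed by $W^J$, again as in Proposition~\ref{bruhat decomp is cw decomp}.

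The main obstacle is not located in this corollary at all: it has already been overcome in establishing Corollary~\ref{topstrong}, whose symmetrizable case rests on the embedding $\iota : G \hookrightarrow H$ into a simply-laced group (Proposition~\ref{topemb}), the non-distortion estimate (Proposition~\ref{nondistortion}), and the properness criterion for $k_\omega$-spaces (Proposition~\ref{komega}), which together upgrade $\iota$ to a topological embedding and yield the homeomorphism $\varphi$. Once that machinery is in hand, the present statement reduces to a citation of Proposition~\ref{bruhat decomp is cw decomp}, so the only thing to be careful about here is to verify that the hypotheses of that proposition are now genuinely met in the symmetrizable case, which is precisely what Corollary~\ref{topstrong} supplies.
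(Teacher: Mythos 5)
Your proposal is correct and follows exactly the paper's route: the paper's own proof consists of the single observation that the corollary is a restatement of Proposition~\ref{bruhat decomp is cw decomp}, whose hypotheses in the symmetrizable case are supplied by Corollary~\ref{topstrong}. Your additional unpacking of how Proposition~\ref{bruhat decomp is cw decomp} consumes \cite[Corollary~3.10, Proposition~5.9]{HKM}, Lemma~\ref{bruhat}, and Lemma~\ref{hom between BwB/B and BwP/P} accurately reproduces that proposition's proof rather than departing from it.
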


\begin{proof}
This is a restatement of Proposition~\ref{bruhat decomp is cw decomp} from the main text. Its proof heavily relies on Corollary~\ref{topstrong}.
\end{proof}

\begin{cor}
Let $G$ be a topological Kac--Moody group endowed with the Kac--Peterson topology. If it is two-spherical or symmetrizable, then the coset model, the group model, and the involution model of the reduced Kac--Moody symmetric space are pairwise homeomorphic with respect to their internal topologies.
\end{cor}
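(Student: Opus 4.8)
The plan is to leverage the fact that this statement was already established in \cite{FHHK} in the two-spherical case, and that the only place where two-sphericity entered that argument is through the topological openness of the big-cell multiplication map; this input is now available in the symmetrizable case via Corollary~\ref{multopen} together with the resulting strong topological twin building structure of Corollary~\ref{topstrong}. Concretely, the three models are linked by canonical $\overline{G}$-equivariant bijections: the \emph{Cartan map} $C \colon \overline{G}/\overline{K} \to \overline{G}$, $g\overline{K} \mapsto g\,\overline{\theta}(g)^{-1}$, identifying the coset model with the group model (its image being a $\overline{\theta}$-twisted conjugacy orbit), and the map sending $g\,\overline{\theta}(g)^{-1}$ to the point symmetry $c_{g}\circ\overline{\theta}\circ c_{g}^{-1}$, identifying the group model with the involution model. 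Here $\overline{\theta}$ denotes the involution induced on the semisimple adjoint quotient $\overline{G}$ by the Cartan--Chevalley involution, whose kernel is $\theta$-stable.

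First I would observe that these bijections are continuous, which is automatic: the map $g \mapsto g\,\overline{\theta}(g)^{-1}$ is continuous on $\overline{G}$ (multiplication and $\overline{\theta}$ being continuous) and constant on $\overline{K}$-cosets, so by the universal property of the quotient topology it descends to a continuous map on $\overline{G}/\overline{K}$; continuity of the passage to point symmetries follows in the same way from continuity of conjugation. The substance of the statement is therefore that the \emph{inverse} bijections are continuous, i.e.\ that each of these continuous bijections is a topological embedding.

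For this I would invoke Proposition~\ref{komega}: since the Kac--Peterson topology is $k_\omega$, a continuous injection between $k_\omega$-spaces is a topological embedding as soon as it is proper, i.e.\ as soon as the preimage of each member of a compact exhaustion of the target is compact. The source $\overline{G}/\overline{K}$ is itself $k_\omega$: by the topological Iwasawa decomposition (Theorem~\ref{TopIwasawa}) the orbit map yields a homeomorphism $\overline{U^+}\times\overline{A}\to \overline{G}/\overline{K}$, as already used in Corollary~\ref{Contrac1}, and the target $\overline{G}$ (respectively the conjugation orbit of $\overline{\theta}$) is $k_\omega$ as well. It thus remains only to verify properness of the Cartan map and of the passage to the involution model against these $k_\omega$-exhaustions; this is precisely the point at which one feeds in the topological big-cell decomposition of Corollary~\ref{multopen}, controlling which compacta map to which, exactly as in the two-spherical argument of \cite{FHHK}.

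The main obstacle I anticipate is this properness verification: one must show that a compact subset of the group model (respectively of the involution model) pulls back to a compact subset of $\overline{G}/\overline{K}$, which amounts to a non-distortion estimate relating the intrinsic building/word data on the various models. In the two-spherical case this is handled in \cite{FHHK}; in the symmetrizable case the required estimate is supplied by combining Corollary~\ref{multopen} with the strong topological twin building structure of Corollary~\ref{topstrong} (and, where passing through an embedding into a simply laced group is convenient, the non-distortion Proposition~\ref{nondistortion}), so that no geometric input beyond these results is needed and the three intrinsic topologies are seen to agree under the canonical bijections.
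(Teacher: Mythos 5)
Your proposal takes essentially the same route as the paper: the paper's entire proof is to cite \cite[Proposition~4.19]{FHHK} for the two-spherical case and to note that in the symmetrizable case the only two-spherical input to that argument, the topological big-cell decomposition, is now supplied by Corollary~\ref{multopen}. Your additional material (the Cartan map, the passage to point symmetries, and the properness check via Proposition~\ref{komega}) is a reasonable blind reconstruction of the internals of \cite[Proposition~4.19]{FHHK}, but it does not change the approach, which matches the paper's.
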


\begin{proof}
  The two-spherical case is \cite[Proposition~4.19]{FHHK}. In the symmetrizable case it follows from \cite[Proposition~4.19]{FHHK} and Corollary~\ref{multopen}.
\end{proof}

\newcommand{\etalchar}[1]{$^{#1}$}
\providecommand{\bysame}{\leavevmode\hbox to3em{\hrulefill}\thinspace}
\providecommand{\MR}{\relax\ifhmode\unskip\space\fi MR }
% \MRhref is called by the amsart/book/proc definition of \MR.
\providecommand{\MRhref}[2]{%
  \href{http://www.ams.org/mathscinet-getitem?mr=#1}{#2}
}
\providecommand{\href}[2]{#2}

%\bibliography{bibliography}

\begin{thebibliography}{GHKW17}

\bibitem[AB08]{AB}
Peter Abramenko and Kenneth~S. Brown, \emph{Buildings. {T}heory and
  applications}, Springer, New York, 2008.
	
\bibitem[AM97]{AbramenkoMuhlherr}
  Peter Abramenko and Bernhard M\"{u}hlherr,
     \emph{Pr\'{e}sentations de certaines {$BN$}-paires jumel\'{e}es comme sommes
              amalgam\'{e}es},
   {C. R. Acad. Sci. Paris S\'{e}r. I Math.} \textbf{325} ({1997}),
    {701--706}.


\bibitem[CM06]{CapraceMuhlherr}
Pierre-Emmanuel Caprace and Bernhard M\"{u}hlherr,
   \emph{Isomorphisms of {K}ac-{M}oody groups which preserve bounded
              subgroups}, {Adv. Math.} \textbf{206} (2006), 250--278.

\bibitem[CR09]{CapraceRemy}
Pierre-Emmanuel Caprace and Bertrand R\'{e}my, \emph{Groups with a root group
  datum}, Innov. Incidence Geom. \textbf{9} (2009), 5--77.

\bibitem[DMGH09]{DMGH}
Tom De~Medts, Ralf Gramlich, and Max Horn, \emph{Iwasawa decompositions of
  split {K}ac-{M}oody groups}, J. Lie Theory \textbf{19} (2009),
  311--337. 

\bibitem[Dyn46]{Dynkin}
Evgenii~Dynkin, \emph{Classification of the simple {L}ie groups}, Rec. Math. [Mat.
  Sbornik] N. S. \textbf{18(60)} (1946), 347--352. 

\bibitem[FHHK20]{FHHK}
Walter {Freyn}, Tobias {Hartnick}, Max {Horn}, and Ralf {K{\"o}hl},
  \emph{{Kac--Moody symmetric spaces}}, M\"unster J.\ Math. \textbf{13} (2020), 1--114.

\bibitem[FST77]{kwsurvey}
Stanley~P. Franklin and Barbara~V. Smith~Thomas, \emph{A survey of
  $k_\omega$-spaces}, Topology Proceedings \textbf{2} (1977), 111--124.

\bibitem[GGH10]{GGH}
Helge Gl\"{o}ckner, Ralf Gramlich, and Tobias Hartnick, \emph{Final group
  topologies, {K}ac-{M}oody groups and {P}ontryagin duality}, Israel J. Math.
  \textbf{177} (2010), 49--101. \MR{2684413}

\bibitem[GHKW17]{GHKW}
David {Ghatei}, Max {Horn}, Ralf {K{\"o}hl}, and Sebastian {Wei{\ss}},
  \emph{{Spin covers of maximal compact subgroups of Kac--Moody groups and
  spin-extended Weyl groups}}, J.\ Group Theory \textbf{20} (2017), 401--504.

\bibitem[Gra09]{Gramlich}
Ralf Gramlich, \emph{Developments in finite {P}han theory},
  {Innov. Incidence Geom.} \textbf{9} (2009), {123--175}.

\bibitem[Goo76]{Goodman}
Roe~W. Goodman, \emph{Nilpotent {L}ie groups: structure and applications to
  analysis}, Lecture Notes in Mathematics, Vol. 562, Springer-Verlag,
  Berlin-New York, 1976. \MR{0442149}

\bibitem[Hat02]{Hatcher}
Allen Hatcher, \emph{Algebraic topology}, Cambridge University Press, 2002.

\bibitem[Hel78]{Helga}
Sigurdur Helgason, \emph{Differential geometry, {L}ie groups, and symmetric
  spaces}, Pure and Applied Mathematics, vol.~80, Academic Press, Inc.
  [Harcourt Brace Jovanovich, Publishers], New York-London, 1978. \MR{514561}

\bibitem[HKL15]{hkl}
Guntram Hainke, Ralf K{\"o}hl, and Paul Levy, \emph{Generalized spin
  representations}, M{\"u}nster J.~ Math. \textbf{8} (2015), 181--210.

\bibitem[HKM13]{HKM}
Tobias Hartnick, Ralf K{\"o}hl, and Andreas Mars, \emph{On topological twin
  buildings and topological split {K}ac--{M}oody groups}, Innov. Incidence
  Geom. \textbf{13} (2013), 1--71.

\bibitem[Hor]{Ho}
Max Horn, \emph{Tikz styles for dynkin diagrams},
  \url{https://github.com/fingolfin/tikz-dynkin/blob/master/dynkin-diagrams.tex},
  Accessed: 2019-02-22.

\bibitem[HR63]{HR}
Edwin Hewitt and Kenneth~A. Ross, \emph{Abstract harmonic analysis. {V}ol. {I}:
  {S}tructure of topological groups. {I}ntegration theory, group
  representations}, Academic Press, Inc., Publishers, New York;
  Springer-Verlag, Berlin-G\"ottingen-Heidelberg, 1963.

\bibitem[Hus94]{Hus}
Dale Husemoller, \emph{Fibre bundles}, third ed., Springer-Verlag, New York,
  1994.

\bibitem[Kac85]{Kac}
Victor~G. Kac, \emph{Constructing groups associated to infinite-dimensional
  {L}ie algebras}, Infinite-dimensional groups with applications ({B}erkeley,
  {C}alif., 1984), Math. Sci. Res. Inst. Publ., vol.~4, Springer, New York,
  1985, pp.~167--216.

\bibitem[Kac90]{Kac2}
\bysame, \emph{Infinite-dimensional {L}ie algebras}, third ed., Cambridge
  University Press, Cambridge, 1990. \MR{1104219}

\bibitem[KP83]{Kac1983}
Victor~G. Kac and Dale~H. Peterson, \emph{Regular functions on certain
  infinite-dimensional groups}, pp.~141--166, Birkh{\"a}user Boston, Boston,
  MA, 1983.
	
\bibitem[KP85]{Kac1985}	
Victor~G. Kac and Dale~H. Peterson, \emph{Defining relations of certain infinite-dimensional groups}, pp.~{165--208}, {Ast\'{e}risque}, 1985.


\bibitem[Kra01]{Kra}
Linus Kramer, \emph{Loop groups and twin buildings}, Geometriae Dedicata
  \textbf{92} (2001), 145--178.

\bibitem[Kum02]{Kum}
Shrawan Kumar, \emph{Kac-{M}oody groups, their flag varieties and
  representation theory}, Progress in Mathematics, vol. 204, Birkh\"{a}user
  Boston, Inc., Boston, MA, 2002. 

\bibitem[Mar15]{marfix}
Timoth{\'e}e Marquis, \emph{A fixed point theorem for {Lie} groups acting on
  buildings and applications to {Kac--Moody} theory}, Forum math. \textbf{27}
  (2015), 449--466.

\bibitem[Mar18]{Mar}
Timoth\'{e}e Marquis, \emph{An introduction to {K}ac-{M}oody groups over
  fields}, EMS Textbooks in Mathematics, European Mathematical Society (EMS),
  Z\"{u}rich, 2018. 

\bibitem[Mar19]{margabberkac}
Timoth{\'e}e Marquis, \emph{Around the {Lie} correspondence for complete
  {Kac--Moody} groups and the {Gabber--Kac} simplicity}, {Ann.\ Inst.\ Fourier (Grenoble)} \textbf{69} 2019, 2519--2576.

\bibitem[Mas77]{Mas}
William~S. Massey, \emph{Algebraic topology: An introduction}, 4th corrected
  printing ed., Graduate texts in mathematics 56, Springer-Verlag, 1977.

\bibitem[Mos49]{Mos}
George~Daniel Mostow, \emph{A new proof of {E}. {C}artan's theorem on the
  topology of semi-simple groups}, Bull. Amer. Math. Soc. \textbf{55} (1949),
  969--980. 

\bibitem[MPW15]{MPW}
Bernhard M\"{u}hlherr, Holger P.~Petersson, and Richard Weiss,
    \emph{Descent in buildings},
   {Princeton University Press, Princeton, NJ}, 2015.

\bibitem[Pal61]{Pal}
Richard~S. Palais, \emph{On the existence of slices for actions of non-compact
  {L}ie groups}, Ann. of Math. (2) \textbf{73} (1961), 295--323.

\bibitem[Pal70]{Palais:1970}
Richard~S. Palais, \emph{When proper maps are closed}, Proc.\ Amer.\ Math.\
  Soc. \textbf{24} (1970), 835--836.

\bibitem[Pro07]{Pro}
Claudio Procesi, \emph{Lie groups. An approach
  through invariants and representations}, Springer, New York, 2007.

\bibitem[Rot88]{Rot}
Joseph~J. Rotman, \emph{An introduction to algebraic topology},
  Springer-Verlag, New York, 1988.

\bibitem[SBG{\etalchar{+}}95]{Salz}
Helmut Salzmann, Dieter Betten, Theo Grundh\"{o}fer, Hermann H\"{a}hl, Rainer
  L\"{o}wen, and Markus Stroppel, \emph{Compact projective planes}, De Gruyter
  Expositions in Mathematics, vol.~21, Walter de Gruyter \& Co., Berlin, 1995,
  With an introduction to octonion geometry. 

\bibitem[Ste99]{Ste1}
Norman~E. Steenrod, \emph{The topology of fibre bundles}, Princeton Landmarks
  in Mathematics, Princeton University Press, Princeton, NJ, 1999, Reprint of
  the 1957 edition, Princeton Paperbacks.

\bibitem[Tit74]{Tits1974}
Jacques Tits,
    \emph {Buildings of spherical type and finite {BN}-pairs},
    {Springer-Verlag, Berlin-New York}, {1974}.

\bibitem[Tit87]{Ti}
Jacques Tits, \emph{Uniqueness and presentation of {K}ac-{M}oody groups over
  fields}, J. Algebra \textbf{105} (1987), no.~2, 542--573.

\bibitem[Wig98]{Wig}
Mark Wiggerman, \emph{The fundamental group of a real flag manifold}, Indag.\
  Math. \textbf{9} (1998), no.~1, 141--153.

\end{thebibliography}
%\bibliographystyle{amsalpha}

\end{document}